\documentclass{amsart}
\numberwithin{equation}{section}

\usepackage{tikz}
\usetikzlibrary{calc}
\usetikzlibrary{cd}
\usepackage{circuitikz}
\usepackage[all,knot]{xy}
\xyoption{all}
\usepackage{amssymb}

\usepackage{epigraph}

\usepackage{hyperref}

\theoremstyle{plain}
\newtheorem{theorem}{Theorem}[subsection]
\newtheorem{lemma}[theorem]{Lemma}
\newtheorem{proposition}[theorem]{Proposition}
\newtheorem{corollary}[theorem]{Corollary}

\theoremstyle{definition}
\newtheorem{definition}[theorem]{Definition}
\newtheorem{example}[theorem]{Example}
\newtheorem{xca}[theorem]{Exercise}
\newtheorem{hypothesis}[theorem]{Hypothesis}

\theoremstyle{remark}
\newtheorem{remark}[theorem]{Remark}

\def\to{\longrightarrow} %I hate short arrows

\DeclareMathOperator{\interior}{\mathrm{int}}
\DeclareMathOperator{\pt}{\mathrm{pt}}
\DeclareMathOperator{\Spec}{\mathrm{Spec}}
\DeclareMathOperator{\Spc}{\mathrm{Spc}}
\DeclareMathOperator{\Proj}{\mathrm{Proj}}
\DeclareMathOperator{\modu}{\mathrm{mod}}
\DeclareMathOperator{\Modu}{\mathrm{Mod}}

\DeclareMathOperator{\Comodu}{\mathrm{Comod}}
\DeclareMathOperator{\Ind}{\mathrm{Ind}}
\DeclareMathOperator{\Thom}{\mathrm{Thom}}
\DeclareMathOperator{\supp}{\mathrm{supp}}
\DeclareMathOperator{\Supp}{\mathrm{Supp}}

\DeclareMathOperator{\Hom}{\mathrm{Hom}}
\DeclareMathOperator{\Ext}{\mathrm{Ext}}
\DeclareMathOperator{\im}{\mathrm{im}}
\DeclareMathOperator{\colim}{\mathrm{colim}}
\DeclareMathOperator{\Idl}{\mathrm{Idl}}
\DeclareMathOperator{\Fun}{\mathrm{Fun}}
\DeclareMathOperator{\Idem}{\mathrm{Idem}}

\def\op{\mathrm{op}}
\def\thick{\mathrm{thick}}
\def\Thick{\mathrm{Thick}}
\def\loc{\mathrm{loc}}
\def\Loc{\mathrm{Loc}}
\def\coloc{\mathrm{coloc}}

\def\unit{\mathbf{1}}

\title{Some notes on tensor triangular geometry}

\author{Greg Stevenson}
\address{Greg Stevenson, Aarhus University, Department of Mathematics, Ny Munkegade 118, bldg. 1530
DK-8000 Aarhus C, Denmark
}
\email{greg@math.au.dk}
\begin{document}
%

%
%\thanks{}

\maketitle

\setcounter{tocdepth}{1}
\tableofcontents

%--------------------------------------------------------------------------------------------------------------------------------------------------

%--------------------------------------------------------------------------------------------------------------------------------------------------

\section{Overview}

%\epigraph{\itshape ``Uh, excuse me. Mark it zero. Next frame.''}{---Walter Sobchak, \textit{The Big Lebowski}}

%\vspace{5pt}

These are the notes from the lectures I gave at the Oberwolfach seminar `Tensor Triangular Geometry and Interactions' which was held in October 2025.

The aim of these notes is to give an introduction to tensor triangular geometry, for both small and large categories, through the lens of lattice theory. We do not try to be exhaustive and this is reflected in both the content and the bibliography. For instance we are quite light on triangulated preliminaries, especially for compactly generated categories.

The approach for essentially small categories is by now quite standard. We begin by giving a short introduction to the theory of lattices, frames, and Stone duality. After some further preliminary material on tensor triangulated categories (tt-categories) we use Stone duality to define the spectrum of an essentially small tt-category. This is developed in parallel to the usual story from commutative algebra. In this way the spectrum is tautologically equipped with a universal property. We expand on this universal property and discuss supports which arise naturally from the universal property and make apparent the connection to the original definitions of Balmer.

We next concentrate on examples. Continuing with our commutative algebra analogy we study the perfect complexes over the polynomial ring $k[x]$. In this example we can compute essentially everything by hand. We then discuss some main classes of examples that motivated the theory. We conclude the chapter on examples with a more or less complete tt-theoretic proof of Thomason's theorem computing the spectrum for the perfect complexes over a quasi-compact and quasi-separated scheme. The argument we give completely avoids the machinery of tensor nilpotence and does not appear elsewhere in the literature to my knowledge (although it has been in private circulation for some years).

In the final chapter we study supports for rigidly compactly generated tensor triangulated categories. The approach here is novel (perhaps experimental would be more accurate) and we again prioritize lattice-theoretic methods. The fundamental object is a `categorified lattice' which is inspired by Clausen and Scholze's notion of categorified locale. The advantage of this approach is that we can treat the small and big supports, both smashing and in the spectrum of the compacts, in a uniform way as well as capturing the universal example and other examples that may emerge. We conclude with a proof of the local-to-global principle, as formulated in our new generality, for supports based on $T_D$ spatial frames with Cantor-Bendixson rank.

These notes are in some sense complementary to the introduction in \cite{Stevensontour}. Different topics are prioritized, we consider different examples, and these notes are both longer and contain more new material which reflects a more evolved understanding of the topic.

\subsection{Acknowledgements}
I am grateful to my co-organizers Nat\`alia Castellana Vila and Henning Krause who not only performed the small miracle of convincing me to write these notes, but were invaluable in their preparation. Thanks are of course also due to the Mathematisches Forschungsinstitut Oberwolfach for their support of the seminar. I am very thankful for useful comments from Scott Balchin, Martin Gallauer, Juan Omar Gom\'ez, and Sira Gratz.

The writing of these notes was partially supported by the Danmarks Frie Forskningsfond (grant ID: 10.46540/4283-00116B). The first sections were written in sunny Okinawa and I thank the Okinawa Institute of Science and Technology (OIST) through the Theoretical Sciences Visiting Program (TSVP) for very pleasant working conditions.

%----------------------------------------------------------------------------------------------------------------------------------------------
% End of chapter
%----------------------------------------------------------------------------------------------------------------------------------------------

\section{Stone duality and the (dual) spectrum}

The aim of this chapter is to introduce the spectrum, \`a la Balmer \cite{Balmer:2005a}, for an essentially small tt-category. We will first introduce some elements of lattice theory which we will use to construct the spectrum and then develop the spectrum in parallel to the usual notion, from algebraic geometry, for a commutative ring.

%----------------------------------------------------------------------------------------------------------------------------------------------

%----------------------------------------------------------------------------------------------------------------------------------------------

\subsection{Lattice theory}

The basis for our investigation will be lattice theory. There is a deep connection between lattices and topological spaces. From every topological space one can produce a well-behaved lattice namely its collection of open subsets. For many spaces of interest this lattice completely determines the space and so provides an equivalence, known as Stone duality, between certain spaces and lattices of open subsets.

With this in mind we'll introduce the definitions necessary to capture the essence of the collection of open subsets of a space: unions, intersections, infinite distributivity, and an abstract version of a point.

\begin{definition}
A \emph{lattice} is a partially ordered set $(L,\leq)$ such that for every pair of elements $m,n\in L$ there is a least upper bound $m\vee n$ called the \emph{join} of $m$ and $n$ and a greatest lower bound $m\wedge n$ called the \emph{meet} of $m$ and $n$. 

We say that $L$ is \emph{bounded} if it has a greatest element $1$ and a least element $0$. 

We say that $L$ is \emph{complete} if any set $S$ of elements of $L$ admits a supremum $\vee S$ and an infimum $\wedge S$ in $L$, called their join and meet respectively.
\end{definition}

\begin{remark}
A number of remarks are in order:
\begin{enumerate}
\item We simply refer to $L$ as a lattice and suppress the partial order which is understood to be present.
\item Being a lattice is a property of a partially ordered set and not a structure. This is also true for being bounded or complete.
\item The meet, as a binary operation, determines the order on $L$ since $m\leq n$ if and only if $m\wedge n = m$ and analogously for the join.
\item A lattice $L$ is complete if and only if the join exists for any set: we can define the meet of a subset $S$ by
\[
\bigwedge S = \bigvee \{m \in L \mid m\leq s \text{ for all } s\in S\}.
\]
The existence of all meets implies the existence of joins in a similar fashion.
\end{enumerate}
\end{remark}

We will be particularly interested in complete lattices satisfying a strong version of the \emph{distributive law}.

\begin{definition}
A lattice $L$ is \emph{distributive} if for every triple of elements $l,m,n \in L$ we have
\[
l \wedge (m\vee n) = (l\wedge m) \vee (l\wedge n).
\]
That is, the meet distributes over the join.
\end{definition}

\begin{remark}
This is equivalent to the dual formula $l \vee (m\wedge n) = (l\vee m) \wedge (l\vee n)$ for all $l,m,n\in L$.
\end{remark}

\begin{xca}
Prove that we have $l \wedge (m\vee n) = (l\wedge m) \vee (l\wedge n)$ for all triples of elements $l,m,n$ in a lattice $L$ if and only if $l \vee (m\wedge n) = (l\vee m) \wedge (l\vee n)$ for all triples of elements.
\end{xca}

\begin{example}
There are two minimal non-distributive lattices which can be depicted as
\[
\begin{tikzpicture}
\tikzstyle{every node}=[font=\LARGE]
\draw [short] (8,4.5) -- (8.75,5.25);
\draw [short] (8.75,5.25) -- (8.75,6.25);
\draw [short] (8.75,6.25) -- (8,7);
\node at (8,4.5) [circ] {};
\node at (8.75,5.25) [circ] {};
\node at (8.75,6.25) [circ] {};
\node at (8,7) [circ] {};
\draw [short] (8,4.5) -- (7.25,5.75);
\draw [short] (7.25,5.75) -- (8,7);
\node at (7.25,5.75) [circ] {};
\node at (3.75,4.5) [circ] {};
\node at (5,5.75) [circ] {};
\draw [short] (3.75,4.5) -- (3.75,7);
\draw [short] (3.75,4.5) -- (2.5,5.75);
\draw [short] (3.75,4.5) -- (5,5.75);
\draw [short] (5,5.75) -- (3.75,7);
\draw [short] (2.5,5.75) -- (3.75,7);
\node at (3.75,5.75) [circ] {};
\node at (2.5,5.75) [circ] {};
\node at (3.75,7) [circ] {};
\end{tikzpicture}
\]
where we view the lowest nodes as minimal elements and the ordering increases as we follow edges up the page. Any non-distributive lattice contains one of these as a sublattice.
\end{example}

\begin{example}
Any lattice of subsets, i.e.\ a sublattice of the powerset of some set ordered by inclusion, is distributive since intersections distribute over (arbitrary) unions. For instance, if $A$ is a set then the powerset $\mathcal{P}(A)$ is a complete distributive lattice. If $A$ is infinite then the set of finite subsets $\mathcal{P}^\omega(A)$ is a distributive lattice which is neither complete nor bounded.
\end{example}

\begin{definition}
A complete lattice $F$ is a \emph{frame} if it satisfies the folowing infinite distributive law: for any subset $S\subseteq F$ and an element $f\in F$ 
\[
f \wedge (\bigvee_{s\in S} s) = \bigvee_{s\in S}(f\wedge s).
\]
\end{definition}

\begin{remark}
Since $F$ is complete it is a bounded lattice. We denote by $0$ and $1$ the bottom and top (i.e.\ minimal and maximal) elements of $F$. Taking the set $S$ to have two elements we see that a frame is a distributive lattice.
\end{remark}

\begin{example}
The prototypical example for us is as follows. Let $X$ be a topological space. We can consider the set $\Omega(X)$ of open subsets of $X$ which is partially ordered under inclusion. This is a complete lattice with joins given by unions. The lattice $\Omega(X)$ is a frame. 
\end{example}

\begin{xca}
Describe the meets in $\Omega(X)$ and prove that it is a frame.
\end{xca}

\begin{example}
An important special case is the frame 
\[
\mathbf{2} = \Omega(\{\ast\}) = \{0,1\}
\]
which is the unique frame with exactly two elements. 
\end{example}

\begin{example}
Let $X$ be a topological space. An open subset $A\subseteq X$ is \emph{regular} if $A = \interior\overline{A}$ that is $A$ is the interior of its closure. The poset of regular open subsets of $X$, denoted $\Omega^\mathrm{reg}(X)$, is a frame. The join of a set $\{A_i \mid i\in I\}$ of regular opens is given by
\[
\bigvee_i A_i = \interior\overline{\bigcup_i A_i}.
\]
\end{example}

The frames $\Omega(X)$ and $\Omega^\mathrm{reg}(X)$ exhibit quite different behaviour in general. In order to understand this we need the notion of a morphism of frames and of a point.

\begin{definition}
Let $E$ and $F$ be frames. A morphism $f\colon E\to F$ of frames is a function which preserves arbitrary joins and finite meets. In particular, a map of frames is order preserving and preserves the top and bottom elements. We denote by $\mathsf{Frm}$ the corresponding category.

Similarly, if $L$ and $M$ are complete lattices then a morphism $L\to M$ of complete lattices is a function preserving arbitrary joins and finite meets.
\end{definition}

\begin{definition}
Let $F$ be a frame. A \emph{point} of $F$ is a morphism $p\colon F\to \mathbf{2}$. We denote by $\pt(F)$ the set of points of $F$. This is given the following topology: the open subsets of $\pt(F)$ are given by
\[
U_f = \{p\in \pt(F) \mid p(f)=1 \}
\]
as $f$ varies over the elements of $F$.

This defines a functor $\pt \colon \mathsf{Frm}^\op \to \mathsf{Top}$ from frames to topological spaces. The action of $\pt$ on morphisms is given by composition in the obvious way, i.e.\ for a frame map $\phi\colon F\to F'$ we have
\[
\pt(\phi)(F'\stackrel{p}{\to} \mathbf{2}) = (F\stackrel{\phi}{\to} F' \stackrel{p}{\to} \mathbf{2}).
\]
\end{definition}

\begin{remark}
The intuition for this definition is that a map of topological spaces $f\colon X\to Y$ induces a corresponding frame map $\Omega(Y) \to \Omega(X)$ by sending an open $U$ of $Y$ to $f^{-1}(U)$. Thus a point of $X$, i.e.\ a morphism $\ast \to X$ from $\ast$ the one point space, gives rise to a point $\Omega(X) \to \Omega(\ast) \cong \mathbf{2}$ of the frame $\Omega(X)$.
\end{remark}

\begin{remark}
The category $\mathsf{Frm}^\op$ is called the category of \emph{locales}. So a frame is the same sort of object as a locale, but the morphisms differ (they are reversed). Using locales reduces the appearance of ops. However, we will encounter a lot of order reversals, and cannot escape all of it, so it is better for us to start getting used to it.
\end{remark}

\begin{remark}
The space $\pt(F)$ is also sometimes called the \emph{spectrum} of $F$ and denoted by $\Spec(F)$.
\end{remark}

\begin{xca}
Let $F$ be a frame. Verify that the subsets 
\[
U_f = \{p\in \pt(F) \mid p(f)=1 \}
\]
are the open subsets for a topology on $\pt(F)$.
\end{xca}

\begin{definition}\label{Gdef:latticeideal}
Let $L$ be a bounded distributive lattice. An \emph{ideal} $I$ of $L$ is a downward closed and upward directed subset of $L$, i.e.\ $I$ is closed under meets with arbitrary elements of $L$ and is closed under finite joins. Dually a subset $F$ of $L$ is called a \emph{filter} if $F$ is upward closed and downward directed, i.e.\ closed under joins with arbitrary elements of $L$ and closed under finite meets.

An ideal $P$ of $L$ is \emph{prime} if $P$ is a proper subset and whenever $m\wedge n \in P$ then one of $m$ or $n$ lies in $P$. The notion of prime filter is dual.
\end{definition}

Giving a morphism $p\colon F\to \mathbf{2}$ is equivalent to specifying $p^{-1}(0)$, which is a prime ideal of $F$, or to specifying $p^{-1}(1)$, which is a prime filter of $F$. 

\begin{definition}\label{Gdef:prime}
An element $\mathfrak{p}$ of a frame $F$ is \emph{meet-irreducible} or \emph{prime} if whenever $m\wedge n\leq \mathfrak{p}$, for $m,n\in F$, we have $m\leq \mathfrak{p}$ or $n\leq \mathfrak{p}$.
\end{definition}

\begin{xca}\label{Gxca:indcompletion}
Let $L$ be a bounded distributive lattice. Show that the poset $\Idl(L)$ of ideals of $L$ ordered by inclusion is a frame. Then prove that an ideal $P$ of $L$ is prime if and only if it is meet-irreducible in $\Idl(L)$.
\end{xca}

\begin{lemma}\label{Glem:prime}
Let $F$ be a frame. The assignments sending a point $p$ of $F$ to $\vee p^{-1}(0)$ and a prime element $\mathfrak{p}$ to the point defined by sending the elements of the prime ideal $\{f\in F\mid f\leq \mathfrak{p}\}$ to $0$ are inverse bijections between $\pt(F)$ and the set of prime elements of $F$.
\end{lemma}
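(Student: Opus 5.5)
The plan is to check that each assignment is well defined and then that the two composites are the identity. Both directions rest on the fact that a frame map preserves arbitrary joins, so the kernel $p^{-1}(0)$ of a point is a principal ideal.

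\emph{From points to primes.} Let $p\colon F\to\mathbf{2}$ be a point and set $\mathfrak{p}=\vee p^{-1}(0)$. Since $p$ preserves arbitrary joins, $p(\mathfrak{p})=\vee_{f\in p^{-1}(0)}p(f)=0$, so $\mathfrak{p}\in p^{-1}(0)$. As $p$ is order preserving, $p^{-1}(0)=\{f\in F\mid f\leq \mathfrak{p}\}$. To see that $\mathfrak{p}$ is prime, suppose $m\wedge n\leq\mathfrak{p}$. Then $p(m)\wedge p(n)=p(m\wedge n)=0$ in $\mathbf{2}$, so one of $p(m),p(n)$ is $0$, i.e.\ $m\leq\mathfrak{p}$ or $n\leq\mathfrak{p}$. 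We also record that $\mathfrak{p}\neq 1$, because $p(1)=1$. Definition~\ref{Gdef:prime} as stated does not obviously exclude $1$, so I will read ``prime'' as including properness, matching the requirement that prime ideals be proper.

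\emph{From primes to points.} Given a prime $\mathfrak{p}\neq 1$, define $p(f)=0$ if $f\leq\mathfrak{p}$ and $p(f)=1$ otherwise. We must check that $p$ preserves:
\begin{enumerate}
\item the top element: $p(1)=1$ because $1\not\leq\mathfrak{p}$;
\item binary meets: $p(m\wedge n)=0$ iff $m\wedge n\leq \mathfrak{p}$ iff ($m\leq\mathfrak{p}$ or $n\leq\mathfrak{p}$) iff $p(m)\wedge p(n)=0$, where the middle step uses primality in one direction and $m\wedge n\leq m,n$ in the other;
\item arbitrary joins: $p(\vee S)=0$ iff $\vee S\leq\mathfrak{p}$ iff $s\leq\mathfrak{p}$ for all $s\in S$ iff $\vee p(S)=0$, by the universal property of the join. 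The empty join gives $p(0)=0$.
\end{enumerate}
Notably, infinite distributivity is not needed anywhere.

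\emph{Inverse bijections.} Starting from a point $p$, the first step shows $p^{-1}(0)=\{f\mid f\leq\mathfrak{p}\}$, and a map to $\mathbf{2}$ is determined by its zero set, so we recover $p$. Starting from a prime $\mathfrak{p}$, the associated point has kernel $\{f\mid f\leq\mathfrak{p}\}$, whose join is $\mathfrak{p}$ itself, since $\mathfrak{p}$ belongs to it and bounds it.

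The only genuinely substantive step is the first one: the observation that $\vee p^{-1}(0)$ still lies in $p^{-1}(0)$, which uses preservation of infinite joins. The remaining point needing care is the properness convention for primes; everything else is routine unwinding of definitions.
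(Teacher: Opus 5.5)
Your proof is correct, and it is the intended unwinding of definitions: the paper leaves this lemma as an exercise, having just remarked that a point is determined by the prime ideal $p^{-1}(0)$, which is exactly the route you take. Your properness caveat is also right: as literally stated, Definition~\ref{Gdef:prime} makes $1$ vacuously prime, yet $1$ has no corresponding point, so requiring $\mathfrak{p}\neq 1$ as you did is necessary for the bijection.
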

\begin{proof}
Exercise.
\end{proof}

\begin{example}
Consider the case that $X = \mathbb{R}$ is the real line. If we consider $\Omega(\mathbb{R})$ then we get a point $p_a$ for every element $a\in \mathbb{R}$ by
\[
p_a(U) = \begin{cases} 1 &\text{ if } a\in U \\ 0 &\text{ if } a\notin U \end{cases}
\]
In fact, these are precisely the points of $\Omega(\mathbb{R})$.
\end{example}

The functors $\pt$ and $\Omega$ are intimately connected.

\begin{theorem}\label{thm:stoneadjunction}
There is an adjunction
\[
\begin{tikzcd}
\mathsf{Frm}^\op \arrow[rr, shift right=0.75ex, "\pt"'] &  & \mathsf{Top} \arrow[ll, shift right=0.75ex, "\Omega"']
\end{tikzcd}
%\xymatrix{
%\mathsf{Frm}^\op \ar[rr]<-0.75ex>_-{\pt} \ar@{<-}[rr]<0.75ex>^-{\Omega} && \mathsf{Top}
%}
\]
where $\pt$ is right adjoint to $\Omega$.
\end{theorem}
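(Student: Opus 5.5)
The plan is to exhibit, for every topological space $X$ and frame $F$, a bijection
\[
\mathsf{Top}(X, \pt(F)) \;\cong\; \mathsf{Frm}^\op(\Omega(X), F) = \mathsf{Frm}(F, \Omega(X)),
\]
natural in both variables. Equivalently, I will construct a unit and counit and check the triangle identities. I prefer the explicit hom-set bijection, since both directions are concrete.

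Given a continuous map $g\colon X\to \pt(F)$, define $\phi_g\colon F\to \Omega(X)$ by $\phi_g(f) = g^{-1}(U_f)$. This is open because $g$ is continuous. To see that $\phi_g$ is a frame map, I will first record the identities
\[
U_{\bigvee S} = \bigcup_{s\in S} U_s, \qquad U_{f\wedge f'} = U_f\cap U_{f'}, \qquad U_1=\pt(F), \qquad U_0=\varnothing.
\]
These hold because each point $p\colon F\to\mathbf{2}$ preserves arbitrary joins and finite meets; this is essentially the content of the earlier exercise that the $U_f$ form a topology. Since $g^{-1}$ preserves unions and finite intersections, $\phi_g$ preserves joins and finite meets.

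Conversely, given a frame map $\phi\colon F\to\Omega(X)$, define $g_\phi\colon X\to\pt(F)$ by
\[
g_\phi(x)(f) = \begin{cases}1 & x\in\phi(f)\\ 0 & x\notin \phi(f).\end{cases}
\]
In other words, $g_\phi(x)$ is the composite $F\xrightarrow{\phi}\Omega(X)\xrightarrow{p_x}\mathbf{2}$, where $p_x$ is the point of $\Omega(X)$ attached to $x$ as in the remark following the definition of $\pt$. This composite is a frame map, so $g_\phi(x)\in\pt(F)$. For continuity, we have $g_\phi^{-1}(U_f) = \{x \mid x\in\phi(f)\} = \phi(f)$, which is open, and the $U_f$ are all the opens of $\pt(F)$.

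The two constructions are mutually inverse. First, $\phi_{g_\phi}(f) = g_\phi^{-1}(U_f)=\phi(f)$. Second, $g_{\phi_g}(x)(f)=1$ if and only if $x\in g^{-1}(U_f)$, if and only if $g(x)(f)=1$, so $g_{\phi_g}=g$.

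Naturality is then a routine check. For $h\colon X'\to X$ continuous, we have $\phi_{g\circ h} = h^{-1}\circ\phi_g = \Omega(h)\circ\phi_g$. For a frame map $\psi\colon F'\to F$, we have $\pt(\psi)\circ g$, and
\[
(\pt(\psi)\circ g)^{-1}(U_{f'}) = g^{-1}(U_{\psi(f')}),
\]
since $\pt(\psi)(p)\in U_{f'}$ if and only if $p(\psi(f'))=1$. Hence $\phi_{\pt(\psi)\circ g}=\phi_g\circ\psi$.

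There is no real obstacle. The only points needing care are keeping track of the op, so that the frame-map direction $F\to\Omega(X)$ is correctly matched with $\Omega\dashv\pt$ rather than the reverse, and the identity $U_{\bigvee S}=\bigcup U_s$, which uses that points preserve arbitrary joins. For the record, the unit $X\to\pt\Omega(X)$ is $x\mapsto p_x$, and the counit, read as a frame map, is $F\to\Omega(\pt F)$, $f\mapsto U_f$.
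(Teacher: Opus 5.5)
Your proof is correct and takes essentially the paper's approach. The paper only names the unit $x\mapsto p_x$ and the counit $f\mapsto U_f$; your natural bijection $g\mapsto \phi_g$, $\phi\mapsto g_\phi$ is the one induced by exactly these maps, and it supplies the checks the paper leaves out: that $f\mapsto U_f$ preserves joins and finite meets, that $g_\phi$ is continuous, and naturality in both variables.
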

\begin{proof}
We do not give a full proof, but just note that the unit and counit are given as follows. For a frame $F$ the counit is the map of frames $F\to \Omega\pt(F)$ which sends $f\mapsto U_f$, and for a space $X$ the unit is the map $X\to \pt\Omega(X)$ which sends $x$ to the point $p_x$ defined by
\[
p_x(U) = \begin{cases} 1 &\text{ if } x\in U \\ 0 &\text{ if } x\notin U \end{cases}
\]
\end{proof}

This result is the precursor to \emph{Stone duality} which identifies full subcategories of frames and spaces on which the above adjunction restricts to an equivalence.

\begin{definition}
A frame $F$ is \emph{spatial} if there is a space $X$ such that $F\cong \Omega(X)$. We denote by $\mathsf{SFrm}$ the full subcategory of $\mathsf{Frm}$ consisting of spatial frames (this is just the essential image of $\Omega$).
\end{definition}

\begin{remark}\label{Grem:spatial}
There is a more intrinsic definition: $F$ is spatial precisely when given elements $m,n\in F$ with $m \nleq n$ there is a point $p$ of $F$ witnessing this, i.e.\ $p(m)=1$ and $p(n)=0$. 
\end{remark}

\begin{example}
The frame $\Omega(\mathbb{R})$ is spatial and its space of points is precisely $\mathbb{R}$. This is a special case of Theorem~\ref{thm:stoneduality}. On the other hand, $\Omega^\mathrm{reg}(\mathbb{R})$ the frame of regular open subsets of $\mathbb{R}$ is not spatial. %Regular opens are a Boolean algebra and so its enough to show it is atomless. Since $\mathbb{R}$ is T_2 an atom would be an isolated point and hence there are none.
\end{example}

Now let us introduce the corresponding class of spaces. In order to align with our definition of spatial frames we start with the definition in terms of our adjunction and then give the usual definition in Remark~\ref{rem:soberdefn}

\begin{definition}
A topological space $X$ is sober if there is a frame $F$ such that $X\cong \pt(F)$. We denote by $\mathsf{Sob}$ the full subcategory of $\mathsf{Top}$ consisting of sober spaces (this is just the essential image of $\pt$).
\end{definition}

\begin{remark}\label{rem:soberdefn}
An equivalent, and perhaps more standard, definition is that $X$ is sober if every non-empty irreducible closed subset of $X$ is the closure of a unique point (called the generic point of the irreducible closed subset). This implies that $X$ is a $T_0$ space. It is a good exercise to check that $\pt(F)$ satisfies this condition for any frame $F$.
\end{remark}

We are now in a position to give the statement of Stone duality.

\begin{theorem}\label{thm:stoneduality}
The adjunction of Theorem~\ref{thm:stoneadjunction} restricts to an equivalence between $\mathsf{SFrm}^\op$ and $\mathsf{Sob}$.
\end{theorem}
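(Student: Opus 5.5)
Since $\mathsf{SFrm}$ and $\mathsf{Sob}$ are defined as the essential images of $\Omega$ and $\pt$, the adjunction of Theorem~\ref{thm:stoneadjunction} restricts to an adjunction between $\mathsf{SFrm}^\op$ and $\mathsf{Sob}$. It therefore suffices to show two things: the counit $\epsilon_F\colon F\to\Omega\pt(F)$, $f\mapsto U_f$, is an isomorphism for spatial $F$, and the unit $\eta_X\colon X\to\pt\Omega(X)$, $x\mapsto p_x$, is a homeomorphism for sober $X$. I will use the standard fact that an adjunction whose unit is invertible on $X$ has the counit at $\Omega X$ split by $\Omega(\eta_X)$ via the triangle identities. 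Dually, if the counit is invertible at $F$, then $\pt(\epsilon_F)$ is a two-sided inverse of $\eta_{\pt F}$.

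\textbf{Counit.} I first check that $\epsilon_F$ is always a surjective frame map. The sets $U_f$ are exactly the opens of $\pt(F)$, and $U_{\bigvee S}=\bigcup_{s\in S} U_s$ and $U_{m\wedge n}=U_m\cap U_n$ hold because points preserve joins and finite meets. It remains to show injectivity, and more precisely order-reflection: if $m\nleq n$ we need some $p$ with $p(m)=1$ and $p(n)=0$. This is precisely the intrinsic characterisation of spatiality in Remark~\ref{Grem:spatial}.

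To avoid relying on that remark unproved, I will derive it directly for $F\cong\Omega(Y)$. Suppose $m\nleq n$, so $m\not\subseteq n$ as opens of $Y$. Pick $y\in m\setminus n$. Transporting $p_y$ along the isomorphism gives a point with $p_y(m)=1$ and $p_y(n)=0$. Order-reflection together with surjectivity then makes $\epsilon_F$ an isomorphism.

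\textbf{Unit.} Let $X\cong\pt(F)$. By naturality, $\eta_X$ is identified with $\eta_{\pt F}$, so it suffices to treat $X=\pt(F)$ for an arbitrary frame $F$. The map $\eta_{\pt F}$ is continuous by adjunction. The crucial input is that the counit $\epsilon_F\colon F\to\Omega\pt(F)$ is an isomorphism onto its image in a sense sufficient for points, even when $F$ is not spatial.

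Concretely, a point $q$ of $\Omega\pt(F)$ composes with the surjection $\epsilon_F$ to give a point $q\epsilon_F$ of $F$. Conversely, every point $p$ of $F$ factors through $\epsilon_F$, because $\epsilon_F(f)=\epsilon_F(g)$ means $U_f=U_g$, which forces $p(f)=p(g)$. Hence $\pt(\epsilon_F)$ is a bijection, indeed a homeomorphism, since $\epsilon_F$ is surjective and the topologies correspond. The triangle identity $\pt(\epsilon_F)\circ\eta_{\pt F}=\mathrm{id}$ then shows that $\eta_{\pt F}$ is the inverse homeomorphism.

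\textbf{Main obstacle.} The delicate point is the unit step for non-spatial $F$: one cannot use invertibility of $\epsilon_F$ directly, and must instead argue that points of $F$ automatically factor through its spatial reflection. As an alternative, I could prove the unit statement via Remark~\ref{rem:soberdefn}. Injectivity of $\eta_X$ is $T_0$. Surjectivity comes from identifying a point of $\Omega(X)$ with a prime open by Lemma~\ref{Glem:prime}, whose complement is irreducible closed and so has a generic point. Openness of $\eta_X$ holds because $\eta_X^{-1}(U_V)=V$.
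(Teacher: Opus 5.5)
Your proof is correct. The paper gives no proof of this theorem. It records only the unit and counit of the adjunction and then states Stone duality as a classical result. Because $\mathsf{SFrm}$ and $\mathsf{Sob}$ are defined as the essential images of $\Omega$ and $\pt$, the content is exactly what you isolate: the counit must be invertible on spatial frames and the unit on spaces of the form $\pt(F)$. Your counit step proves the separation criterion of Remark~\ref{Grem:spatial} directly from $F\cong\Omega(Y)$. Your unit step establishes that the adjunction is idempotent.

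The unit step can be streamlined. Since $\epsilon_F$ is surjective, $\pt(\epsilon_F)$ is injective. The triangle identity $\pt(\epsilon_F)\circ\eta_{\pt F}=\mathrm{id}$ then forces it to be a bijection with inverse $\eta_{\pt F}$, and both maps are continuous, so it is a homeomorphism. Your separate factorization of points through $\epsilon_F$ is therefore not needed. If you keep it, you should say that the induced map $\Omega\pt(F)\to\mathbf{2}$ is a frame map; this is immediate, because it is just $\eta_{\pt F}(p)$, evaluation at $p$.

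Your alternative route via generic points would rest on the equivalence of definitions in Remark~\ref{rem:soberdefn}. The paper leaves that equivalence as an exercise, and it carries essentially the same content as the unit step you prove directly. The direct argument is the better choice.
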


We can actually restrict this equivalence even further to rather important special classes of spaces and lattices.

\begin{definition}
Let $C$ be a complete lattice. We say an element $c\in C$ is \emph{finitely presented} (also sometimes called \emph{compact} or \emph{quasi-compact}) if whenever $c\leq \vee S$ for some set of elements $S\subseteq C$ there is a finite subset $S'\subseteq S$ such that $c\leq \vee S'$. We denote the collection of finitely presented elements of $C$ by $C^\omega$. 

The lattice $C$ is said to be \emph{algebraic} if it is generated by $C^\omega$, i.e.\ every element of $C$ is a join of finitely presented elements.
\end{definition}

\begin{definition}
A frame $F$ is a \emph{coherent frame} if:
\begin{enumerate}
\item $F$ is algebraic;
\item the finitely presented elements $F^\omega$ form a bounded sublattice of $F$.
\end{enumerate}
We note that the finitely presented elements are always closed under finite joins and so this second condition is really asking that $F^\omega$ is closed under binary meets and that the top element $1$ is finitely presented. 

A morphism of coherent frames is a frame map which sends finitely presented elements to finitely presented elements. This gives a subcategory $\mathsf{CohFrm}$ (not full) of $\mathsf{Frm}$.
\end{definition}

\begin{remark}
All of these lattice-theoretic definitions are very transparent when viewed through the lens of category theory. A poset is simply a (skeletal) category such that there is at most one morphism between any two objects (one calls such a category thin). The top and bottom are then terminal and initial objects and a bounded lattice is such a category which admits all finite limits and colimits. A frame is then a thin category with limits and colimits such that finite limits commute with colimits. Being a coherent frame is just asking that this category is locally finitely presented and that the finitely presented objects are closed under finite limits.
\end{remark}

It turns out that coherent frames are automatically spatial.

\begin{proposition}
If $F$ is a coherent frame then $F$ is spatial.
\end{proposition}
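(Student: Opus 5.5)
We verify the intrinsic criterion of Remark~\ref{Grem:spatial}. Let $m,n\in F$ with $m\nleq n$; we want a point $p\colon F\to\mathbf{2}$ with $p(m)=1$ and $p(n)=0$. Since $F$ is algebraic, $m$ is a join of finitely presented elements. If all of these were $\leq n$ we would get $m\leq n$, so there is a finitely presented $c\in F^\omega$ with $c\leq m$ and $c\nleq n$. It therefore suffices to find a point with $p(c)=1$ and $p(n)=0$.

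By Lemma~\ref{Glem:prime}, a point is the same as a prime element $\mathfrak{p}$, with $p(f)=0$ exactly when $f\leq\mathfrak{p}$. So we need a prime element $\mathfrak{p}$ with $n\leq\mathfrak{p}$ and $c\nleq\mathfrak{p}$. Consider the poset
\[
\mathcal{S}=\{x\in F \mid n\leq x,\ c\nleq x\},
\]
which is nonempty because it contains $n$. We apply Zorn's lemma. Let $(x_i)$ be a chain in $\mathcal{S}$ and put $x=\bigvee x_i$. Then $n\leq x$. If $c\leq x$, then compactness of $c$ gives $c\leq x_{i_1}\vee\dots\vee x_{i_k}$, which equals the largest of these $x_{i_j}$ because they form a chain. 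That contradicts $c\nleq x_{i_j}$. (The empty chain has upper bound $n$.) Hence $x\in\mathcal{S}$, and Zorn gives a maximal element $\mathfrak{p}\in\mathcal{S}$.

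The main step is to show that $\mathfrak{p}$ is prime. First, $\mathfrak{p}\neq 1$, since $c\leq 1$. Now suppose $a\wedge b\leq\mathfrak{p}$ but $a\nleq\mathfrak{p}$ and $b\nleq\mathfrak{p}$. Then $\mathfrak{p}\vee a$ and $\mathfrak{p}\vee b$ are strictly larger than $\mathfrak{p}$ and still lie above $n$. By maximality they are not in $\mathcal{S}$, so $c\leq\mathfrak{p}\vee a$ and $c\leq\mathfrak{p}\vee b$. Distributivity then gives
\[
c\leq(\mathfrak{p}\vee a)\wedge(\mathfrak{p}\vee b)=\mathfrak{p}\vee(a\wedge b)=\mathfrak{p},
\]
which is a contradiction. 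So $\mathfrak{p}$ is prime, and the associated point satisfies $p(n)=0$ and $p(c)=1$. Since $c\leq m$ and points preserve order, $p(m)=1$, as required.

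The argument only used that $F$ is algebraic, together with finite distributivity and Zorn's lemma. The coherence conditions (that $F^\omega$ is a bounded sublattice) are not needed here; they become relevant for identifying $\pt(F)$ as a spectral space.
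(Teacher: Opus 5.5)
Your proof is correct and follows the same route as the paper's sketch: pass to a finitely presented element below $m$, use Zorn's lemma to get something maximal among those lying above $n$ and not above that element, check that it is prime, and obtain the point via Lemma~\ref{Glem:prime}. Your version is in fact the more careful one. The paper also replaces $y$ by a single finitely presented $b\leq y$, and a point with $p(b)=0$ need not have $p(y)=0$, whereas you keep $n$ arbitrary. You also work with elements rather than lattice ideals, so the resulting map to $\mathbf{2}$ preserves arbitrary joins automatically; and your closing remark that only algebraicity and distributivity are used is also right.
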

\begin{proof}
We give a sketch of how to check the condition of Remark~\ref{Grem:spatial}. Suppose that $x\nleq y$ are elements of $F$. Both $x$ and $y$ are joins of finitely presented elements and so there exist finitely presented elements $a\leq x$ and $b\leq y$ such that $a\nleq b$. It is enough to find a point separating $a$ and $b$, for instance it would be enough to find a maximal ideal of $F$ containing $b$ but not $a$. In this case the corresponding point is given by identifying the elements of the maximal ideal with $0$ (cf.\ Lemma~\ref{Glem:prime}).

One can find such an ideal using Zorn's lemma. We can get started because the downward closure of $b$, i.e.\ the principal ideal it generates, does not contain $a$ by assumption. For further details see \cite[Theorem~II.3.4]{Johnstone}
\end{proof}

Thus under Stone duality the subcategory $\mathsf{CohFrm}$ is equivalent to a subcategory of spaces. These spaces are called \emph{coherent spaces} or \emph{spectral spaces} and the corresponding morphisms are quasi-compact continuous maps. Such spaces can be characterized as follows.

\begin{lemma}
A space $X$ is coherent if and only if the following conditions are satisfied:
\begin{enumerate}
\item $X$ is sober, i.e.\ $T_0$ and every non-empty irreducible closed subset has a generic point;
\item $X$ is quasi-compact;
\item $X$ has a basis of quasi-compact open subsets;
\item the intersection of two quasi-compact open subsets of $X$ is quasi-compact.
\end{enumerate}
\end{lemma}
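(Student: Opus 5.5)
The definition in force is that $X$ is coherent if $X\cong\pt(F)$ for some coherent frame $F$; by Stone duality (Theorem~\ref{thm:stoneduality}) and the preceding proposition, this is the same as asking that $X$ is sober and $\Omega(X)$ is a coherent frame. So the plan is to translate each clause of the definition of a coherent frame, applied to $F=\Omega(X)$, into a topological condition on $X$.

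The key dictionary entry is that an open $U\subseteq X$ is a finitely presented element of $\Omega(X)$ exactly when $U$ is quasi-compact. Joins in $\Omega(X)$ are unions, so $U\subseteq\bigcup S$ having a finite subcover for every family $S$ of opens is precisely quasi-compactness of $U$ in the subspace topology. With this in hand the translation reads as follows.
\begin{enumerate}
\item $\Omega(X)$ is algebraic if and only if every open is a union of quasi-compact opens, i.e.\ the quasi-compact opens form a basis; this is condition (3).
\item The top element $X$ is finitely presented if and only if $X$ is quasi-compact, which is condition (2).
\item $\Omega(X)^\omega$ is closed under binary meets if and only if intersections of two quasi-compact opens are quasi-compact, which is condition (4). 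The bottom element $\emptyset$ is always quasi-compact, and finite joins are automatic.
\end{enumerate}

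For the forward direction, if $X\cong\pt(F)$ with $F$ coherent, then $X$ is sober by definition. Since $F$ is spatial by the proposition, the counit $F\to\Omega\pt(F)$ of Theorem~\ref{thm:stoneadjunction} is an isomorphism, so $\Omega(X)\cong F$ is coherent. Conditions (2)--(4) then follow from the translation above. Conversely, if (1)--(4) hold, then $\Omega(X)$ is a coherent frame by the same translation. Sobriety means the unit $X\to\pt\Omega(X)$ is a homeomorphism; this is immediate from Stone duality once we know $X\in\mathsf{Sob}$, with the equivalence of the two descriptions of sobriety taken from Remark~\ref{rem:soberdefn}. Hence $X\cong\pt(\Omega(X))$ with $\Omega(X)$ coherent.

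The main obstacle is not in this translation. It lies in using Remark~\ref{rem:soberdefn}, which asserts that ``$X\cong\pt(F)$'' is equivalent to the generic-point condition. If this is to be justified rather than assumed, I would argue directly with Lemma~\ref{Glem:prime}. Points of $\Omega(X)$ correspond to prime opens $\mathfrak p$. The complement of a prime open is a closed set that is irreducible: it is non-empty because a prime is proper, and it is irreducible by the primality condition. Existence and uniqueness of generic points then says precisely that every such prime open is of the form $X\setminus\overline{\{x\}}$ for a unique $x$, i.e.\ that the unit map is bijective. Since the unit map pulls $U_V$ back to $V$, it is open onto its image, and so is a homeomorphism.
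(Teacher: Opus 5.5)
Your proof is correct. The paper leaves this lemma as an exercise, and your route is exactly the argument its surrounding text sets up: you identify the finitely presented elements of $\Omega(X)$ with the quasi-compact opens, so the coherent-frame axioms for $\Omega(X)$ become conditions (2)--(4), and you use spatiality of coherent frames together with Stone duality to pass between $X\cong\pt(F)$ and $X\cong\pt\Omega(X)$. Your direct check of Remark~\ref{rem:soberdefn} via prime opens fills in the one step the paper only asserts, and you rightly require primes to be proper, which Definition~\ref{Gdef:prime} omits but Lemma~\ref{Glem:prime} needs.
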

\begin{proof}
Exercise.
\end{proof}

\begin{remark}
By a result of Hochster \cite{Hochster1969} a space $X$ is coherent precisely if it is the spectrum of some commutative ring\textemdash{}hence the alternative name spectral. 
\end{remark}

\begin{remark}\label{Grem:coherentequivalence}
Given a bounded distributive lattice $L$ we can consider its lattice of ideals $\Idl(L)$ which is a frame. In fact, one can check that $\Idl(L)$ is a coherent frame with finitely presented elements the principal ideals, i.e.\ $\Idl(L)^\omega \cong L$. This defines an equivalence of categories between bounded distributive lattices and coherent frames. It follows that coherent spaces are dual to the category of bounded distributive lattices.
\end{remark}
%----------------------------------------------------------------------------------------------------------------------------------------------

%----------------------------------------------------------------------------------------------------------------------------------------------

\subsection{Preliminaries on tt-categories}

We now introduce some notation and terminology and record some useful facts about triangulated and `tensor triangulated' categories. 

We will generally denote by $\mathsf{K}$ an essentially small triangulated category. We will use $\Sigma$ to denote the suspension functor of $\mathsf{K}$. 

\begin{definition}
An \emph{essentially small tensor triangulated category}, usually abbreviated to \emph{tt-category}, is a triple $(\mathsf{K}, \otimes, \mathbf{1})$, where $\mathsf{K}$ is an essentially small triangulated category and $(\otimes, \unit)$ is a symmetric monoidal structure on $\mathsf{K}$. We furthermore insist that these are compatible in the sense that $\otimes$ is an exact functor in each variable.
\end{definition}

\begin{remark}
We will systematically omit associators and so on from the discussion. One could also insist on Koszul signs governing the interaction of the tensor product and the suspension (a natural assumption in the sense that they occur for any example arising as the homotopy category of a symmetric monoidal stable $\infty$-category). For the purposes of our discussion we can safely ignore all of this. In fact, the elementary aspects of the theory are only really concerned with when $x\otimes y$ vanishes for $x,y\in \mathsf{K}$. In particular, they are not too sensitive to the behaviour of the tensor product on maps and mostly depend on $x\otimes y$ and $y\otimes x$ vanishing simultaneously.
\end{remark}

\begin{remark}
We will assume that $\unit \neq 0$ in order to avoid having to insist on this repeatedly to avoid corner cases of statements.
\end{remark}

\begin{example}\label{Gex:ttcats}
There are many examples, of many different flavours. Here are a few typical ones.
\begin{enumerate}
\item\label{Gitem:perf} Let $R$ be a commutative ring. We can consider 
\[
\mathsf{D}^\mathrm{perf}(R) \cong \mathsf{K}^\mathrm{b}(\mathrm{proj} R)
\]
the category of perfect complexes over $R$, i.e.\ bounded complexes of finitely generated projective $R$-modules up to homotopy. This is symmetric monoidal via the tensor product over $R$ with unit object $R$ concentrated in degree $0$.
\item There is a more general version of the first example: for $X$ a quasi-compact and quasi-separated scheme we can consider $\mathsf{D}^\mathrm{perf}(X)$ the perfect complexes over $X$ with the left derived tensor product and unit $\mathcal{O}_X$ in degree $0$. 
\item\label{Gitem:hopf} If $A$ is a finite dimensional Hopf algebra over a field $k$ then consider $\modu A$ the category of finite dimensional right $A$-modules. The algebra $A$ is automatically self-injective and so $\modu A$ is a Frobenius category. The Hopf algebra structure on $A$ makes $\modu A$ into a closed monoidal category with exact tensor product. If $A$ is cocommutative then $\modu A$ is symmetric monoidal and its stable category $\underline{\modu} A$ is a tt-category via the tensor product over $k$ with unit the trivial representation.
\item The homotopy category of finite spectra $\mathrm{ho}(\mathsf{Sp}^\omega)$ is a tt-category via the smash product with unit object the sphere spectrum $S$.
\item One can generalize (\ref{Gitem:perf}) in a different direction. Suppose that $\mathsf{M}$ is an essentially small symmetric monoidal additive category. Then $\mathsf{K}^\mathrm{b}(\mathsf{M})$ the bounded homotopy category of $\mathsf{M}$ is a tt-category. There are many interesting examples, for instance the recent work of Balmer and Gallauer on permutation modules \cite{balmer2023geometry} is concerned with categories of this form.
\end{enumerate}
\end{example}

\begin{remark}
Example~\ref{Gex:ttcats} (\ref{Gitem:hopf}) gives a natural source of closed monoidal triangulated categories which are not symmetric. If the Hopf algebra is quasi-triangular they are braided and a lot of what we describe below can be implemented. In the honestly noncommutative case one needs a different approach and different expectations. The noncommutative case has been studied extensively by Nakano, Yakimov, and Vashaw starting with \cite{NVY2022}. There has also been some work in the case where there is no monoidal structure at all. We refer to \cite{Gratz2023} and the references therein.
\end{remark}

\begin{definition}\label{Gdef:thick}
A full subcategory $\mathsf{J}$ of $\mathsf{K}$ is \emph{thick} if it is closed under suspensions, cones, and direct summands. In other words, $\mathsf{J}$ is thick if it is a triangulated subcategory and if $k\oplus k'\in \mathsf{J}$ then both $k$ and $k'$ lie in $\mathsf{J}$. Given a set of objects $\mathcal{A}\subseteq \mathsf{K}$ we write $\thick(\mathcal{A})$ for the smallest thick subcategory containing $\mathcal{A}$ (also called the thick subcategory generated by $\mathcal{A}$.)

A thick subcategory $\mathsf{J}$ of $\mathsf{K}$ is a thick \emph{tensor-ideal} (or $\otimes$-ideal) if given any $k\in \mathsf{K}$ and $l\in \mathsf{J}$ the tensor product $k\otimes l$ lies in $\mathsf{J}$, that is
\begin{displaymath}
\mathsf{K} \times \mathsf{J} \stackrel{\otimes}{\longrightarrow} \mathsf{K}
\end{displaymath}
factors through $\mathsf{J}$. We will usually omit the word thick and just speak of tensor ideals as we will only consider ideals that are closed under summands. Given a set of objects $\mathcal{A}\subseteq \mathsf{K}$ we write $\thick^\otimes(\mathcal{A})$ for the smallest thick tensor-ideal containing $\mathcal{A}$ (also called the thick tensor-ideal generated by $\mathcal{A}$.)

Finally, a proper thick tensor-ideal $\mathsf{P}$ of $\mathsf{K}$ is \emph{prime} if whenever $k\otimes l \in \mathsf{P}$, for $k,l\in \mathsf{K}$, then either $k$ or $l$ lies in $\mathsf{P}$. When no confusion can occur we will often just speak of prime ideals or primes of $\mathsf{K}$. 
\end{definition}

\begin{xca}
Show that
\[
\thick^\otimes(\mathcal{A}) = \thick(a\otimes k \mid a\in \mathcal{A}, k\in \mathsf{K}).
\]
\end{xca}

Since we have assumed that $\mathsf{K}$ is essentially small the collections of thick subcategories, thick tensor-ideals, and prime tensor-ideals each form a set. Each of these sets is naturally ordered by inclusion giving rise to posets $\Thick(\mathsf{K})$, $\Thick^\otimes(\mathsf{K})$, and $\Spc \mathsf{K}$.

\begin{lemma}\label{Glem:small_unit_gen}
Suppose that $\mathsf{K}$ is generated by the tensor unit, i.e.\ $\thick(\unit) = \mathsf{K}$. Then every thick subcategory is a tensor-ideal:
\begin{displaymath}
\Thick(\mathsf{K}) = \Thick^\otimes(\mathsf{K}).
\end{displaymath}
\end{lemma}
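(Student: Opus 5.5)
The plan is the standard ``two-out-of-three / thick closure'' argument. Fix a thick subcategory $\mathsf{J}$ and an object $l\in\mathsf{J}$. Consider the full subcategory
\[
\mathsf{C}_l = \{k\in \mathsf{K} \mid k\otimes l \in \mathsf{J}\}.
\]
It suffices to show $\mathsf{C}_l = \mathsf{K}$ for every $l\in\mathsf{J}$, since this says exactly that $\mathsf{K}\otimes\mathsf{J}\subseteq \mathsf{J}$. The inclusion $\Thick^\otimes(\mathsf{K})\subseteq\Thick(\mathsf{K})$ holds by definition, so this gives the claimed equality.

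First, $\unit\in\mathsf{C}_l$, since $\unit\otimes l\cong l\in\mathsf{J}$. Second, $\mathsf{C}_l$ is a thick subcategory of $\mathsf{K}$. We check the three closure properties as follows.
\begin{enumerate}
\item Suspension: $-\otimes l$ is exact, so $\Sigma k\otimes l\cong \Sigma(k\otimes l)$, and $\mathsf{J}$ is closed under suspension (and desuspension).
\item Cones: exactness of $-\otimes l$ sends a distinguished triangle $k\to k'\to k''\to\Sigma k$ to a distinguished triangle $k\otimes l\to k'\otimes l\to k''\otimes l\to \Sigma k\otimes l$. If two of the terms lie in $\mathsf{J}$, so does the third, as $\mathsf{J}$ is triangulated.
\item Summands: $-\otimes l$ is additive, so $(k\oplus k')\otimes l\cong (k\otimes l)\oplus(k'\otimes l)$, and $\mathsf{J}$ is closed under summands.
\end{enumerate}
Also $\mathsf{C}_l$ contains $0$ and is replete, because tensoring preserves isomorphisms and $\mathsf{J}$, being thick, is closed under isomorphisms.

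Hence $\mathsf{C}_l$ is a thick subcategory containing $\unit$, so $\mathsf{K}=\thick(\unit)\subseteq\mathsf{C}_l$ by minimality. Symmetry of $\otimes$ handles the other side, though the definition only asks for $k\otimes l$ anyway.

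There is no real obstacle here. The only point needing care is that ``thick'' includes closure under desuspension and isomorphism. Both follow because $\Sigma$ is an equivalence compatible with $-\otimes l$ up to natural isomorphism, and exactness of $\otimes$ in each variable is part of the definition of a tt-category.
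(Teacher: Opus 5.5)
Your proof is correct. The paper gives no argument here (the proof is left as an exercise), and showing that $\{k\in\mathsf{K} \mid k\otimes l\in\mathsf{J}\}$ is a thick subcategory containing $\unit$ is the standard solution to that exercise.
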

\begin{proof}
Exercise.
\end{proof}

\begin{lemma}\label{Glem:fpideal}
The partially ordered sets $\Thick(\mathsf{K})$ and $\Thick^\otimes(\mathsf{K})$ are complete algebraic lattices. The finitely presented thick subcategories (resp.\ tensor-ideals) are precisely the ones generated by a single object as thick subcategories (resp.\ tensor-ideals).
\end{lemma}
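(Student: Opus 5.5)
Completeness comes first. Intersections of thick subcategories (resp.\ thick tensor-ideals) are again thick (resp.\ tensor-ideals), because closure under suspensions, cones, summands and tensoring with arbitrary objects is checked one object at a time. So $\Thick(\mathsf{K})$ and $\Thick^\otimes(\mathsf{K})$ have all meets, given by intersection. They are sets since $\mathsf{K}$ is essentially small. By the earlier remark, a poset with all meets is a complete lattice, and the join of a family $\{\mathsf{J}_i\}$ is $\thick(\bigcup_i \mathsf{J}_i)$, resp.\ $\thick^\otimes(\bigcup_i \mathsf{J}_i)$.

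The key technical step is that these joins are computed by a union once the family is directed. Concretely, for any family,
\[
\thick\Big(\bigcup_i \mathsf{J}_i\Big) = \bigcup_{S} \thick\Big(\bigcup_{i\in S}\mathsf{J}_i\Big),
\]
where $S$ runs over finite subsets of the index set. The right-hand side is a directed union of thick subcategories. A directed union is again thick: any cone, summand or suspension involves only finitely many objects, and these lie in a common member. It contains every $\mathsf{J}_i$, and it is clearly contained in the left-hand side, so equality holds. For tensor-ideals the same argument works: a directed union of tensor-ideals is a tensor-ideal, since the condition $k\otimes l\in\mathsf{J}$ involves a single $l$.

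Next I characterize the finitely presented elements and deduce algebraicity.

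\emph{Singly generated implies finitely presented.} Suppose $\thick(x)\subseteq \bigvee_i \mathsf{J}_i$. By the displayed formula, $x$ lies in $\thick(\bigcup_{i\in S}\mathsf{J}_i)$ for some finite $S$. Hence $\thick(x)\subseteq\bigvee_{i\in S}\mathsf{J}_i$.

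\emph{Algebraic.} Every $\mathsf{J}$ equals $\bigvee_{x\in\mathsf{J}}\thick(x)$, a join of singly generated elements, so the lattice is algebraic once the previous step is known.

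\emph{Finitely presented implies singly generated.} If $\mathsf{J}$ is finitely presented, then $\mathsf{J}=\bigvee_{x\in \mathsf{J}}\thick(x)$ forces $\mathsf{J}=\thick(x_1,\dots,x_n)$ for finitely many $x_j\in\mathsf{J}$. The finite set can be replaced by a single object, since
\[
\thick(x_1,\dots,x_n)=\thick(x_1\oplus\cdots\oplus x_n)
\]
by closure under finite sums and summands. The tensor-ideal case is verbatim with $\thick^\otimes$ in place of $\thick$.

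The only step requiring care is the directed-union argument. One must check that the finitary nature of the closure operations (triangles and summands involve finitely many objects) makes the union over finite subfamilies already thick. I expect this to be routine but it is the crux.
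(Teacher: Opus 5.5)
Your proof is correct and follows the same route as the paper: meets are intersections, joins are the generated subcategory, every ideal is the join of its principal ideals, the closure operations are finitary, and finitely many generators can be replaced by the single object $x_1\oplus\cdots\oplus x_n$. The only difference is that your directed-union formula gives a rigorous version of the step the paper states in one line, namely that membership in a generated thick (tensor-)ideal is already witnessed by a finite subfamily of generators.
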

\begin{proof}
We just give the proof for $\Thick^\otimes(\mathsf{K})$ as the arguments are essentially identical. For completeness, it is enough to show that arbitrary meets exist. The intersection of any collection of thick tensor-ideals is again a thick tensor-ideal and so this gives the meet (the empty intersection being all of $\mathsf{K}$). It follows, or one can check directly, that the join of a family of tensor-ideals $\{\mathsf{J}_\lambda\mid \lambda \in \Lambda\}$ is the tensor-ideal $\thick^\otimes(\cup_\lambda \mathsf{J}_\lambda)$ generated by their union. In particular, for a finite family of objects $k_1,\ldots, k_n$ we have
\begin{align*}
\thick^\otimes(k_1) \vee \cdots \vee \thick^\otimes(k_n) &= \thick^\otimes(\thick^\otimes(k_1) \cup \cdots \cup \thick^\otimes(k_n)) \\
&=  \thick^\otimes(k_1,\ldots, k_n) \\
&= \thick^\otimes(k_1 \oplus \cdots \oplus k_n).
\end{align*}

 Any tensor-ideal $\mathsf{I}$ is the join of its principal ideals:
\[
\mathsf{I} = \bigvee_{k\in \mathsf{I}} \thick^\otimes(k).
\]
The operations defining a thick tensor-ideal are finitary and so if an object is in the thick tensor-ideal generated by some set of objects $\mathcal{A}$ it is already in the ideal generated by some finite subset of $\mathcal{A}$. Thus a thick tensor-ideal $\mathsf{I}$ is finitely presented if and only if it is principal, i.e.\ generated by a single object. We already noted that every ideal is a join of principal ideals and so we are done.
\end{proof}

\begin{remark}
As in the proof we call a thick tensor-ideal generated by a single object principal. They are also sometimes referred to as finitely generated ideals. In fact, an ideal is finitely generated if and only if it is principal.
\end{remark}

\begin{xca}\label{Gxca:sublattice}
Show that $\Thick^\otimes(\mathsf{K})$ is a bounded sublattice of $\Thick(\mathsf{K})$.
\end{xca}

\begin{example}
The complete lattice $\Thick(\mathsf{K})$ need not be distributive and one should view distributivity as rather special behaviour. As concrete examples, if one takes $\mathsf{K} = \mathsf{D}^\mathrm{b}(\mathrm{coh}\;\mathbb{P}^1)$ the bounded derived category of coherent sheaves on $\mathbb{P}^1$ or $\mathsf{K} = \mathsf{D}^\mathrm{b}(\modu kA_2)$ the bounded derived category of representations of the quiver $A_2 = (1 \longrightarrow 2)$ over the field $k$ (aka the abelian category of maps of vector spaces) then the lattice will not be distributive. 

In fact $\modu kA_2$ is rather simple. There are two indecomposable projective modules $P_1$ and $P_2$ corresponding to the vertices, where $P_1$ is simple, and another simple module $S_2$. Every representation is a direct sum of copies of these three and every complex is quasi-isomorphic to a sum of suspensions of these three objects. The lattice of thick subcategories is
\[
\begin{tikzpicture}%[shorten >= 2pt, shorten <= 2pt]
    
		\node (v0) at (0,0) {};
    \node (va) at (-2,2) {};
    \node (vb) at (0,2) {};
		\node (vc) at (2,2) {};
		\node (v1) at (0,4) {};
    
    \draw[fill] (v0)  circle (2pt) node [below] {0};
		\draw[fill] (va)  circle (2pt) node [left] {$\thick(P_1)$};
		\draw[fill] (vb)  circle (2pt) node [right] {$\thick(P_2)$};
		\draw[fill] (vc)  circle (2pt) node [right] {$\thick(S_2)$};
		\draw[fill] (v1)  circle (2pt) node [above] {$\mathsf{D}^\mathrm{b}(kA_2)$};
    %\draw[fill] (v2)  circle (2pt) node [above] {\footnotesize 2};

		\path[-] (v0) edge  node [above] {} (va);
		\path[-] (v0) edge  node [above] {} (vb);
		\path[-] (v0) edge  node [above] {} (vc);
		\path[-] (va) edge  node [above] {} (v1);
		\path[-] (vb) edge  node [above] {} (v1);
		\path[-] (vc) edge  node [above] {} (v1);

  \end{tikzpicture}
\]
\end{example}

Similarly, the lattice $\Thick^\otimes(\mathsf{K})$ does not need to be distributive in general. This should be compared to the fact that for a commutative ring $R$ the ideals of $R$ (in the usual sense) form a complete algebraic lattice but need not be distributive! In the next section we discuss how to address this issue and make the comparison to algebraic geometry.

%\textcolor{red}{
%\begin{example}
%Is there a reasonable example that I could explain where the lattice is not distributive.
%\end{example}
%}

%----------------------------------------------------------------------------------------------------------------------------------------------

%----------------------------------------------------------------------------------------------------------------------------------------------

\subsection{An extended meta-example: spectra and spectra}

%----------------------------------------------------------------------------------------------------------------------------------------------

%----------------------------------------------------------------------------------------------------------------------------------------------

We claimed that $\Thick^\otimes(\mathsf{K})$ is not necessarily distributive and so it need not correspond to a sober space under Stone duality. In the commutative algebra context this is solved by passing to \emph{radical} ideals so that one can take advantage of the coincidence of intersection and multiplication. We can do exactly the same thing in the tt context. In this section we discuss the commutative and tt cases in parallel leading up to the construction of the spectrum.

In this section we fix a tt-category $\mathsf{K}$ and a commutative ring $R$. We will denote by $\mathrm{Idl}(R)$ the complete lattice of ideals of $R$ ordered by inclusion. This lattice is generally not distributive: the meet, which is given by intersection, need not distribute over the sum of ideals. However, given ideals $I$ and $J$ of $R$ one can form the product ideal $IJ \subseteq I\cap J$. Because multiplication distributes over addition the operation of ideal multiplication is distributive.

\begin{definition}
An ideal $J$ of $R$ is \emph{radical} if $r^n \in J$ implies that $r\in J$. For an ideal $J$ we denote by $\sqrt{J}$ its radical and we let $\mathrm{Rad}(R)$ be the complete lattice of radical ideals.
\end{definition}

The key point is the following.

\begin{lemma}\label{lem:radring}
Let $I$ and $J$ be radical ideals of $R$. Then $I\cap J = \sqrt{IJ}$ and so the meet is given by `multiplication of radical ideals'.
\end{lemma}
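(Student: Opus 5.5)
We prove the equality $I\cap J = \sqrt{IJ}$ by establishing the two inclusions separately. Both use only the definition of a radical ideal and the elementary inclusion $IJ \subseteq I\cap J$. The second assertion, that the meet in $\mathrm{Rad}(R)$ is given by multiplication followed by taking radicals, then follows once we observe two facts. First, an intersection of radical ideals is radical: if $r^n\in I\cap J$ then $r\in I$ and $r\in J$. Second, the meet in $\mathrm{Rad}(R)$ is therefore computed by the intersection, as in the proof of Lemma~\ref{Glem:fpideal}.

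For the inclusion $\sqrt{IJ}\subseteq I\cap J$, note that $IJ\subseteq I\cap J$, since each generator $ij$ lies in both $I$ and $J$. Taking radicals preserves inclusions. Moreover $\sqrt{I\cap J} = I\cap J$ because $I\cap J$ is radical. Hence
\[
\sqrt{IJ}\subseteq \sqrt{I\cap J} = I\cap J.
\]

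For the reverse inclusion $I\cap J\subseteq \sqrt{IJ}$, take $r\in I\cap J$. Then $r^2 = r\cdot r\in IJ$, since it is the product of an element of $I$ with an element of $J$. Therefore $r\in\sqrt{IJ}$. Notice that this direction does not even require $I$ and $J$ to be radical.

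There is no real obstacle here. The only point needing care is the routine check that $\sqrt{-}$ is monotone and idempotent and that $\sqrt{K}$ is an ideal. The latter is the standard binomial-expansion argument: if $a^m,b^n\in K$ then $(a+b)^{m+n}\in K$. We would quote this as standard rather than prove it.
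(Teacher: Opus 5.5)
Your proposal is correct and follows the paper's proof essentially verbatim: you use that $I\cap J$ is radical together with $IJ\subseteq I\cap J$ to get $\sqrt{IJ}\subseteq I\cap J$, and $r^2\in IJ$ for $r\in I\cap J$ to get the reverse inclusion. The extra remarks you add, that the second inclusion needs no radical hypothesis and that the meet in $\mathrm{Rad}(R)$ is intersection, are accurate but not needed beyond what the paper does.
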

\begin{proof}
Since $I$ and $J$ are radical so is $I\cap J$. We always have $IJ \subseteq I\cap J$ and hence $\sqrt{IJ}\subseteq I\cap J$. On the other hand if $r\in I\cap J$ then $r^2 \in IJ$ and so $r$ lies in $\sqrt{IJ}$. This shows $I\cap J \subseteq \sqrt{IJ}$ and so we get equality as desired.
\end{proof}

Using this we can produce the space underlying the affine scheme corresponding to $R$ and show it is coherent.

\begin{theorem}
The lattice $\mathrm{Rad}(R)$ is a coherent frame. The corresponding space under Stone duality is $\Spec R$ the prime ideal spectrum of $R$ with the Zariski topology.
\end{theorem}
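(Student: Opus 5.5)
The plan is to treat $\mathrm{Rad}(R)$ exactly as $\Thick^\otimes(\mathsf{K})$ was treated in Lemma~\ref{Glem:fpideal}, and then use Lemma~\ref{lem:radring} to get distributivity.

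\textbf{Completeness and algebraicity.} An arbitrary intersection of radical ideals is radical, so $\mathrm{Rad}(R)$ has all meets and is complete. The join of a family $\{I_\lambda\}$ is $\sqrt{\sum_\lambda I_\lambda}$. For $r\in R$, $r$ lies in $\sqrt{\sum_\lambda I_\lambda}$ exactly when some power $r^n$ is a finite sum of elements of the $I_\lambda$. So membership in a join is witnessed by finitely many $I_\lambda$. It follows that $\sqrt{(a_1,\dots,a_n)}$ is finitely presented. Conversely, a finitely presented radical ideal $J=\bigvee_{a\in J}\sqrt{(a)}$ is a finite such join, hence of the form $\sqrt{(a_1,\dots,a_n)}$. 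Every radical ideal is the join of the $\sqrt{(a)}$ it contains, so the lattice is algebraic. The top element $R=\sqrt{(1)}$ is finitely presented. For closure under binary meets, Lemma~\ref{lem:radring} gives
\[
\sqrt{(a_1,\dots,a_n)}\cap\sqrt{(b_1,\dots,b_m)}=\sqrt{(a_ib_j)_{i,j}},
\]
which is again finitely generated up to radical.

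\textbf{Frame law.} This step carries the actual content. Let $I$ and $J_\lambda$ be radical ideals. By Lemma~\ref{lem:radring},
\[
I\wedge\bigvee_\lambda J_\lambda=\sqrt{I\cdot\sqrt{\textstyle\sum J_\lambda}}.
\]
The aim is to show this equals $\sqrt{\sum_\lambda IJ_\lambda}=\bigvee_\lambda(I\wedge J_\lambda)$. The inclusion $\supseteq$ is monotonicity. For $\subseteq$, the key fact is $\sqrt{I\sqrt{K}}=\sqrt{IK}$: if $x\in I$ and $y^n\in K$, then $(xy)^n\in IK$. We then expand $I\sum J_\lambda=\sum IJ_\lambda$ using ordinary distributivity of ideal multiplication over sums. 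The only care needed is keeping track of radicals; otherwise this is routine.

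\textbf{Identifying the points.} By Lemma~\ref{Glem:prime}, points correspond to prime (meet-irreducible) elements of $\mathrm{Rad}(R)$. I will show these are exactly the prime ideals of $R$.

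First, a prime ideal $\mathfrak p$ is radical. If $I\cap J\subseteq\mathfrak p$, then $IJ\subseteq\mathfrak p$, so $I\subseteq\mathfrak p$ or $J\subseteq\mathfrak p$. Hence $\mathfrak p$ is meet-irreducible.

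Conversely, let $\mathfrak p$ be a meet-irreducible radical ideal with $ab\in\mathfrak p$. Then $\sqrt{(a)}\cap\sqrt{(b)}=\sqrt{(ab)}\subseteq\mathfrak p$, so $a\in\mathfrak p$ or $b\in\mathfrak p$. Meet-irreducibility means being a prime element of the frame, which excludes the top element, so $\mathfrak p$ is proper.

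Under this bijection, the basic open set $U_J$ of $\pt(\mathrm{Rad}(R))$ consists of the primes $\mathfrak p$ with $J\not\subseteq\mathfrak p$. This is the Zariski open $\Spec R\setminus V(J)$, and every Zariski open has this form, with $J$ taken radical. Hence the two topologies agree and the space of points is $\Spec R$.
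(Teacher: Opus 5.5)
Your proof is correct and follows the paper's route: the finitely presented elements are the radicals of finitely generated ideals, Lemma~\ref{lem:radring} gives both closure under binary meets and distributivity, and the points are matched with the prime elements of $\mathrm{Rad}(R)$, which are the prime ideals. The one difference is that you prove the infinite distributive law directly, using that ideal multiplication distributes over arbitrary sums and that $\sqrt{I\sqrt{K}}=\sqrt{IK}$; the paper checks only binary distributivity and uses algebraicity to conclude it has a frame, and it leaves the prime/point identification and the Zariski topology as exercises, which you carry out.
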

\begin{proof}
We will only sketch the argument. It is straightforward, using that we only have access to finite sums, to check that principal radical ideals (i.e.\ the radicals of principal ideals) are finitely presented elements of $\mathrm{Rad}(R)$. Every radical ideal is the join of the principal radical ideals it contains. Hence $\mathrm{Rad}(R)$ is an algebraic lattice. The ring $R$ is the top element of $\mathrm{Rad}(R)$ and is principal, generated by $1$, so the top element of $\mathrm{Rad}(R)$ is finitely presented. Finitely generated radical ideals are finite joins of principal ones and so are finitely presented in $\mathrm{Rad}(R)$. Using that every radical ideal is a join of principal ones it follows easily that any finitely presented element of $\mathrm{Rad}(R)$ must be the radical of a finitely generated ideal.

Let us next show that $\mathrm{Rad}(R)$ is distributive. Because it is an algebraic lattice this already implies it is a frame. If $I,J$ and $K$ are radical ideals then using Lemma~\ref{lem:radring} we see
\begin{align*}
I\cap (J\vee K) &= I\cap \sqrt{J+K} \\
&= \sqrt{I\sqrt{J+K}} \\
&= \sqrt{I(J+K)} \\
&= \sqrt{(IJ +IK)} \\
&= \sqrt{\sqrt{IJ} + \sqrt{IK}} \\
&= \sqrt{(I\cap J) + (I\cap K)} \\
&= (I\cap J) \vee (I\cap K).
\end{align*}

It also follows from Lemma~\ref{lem:radring} that the finitely presented elements are closed under meets: for elements $r_i$ and $s_j$ of $R$ with $1\leq i \leq m$ and $1\leq j \leq n$ we have
\[
\sqrt{(r_1,\ldots,r_m)} \cap \sqrt{(s_1,\ldots,s_n)} = \sqrt{(r_1,\ldots,r_m)(s_1,\ldots,s_n)} = \sqrt{(r_is_j \mid i,j)}.
\]

By Lemma~\ref{Glem:prime} we can describe the points of $\pt(\mathrm{Rad}(R))$ as the prime elements of the frame $\mathrm{Rad}(R)$. These are exactly the prime ideals and it is an exercise to show that the topology is correct.
\end{proof}

The remarkable thing is we can follow exactly the same recipe in the tt setting\textemdash{}the proofs are more or less identical.

\begin{definition}
A thick tensor-ideal $\mathsf{J}$ of $\mathsf{K}$ is \emph{radical} if whenever $x^{\otimes n} \in \mathsf{J}$ for some $x\in \mathsf{K}$ we already have $x\in \mathsf{J}$. For a tensor-ideal $\mathsf{J}$ we denote its radical, i.e.\ the smallest radical ideal containing it, by $\sqrt{\mathsf{J}}$.

We use $\Thick^{\sqrt{\otimes}}(\mathsf{K})$ to denote the complete lattice of radical tensor-ideals of $\mathsf{K}$.
\end{definition}

It turns out that, although a tensor-ideal is a more complicated object that an ideal of a ring, one can still take the radical using the naive formula.

\begin{lemma}\label{Glem:radical}
Let $\mathsf{J}$ be a tensor-ideal of $\mathsf{K}$. Then 
\[
\sqrt{\mathsf{J}} = \{x\in \mathsf{K} \mid x^{\otimes n}\in \mathsf{J} \text{ for some } n\geq 1\}.
\]
\end{lemma}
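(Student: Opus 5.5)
Write $\mathsf{R} = \{x\in \mathsf{K} \mid x^{\otimes n}\in \mathsf{J} \text{ for some } n\geq 1\}$. The plan is to show that $\mathsf{R}$ is a radical thick tensor-ideal containing $\mathsf{J}$. It is then evidently contained in every radical ideal containing $\mathsf{J}$: if $\mathsf{I}\supseteq \mathsf{J}$ is radical and $x^{\otimes n}\in \mathsf{J}\subseteq \mathsf{I}$, then $x\in \mathsf{I}$. Hence $\mathsf{R}=\sqrt{\mathsf{J}}$.

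Several properties are immediate.
\begin{itemize}
\item $\mathsf{J}\subseteq \mathsf{R}$: take $n=1$.
\item $\mathsf{R}$ is radical: if $(x^{\otimes m})^{\otimes n}\in \mathsf{J}$ then $x^{\otimes mn}\in\mathsf{J}$.
\item Tensor-ideal property: for $k\in\mathsf{K}$ and $x\in\mathsf{R}$ we have $(k\otimes x)^{\otimes n}\cong k^{\otimes n}\otimes x^{\otimes n}$, using the symmetry. This lies in $\mathsf{J}$ because $\mathsf{J}$ is an ideal.
\item Suspension: $(\Sigma x)^{\otimes n}\cong \Sigma^n x^{\otimes n}$ by exactness of $\otimes$ in each variable.
\item Summands: if $x\oplus y\in\mathsf{R}$ then $x^{\otimes n}$ is a summand of $(x\oplus y)^{\otimes n}$, so it lies in $\mathsf{J}$ since $\mathsf{J}$ is thick.
\end{itemize}

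The main obstacle is closure under cones. Let $x\to y\to z\to\Sigma x$ be an exact triangle with $x^{\otimes m},y^{\otimes m}\in\mathsf{J}$, taking a common $m$ (for instance, the maximum of the two exponents works after tensoring up, by the ideal property). The plan is a filtration argument: show that $z^{\otimes 2m-1}$, or more generally $z^{\otimes(a+b-1)}$, lies in $\thick\big(x^{\otimes i}\otimes y^{\otimes j}\otimes z^{\otimes l}\big)$ for terms with $i\ge a$ or $j\ge b$.

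Concretely, I would first prove an auxiliary claim: for any object $w$, if $x\otimes w$ and $y\otimes w$ lie in a thick tensor-ideal $\mathsf{I}$, then so does $z\otimes w$. This follows since $x\otimes w\to y\otimes w\to z\otimes w\to$ is exact and $\mathsf{I}$ is thick.

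I then apply the claim in stages. Let $\mathsf{I}=\mathsf{J}$ and consider $z^{\otimes k}\otimes x^{\otimes i}\otimes y^{\otimes j}$. I show by induction on $k$ that this lies in $\mathsf{J}$ whenever $k+i+j\geq 2m-1$ and $i,j$ are arbitrary. The base case $k=0$ holds since then $i\geq m$ or $j\geq m$, and the corresponding factor lies in $\mathsf{J}$. For the inductive step, tensor the triangle with $w=z^{\otimes (k-1)}\otimes x^{\otimes i}\otimes y^{\otimes j}$. The terms $x\otimes w$ and $y\otimes w$ have one fewer $z$-factor and the same total degree, so they lie in $\mathsf{J}$ by induction, and the claim gives $z\otimes w\in\mathsf{J}$.

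Taking $i=j=0$ and $k=2m-1$ gives $z^{\otimes 2m-1}\in\mathsf{J}$, so $z\in\mathsf{R}$. Together with closure under suspensions and summands, this makes $\mathsf{R}$ a thick tensor-ideal, which completes the argument.
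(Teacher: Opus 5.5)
Your argument is correct and is essentially the standard proof that the paper defers to (\cite[Lemma~4.2]{Balmer:2005a}, \cite[Lemma~3.1.5]{KockPitsch}): every closure property except cones is formal, and cones are handled by tensoring the triangle with mixed tensor powers and inducting to get $z^{\otimes(2m-1)}\in\mathsf{J}$. The one item you left implicit is closure under $\Sigma^{-1}$, which follows from $(\Sigma^{-1}x)^{\otimes n}\cong\Sigma^{-n}x^{\otimes n}$ just as in your check for $\Sigma$.
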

\begin{proof}
See \cite[Lemma~4.2]{Balmer:2005a} or \cite[Lemma~3.1.5]{KockPitsch}.
\end{proof}

\begin{xca}
Prove that an ideal $\mathsf{P}$ is prime in the sense of Definition~\ref{Gdef:thick} if and only if it is prime in the lattice of radical tensor-ideals of $\mathsf{K}$ as in Definition~\ref{Gdef:prime}.
\end{xca}

One then has an analogue of the key lemma from above. For a pair of tensor-ideals $\mathsf{I}$ and $\mathsf{J}$ we let $\mathsf{I}\otimes \mathsf{J}$ denote the thick tensor-ideal generated by all objects of the form $x\otimes y$ with $x\in \mathsf{I}$ and $y\in \mathsf{J}$.

\begin{lemma}\label{Glem:meettensor}
Let $\mathsf{I}$ and $\mathsf{J}$ be radical tensor-ideals of $\mathsf{K}$. Then $\mathsf{I}\cap \mathsf{J} = \sqrt{\mathsf{I} \otimes \mathsf{J}}$ and so the meet is given by `tensor product of radical ideals'.
\end{lemma}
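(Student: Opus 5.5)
We mimic the proof of Lemma~\ref{lem:radring}, proving the two inclusions separately. Throughout we use Lemma~\ref{Glem:radical}, which describes $\sqrt{\mathsf{J}}$ as the set of objects some tensor power of which lies in $\mathsf{J}$.

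For $\sqrt{\mathsf{I}\otimes\mathsf{J}} \subseteq \mathsf{I}\cap\mathsf{J}$ we argue in three steps.
\begin{enumerate}
\item The intersection of two radical tensor-ideals is again a radical tensor-ideal. Intersections of thick tensor-ideals are thick tensor-ideals, and if $x^{\otimes n}$ lies in both $\mathsf{I}$ and $\mathsf{J}$ then $x$ lies in both.
\item For $x\in\mathsf{I}$ and $y\in\mathsf{J}$, the object $x\otimes y$ lies in $\mathsf{I}$ because $\mathsf{I}$ is a tensor-ideal. It lies in $\mathsf{J}$ by the ideal property together with symmetry of $\otimes$, since $x\otimes y\cong y\otimes x$.
\item Hence all generators of $\mathsf{I}\otimes\mathsf{J}$ lie in the thick tensor-ideal $\mathsf{I}\cap\mathsf{J}$, so $\mathsf{I}\otimes\mathsf{J}\subseteq \mathsf{I}\cap\mathsf{J}$. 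Since $\mathsf{I}\cap\mathsf{J}$ is radical and $\sqrt{\mathsf{I}\otimes\mathsf{J}}$ is the smallest radical ideal containing $\mathsf{I}\otimes\mathsf{J}$, we get $\sqrt{\mathsf{I}\otimes\mathsf{J}}\subseteq\mathsf{I}\cap\mathsf{J}$.
\end{enumerate}

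For the reverse inclusion, take $x\in\mathsf{I}\cap\mathsf{J}$. Then $x^{\otimes 2}=x\otimes x$ is one of the generators of $\mathsf{I}\otimes\mathsf{J}$, with the first factor in $\mathsf{I}$ and the second in $\mathsf{J}$. By Lemma~\ref{Glem:radical}, $x\in\sqrt{\mathsf{I}\otimes\mathsf{J}}$. Note that this direction does not use the radicality of $\mathsf{I}$ and $\mathsf{J}$, only the explicit description of the radical.

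The only genuine input is Lemma~\ref{Glem:radical}, which is assumed. We actually need it only in the weak form that anything with a tensor power in $\mathsf{J}$ lies in $\sqrt{\mathsf{J}}$. This weak form is clear anyway: $\sqrt{\mathsf{J}}$ is radical and contains $\mathsf{J}$, so such an object lies in it.

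There is one subtlety to keep track of. $\mathsf{I}\otimes\mathsf{J}$ is defined as a thick tensor-ideal generated by the products, so the first inclusion must argue via generators, as in step 3, rather than object by object. With that noted, there is no real obstacle; the argument is a direct transcription of the commutative algebra case.
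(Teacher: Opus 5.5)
Your proof is correct and is essentially the paper's argument. Both show that $\mathsf{I}\cap\mathsf{J}$ is a radical ideal containing $\mathsf{I}\otimes\mathsf{J}$, hence containing $\sqrt{\mathsf{I}\otimes\mathsf{J}}$, and conversely that $x\in\mathsf{I}\cap\mathsf{J}$ gives $x\otimes x\in\mathsf{I}\otimes\mathsf{J}$, with you additionally spelling out the generator-level and symmetry details the paper leaves implicit.
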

\begin{proof}
The intersection $\mathsf{I}\cap \mathsf{J}$ is radical and contains $\mathsf{I} \otimes \mathsf{J}$ and hence also $\sqrt{\mathsf{I} \otimes \mathsf{J}}$. On the other hand, if $x\in \mathsf{I}\cap \mathsf{J}$ then $x\otimes x \in \mathsf{I} \otimes \mathsf{J}$ and so $x\in \sqrt{\mathsf{I} \otimes \mathsf{J}}$ as required.
\end{proof}

\begin{theorem}
The lattice $\Thick^{\sqrt{\otimes}}(\mathsf{K})$ is a coherent frame. 
\end{theorem}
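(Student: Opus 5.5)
I will follow the proof for $\mathrm{Rad}(R)$ step by step, with Lemma~\ref{Glem:meettensor} in place of Lemma~\ref{lem:radring} and Lemma~\ref{Glem:radical} as the tool for computing radicals. There are three things to show: $\Thick^{\sqrt{\otimes}}(\mathsf{K})$ is algebraic, it is distributive (hence a frame), and its finitely presented elements form a bounded sublattice.

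\emph{Completeness and algebraicity.} An intersection of radical tensor-ideals is radical, so $\Thick^{\sqrt{\otimes}}(\mathsf{K})$ is complete. Meets are intersections, and the join of a family $\{\mathsf{J}_\lambda\}$ is $\sqrt{\thick^\otimes(\cup_\lambda \mathsf{J}_\lambda)}$. I claim that each principal radical ideal $\sqrt{\thick^\otimes(k)}$ is finitely presented. Suppose it lies below a join $\sqrt{\thick^\otimes(\cup_\lambda \mathsf{J}_\lambda)}$. Then $k$ lies in this join, so by Lemma~\ref{Glem:radical} some power $k^{\otimes n}$ lies in $\thick^\otimes(\cup_\lambda \mathsf{J}_\lambda)$. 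By the finitary argument in the proof of Lemma~\ref{Glem:fpideal}, $k^{\otimes n}$ already lies in the ideal generated by finitely many $\mathsf{J}_\lambda$. Hence $k$ lies in the radical of that ideal, which is the finite join. Every radical ideal $\mathsf{J}$ equals $\bigvee_{k\in\mathsf{J}}\sqrt{\thick^\otimes(k)}$, so the lattice is algebraic. Finite joins of principal radical ideals are again principal, via $k_1\oplus\cdots\oplus k_n$. Therefore the finitely presented elements are exactly the principal radical ideals: a finitely presented element is covered by finitely many of the principal ideals it contains, and hence equals their join. The top element $\mathsf{K}=\sqrt{\thick^\otimes(\unit)}$ is principal, so it is finitely presented.

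\emph{Distributivity.} For radical $\mathsf{I},\mathsf{J},\mathsf{L}$, Lemma~\ref{Glem:meettensor} turns the left side into a radical of a tensor product, and I will run the same chain of equalities as in the ring case:
\[
\mathsf{I}\cap(\mathsf{J}\vee\mathsf{L})=\sqrt{\mathsf{I}\otimes\sqrt{\mathsf{J}\vee_{\thick}\mathsf{L}}}=\sqrt{\mathsf{I}\otimes(\mathsf{J}\vee_{\thick}\mathsf{L})}=\sqrt{(\mathsf{I}\otimes\mathsf{J})\vee_{\thick}(\mathsf{I}\otimes\mathsf{L})}=(\mathsf{I}\cap\mathsf{J})\vee(\mathsf{I}\cap\mathsf{L}).
\]
Here $\vee_{\thick}$ denotes the join in $\Thick^\otimes(\mathsf{K})$. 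Two identities need proof.

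The first is $\sqrt{\mathsf{I}\otimes\sqrt{\mathsf{A}}}=\sqrt{\mathsf{I}\otimes\mathsf{A}}$. Take $x\in\mathsf{I}$ and $a\in\sqrt{\mathsf{A}}$, so $a^{\otimes n}\in\mathsf{A}$ for some $n$. Then $(x\otimes a)^{\otimes n}\cong x^{\otimes n}\otimes a^{\otimes n}\in\mathsf{I}\otimes\mathsf{A}$. The objects $x\otimes a$ generate $\mathsf{I}\otimes\sqrt{\mathsf{A}}$, so it lies in $\sqrt{\mathsf{I}\otimes\mathsf{A}}$, which is the nontrivial inclusion.

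The second is that $\mathsf{I}\otimes-$ commutes with joins of ideals, i.e.\ $\mathsf{I}\otimes(\mathsf{J}\vee_{\thick}\mathsf{L})=(\mathsf{I}\otimes\mathsf{J})\vee_{\thick}(\mathsf{I}\otimes\mathsf{L})$. This is the step I expect to be the main obstacle, since it replaces "multiplication distributes over addition". I would argue as follows. Fix $x\in\mathsf{I}$. The objects $y$ with $x\otimes y$ in the right-hand ideal form a thick subcategory, because $x\otimes-$ is exact and preserves summands. This subcategory contains $\mathsf{J}\cup\mathsf{L}$, so it contains $\thick(\mathsf{J}\cup\mathsf{L})=\mathsf{J}\vee_{\thick}\mathsf{L}$; the latter is already a tensor-ideal, by the exercise describing $\thick^\otimes$ as a thick closure. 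So every generator $x\otimes y$ of the left side lies in the right side, and the reverse inclusion is clear. The last equality in the chain also needs the fact that radicals absorb radicals inside joins, i.e.\ $\sqrt{\mathsf{A}\vee_{\thick}\mathsf{B}}=\sqrt{\sqrt{\mathsf{A}}\vee_{\thick}\sqrt{\mathsf{B}}}$, which is immediate from minimality of the radical. Finally, an algebraic distributive lattice is a frame, as used in the ring case, because meets with a fixed element commute with directed joins. Concretely, if $x\in\mathsf{I}$ lies in a directed join of radical ideals, then $x$ lies in one of them by the finitary argument above.

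\emph{Closure under binary meets.} Lemma~\ref{Glem:meettensor} together with the first identity gives
\[
\sqrt{\thick^\otimes(k)}\cap\sqrt{\thick^\otimes(l)}=\sqrt{\thick^\otimes(k\otimes l)}.
\]
This uses that $\thick^\otimes(k)\otimes\thick^\otimes(l)=\thick^\otimes(k\otimes l)$, which follows by the same two-variable thick-subcategory argument. So the finitely presented elements form a bounded sublattice, and $\Thick^{\sqrt{\otimes}}(\mathsf{K})$ is a coherent frame.
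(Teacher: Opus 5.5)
Your proof is correct and follows the paper's route: you run the argument for $\mathrm{Rad}(R)$ with Lemma~\ref{Glem:meettensor} in place of Lemma~\ref{lem:radring}, together with the finitary nature of membership in (radical) tensor-ideals, which are exactly the two key points the paper's sketch names. You also supply details the paper leaves to Kock--Pitsch, namely the identities $\sqrt{\mathsf{I}\otimes\sqrt{\mathsf{A}}}=\sqrt{\mathsf{I}\otimes\mathsf{A}}$ and $\mathsf{I}\otimes(\mathsf{J}\vee\mathsf{L})=(\mathsf{I}\otimes\mathsf{J})\vee(\mathsf{I}\otimes\mathsf{L})$, and the argument that meets commute with directed joins.
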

\begin{proof}
One can follow the same outline as the proof for radical ideals of a commutative ring. The key points are the previous lemma and that witnessing membership of a single object in a radical thick tensor-ideal only involves finitely many other objects (cf.\ Lemma~\ref{Glem:fpideal}). This fact appears as Theorem~3.1.9 of \cite{KockPitsch} where full details are given.
\end{proof}

\begin{remark}
One should also see \cite{BKS-support} where it was already observed that $\Spc \mathsf{K}$ is a coherent space.
\end{remark}

So we know that $\Thick^{\sqrt{\otimes}}(\mathsf{K})$ is Stone dual to some coherent space whose points are the prime elements of $\Thick^{\sqrt{\otimes}}(\mathsf{K})$, i.e.\ the points are exactly the prime thick tensor-ideals (see Definition~\ref{Gdef:thick}). So we have a topology on the set $\Spc \mathsf{K}$ and its open subsets recover the lattice of radical tensor-ideals of $\mathsf{K}$. We denote this space by $(\Spc \mathsf{K})^\vee$.

The reason for the decoration is that, due to the order reversal that occurs when passing from a ring to its derived category, the space $(\Spc \mathsf{K})^\vee$ will not recover $\Spec R$ when applied to $\mathsf{K} = \mathsf{D}^\mathrm{perf}(R)$. This will be illustrated in detail in Section~\ref{Gsec:realexample} when we do a concrete example. The space we obtained is actually \emph{dual} to the original definition of the spectrum.

\begin{definition}
Let $X$ be a coherent space. The \emph{Hochster dual} of $X$, denoted by $X^\vee$, is the space with underlying set $X$ and topology given by taking the closed subsets with quasi-compact complement as a basis of opens. Thus a general open subset $V$ of $X^\vee$ has the form
\[
V = \cup_\lambda V_\lambda
\]
where each $V_\lambda$ is closed and the open subset $X\setminus V_\lambda$ of $X$ is quasi-compact. We call a subset $V$ of this form a \emph{Thomason subset}. We denote by $\mathrm{Thom}(X)$ the coherent frame of Thomason subsets of $X$ (which is nothing but $\Omega(X^\vee)$). 
\end{definition}

\begin{xca}
Check that $X^\vee$ is again a coherent space and that $X^{\vee \vee} = X$.
\end{xca}

\begin{remark}
Under the equivalence of Remark~\ref{Grem:coherentequivalence} Hochster duality just corresponds to taking the opposite bounded distributive lattice. There is, of course, a corresponding duality on coherent frames. This is given by sending a coherent frame $F$ to $\Ind ((F^\omega)^\op)$ the Ind-completion of the opposite lattice of finitely presented elements. Here the Ind-completion can be modeled by the lattice of ideals of $(F^\omega)^\op$ (cf.\ Exercise~\ref{Gxca:indcompletion}). 
\end{remark}

\begin{definition}
Let $\mathsf{K}$ be a tt-category. The \emph{spectrum} of $\mathsf{K}$ is defined to be
\[
\Spc \mathsf{K} = \pt(\Thick^{\sqrt{\otimes}}(\mathsf{K}))^\vee
\]
the Hochster dual of the Stone dual of the coherent frame of radical tensor-ideals.
\end{definition}

We thus have, by construction, an isomorphism of lattices between the Thomason subsets of $\Spc \mathsf{K}$ and $\Thick^{\sqrt{\otimes}}(\mathsf{K})$. This space has been constructed in a `free' way without losing any information about $\Thick^{\sqrt{\otimes}}(\mathsf{K})$. It has a corresponding universal property and we describe this next.

Given a monoidal exact functor between tt-categories $F\colon \mathsf{K} \to \mathsf{L}$ and a radical tensor-ideal $\mathsf{J}$ of $\mathsf{K}$ we can consider the  radical tensor-ideal of $\mathsf{L}$ defined by 
\[
F_*(\mathsf{J}) = \sqrt{\thick^\otimes(F(a) \mid a\in \mathsf{J})}
\]
i.e.\ the radical ideal generated by the image of $\mathsf{J}$. This defines a map of coherent frames
\[
\Thick^{\sqrt{\otimes}}(\mathsf{K}) \to \Thick^{\sqrt{\otimes}}(\mathsf{L})
\]
and hence a corresponding map $f\colon \Spc \mathsf{L} \to \Spc \mathsf{K}$. This makes $\Spc$ into a functor on the category of tt-categories with exact monoidal functors. Concretely, $f$ sends a prime ideal $\mathsf{Q}$ of $\mathsf{L}$ to its preimage $F^{-1}\mathsf{Q}$.

%\begin{definition}
%Constructible topology
%\end{definition}

%----------------------------------------------------------------------------------------------------------------------------------------------

%----------------------------------------------------------------------------------------------------------------------------------------------

\subsection{Support and the universal property unwound}

In this section we make explicit the connection between the universal property of $\Spc \mathsf{K}$ as defined by Stone duality and the universal property given by Balmer when he introduced the spectrum \cite{Balmer:2005a}. We also use our lattice theoretic description to make some observations which recover, more or less for free, some fundamental results about the spectrum.

We know there is a lattice isomorphism
\[
\Thick^{\sqrt{\otimes}}(\mathsf{K}) \cong \mathrm{Thom}(\Spc \mathsf{K}),
\]
that $\Spc \mathsf{K}$ is a coherent space, and correspondingly $\Thick^{\sqrt{\otimes}}(\mathsf{K})$ is a coherent frame. Given a map of spaces $f\colon X\to \Spc \mathsf{K}$ there is an induced map from closed Thomason subsets of the spectrum to the complete lattice $\Phi(X)$ of closed subsets of $X$
\[
f^{-1}\colon \Thom(\Spc \mathsf{K})^\omega \to \Phi(X)
\]
sending a closed subset with quasi-compact open complement to its preimage, a closed subset of $X$. (We note that $\Phi(X) \cong \Omega(X)^\op$ but as this equivalence involves taking the complement it can make what follows confusing.) As we just noted $\Thom(\Spc \mathsf{K})^\omega$ is lattice isomorphic to $\Thick^{\sqrt{\otimes}}(\mathsf{K})^\omega$ the lattice of principal radical ideals of $\mathsf{K}$ and so such a map determines a way of assigning closed subsets of $X$ to principal ideals, i.e.\ objects of $\mathsf{K}$, which is compatible with the lattice structure. Making this compatibility explicit gives the following properties.
 %Let us see how to exploit this a bit.

\begin{definition}\label{Gdef:supportdatum}
A support datum $\sigma$ on $\mathsf{K}$ with values in $X$ assigns to each $a\in \mathsf{K}$ a closed subset $\sigma(a)$ of $X$ such that:
\begin{itemize}
\item[(SD1)] $\sigma(0) = \varnothing$ and $\sigma(\unit) = X$
\item[(SD2)] $\sigma(a\oplus b) = \sigma(a)\cup \sigma(b)$
\item[(SD3)] $\sigma(\Sigma a) = \sigma(a)$
\item[(SD4)] given a triangle $a\to b\to c \to \Sigma a$ we have $\sigma(b)\subseteq \sigma(a)\cup \sigma(c)$
\item[(SD5)] $\sigma(a\otimes b) = \sigma(a)\cap \sigma(b)$.
\end{itemize}
\end{definition}

\begin{xca}
Check that the above conditions are equivalent to giving a bounded lattice map $\Thick^{\sqrt{\otimes}}(\mathsf{K})^\omega \to \Phi(X)$.
\end{xca}

We can extend the morphism $\Thom(\Spc \mathsf{K})^\omega \to \Phi(X)$ to a map $\Thom(\Spc \mathsf{K}) \to \mathcal{P}(X)$ by taking preimages of arbitrary Thomason subsets. These will not necessarily be closed, but they will be specialization closed subsets and they would even be Thomason provided $f$ were a quasi-compact map. If we instead chose to work with the dual topology on $\Spc \mathsf{K}$ then our life would be easier as we'd just have a map of frames
\[
\Thick^{\sqrt{\otimes}}(\mathsf{K}) = \Omega(\Spc \mathsf{K}^\vee) \to \Omega(X).
\]
If $X$ is coherent and $f$ is quasi-compact, which is the case we really care about if we want to compute the spectrum, then we can take duals freely and everything works nicely.

There is obviously, from our perspective, a universal support datum: it comes from the identity map $\Spc \mathsf{K} \to \Spc \mathsf{K}$ i.e.\ it is essentially the counit 
\[
\Thick^{\sqrt{\otimes}}(\mathsf{K})^\omega \stackrel{\sim}{\longrightarrow} \Thom(\Spc \mathsf{K})^\omega.
\] Indeed, given a support datum $\sigma$ or equivalently a lattice map $\Thom(\Spc \mathsf{K})^\omega \to \Phi(X) \cong \Omega(X)^\op$ we can (take opposites then) apply the adjunction to get maps
\[
X \to \pt\Omega(X) \to \Spc \mathsf{K}
\]
such that taking preimages recovers the support datum. 

We denote the universal support datum by $\supp$ i.e.\ for each $a\in \mathsf{K}$ we associate to it the closed subset with quasi-compact open complement $\supp(a)$ of $\Spc \mathsf{K}$. This is the subset corresponding to the lattice element $\sqrt{\thick^\otimes(a)}$. 

We can uniquely extend the map $\Thom(\Spc \mathsf{K})^\omega \to \Phi(X)$ to a map of complete lattices $\Thom(\Spc \mathsf{K}) \to \mathcal{P}(X)$, i.e.\ a map preserving joins and finite meets. Concretely if $\sigma$ is the corresponding support data this is just defined on a set of objects $\mathcal{A} \subseteq \mathsf{K}$ by
\[
\sigma(\mathcal{A}) = \sigma\left(\sqrt{\thick^\otimes(\mathcal{A})}\right) = \sigma\left(\bigvee_{a\in \mathcal{A}}\sqrt{\thick^\otimes(a)}\right) = \bigcup_{a\in \mathcal{A}} \sigma(a).
\]

\begin{remark}
Here we see a deficit of working with closed subsets: when we take unions we don't necessarily have a closed subset anymore and there is no reason that each $\sigma(a)$ should have quasi-compact open complement and so we don't necessarily get Thomason subsets but only unions of closed subsets. 
\end{remark}

We close this section with a couple of useful observations (all originally from Balmer's paper \cite{Balmer:2005a}). Let us start by unpacking how to describe $\Spc \mathsf{K}$ and the support in a more traditional way using primes.

\begin{proposition}
For an object $a\in \mathsf{K}$ we have
\[
\supp a = \{\mathsf{P} \in \Spc \mathsf{K} \mid a\notin \mathsf{P}\}
\]
and these subsets form a basis for the topology of $\Spc \mathsf{K}$.
\end{proposition}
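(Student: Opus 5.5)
We unwind the chain of identifications used to define $\Spc \mathsf{K}$. The spectrum is $\pt(\Thick^{\sqrt{\otimes}}(\mathsf{K}))^\vee$. By Lemma~\ref{Glem:prime}, a point of this frame is the same as a prime element. By the exercise following Lemma~\ref{Glem:radical}, the prime elements are exactly the prime thick tensor-ideals $\mathsf{P}$. The point $p_{\mathsf{P}}$ sends a radical ideal $\mathsf{J}$ to $0$ exactly when $\mathsf{J}\subseteq \mathsf{P}$.

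Before dualising, the open subsets of $\pt(\Thick^{\sqrt{\otimes}}(\mathsf{K}))$ are the sets $U_{\mathsf{J}}=\{\mathsf{P} \mid \mathsf{J}\not\subseteq \mathsf{P}\}$. Since $\Thick^{\sqrt{\otimes}}(\mathsf{K})$ is coherent and spatial, the assignment $\mathsf{J}\mapsto U_{\mathsf{J}}$ is injective. It identifies finitely presented elements with quasi-compact opens. By the Thick$^{\sqrt{\otimes}}$ analogue of Lemma~\ref{Glem:fpideal}, the finitely presented elements are exactly the principal radical ideals $\sqrt{\thick^\otimes(a)}$.

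The key computation is
\[
U_{\sqrt{\thick^\otimes(a)}}=\{\mathsf{P}\mid a\notin \mathsf{P}\}.
\]
This holds because $\mathsf{P}$ is a radical thick tensor-ideal, so $\sqrt{\thick^\otimes(a)}\subseteq \mathsf{P}$ if and only if $a\in\mathsf{P}$. Passing to the Hochster dual, this set, which is quasi-compact open in $\pt(\cdots)$, becomes a closed set with quasi-compact open complement in $\Spc\mathsf{K}$. Under the lattice isomorphism $\Thick^{\sqrt{\otimes}}(\mathsf{K})\cong\Thom(\Spc\mathsf{K})$, it is exactly the Thomason subset matched with $\sqrt{\thick^\otimes(a)}$. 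By the definition of $\supp$ this set is $\supp a$, which gives the displayed formula.

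For the basis statement we read ``basis'' in Balmer's sense: the sets $\supp a$ form a basis of closed subsets of $\Spc \mathsf{K}$, equivalently their complements $\{\mathsf{P}\mid a\in\mathsf{P}\}$ form an open basis. By definition of the Hochster dual, the closed subsets of $\Spc\mathsf{K}$ are intersections of closed sets with quasi-compact open complement. These closed sets are the complements of quasi-compact opens of the Stone dual, i.e.\ the sets $U_{\mathsf{J}}$ with $\mathsf{J}$ finitely presented. Every finitely presented radical ideal is principal, so each such closed set equals some $\supp a$, and every closed subset is an intersection of supports. The sets $\supp a$ are also closed under finite unions, since $\supp a\cup\supp b=\supp(a\oplus b)$, and contain $\varnothing = \supp 0$. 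So they form a genuine closed basis.

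The main obstacle is bookkeeping rather than mathematics: keeping the order reversal under Hochster duality straight, so that quasi-compact opens of the Stone dual become closed sets of $\Spc\mathsf{K}$. One must also check that finitely presented radical ideals are principal. This follows from the identity $\sqrt{\thick^\otimes(a)}\vee\sqrt{\thick^\otimes(b)}=\sqrt{\thick^\otimes(a\oplus b)}$, together with the finitary nature of membership in radical ideals cited from \cite{KockPitsch}.
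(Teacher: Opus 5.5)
Your argument is correct and follows the paper's own proof: unwinding the point/prime-element bijection shows $\supp a$ is the set of primes with $\sqrt{\thick^\otimes(a)}\not\subseteq\mathsf{P}$, i.e.\ $a\notin\mathsf{P}$, and you additionally spell out the closed-basis claim that the paper simply calls clear from the construction. The only blemish is a bookkeeping slip in that extra part: the closed subsets of $\Spc\mathsf{K}$ with quasi-compact open complement are the quasi-compact opens $U_{\mathsf{J}}$ of the Stone dual themselves, not their complements (the complements are the basic opens of $\Spc\mathsf{K}$), and your ``i.e.'' clause and your conclusion in fact use the correct version.
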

\begin{proof}
The second statement is clear from the construction, so we only need to identify the support as claimed. By definition $\supp(a)$ is the Thomason subset (i.e.\ open subset of $(\Spc \mathsf{K})^\vee$) corresponding to the principal ideal $\sqrt{\thick^\otimes(a)}$. This is the set of points $p\colon \Thick^{\sqrt{\otimes}}(\mathsf{K}) \longrightarrow \{0,1\}$ such that $p\left(\sqrt{\thick^\otimes(a)}\right) = 1$. Under the bijection of points with prime elements of the lattice this is precisely those $\mathsf{P} \in \Spc \mathsf{K}$ such that \\ 
$\sqrt{\thick^\otimes(a)} \nsubseteq \mathsf{P}$ or equivalently such that $a\notin \mathsf{P}$. 
\end{proof}

A number of other useful properties follow in a straightforward way from the Stone duality point of view (in some cases the arguments are essentially identical to those in Balmer's paper which is also instructive). We indicate how one can deduce these results, but do not provide full details.

\begin{proposition}\label{Gprop:ttfacts}%[\cite{BaSpec}*{Proposition~2.3}]
Let $\mathsf{K}$ be a tt-category.
\begin{itemize}
\item[$(a)$] Let $S$ be a set of objects of $\mathsf{K}$ containing $\unit$ and such that if $k,l \in S$ then $k\otimes l \in S$. If $S$ does not contain $0$ then there exists a $\mathsf{P}\in \Spc \mathsf{K}$ such that $\mathsf{P} \cap S = \varnothing$.
\item[$(b)$] For any proper thick tensor-ideal $\mathsf{I} \subsetneq \mathsf{K}$ there exists a maximal proper thick tensor-ideal $\mathsf{M}$ with $\mathsf{I} \subseteq \mathsf{M}$.
\item[$(c)$] Maximal proper thick tensor-ideals are prime.
\item[$(d)$] The spectrum is not empty: $\Spc \mathsf{K} \neq \varnothing$. 
\item[$(e)$] For every closed subset $V\subseteq \Spc \mathsf{K}$ with quasi-compact open complement there exists an object $a\in \mathsf{K}$ with $\supp(a) = V$.
\end{itemize}
\end{proposition}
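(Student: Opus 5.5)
I will deduce each part from the fact that $\Thick^{\sqrt{\otimes}}(\mathsf{K})$ is a coherent frame whose finitely presented elements are the principal radical ideals $\sqrt{\thick^\otimes(a)}$, together with the isomorphism $\Thick^{\sqrt{\otimes}}(\mathsf{K})\cong \Thom(\Spc\mathsf{K})$. I will prove the parts in the order (b), (c), (a), (d), (e).

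For (b) I apply Zorn's lemma to the poset of proper thick tensor-ideals containing $\mathsf{I}$. A thick tensor-ideal is proper exactly when it does not contain $\unit$. The union of a chain of such ideals is again a thick tensor-ideal, because the defining operations are finitary (as in Lemma~\ref{Glem:fpideal}). It is still proper, since it does not contain $\unit$. Said lattice-theoretically, the top element $\thick^\otimes(\unit)$ is finitely presented, so a directed join of proper ideals is proper. Zorn's lemma then gives a maximal element $\mathsf{M}$.

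For (c), let $\mathsf{M}$ be maximal. I first check that $\mathsf{M}$ is radical. If $x^{\otimes n}\in\mathsf{M}$, then $\sqrt{\mathsf{M}}$ contains $x$. It is proper, because $\unit^{\otimes n}=\unit\notin \mathsf{M}$ and Lemma~\ref{Glem:radical} then shows $\unit\notin\sqrt{\mathsf{M}}$. Maximality gives $\sqrt{\mathsf{M}}=\mathsf{M}$. Next suppose $a\otimes b\in\mathsf{M}$ with $a,b\notin\mathsf{M}$. Then the radical ideals $\sqrt{\thick^\otimes(\mathsf{M},a)}$ and $\sqrt{\thick^\otimes(\mathsf{M},b)}$ both equal $\mathsf{K}$. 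By Lemma~\ref{Glem:meettensor} together with frame distributivity, their meet is
\[
\sqrt{(\mathsf{M}\vee\langle a\rangle)\otimes(\mathsf{M}\vee\langle b\rangle)}\subseteq \sqrt{\mathsf{M}\vee \langle a\otimes b\rangle}=\mathsf{M}.
\]
This contradicts the meet being $\mathsf{K}$.

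For (a), I use Zorn's lemma on the thick tensor-ideals disjoint from $S$. The set is nonempty because it contains $0$, and it is closed under unions of chains. Let $\mathsf{P}$ be a maximal element. It is proper since $\unit\in S$. Its radical is still disjoint from $S$: if $s\in\sqrt{\mathsf{P}}\cap S$, then $s^{\otimes n}\in \mathsf{P}\cap S$, using that $S$ is closed under tensor products. So $\mathsf{P}$ is radical. Primality then follows as in (c). If $a,b\notin\mathsf{P}$, maximality produces $s\in S\cap\sqrt{\thick^\otimes(\mathsf{P},a)}$ and $t\in S\cap\sqrt{\thick^\otimes(\mathsf{P},b)}$. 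The object $s\otimes t$ lies in $S$ and also lies in the meet of these two ideals, which by Lemma~\ref{Glem:meettensor} and distributivity is $\sqrt{\thick^\otimes(\mathsf{P},a\otimes b)}$. If $a\otimes b\in\mathsf{P}$, this ideal would equal $\mathsf{P}$, contradicting $\mathsf{P}\cap S=\varnothing$. The main obstacle in both (a) and (c) is this distributivity bookkeeping, namely identifying $\sqrt{\mathsf{J}\vee\langle a\rangle}\cap\sqrt{\mathsf{J}\vee\langle b\rangle}$ with $\sqrt{\mathsf{J}\vee\langle a\otimes b\rangle}$. The cleanest route is to compute in the frame: $(\mathsf{J}\vee A)\wedge(\mathsf{J}\vee B)=\mathsf{J}\vee(A\wedge B)$, and $A\wedge B=\sqrt{\thick^\otimes(a\otimes b)}$ for principal $A$ and $B$.

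Part (d) follows from (a) applied to $S=\{\unit\}$, which does not contain $0$ because $\unit\neq 0$ by standing assumption. It also follows from (b) and (c) applied to $\mathsf{I}=0$.

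For (e), closed subsets with quasi-compact open complement are exactly the finitely presented elements of $\Thom(\Spc\mathsf{K})$. Under the isomorphism these correspond to finitely presented radical ideals, and by Lemma~\ref{Glem:fpideal} these are finite joins $\sqrt{\thick^\otimes(a_1)}\vee\cdots\vee\sqrt{\thick^\otimes(a_n)}$. This join equals $\sqrt{\thick^\otimes(a_1\oplus\cdots\oplus a_n)}$, so $V=\supp(a_1\oplus\cdots\oplus a_n)$. The empty set is covered by $n=0$, giving $a=0$.
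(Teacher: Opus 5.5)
Your proof is correct. It runs the argument at a different level from the paper.

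The paper moves everything into the coherent frame $\Thick^{\sqrt{\otimes}}(\mathsf{K})$ and quotes lattice theory:
\begin{itemize}
\item[(a)] The principal radical ideals $\sqrt{\thick^\otimes(s)}$, $s\in S$, form a filter base avoiding $\sqrt{0}$. No $s\in S$ is nilpotent, because $S$ is $\otimes$-closed and does not contain $0$. The prime ideal theorem for distributive lattices \cite[I.2.3 and I.2.4]{Johnstone} then produces a point.
\item[(b), (c)] These are cited as general facts about coherent spaces and distributive lattices.
\item[(d)] This is the statement that a nontrivial coherent frame has a point.
\end{itemize}
You instead apply Zorn's lemma directly to thick tensor-ideals of $\mathsf{K}$, which is essentially Balmer's original argument, and use the frame structure only to check primality. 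Your route is self-contained. It also makes explicit a step the citations pass over: an ideal maximal among proper (or $S$-avoiding) thick tensor-ideals is automatically radical. Hence maximality in $\Thick^\otimes(\mathsf{K})$ and in $\Thick^{\sqrt{\otimes}}(\mathsf{K})$ agree. The paper's route is shorter and shows these facts come for free once Stone duality is set up. Part (e) is the same argument as in the paper.

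Two small remarks. First, you flag the identity $\sqrt{\thick^\otimes(a)}\wedge\sqrt{\thick^\otimes(b)}=\sqrt{\thick^\otimes(a\otimes b)}$ as the main obstacle. It follows from Lemma~\ref{Glem:meettensor} plus one observation: for fixed $y$, the objects $x$ with $x\otimes y$ in a given thick tensor-ideal form a thick tensor-ideal. Applying this in each variable gives $\thick^\otimes(\mathsf{J},a)\otimes\thick^\otimes(\mathsf{J},b)\subseteq\thick^\otimes(\mathsf{J},a\otimes b)$ directly. With this, (a) and (c) need neither radicals nor distributivity. For example, in (c) $\unit\otimes\unit$ would lie in $\thick^\otimes(\mathsf{M},a\otimes b)=\mathsf{M}$. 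Second, in (e), Lemma~\ref{Glem:fpideal} concerns $\Thick^\otimes(\mathsf{K})$ rather than radical ideals. The fact you need still follows from compactness, since every radical ideal is the join of the principal radical ideals it contains.
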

\begin{proof}
Let us briefly indicate the argument in each case (this text is important and definitely not only here to avoid bad spacing in the itemize environment).
\begin{itemize}
\item[(a)] The radical ideals generated by subsets of $S$ form a base for a filter and so this follows by a Zorn's lemma argument (see \cite[I.2.3 and I.2.4]{Johnstone}).
\item[(b)] This follows from the general properties of coherent spaces.
\item[(c)] A general fact about lattices, see \cite[Theorem~I.2.4]{Johnstone}.
\item[(d)] We assume that $\unit \neq 0$ and so $\Thick^{\sqrt{\otimes}}(\mathsf{K})$ has at least two elements and thus at least one point (the Stone dual of the empty set is the unique lattice structure on a singleton).
\item[(e)] By construction $V$ corresponds to a finitely presented element of the lattice $\Omega((\Spc \mathsf{K})^\vee)$ and hence to a finitely presented radical tensor-ideal. Every finitely generated ideal is principal and so picking $a$ to be any generator for this ideal gives the required object.
\end{itemize}
\end{proof}

Another very important property of the support is that it can detect whether or not an object is nilpotent.

\begin{lemma}
Given $a\in \mathsf{K}$ we have $\supp a = \varnothing$ if and only if $a^{\otimes n} = 0$ for some $n\geq 1$.
\end{lemma}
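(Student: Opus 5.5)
The plan is to translate the statement into the lattice $\Thick^{\sqrt{\otimes}}(\mathsf{K})$ and use the Stone duality identification
\[
\Thick^{\sqrt{\otimes}}(\mathsf{K}) \cong \Thom(\Spc \mathsf{K}).
\]
Under this isomorphism, $\supp a$ is the Thomason subset corresponding to the principal radical ideal $\sqrt{\thick^\otimes(a)}$. Since it is a lattice isomorphism, it matches bottom elements: the empty Thomason subset corresponds to the smallest radical tensor-ideal $\sqrt{0}$. So $\supp a = \varnothing$ holds if and only if $\sqrt{\thick^\otimes(a)} = \sqrt{0}$, which in turn holds if and only if $a \in \sqrt{0}$.

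It then remains to identify $\sqrt{0}$ with the tensor-nilpotent objects. This is Lemma~\ref{Glem:radical} applied to $\mathsf{J} = 0$:
\[
\sqrt{0} = \{x \in \mathsf{K} \mid x^{\otimes n} = 0 \text{ for some } n\geq 1\}.
\]
Combining the two steps gives the claim.

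Alternatively, one can argue concretely using the description $\supp a = \{\mathsf{P} \mid a\notin \mathsf{P}\}$.
\begin{itemize}
\item[($\Leftarrow$)] If $a^{\otimes n}=0$, then $a^{\otimes n}$ lies in every prime, and primality forces $a\in\mathsf{P}$ for all $\mathsf{P}$. Hence $\supp a=\varnothing$.
\item[($\Rightarrow$)] Suppose $a^{\otimes n}\neq 0$ for all $n$. Apply Proposition~\ref{Gprop:ttfacts}(a) to the multiplicative set $S=\{\unit, a, a^{\otimes 2},\dots\}$. We must check that $0\notin S$: this holds for the powers of $a$ by assumption, and for $\unit$ because $\unit\neq 0$. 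The proposition then gives a prime $\mathsf{P}$ with $a\notin \mathsf{P}$, so $\supp a\neq\varnothing$.
\end{itemize}
One should note that Proposition~\ref{Gprop:ttfacts}(a) strictly requires $S$ to contain $0$-free objects up to isomorphism, which is fine.

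The only real content is the existence of a prime avoiding a multiplicative set, that is, the Zorn's lemma argument. This is supplied by Proposition~\ref{Gprop:ttfacts}(a), or equivalently by the spatiality of the coherent frame, which separates $\sqrt{\thick^\otimes(a)} \nleq \sqrt{0}$ by a point. So I expect no serious obstacle beyond carefully matching the bottom element of the frame with the empty Thomason subset.
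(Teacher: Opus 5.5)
Your proof is correct, and your main argument is the same as the paper's. Both use the Stone duality isomorphism $\Thick^{\sqrt{\otimes}}(\mathsf{K})\cong\Thom(\Spc\mathsf{K})$ to identify $\supp a=\varnothing$ with $\sqrt{\thick^\otimes(a)}$ being the bottom element $\sqrt{0}$, and then apply Lemma~\ref{Glem:radical} with $\mathsf{J}=0$. Your alternative route, applying Proposition~\ref{Gprop:ttfacts}(a) to $\{\unit,a,a^{\otimes 2},\dots\}$, is also valid (it is Balmer's original argument) and makes explicit the Zorn's lemma/spatiality input that the lattice-theoretic phrasing leaves implicit.
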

\begin{proof}
We have $\supp a = \varnothing$ precisely if every point of $\Thick^{\sqrt{\otimes}}(\mathsf{K})$ sends $a$ to $0$. This can only happen if $\sqrt{\thick^\otimes(a)}$ is the minimal element of the lattice of radical tensor-ideals. The minimal element is $\sqrt{0}$ which consists exactly of the tensor-nilpotent objects by Lemma~\ref{Glem:radical}.

%By \cite{BaSpec}*{Corollary~2.4} the support of $k$ is empty if and only if $k$ is tensor-nilpotent i.e., $k^{\otimes n}\cong 0$. As $\mathsf{K}$ is rigid there are no non-zero tensor-nilpotent objects by Remark~\ref{rem_rigid_facts}.
\end{proof}

\begin{lemma}\label{Glem:closure}%[\cite{BaSpec}*{Proposition~2.9}]
Let $\mathsf{P} \in \Spc\mathsf{K}$. The closure of $\mathsf{P}$ is
\[
\overline{\{\mathsf{P}\}} = \{\mathsf{Q}\in \Spc\mathsf{K} \; \vert \; \mathsf{Q}\subseteq \mathsf{P}\}.
\]
\end{lemma}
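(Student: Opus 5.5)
The closure of a point in any space is the intersection of all closed sets containing it, so the plan is to compute $\overline{\{\mathsf{P}\}}$ as the intersection of the basic closed subsets containing $\mathsf{P}$. By the preceding Proposition the sets $\supp a$ form a basis of closed subsets of $\Spc \mathsf{K}$: every closed subset is an intersection of sets of the form $\supp(a)$. The reason is that the Thomason subsets of $(\Spc\mathsf{K})^\vee$ are unions of the $\supp(a)$, and the quasi-compact opens $\Spc\mathsf{K}\setminus \supp(a)$ form a basis of opens in $\Spc \mathsf{K}$. Hence
\[
\overline{\{\mathsf{P}\}} = \bigcap_{a \,:\, \mathsf{P}\in \supp(a)} \supp(a) = \bigcap_{a\notin \mathsf{P}} \supp(a).
\]

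Next I unwind this intersection using $\supp a = \{\mathsf{Q} \mid a\notin \mathsf{Q}\}$. A prime $\mathsf{Q}$ lies in the intersection exactly when every $a\notin\mathsf{P}$ also satisfies $a\notin \mathsf{Q}$. Equivalently, $\mathsf{K}\setminus\mathsf{P}\subseteq \mathsf{K}\setminus \mathsf{Q}$, which is the same as $\mathsf{Q}\subseteq \mathsf{P}$. This gives the claimed equality.

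The only step needing care is the claim that the supports form a basis of \emph{closed} sets for the topology of $\Spc\mathsf{K}$ rather than of $(\Spc\mathsf{K})^\vee$. Here I rely on the Hochster duality conventions: the quasi-compact opens of a coherent space coincide with the complements of the quasi-compact opens of its dual, and the latter correspond to principal radical ideals, i.e.\ to the sets $\supp(a)$. This also matches Proposition~\ref{Gprop:ttfacts}(e). Alternatively, one can take $\supp(a)=\{\mathsf{Q}\mid a\notin \mathsf{Q}\}$ as closed basic sets directly, as in Balmer's original definition. Either way the computation above is formal once this is in place.
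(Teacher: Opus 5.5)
Your proof is correct, but it takes a different route from the paper's. You work directly in $\Spc\mathsf{K}$. Since the sets $\supp(a)$ form a basis of closed subsets, the closure of $\mathsf{P}$ is $\bigcap_{a\notin\mathsf{P}}\supp(a)$. Unwinding $\supp(a)=\{\mathsf{Q}\mid a\notin\mathsf{Q}\}$ turns this into the condition $\mathsf{K}\setminus\mathsf{P}\subseteq\mathsf{K}\setminus\mathsf{Q}$. This is essentially Balmer's original argument.

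The paper instead computes the closure in the Stone dual $(\Spc\mathsf{K})^\vee$. There the prime element $\mathsf{P}$ determines the open set $\{\mathsf{Q}\mid\mathsf{P}\nsubseteq\mathsf{Q}\}$, and every closed set is upward closed under inclusion. So the closure of $\mathsf{P}$ in the dual is $\{\mathsf{Q}\mid\mathsf{P}\subseteq\mathsf{Q}\}$. The paper then invokes the fact that Hochster duality reverses the specialization order, which it takes for granted.

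The paper's route stays inside the lattice-theoretic framework and presents the order reversal of Remark~\ref{rem_gymnastics} as a general feature of Hochster duality. Your route is more elementary and self-contained: it needs only the description of $\supp$ from the preceding Proposition and the definition of the dual topology, and in effect proves the reversal in this instance.

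One small slip in your justification: the unions of the $\supp(a)$ are the Thomason subsets of $\Spc\mathsf{K}$, i.e.\ the open subsets of $(\Spc\mathsf{K})^\vee$, not Thomason subsets of $(\Spc\mathsf{K})^\vee$. This does not affect the argument. What you actually use is that the complements $\Spc\mathsf{K}\setminus\supp(a)$ form a basis of opens of $\Spc\mathsf{K}$, and that is exactly the definition of the Hochster dual topology.
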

\begin{proof}
In $(\Spc \mathsf{K})^\vee$ the prime element $\mathsf{P}$ gives rise to the open subset $\{\mathsf{Q} \in \Spc \mathsf{K} \mid \mathsf{P} \nsubseteq \mathsf{Q}\}$. This has closed complement $\{\mathsf{Q} \in \Spc \mathsf{K} \mid \mathsf{P} \subseteq \mathsf{Q}\}$. One can check easily that any closed subset of $(\Spc \mathsf{K})^\vee$ is upward closed with respect to containment and so this must be the closure of $\mathsf{P}$ (we note that it is irreducible with generic point $\mathsf{P}$).

Passing from $(\Spc \mathsf{K})^\vee$ to $\Spc \mathsf{K}$ reverses the closure ordering: in $(\Spc \mathsf{K})^\vee$ we have $\overline{\mathsf{Q_1}} \subseteq \overline{\mathsf{Q}_2}$ if and only if $\mathsf{Q}_2\subseteq \mathsf{Q}_1$ and this is swapped upon taking the Hochster dual. It follows that the closure of $\mathsf{P}$ in $\Spc \mathsf{K}$ consists of the primes \emph{contained in} $\mathsf{P}$ as claimed.
%Let $S_0 = \K\setminus \sfP$ denote the complement of $\sfP$. It is immediate that $\sfP \in Z(S_0)$ and one easily checks that if $\sfP\in Z(S)$ then $S\cie S_0$. Thus for any such $S$ we have $Z(S_0)\cie Z(S)$, i.e.\ $Z(S_0)$ is the smallest closed subset containing $\sfP$. This shows
%\begin{displaymath}
%\overline{\{\sfP\}} = Z(S_0) = \{\sfQ\in \Spc\K \; \vert \; \sfQ\cie \sfP\}
%\end{displaymath}
%as claimed. The assertion that $\Spc\K$ is $T_0$ follows immediately.
\end{proof}

\begin{remark}\label{rem_gymnastics}
We see that the dual we have taken will cause us some initial psychic pain. The closure relation on primes is the reverse of the one from algebraic geometry. We will illustrate this in a concrete example in Section~\ref{Gsec:realexample}. In some sense, the order reversal is natural though. In commutative algebra the larger a prime ideal the smaller its complement and so one inverts fewer elements by passing to the localization at that prime (loosely speaking of course since all these things can have the same cardinality). On the other hand in a triangulated category the larger the prime ideal the more morphisms we invert in the corresponding Verdier quotient which is the tt-version of localizing at a prime.
\end{remark}

%----------------------------------------------------------------------------------------------------------------------------------------------

%----------------------------------------------------------------------------------------------------------------------------------------------

%----------------------------------------------------------------------------------------------------------------------------------------------

%----------------------------------------------------------------------------------------------------------------------------------------------

\subsection{Rigid tt-categories}

%----------------------------------------------------------------------------------------------------------------------------------------------

%----------------------------------------------------------------------------------------------------------------------------------------------

In this section we introduce an important class of tt-categories.

\begin{definition}\label{Gdefn:rigid}
Let $\mathsf{K}$ be an essentially small tensor triangulated category. Assume that $\mathsf{K}$ is closed symmetric monoidal, i.e.\ for each $k\in \mathsf{K}$ the functor $k\otimes-$ has a right adjoint which we denote $\hom(k,-)$ and these functors can be assembled into a bifunctor $\hom(-,-)$ which we call the \emph{internal hom} of $\mathsf{K}$. By definition one has, for all $a,b,c \in \mathsf{K}$, the tensor-hom adjunction
\begin{displaymath}
\mathsf{K}(a\otimes b, c) \cong \mathsf{K}(a, \hom(b,c)),
\end{displaymath}
with corresponding units and counits
\begin{displaymath}
\eta_{a,b} \colon b \to \hom(a,a\otimes b) \quad \text{and} \quad \varepsilon_{a,b} \colon \hom(a,b)\otimes a \to b.
\end{displaymath}
We define the \emph{dual} of $a\in \mathsf{K}$ to be the object
\begin{displaymath}
a^\vee = \hom(a,\unit).
\end{displaymath}
Given $a,b \in \mathsf{K}$ there is a natural \emph{evaluation map}
\begin{displaymath}
a^\vee \otimes b \to \hom(a,b),
\end{displaymath}
which is defined by following the identity map on $b$ through the composite
\begin{displaymath}
\begin{tikzcd}
\mathsf{K}(b,b) \arrow[r, "\sim"] & \mathsf{K}(b\otimes \unit, b) \arrow[rr, "{\mathsf{K}(b\otimes \varepsilon_{a,\unit}, b)}"] && \mathsf{K}(a\otimes a^\vee \otimes b, b) \arrow[r, "\sim"] & \mathsf{K}(a^\vee \otimes b, \hom(a,b))
\end{tikzcd}
%\xymatrix{
%\mathsf{K}(b,b) \ar[r]^-\sim & \mathsf{K}(b\otimes \unit, b) \ar[rr]^-{\mathsf{K}(b\otimes \varepsilon_{a,\unit}, b)} && \mathsf{K}(a\otimes a^\vee \otimes b, b) \ar[r]^-\sim & \mathsf{K}(a^\vee \otimes b, \hom(a,b)).
%}
\end{displaymath}
We say that $\mathsf{K}$ is \emph{rigid} if for all $a,b \in \mathsf{K}$ this natural evaluation map is an isomorphism
\begin{displaymath}
a^\vee \otimes b \stackrel{\sim}{\to} \hom(a,b).
\end{displaymath}
\end{definition}

\begin{remark}
We are using the same notation for the dual internal to $\mathsf{K}$ and the Hochster dual but these are sufficiently different types of operations that this seems unlikely to cause us any confusion.
\end{remark}

\begin{remark}
If $\mathsf{K}$ is rigid then, given $a\in \mathsf{K}$, there is a natural isomorphism
\begin{displaymath}
(a^\vee)^\vee \cong a,
\end{displaymath}
and the functor $a^\vee\otimes -$ is both a left and a right adjoint to the functor $a\otimes-$ of tensoring with $a$ on the left (and hence also to tensoring on the right by symmetry).
\end{remark}

\begin{example}
Let us discuss the rigidity, or otherwise, of the examples from Example~\ref{Gex:ttcats}
\begin{itemize}
\item[(1),(2)] Given a commutative ring $R$ the category $\mathsf{D}^\mathrm{perf}(R)$ is rigid. This is clear if we view it as $\mathsf{K}^\mathrm{b}(\mathrm{proj} R)$ as we are then working with the usual tensor product over $R$ of projective modules and reduce to an easy computation with finite rank free modules. For a quasi-compact and quasi-separated scheme $X$ the category of perfect complexes $\mathsf{D}^\mathrm{perf}(X)$ is also rigid, which one can check locally using the fact for rings.
\item[(3)] If $A$ is a finite dimensional cocommutative Hopf algebra then $\underline{\modu}\: A$ is a rigid tt-category. In this case the required duality is essentially just the corresponding duality on the category of vector spaces over the ground field.
\item[(4)] The finite stable homotopy category $\mathrm{SH}^\mathrm{fin}$ together with the smash product of spectra is a rigid tt-category.
\item[(5)] These examples may or may not be rigid and this depends on the choice of symmetric monoidal additive category $\mathsf{M}$. In fact, $\mathsf{K}^\mathrm{b}(\mathsf{M})$ is rigid precisely when $\mathsf{M}$ is rigid (cf.\ (1) above).
\end{itemize}
\end{example}

Now let us discuss one of the main features of rigid tt-categories that simplifies the theory. Given $a\in \mathsf{K}$ the adjunction between $a\otimes-$ and $a^\vee\otimes-$ implies that $a$ is a summand of $a\otimes a \otimes a^\vee$ by the triangle identities for adjunction. Dually we have that $a^\vee$ is a summand of $a\otimes a^\vee \otimes a^\vee$. The former implies that every tensor-ideal is radical:
\[
\Thick^\otimes(\mathsf{K}) = \Thick^{\sqrt{\otimes}}(\mathsf{K})
\]
and hence the whole lattice of tensor-ideals is a coherent frame. The latter tells us that every tensor-ideal is closed under taking duals and hence $\supp(a) = \supp(a^\vee)$. 

As an important special case we see that if $a\in \mathsf{K}$ is tensor-nilpotent, i.e.\ $a^{\otimes n}\cong 0$ for some $n$, then $a\cong 0$.

\begin{remark}
Rigidity also plays a special role when we consider compactly generated triangulated categories and is crucial in the current approaches to classifying localizing tensor-ideals through supports.
\end{remark}

%----------------------------------------------------------------------------------------------------------------------------------------------

%----------------------------------------------------------------------------------------------------------------------------------------------

\section{The spectrum in action}

In this chapter we do some long overdue examples. We start by giving an explicit computation of the spectrum for a polynomial ring $k[x]$ over a field $k$ and compare the usual structures of algebraic geometry with those in the tt setting. We then discuss computations of the spectrum for various examples and give a proof, using tt techniques, of the computation of the spectrum for the perfect complexes on a quasi-compact and quasi-separated scheme.

%----------------------------------------------------------------------------------------------------------------------------------------------

%----------------------------------------------------------------------------------------------------------------------------------------------

\subsection{An honest example: the polynomial ring by hand}\label{Gsec:realexample}

In this section we will compute many things in the example $\mathsf{D}^\mathrm{perf}(k[x])$, for some fixed ground field $k$, explicitly. This is possible because the ring $k[x]$ is rather simple, but it is rich enough to indicate many features of the theory.

The prime ideals of $k[x]$ are the zero ideal $(0)$ and the ideals $(f)$ where $f$ is an irreducible polynomial. Thus $\Spec k[x]$ is an irreducible space with generic point $(0)$ and a closed point for each irreducible ideal. In the special case that $k$ is algebraically closed the irreducible polynomials are precisely the linear ones and the closed points are naturally identified with $k$.

Every ideal of $k[x]$ is finitely generated (and in fact principal) and so the lattice $\mathrm{Rad}(k[x])$ consists entirely of finitely presented elements. It is naturally isomorphic to the lattice of open subsets of $\Spec k[x]$ as a consequence of Stone duality. Under this correspondence a radical ideal $(f)$, that is an ideal generated by a square-free product of irreducible polynomials $f$, corresponds to the basic open subset $D(f)$ of prime ideals not containing $f$ (in other words the irreducible polynomials not dividing $f$).

We have a complete understanding of the finitely generated $k[x]$-modules. Every finitely generated projective module is free. The remaining modules are finite dimensional and every finite dimensional module is a direct sum of indecomposable finite dimensional modules in an essentially unique way. The finite dimensional indecomposable modules have the form $k[x]/(f^n)$ where $f$ is an irreducible polynomial (that is $(f)$ is a prime ideal). We call $k[x]/(f)$ the \emph{residue field} at $(f)$.

Thus there is a family of indecomposable finite dimensional modules for each closed point of $\Spec k[x]$. For a fixed irreducible polynomial $f$ one has an indecomposable module $k[x]/(f^n)$ for each $n\geq 1$ which has filtration
\[
0 \subseteq (f^{n-1})/(f^n) \subseteq (f^{n-2})/(f^n) \subseteq \cdots \subseteq (f)/(f^n) \subseteq k[x]/(f^n)
\]
where each subquotient is isomorphic to $k[x]/(f)$. In particular, we see that
\[
\thick(k[x]/(f)) \supseteq \thick(k[x]/(f^n)).
\]
In fact, these two thick subcategories are equal. The proof illustrates a general principle which is the key to understanding the case of an arbitrary commutative ring. This is a good point to remind ourselves that, because $k[x]$ generates $\mathsf{D}^\mathrm{perf}(k[x])$, it follows from Lemma~\ref{Glem:small_unit_gen} that every thick subcategory is an ideal.

\begin{lemma}\label{Glem:torsionkx}
There is an equality of thick subcategories 
\[
\thick(k[x]/(f)) = \thick(k[x]/(f^n))
\]
for each $n\geq 1$.
\end{lemma}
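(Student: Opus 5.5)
The inclusion $\thick(k[x]/(f^n)) \subseteq \thick(k[x]/(f))$ has already been observed. The filtration of $k[x]/(f^n)$ has all subquotients isomorphic to $k[x]/(f)$. The short exact sequences between consecutive terms give triangles in $\mathsf{D}^\mathrm{perf}(k[x])$. So by induction on the length of the filtration $k[x]/(f^n)$ lies in $\thick(k[x]/(f))$. (All these modules are perfect since $k[x]$ is regular; concretely $k[x]/(g)$ is quasi-isomorphic to the cone of $k[x]\xrightarrow{g}k[x]$.) It remains to prove the reverse inclusion, i.e.\ that $k[x]/(f)$ lies in $\thick(k[x]/(f^n))$.

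The plan is to use the tensor structure, which is available since every thick subcategory is an ideal by Lemma~\ref{Glem:small_unit_gen}. Let $K = \mathrm{cone}(k[x]\xrightarrow{f^n}k[x]) \simeq k[x]/(f^n)$. Since $\thick(K)$ is a tensor-ideal, $k[x]/(f)\otimes K$ lies in $\thick(K)$, where $\otimes$ is the derived tensor product. We compute this object directly. Tensoring the triangle $k[x]\xrightarrow{f^n}k[x]\to K\to$ with $M = k[x]/(f)$ gives a triangle
\[
M \xrightarrow{f^n} M \to M\otimes K \to \Sigma M .
\]
Multiplication by $f^n$ (indeed by $f$) is zero on $M$, so this triangle has zero first map. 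Hence it splits, and
\[
M\otimes K \cong M\oplus \Sigma M .
\]
Since thick subcategories are closed under summands, $M = k[x]/(f)$ lies in $\thick(K) = \thick(k[x]/(f^n))$, which completes the reverse inclusion.

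The main point to be careful about is the splitting claim. In a triangulated category, a triangle $a\xrightarrow{0}b\to c\to\Sigma a$ has $c\cong b\oplus\Sigma a$. This is standard, following by comparing with the direct sum of the triangles $0\to b\to b\to 0$ and $a\to 0\to\Sigma a\to\Sigma a$. The other point is to justify that the map induced on $M\otimes k[x]\cong M$ by tensoring with $f^n$ really is multiplication by $f^n$ on $M$, hence zero. This is immediate because $\otimes$ is over $k[x]$ and is $k[x]$-linear. Alternatively, one can avoid tensors and reason by hand: the Koszul-type cone of $f^n$ on $K$ splits similarly. The tensor argument, however, is the general principle the text alludes to: $M$ is a summand of $M\otimes \mathrm{cone}(f^n)$ whenever $f^n$ acts by zero on $M$.
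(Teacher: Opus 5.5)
Your proof is correct and follows the paper's argument. The paper also uses Lemma~\ref{Glem:small_unit_gen} to treat $\thick(k[x]/(f^n))$ as a tensor-ideal, computes $k[x]/(f)\otimes^{\mathrm{L}} k[x]/(f^n)\cong k[x]/(f)\oplus\Sigma k[x]/(f)$ by tensoring the resolution $k[x]\stackrel{f^n}{\longrightarrow}k[x]$, and concludes by closure under summands; your explicit splitting argument for a triangle with zero first map fills in a detail the paper leaves implicit.
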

\begin{proof}
We already observed one containment and so it is enough to check that $k[x]/(f)$ lies in $\thick(k[x]/(f^n))$. The derived tensor product $k[x]/(f)\otimes^\mathrm{L} k[x]/(f^n)$ can be computed by tensoring the projective resolution
\[
k[x] \stackrel{f^n}{\longrightarrow} k[x]
\]
of $k[x]/(f^n)$ with $k[x]/(f)$. We see that
\[
k[x]/(f)\otimes^\mathrm{L} k[x]/(f^n) \cong k[x]/(f) \oplus \Sigma k[x]/(f)
\]
and so $k[x]/(f) \in \thick^\otimes(k[x]/(f^n)) = \thick(k[x]/(f^n))$.
\end{proof}

\begin{remark}\label{Grem:field}
Actually the fact that $k[x]/(f)\otimes^\mathrm{L} k[x]/(f^n)$ is a direct sum of shifts of $k[x]/(f)$ is true for completely formal reasons. We can factor tensoring with $k[x]/(f)$ through the category of $k[x]/(f)$-vector spaces where every complex is a sum of shifts of $k[x]/(f)$. This only relies on the fact that $k[x] \to k[x]/(f)$ is a map to a field; the same argument works for any map $R\to k$ from a commutative ring $R$ to a field $k$.
\end{remark}

\begin{remark}
One can also see this equality of thick subcategories directly, without tensoring, using the existence of short exact sequences
\[
0 \longrightarrow k[x]/(f^n) \longrightarrow k[x]/(f^{n+1}) \oplus k[x]/(f^{n-1}) \longrightarrow k[x]/(f^n) \longrightarrow 0.
\]
These are a special instance of \emph{Auslander-Reiten sequences}.
\end{remark}

Now let us continue on by introducing an important structural property of $\mathsf{D}^\mathrm{perf}(k[x])$. The ring $k[x]$ is hereditary, i.e.\ it has global dimension $1$, and so every bounded complex of finitely generated $k[x]$-modules is quasi-isomorphic to a bounded complex of projectives. Thus $\mathsf{D}^\mathrm{perf}(k[x]) = \mathsf{D}^\mathrm{b}(\modu k[x])$. Moreover, every complex over $k[x]$ is quasi-isomorphic to its cohomology modules placed in the appropriate degrees. We say that such a complex is \emph{formal}.

\begin{lemma}\label{Glem:hereditary}
If $A$ is a ring of global dimension $1$ then every complex $C$ of $A$-modules satisfies
\[
C \cong \bigoplus_i \Sigma^{-i}H^i(C)
\]
in the derived category $\mathsf{D}(A)$.
\end{lemma}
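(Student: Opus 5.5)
The idea is to split off each cohomology module using the vanishing of $\Ext^2$, which is exactly what global dimension $1$ buys us. Fix a complex $C$ and an integer $i$. The plan is to construct a morphism $\Sigma^{-i}H^i(C) \to C$ in $\mathsf{D}(A)$ that induces the identity on $H^i$.

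\textbf{Constructing the map for a single degree.} Write $Z^i$ for the cycles and $B^i$ for the boundaries in degree $i$. Since $A$ has global dimension $1$, submodules of projective modules are projective. More relevantly, $H^i(C)$ has a projective resolution of length one, say
\[
0 \to P_1 \to P_0 \to H^i(C) \to 0.
\]
I will first lift $P_0 \to H^i(C)$ through the surjection $Z^i \to H^i(C)$, which is possible because $P_0$ is projective. This gives a map of modules $P_0 \to Z^i \subseteq C^i$. Restricted to $P_1$, this lift lands in $B^i$, the image of $d\colon C^{i-1}\to Z^i$.

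The next step is to lift $P_1 \to B^i$ along $C^{i-1} \to B^i$, again using projectivity of $P_1$. The result is a chain map from the two-term complex $P_1 \to P_0$, placed in degrees $i-1$ and $i$, to $C$. This two-term complex is isomorphic in $\mathsf{D}(A)$ to $\Sigma^{-i}H^i(C)$. By construction the chain map induces the identity on $H^i$. Alternatively, one can reach the same map by truncations: the triangle $\tau_{\le i-1}C \to \tau_{\le i}C \to \Sigma^{-i}H^i(C)$ splits because the connecting map lies in $\Ext^{2}$-type groups, and these vanish.

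\textbf{Assembling.} Summing these maps gives a morphism
\[
\phi\colon \bigoplus_i \Sigma^{-i}H^i(C) \to C
\]
in $\mathsf{D}(A)$. The coproduct exists in $\mathsf{D}(A)$, since $\mathsf{D}(A)$ has arbitrary coproducts computed degreewise, and cohomology commutes with coproducts. On $H^j$, the summand $\Sigma^{-i}H^i(C)$ contributes only when $i=j$, where it gives the identity. Hence $\phi$ is a quasi-isomorphism, and therefore an isomorphism in $\mathsf{D}(A)$.

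\textbf{Main obstacle.} The only real content is the lifting step, that is, ensuring the lift of $P_1$ factors through $C^{i-1}$. This works because the image of $P_1$ lies in $B^i$ and $P_1$ is projective; global dimension $1$ is what makes $P_1$ projective. For unbounded $C$ no bounded-complex hypotheses are needed, since the argument is degreewise and coproducts are exact.
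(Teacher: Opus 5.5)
Your argument is correct and has the same overall architecture as the paper's proof. For each degree you represent $\Sigma^{-i}H^i(C)$ by a two-term complex in degrees $i-1,i$ and map it to $C$ by a chain map that is an isomorphism on $H^i$. You then sum over $i$ and check on cohomology, as the paper does.

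The difference is how the two-term complex is produced. The paper takes the $2$-extension $0\to \ker d^{n-1}\to C^{n-1}\to \ker d^n\to H^n(C)\to 0$. It uses the vanishing of its class in $\Ext^2(H^n(C),\ker d^{n-1})$ to build a module $E^n$, with $0\to C^{n-1}\to E^n\to H^n(C)\to 0$ and a compatible map $E^n\to\ker d^n$. It then uses $C^{n-1}\to E^n$, mapped to $C$ by the identity in degree $n-1$. You instead take a length-one projective resolution $P_1\to P_0$ of $H^i(C)$ and lift twice by projectivity. Your lift of $P_1\to B^i$ along $C^{i-1}\to B^i$ is the projective-resolution form of the same $\Ext^2$-vanishing; that is exactly where projectivity of $P_1$, i.e.\ projective dimension at most $1$, enters.

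Your version is more hands-on and makes the chain map fully explicit, but it needs enough projectives. The paper's Yoneda-Ext argument uses only $\Ext^2=0$, so, as the paper remarks, it works verbatim in any hereditary abelian category, including ones without enough projectives.

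Your truncation variant is also fine, but for unbounded-below $C$ the phrase ``$\Ext^2$-type groups'' should be made precise. One way is to compute $\Hom_{\mathsf{D}(A)}(\Sigma^{-i}H^i(C),\Sigma\tau_{\le i-1}C)$ using the bounded complex of projectives $P_1\to P_0$. This group then vanishes for degree reasons, since $\Sigma\tau_{\le i-1}C$ is concentrated in degrees $\le i-2$.
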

\begin{proof}
Consider for $n\in \mathbb{Z}$ the exact sequence
\[
0 \to \ker d^{n-1} \to C^{n-1} \to \ker d^{n} \to H^n(C) \to 0
\]
which represents a class in $\Ext^2(H^n(C), \ker d^{n-1})$. This Ext group vanishes as $A$ has global dimension $1$ and so this class must be trivial. Thus there exists an $A$-module $E^n$ fitting into a commutative diagram
\[
\begin{tikzcd}
            &                                                       & 0 \arrow[d]                                     & 0 \arrow[d]                   &   \\
0 \arrow[r] & \ker d^{n-1} \arrow[r] \arrow[d, equal] & C^{n-1} \arrow[r] \arrow[d]                     & \im d^{n} \arrow[r] \arrow[d] & 0 \\
0 \arrow[r] & \ker d^{n-1} \arrow[r]                                & E^n \arrow[r] \arrow[d]                         & \ker d^n \arrow[r] \arrow[d]  & 0 \\
            &                                                       & H^n(C) \arrow[r, equal] \arrow[d] & H^n(C) \arrow[d]              &   \\
            &                                                       & 0                                               & 0                             &  
\end{tikzcd}
\]
where the square in the top right is a pullback. Using this we can define a morphism in the derived category as the span
\[
\begin{tikzcd}
\cdots \arrow[r] \arrow[d] & 0 \arrow[r]                     & 0 \arrow[r]                           & H^n(C) \arrow[r]                  & 0 \arrow[r]                     & \cdots \arrow[d] \\
\cdots \arrow[r] \arrow[d] & 0 \arrow[u] \arrow[d] \arrow[r] & C^{n-1} \arrow[u] \arrow[d] \arrow[r] & E^n \arrow[u] \arrow[d] \arrow[r] & 0 \arrow[u] \arrow[d] \arrow[r] & \cdots \arrow[d] \\
\cdots \arrow[r]           & C^{n-2} \arrow[r]               & C^{n-1} \arrow[r]                     & C^{n} \arrow[r]                   & C^{n+1} \arrow[r]               & \cdots          
\end{tikzcd}
\]
giving a morphism $\Sigma^{-n}H^n(C) \to C$ which induces an isomorphism in $H^n$. Taking the induced map $\oplus_i \Sigma^{-i}H^i(C) \to C$ gives the desired quasi-isomorphism.
\end{proof}

\begin{remark}
The proof doesn't require a ring and works for any hereditary abelian category.
\end{remark}

Thus giving an object of $\mathsf{D}^\mathrm{perf}(k[x])$ is equivalent to specifying finitely many finitely generated $k[x]$-modules and the degrees in which they live. The morphisms are given by usual module morphisms in degree $0$ and by $\Ext^1$ in degree $1$.

In particular, every thick subcategory can be generated by $k[x]$-modules and is, in fact, determined by the indecomposable modules it contains. As noted above Lemma~\ref{Glem:small_unit_gen} tells us that 
\[
\Thick(\mathsf{D}^\mathrm{perf}(k[x])) = \Thick^\otimes(\mathsf{D}^\mathrm{perf}(k[x]))
\]
and $\mathsf{D}^\mathrm{perf}(k[x])$ is rigid so every tensor-ideal is radical.

Let us now describe the prime ideals of $\mathsf{D}^\mathrm{perf}(k[x])$.

\begin{lemma}
If $f$ and $g$ are distinct irreducible polynomials then the residue fields $k[x]/(f)$ and $k[x]/(g)$ satisfy
\[
k[x]/(f) \otimes^\mathrm{L} k[x]/(g) = 0.
\]
In particular, if $\mathsf{P} \in \Spc \mathsf{D}^\mathrm{perf}(k[x])$ then there is at most one irreducible polynomial $f$ such that $k[x]/(f) \notin \mathsf{P}$. 
\end{lemma}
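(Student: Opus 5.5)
The first assertion is a direct computation, exactly as in the proof of Lemma~\ref{Glem:torsionkx}. We resolve $k[x]/(f)$ by the two-term projective resolution
\[
k[x] \stackrel{f}{\longrightarrow} k[x]
\]
and tensor it with $k[x]/(g)$. This gives the complex
\[
k[x]/(g) \stackrel{f}{\longrightarrow} k[x]/(g),
\]
so the derived tensor product has cohomology the kernel and cokernel of multiplication by $f$ on $k[x]/(g)$.

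The key point is that $f$ acts invertibly on $k[x]/(g)$. Since $f$ and $g$ are distinct irreducibles, we may assume they are normalized to be monic, so they are non-associate and hence coprime in the PID $k[x]$. Thus $(f,g)=k[x]$, and writing $af+bg=1$ shows that $f$ is a unit modulo $g$. Equivalently, $k[x]/(g)$ is a field and $f\notin(g)$. Therefore multiplication by $f$ is an isomorphism, the complex is acyclic, and the tensor product vanishes in $\mathsf{D}^\mathrm{perf}(k[x])$.

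For the second assertion, let $\mathsf{P}$ be a prime and suppose $k[x]/(f)\notin\mathsf{P}$ and $k[x]/(g)\notin\mathsf{P}$ with $f\neq g$. Their tensor product is $0$, and $0\in\mathsf{P}$. Primality then forces one of the two factors into $\mathsf{P}$, which is a contradiction. Hence at most one residue field lies outside $\mathsf{P}$.

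The only point requiring care is the normalization: we need "distinct" to mean non-associate, which we ensure by taking monic representatives, or equivalently by working with distinct prime ideals $(f)\neq(g)$. Apart from that, nothing here is an obstacle.
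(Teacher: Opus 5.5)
Your proof is correct and follows the paper's argument essentially verbatim. Like the paper, you tensor the resolution $k[x]\stackrel{f}{\to}k[x]$ with $k[x]/(g)$, note that multiplication by $f$ is invertible on $k[x]/(g)$ by coprimality, and deduce the statement about primes from $0\in\mathsf{P}$. Your remark that ``distinct'' must mean non-associate, i.e.\ $(f)\neq(g)$, is a sensible clarification that the paper leaves implicit.
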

\begin{proof}
We can compute this directly by tensoring the resolution $k[x] \stackrel{f}{\longrightarrow} k[x]$ of $k[x]/(f)$ with $k[x]/(g)$ to get $k[x]/(g) \stackrel{f}{\longrightarrow} k[x]/(g)$. Because $g$ and $f$ are coprime we have that multiplication by $f$ is an isomorphism on $k[x]/(g)$ and so this complex is acyclic. The statement about prime ideals is then immediate from their definition as any prime ideal contains $0$ and hence at least one of any two objects which tensor to $0$.
\end{proof}

\begin{remark}
Again this is true for some formal reason that generalizes: the cohomology of $k[x]/(f) \otimes^\mathrm{L} k[x]/(g)$ would have to be a $k[x]/(f)$-vector space and a $k[x]/(g)$-vector space and this can only happen if it is zero.
\end{remark}

\begin{xca}
Verify that if $R$ is a commutative ring and $\mathfrak{p}$ and $\mathfrak{q}$ are distinct prime ideals then $k(\mathfrak{p})\otimes k(\mathfrak{q}) = 0$ where $k(\mathfrak{p}) = (R/\mathfrak{p})_{(0)}$ is the residue field, i.e.\ the field of fractions of the domain $R/\mathfrak{p}$.
\end{xca}

For a $k[x]$-module $M$ we denote by $\mathrm{ann}(M)$ its annihilator and for a perfect complex $E$ we let $H^*(E) = \oplus_i H^i(E)$.

\begin{proposition}\label{Gprop:kxprimes}
The prime ideals of $\mathsf{D}^\mathrm{perf}(k[x])$ are given by
\begin{align*}
\mathsf{D}^\mathrm{perf}_{\mathrm{tors}}(k[x]) &= \{E\in \mathsf{D}^\mathrm{perf}(k[x]) \mid \mathrm{ann}(H^*(E)) \neq 0\} \\
&= \{E\in \mathsf{D}^\mathrm{perf}(k[x]) \mid H^*(E)_{(0)} = 0\} \\
&= \thick(k[x]/(f) \mid (f)\in \Spec k[x]\setminus \{(0)\})
\end{align*}
and for each $(f) \in \Spec k[x] \setminus \{(0)\}$ the ideal
\begin{align*}
\mathsf{D}^\mathrm{perf}_{(f)}(k[x]) &= \{E\in \mathsf{D}^\mathrm{perf}(k[x]) \mid H^*(E)_{(f)} = 0\} \\
&= \thick(k[x]/(g) \mid (g)\in \Spec k[x]\setminus \{(0),(f)\}).
\end{align*}
\end{proposition}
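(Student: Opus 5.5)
The plan is to first classify all thick subcategories of $\mathsf{D}^\mathrm{perf}(k[x])$ using formality, and then read off which ones are prime. By Lemma~\ref{Glem:hereditary} every object $E$ is isomorphic to $\bigoplus_i \Sigma^{-i}H^i(E)$. Each finitely generated module splits as a free part plus a sum of torsion modules $k[x]/(f^n)$. So every thick subcategory $\mathsf{J}$ is determined by which indecomposables $k[x]$ and $k[x]/(f^n)$ it contains. By Lemma~\ref{Glem:torsionkx}, $\mathsf{J}$ contains $k[x]/(f^n)$ if and only if it contains the residue field $k[x]/(f)$. If $k[x]\in \mathsf{J}$ then $\mathsf{J}$ is everything. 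Hence the proper thick subcategories are exactly $\mathsf{J}_S = \thick(k[x]/(f) \mid (f)\in S)$ for $S$ a set of closed points. Moreover, $E\in \mathsf{J}_S$ if and only if $H^*(E)$ is torsion with all primary components supported in $S$.

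Next, determine which $\mathsf{J}_S$ are prime; they are proper since $k[x]\notin \mathsf{J}_S$. By the preceding lemma, a prime misses at most one residue field, so $S$ is either all closed points or all but one point $(f)$. Conversely, to show these are prime, suppose $E\otimes F\in \mathsf{J}_S$ with $E,F\notin \mathsf{J}_S$. By additivity of the tensor product and the splitting of $E$ and $F$ into indecomposables (ideals are thick, hence closed under summands), we may assume $E$ and $F$ are shifts of indecomposables outside $\mathsf{J}_S$. Each such indecomposable is either $k[x]$ or $k[x]/(f^n)$ with $(f)\notin S$. When $S$ is all closed points, both are $k[x]$ and the tensor product $k[x]$ is not torsion, a contradiction. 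When $S$ misses only $(f)$, we also have $k[x]/(f^n)\otimes^{\mathrm{L}} k[x]/(f^m)$. This has $k[x]/(f)$-composition factors, because its homology is $k[x]/(f^{\min(n,m)})$ in two degrees, so it is not in $\mathsf{J}_S$. The same holds for $k[x]\otimes k[x]/(f^n)$. This proves primality.

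Finally, identify the descriptions. The object $E$ lies in $\mathsf{J}_{\text{all}}$ if and only if $H^*(E)$ is a finitely generated torsion module. This is equivalent to $\mathrm{ann}(H^*(E))\neq 0$, and to $H^*(E)_{(0)}=0$, since localizing at $(0)$ kills exactly the torsion. Likewise, $E\in \mathsf{J}_{S}$ with $S$ missing only $(f)$ if and only if $H^*(E)$ is torsion with no $f$-primary part. This is equivalent to $H^*(E)_{(f)}=0$: localizing at $(f)$ kills $k[x]/(g^n)$ for $g\neq f$ but not $k[x]$ or $k[x]/(f^n)$.

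The main obstacle I expect is the classification step, specifically the claim that thick subcategories are determined by their indecomposable members. This requires care that Krull--Schmidt-type uniqueness and closure under summands suffice, given that formality is only an isomorphism in the derived category. The rest is bookkeeping with the torsion decomposition.
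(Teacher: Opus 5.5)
Your proposal is correct and follows essentially the same route as the paper. Both arguments reduce via formality (Lemma~\ref{Glem:hereditary}) and Lemma~\ref{Glem:torsionkx} to indecomposable modules and then check primality on pairs of indecomposables; you additionally make explicit the completeness of the list (via the ``at most one residue field is missing'' lemma) and the computation of $k[x]/(f^n)\otimes^{\mathrm{L}} k[x]/(f^m)$, both of which the paper leaves implicit. Your Krull--Schmidt worry is unfounded: a thick subcategory is closed under finite sums and summands, so membership can be tested on any decomposition into shifts of indecomposables, and uniqueness of the decomposition is never needed.
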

\begin{proof}
It is straightforward to check that the ideals defined above are all proper and, using what we've explained about objects of $\mathsf{D}^\mathrm{perf}(k[x])$, that the two descriptions given in each case agree. 

The ideal $\mathsf{D}^\mathrm{perf}_{\mathrm{tors}}(k[x])$ is maximal: it contains every indecomposable module except for $k[x]$ and we clearly cannot add the tensor unit and still have a proper ideal. It also follows directly from this that it is prime since a tensor product of non-torsion (i.e.\ free) modules is not torsion. One can also deduce it is prime from Proposition~\ref{Gprop:ttfacts}(c).

It remains to show that $\mathsf{D}^\mathrm{perf}_{(f)}(k[x])$ is prime. One can see this in several ways. The most direct is to use our description of the objects of $\mathsf{D}^\mathrm{perf}(k[x])$. It is sufficient to check that if the derived tensor product of two indecomposable modules lands in $\mathsf{D}^\mathrm{perf}_{(f)}(k[x])$ then it already contained one of them. This is clear if one of the modules is $k[x]$ and so the only case to consider is when we tensor two finite dimensional indecomposable modules. We already checked that $k[x]/(f) \otimes^\mathrm{L} k[x]/(g) = 0$ and so 
\[
\thick(k[x]/(f)) \otimes^\mathrm{L} \thick(k[x]/(g)) = 0
\]
i.e.\ $k[x]/(f^n) \otimes^\mathrm{L} k[x]/(g^m) = 0$ for all $n,m\geq 1$. Thus the only ways to tensor two indecomposable finite dimensional modules and get an object of $\mathsf{D}^\mathrm{perf}_{(f)}(k[x])$ are either to tensor two modules which are already contained in it, or to tensor with $k[x]/(f^n)$ and get zero which is ok since all other families $k[x]/(g^m)$ are already contained in $\mathsf{D}^\mathrm{perf}_{(f)}(k[x])$.
\end{proof}

The phrasing of the proposition gives an obvious bijection between the sets $\Spec k[x]$ and $\Spc \mathsf{D}^\mathrm{perf}(k[x])$ which is \emph{order reversing} the minimal prime ideal $(0)$ of $k[x]$ corresponds to the maximal prime ideal $\mathsf{D}^\mathrm{perf}_{\mathrm{tors}}(k[x])$ of $\mathsf{D}^\mathrm{perf}(k[x])$. This coincides with the order reversal in the closure operation as in Lemma~\ref{Glem:closure}. 

For brevity, and to avoid case distinctions, let us denote by $\mathsf{P}(\mathfrak{p})$ the prime tensor-ideal corresponding to the prime ideal $\mathfrak{p}$ of $k[x]$. This bijection is really natural as the following result demonstrates (the first part is already implicit in Proposition~\ref{Gprop:kxprimes}).

\begin{proposition}
We have  
\[
\mathsf{P}(\mathfrak{p}) = \ker ((-)\otimes k[x]_\mathfrak{p}\colon \mathsf{D}^\mathrm{perf}(k[x]) \to \mathsf{D}^\mathrm{perf}(k[x]_\mathfrak{p})).
\]
and in fact $\mathsf{D}^\mathrm{perf}(k[x]) / \mathsf{P}(\mathfrak{p})$ is equivalent to $\mathsf{D}^\mathrm{perf}(k[x]_\mathfrak{p})$.
\end{proposition}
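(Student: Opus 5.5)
The plan has two parts: first identify the kernel, then identify the Verdier quotient.

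\emph{The kernel.} Localization $(-)\otimes k[x]_\mathfrak{p}$ is exact, so $E\otimes k[x]_\mathfrak{p}\simeq 0$ exactly when $H^*(E)_\mathfrak{p}=0$. For $\mathfrak{p}=(f)$ this is literally the description of $\mathsf{D}^\mathrm{perf}_{(f)}(k[x])$ in Proposition~\ref{Gprop:kxprimes}. For $\mathfrak{p}=(0)$ it is the second description of $\mathsf{D}^\mathrm{perf}_{\mathrm{tors}}(k[x])$ given there. As a consistency check one can use Lemma~\ref{Glem:hereditary} to reduce to indecomposable modules. The module $k[x]$ survives every localization. The module $k[x]/(g^m)$ vanishes at $\mathfrak{p}$ precisely when $g\notin\mathfrak{p}$, i.e.\ at $(0)$ and at every $(f)\neq(g)$.

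\emph{The quotient.} The functor $L=(-)\otimes k[x]_\mathfrak{p}$ kills $\mathsf{P}(\mathfrak{p})$, so it induces an exact functor
\[
\bar L\colon \mathsf{D}^\mathrm{perf}(k[x])/\mathsf{P}(\mathfrak{p})\to \mathsf{D}^\mathrm{perf}(k[x]_\mathfrak{p}).
\]
I would show $\bar L$ is fully faithful and essentially surjective.

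\emph{Essential surjectivity.} The ring $k[x]_\mathfrak{p}$ is a field or a DVR, so it is hereditary. By Lemma~\ref{Glem:hereditary} every perfect complex over it is a sum of shifts of finitely generated modules. Those modules are $k[x]_\mathfrak{p}$ itself and $k[x]_{(f)}/(f^n)$, and they are the images of $k[x]$ and $k[x]/(f^n)$ respectively. Since $L$ commutes with sums and shifts, everything is hit.

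\emph{Full faithfulness.} This is the main obstacle. I would use the calculus of fractions in the Verdier quotient. A morphism $E\to E'$ there is a roof $E\xleftarrow{s}E''\to E'$ with $\mathrm{cone}(s)\in\mathsf{P}(\mathfrak{p})$. The claim to prove is
\[
\Hom_{\mathsf{K}/\mathsf{P}(\mathfrak{p})}(E,E')=\colim_{s}\Hom(E'',E')\cong \Hom(E,E')_\mathfrak{p}.
\]
The key input is the following. For $u\notin\mathfrak{p}$, the cone of multiplication $u\colon E\to E$ has cohomology killed by a power of $u$. That cohomology localizes to zero at $\mathfrak{p}$, so the cone lies in $\mathsf{P}(\mathfrak{p})$ by the first part. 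Hence every $u\notin\mathfrak{p}$ acts invertibly on quotient Hom-sets, and this gives a map $\Hom(E,E')_\mathfrak{p}\to\Hom_{\mathsf{K}/\mathsf{P}(\mathfrak{p})}(E,E')$.

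To show this map is an isomorphism I would check both directions.
\begin{itemize}
\item Surjectivity: any $s$ with cone in $\mathsf{P}(\mathfrak{p})$ is dominated, up to fractions, by multiplication by some $u\notin\mathfrak{p}$. The cone is perfect with torsion cohomology away from $\mathfrak{p}$, so it is annihilated by some $u\notin\mathfrak{p}$.
\item Injectivity: a morphism vanishing in the quotient factors through an object of $\mathsf{P}(\mathfrak{p})$. Such an object is killed by some $u\notin\mathfrak{p}$, so the morphism is $u$-torsion.
\end{itemize}
Finally, since $E$ is perfect, $\Hom(E,E')_\mathfrak{p}\cong\Hom_{k[x]_\mathfrak{p}}(E_\mathfrak{p},E'_\mathfrak{p})$ by flatness of localization. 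Composing these identifications shows $\bar L$ is fully faithful, and hence an equivalence.
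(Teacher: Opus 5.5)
Your proof is correct, but it takes a different route from the paper's.

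The paper works in the big categories. Since $k[x]\to k[x]_\mathfrak{p}$ is a flat epimorphism, $\mathsf{D}(k[x]_\mathfrak{p})$ is the localization of $\mathsf{D}(k[x])$ at the objects killed by $(-)\otimes k[x]_\mathfrak{p}$. This localization preserves compacts, so the Neeman--Thomason localization theorem identifies $\mathsf{D}^\mathrm{perf}(k[x]_\mathfrak{p})$ with the idempotent completion of the Verdier quotient by the kernel. The kernel is then matched with $\mathsf{P}(\mathfrak{p})$ via indecomposable modules, and the idempotent completion is dismissed by a check ``by hand'' that is not written out.

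You stay inside the small categories. Calculus of fractions computes the quotient Hom-sets as $\Hom(E,E')_\mathfrak{p}$; this is a hands-on case of Balmer's central localization, which the paper only invokes later, in the tt-proof of Thomason's theorem. The hereditary classification of perfect complexes over the field or discrete valuation ring $k[x]_\mathfrak{p}$ then gives essential surjectivity on the nose. That is precisely the check the paper omits.

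The paper's argument is more conceptual. It extends to any localization $R\to S^{-1}R$ of a commutative ring (up to idempotent completion), and it previews the idempotent-algebra picture of Section~\ref{Gsec:loc}. Yours is elementary and self-contained, and only its essential-surjectivity step really depends on the special structure of $k[x]$.

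One detail should be made explicit. In both your surjectivity and injectivity steps you need $u\cdot 1_C=0$ as a morphism for $C\in\mathsf{P}(\mathfrak{p})$, not merely that $u$ kills $H^*(C)$. This follows from formality (Lemma~\ref{Glem:hereditary}), or over any ring from $\Hom(C,C)_\mathfrak{p}\cong\Hom(C_\mathfrak{p},C_\mathfrak{p})=0$. Applying $\Hom(E,-)$ to the triangle $E''\stackrel{s}{\to}E\to C\to\Sigma E''$ then gives $u\cdot 1_E=s\circ t$ for some $t$. Hence the roof $(s,a)$ is equivalent to $(u,a\circ t)$, which is the ``domination'' you assert.
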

\begin{proof}
The map $k[x] \to k[x]_\mathfrak{p}$ is a flat epimorphism. Thus the induced base change functor $\Modu k[x] \to \Modu k[x]_\mathfrak{p}$ is exact and its right adjoint, restriction of scalars, is fully faithful and preserves injectives. Deriving this we see that $\mathsf{D}(k[x]_\mathfrak{p})$ is the Verdier localization of $\mathsf{D}(k[x])$ at the coproduct closed triangulated subcategory
\[
\{E \in \mathsf{D}(k[x]) \mid E\otimes k[x]_\mathfrak{p} \cong 0\}.
\]
(Looking ahead, $k[x]_\mathfrak{p}$ is an idempotent algebra in $\mathsf{D}(k[x])$, see Section~\ref{Gsec:loc}.) This localization preserves compacts and one then easily checks the category $\mathsf{D}^\mathrm{perf}(k[x]_\mathfrak{p})$ is the idempotent completion of the Verdier quotient of $\mathsf{D}^\mathrm{perf}(k[x])$ by 
\[
\{E \in \mathsf{D}(k[x]) \mid E\otimes k[x]_\mathfrak{p} \cong 0\} \cap \mathsf{D}^\mathrm{perf}(k[x]) = \ker(\mathsf{D}^\mathrm{perf}(k[x]) \to \mathsf{D}^\mathrm{perf}(k[x]_\mathfrak{p})).
\]
This kernel is the thick subcategory generated by those indecomposable modules $M$ such that $M\otimes k[x]_\mathfrak{p}=0$, i.e.\ those $M$ which are annihilated by some non-zero polynomial in $k[x] \setminus \mathfrak{p}$. This is precisely $\mathsf{P}(\mathfrak{p})$. Given this description one can check by hand that the idempotent completion is not necessary.
\end{proof}

\begin{remark}
In general the idempotent completion after taking the localization is required.
\end{remark}

\begin{remark}\label{Grem:rho}
One can also start with the prime tensor-ideal $\mathsf{P}$ and produce a prime ideal of $k[x]$ following \cite{Balmer:2010b}. We set
\[
\rho(\mathsf{P}) = \{f\in k[x] \mid \mathrm{cone}(k[x] \stackrel{f}{\to} k[x]) \notin \mathsf{P} \}.
\]
This is a prime ideal, and $\rho$ gives an inverse to the bijection $\mathfrak{p} \mapsto \mathsf{P}(\mathfrak{p})$. (We note that this doesn't really depend on knowing we're working with $\mathsf{D}^\mathrm{perf}(k[x])$ as we can recover $k[x]$ as the endomorphism ring of the unit object.)

The intuition is that killing $\mathsf{P}$ inverts every morphism $f$ whose cone is in $\mathsf{P}$ and in particular, if $\mathrm{cone}(k[x] \stackrel{f}{\to} k[x])$ were in $\mathsf{P}$ then it would become invertible in the Verdier quotient by $\mathsf{P}$. So $\mathsf{P}$ is a closer relative of $k[x]\setminus \mathfrak{p}$, the set of maps we invert when localizing at $\mathfrak{p}$, than of $\mathfrak{p}$ itself. This gives another explanation for the order reversal we have observed.
\end{remark}

By now, if we didn't already know, we would have guessed that $\Spc \mathsf{D}^\mathrm{perf}(k[x])$ is just $\Spec k[x]$. Let's verify this by describing explicitly the lattice of tensor-ideals and identifying it with $\Thom(\Spec k[x])$ the dual of the frame of radical ideals of $k[x]$.

\begin{remark}
Because $k[x]$ is noetherian every open subset of $\Spec k[x]$ is quasi-compact and so a subset is Thomason if and only if it is a possibly infinite union of closed subsets. We call such a subset \emph{specialization closed}. Thus the proper non-empty Thomason subsets are just given by arbitrary subsets of closed points of $\Spec k[x]$.
\end{remark}

Let us define functions 
\[
\phi\colon \Thick^{\sqrt{\otimes}}(\mathsf{D}^\mathrm{perf}(k[x])) \to \Thom(\Spec k[x])
\]
and 
\[
\psi \colon \Thom(\Spec k[x]) \to \Thick^{\sqrt{\otimes}}(\mathsf{D}^\mathrm{perf}(k[x]))
\]
as follows. For a thick tensor-ideal $\mathsf{J}$ we set
\[
\phi(\mathsf{J}) = \{(f) \in \Spec k[x] \mid k[x]/(f) \in \mathsf{J} \}.
\]
For example
\begin{itemize}
\item $\phi(0) = \varnothing$
\item $\phi(\mathsf{D}^\mathrm{perf}(k[x])) = \Spec k[x]$
\item $\phi(\mathsf{D}^\mathrm{perf}_{\mathrm{tors}}(k[x])) = \Spec k[x] \setminus \{(0)\}$
\item $\phi(\mathsf{D}^\mathrm{perf}_{(f)}(k[x])) = \Spec k[x] \setminus \{(0),(f)\}$
\item $\phi(\thick(k[x]/(g))) = V(g) = \{(f)\in \Spec k[x] \mid g\in (f)\}$
\end{itemize}
For a Thomason subset $V \subseteq \Spec k[x]$ we set
\[
\psi(V) = \thick(k[x]/(f) \mid (f) \in V).
\] 
So for instance
\begin{itemize}
\item $\psi(\varnothing) = 0$
\item $\psi(\Spec k[x]) = \mathsf{D}^\mathrm{perf}(k[x])$
\item $\psi(\Spec k[x] \setminus \{(0)\}) = \mathsf{D}^\mathrm{perf}_{\mathrm{tors}}(k[x])$
\end{itemize}
and by now you probably get the idea.

\begin{proposition}
The maps $\phi$ and $\psi$ give an isomorphism of coherent frames
\[
\Thom(\Spec k[x]) \cong \Thick^{\sqrt{\otimes}}(\mathsf{D}^\mathrm{perf}(k[x])).
\]
Hence there is a homeomorphism $\Spec k[x] \to \Spc \mathsf{D}^\mathrm{perf}(k[x])$ and it acts on prime ideals by $\mathfrak{p} \mapsto \mathsf{P}(\mathfrak{p})$.
\end{proposition}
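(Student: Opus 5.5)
The plan is to check that $\phi$ and $\psi$ are well defined, order preserving, and mutually inverse. An order isomorphism between complete lattices automatically preserves all joins and meets, so it is an isomorphism of frames. It also preserves finitely presented elements, since those are defined purely order-theoretically. So it is an isomorphism of coherent frames.

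\emph{Step 1: well-definedness.} I need $\phi(\mathsf{J})$ to be Thomason, i.e.\ specialization closed. Suppose $\phi(\mathsf{J})$ contains the generic point $(0)$, so $k[x]/(0)=k[x]\in\mathsf{J}$. Then $\mathsf{J}$ is everything, because $k[x]$ is the unit, and $\phi(\mathsf{J})=\Spec k[x]$. Otherwise $\phi(\mathsf{J})$ is a set of closed points, which is Thomason by the preceding remark. Here I read $k[x]/(0)$ as $k[x]$, which is how the listed examples treat it. For $\psi$ there is nothing to check: it produces a thick subcategory, which is an ideal by Lemma~\ref{Glem:small_unit_gen} and radical by rigidity. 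Both maps are clearly monotone.

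\emph{Step 2: $\psi\phi=\mathrm{id}$.} By Lemma~\ref{Glem:hereditary}, every object is a sum of shifts of modules, and each module is a sum of indecomposables. These are $k[x]$ and the modules $k[x]/(f^n)$. So $\mathsf{J}$ is generated by the indecomposable modules it contains. If $k[x]\in\mathsf{J}$, both sides are everything. Otherwise $\mathsf{J}$ is generated by some $k[x]/(f^n)$. By Lemma~\ref{Glem:torsionkx}, $k[x]/(f^n)\in\mathsf{J}$ if and only if $k[x]/(f)\in\mathsf{J}$. Hence $\mathsf{J}=\thick(k[x]/(f)\mid (f)\in\phi(\mathsf{J}))=\psi\phi(\mathsf{J})$.

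\emph{Step 3: $\phi\psi=\mathrm{id}$.} The inclusion $V\subseteq\phi\psi(V)$ is clear. For the reverse inclusion, first let $V$ be a proper Thomason subset, so it consists of closed points $(f)$. Then $\psi(V)$ lies in the kernel of the exact functor $-\otimes^{\mathrm L}k[x]/(g)$ whenever $(g)\notin V$, since distinct residue fields tensor to zero. But $k[x]/(g)\otimes^{\mathrm L}k[x]/(g)\neq 0$, so $k[x]/(g)\notin\psi(V)$. Likewise $k[x]\notin\psi(V)$: everything in $\psi(V)$ is torsion, i.e.\ $\psi(V)\subseteq\mathsf{D}^\mathrm{perf}_{\mathrm{tors}}(k[x])$. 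So $\phi\psi(V)=V$. The case $V=\Spec k[x]$ is immediate. This detection-by-residue-fields argument is the only step with real content, and I expect it to be the main obstacle.

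\emph{Step 4: the homeomorphism.} By construction, $\Thick^{\sqrt{\otimes}}(\mathsf{D}^\mathrm{perf}(k[x]))\cong\Thom(\Spc\,\mathsf{D}^\mathrm{perf}(k[x]))$. So Steps 1–3 give $\Omega((\Spec k[x])^\vee)\cong\Omega((\Spc\,\mathsf{D}^\mathrm{perf}(k[x]))^\vee)$. Both spaces are coherent, hence sober, so Theorem~\ref{thm:stoneduality} gives a homeomorphism of the Hochster duals. Applying the Hochster dual again gives $\Spec k[x]\cong\Spc\,\mathsf{D}^\mathrm{perf}(k[x])$.

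To identify the map on points, I trace a prime $\mathfrak{p}$ through Lemma~\ref{Glem:prime}. In $\Thom(\Spec k[x])$, the prime element corresponding to $\mathfrak{p}$ is the largest Thomason subset not containing $\mathfrak{p}$, namely $\{\mathfrak{q}\mid \mathfrak{p}\notin\overline{\{\mathfrak{q}\}}\}$. Its image under $\psi$ is exactly the ideal listed in Proposition~\ref{Gprop:kxprimes}, which is $\mathsf{P}(\mathfrak{p})$. For example, $\mathfrak{p}=(0)$ gives the set of all closed points, and $\psi$ of that is $\mathsf{D}^\mathrm{perf}_{\mathrm{tors}}(k[x])$.
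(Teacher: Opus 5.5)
Your proposal is correct and follows the paper's proof essentially step for step. The two inverse checks are the same: residue-field detection shows $\phi\psi=\mathrm{id}$, and Lemma~\ref{Glem:hereditary} with Lemma~\ref{Glem:torsionkx} shows $\psi\phi=\mathrm{id}$; your remark that an order isomorphism of complete lattices automatically preserves joins, meets and finitely presented elements replaces the paper's ``straightforward exercise'', and your Step~4 spells out the homeomorphism and the identification $\mathfrak{p}\mapsto\mathsf{P}(\mathfrak{p})$, which the paper leaves implicit.
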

\begin{proof}
It is clear that both $\phi$ and $\psi$ are order preserving and they both preserve joins more or less by definition. It's a straightforward exercise to check they both preserve finite (and in fact all) meets and finitely presented objects. Let us argue that they are inverse to one another to complete the proof. Suppose that $\mathsf{J}$ is a tensor-ideal and $V$ is a Thomason subset.

It is evident that $\phi\psi(V) \supseteq V$ and we need to verify that no additional points can appear. If $V$ is proper, i.e.\ there is something to check, then $\psi(V)$ is generated by torsion modules and so we cannot have $k[x] \in \psi(V)$ (since being torsion is preserved by finite sums and extensions). If $(f)\notin V$ is a closed point then the exact functor $k[x]/(f)\otimes^\mathrm{L}(-)$ kills the generators of $\psi(V)$ and hence kills $\psi(V)$. But $k[x]/(f) \otimes^\mathrm{L} k[x]/(f) \neq 0$ and so we can't have $(f)\in \phi\psi(V)$.

Starting with $\mathsf{J}$ it is evident that $\psi\phi(\mathsf{J}) \subseteq \mathsf{J}$. By Lemma~\ref{Glem:hereditary} it is enough to check that these two thick subcategories contain the same indecomposable modules. If $\mathsf{J}$ contains $k[x]$ then $(0)\in \phi(\mathsf{J})$ and so $\psi\phi(\mathsf{J})$ is also all of $\mathsf{D}^\mathrm{perf}(k[x])$. So we are reduced to the case that $\phi(\mathsf{J})$ is a collection of closed points and we need to show that if $k[x]/(f^n)$ lies in $\mathsf{J}$ then it is also in $\psi\phi(\mathsf{J})$. We already saw in Lemma~\ref{Glem:torsionkx} that this boils down to specifying which residue fields $\mathsf{J}$ contains and this is exactly what $\phi$ does. 
\end{proof}

Finally we should discuss supports. The abstract machinery gives a support for objects of $\mathsf{D}^\mathrm{perf}(k[x])$ valued in $\Spc \mathsf{D}^\mathrm{perf}(k[x])$ which we now know is homeomorphic to $\Spec k[x]$. There is also a support on $\mathsf{D}^\mathrm{perf}(k[x])$ coming from geometry which we will temporarily denote by $\sigma$. It is defined on a perfect complex $E$ by
\begin{align*}
\sigma(E) &= \{\mathfrak{p}\in \Spec k[x] \mid E_\mathfrak{p} \neq 0\} \\
&= \{\mathfrak{p}\in \Spec k[x] \mid H^*(E)_\mathfrak{p} \neq 0\} \\
&= \cup_i \sigma(H^i(E))
\end{align*}
here $E_\mathfrak{p} = E\otimes k[x]_\mathfrak{p}$ and the equalities are a small exercise in exactness of localization and commutative algebra. The subset $\sigma(H^i(E))$ is nothing but the usual support of the $k[x]$-module $H^i(E)$. In particular, $\sigma(E)$ is always a closed subset of $\Spec k[x]$ and so setting, for a tensor-ideal $\mathsf{J}$, 
\[
\sigma(\mathsf{J}) = \bigcup_{E\in \mathsf{J}} \sigma(E)
\]
gives a Thomason subset of $\Spec k[x]$.

\begin{xca}
Show that $\sigma(\mathsf{J}) = \phi(\mathsf{J})$.
\end{xca}

Thus the support $\sigma$ of a perfect complex gives the lattice isomorphism between Thomason subsets and thick tensor-ideals and thus corresponds to the universal support $\supp$ under the associated bijection of spectra.

In Section~\ref{Gsec:thomasonproof} we will discuss how one can generalize this discussion to an arbitrary commutative ring. One cannot work explicitly in that generality, but the general yoga of using the residue fields to control the situation remains intact. There are also other arguments using tensor-nilpotence, for instance this is the approach taken by Thomason in his paper \cite{Thomason:1997a} giving the first proof of the homeomorphism between the spectra of the ring and of its perfect complexes (although not in that language) without any noetherian hypothesis.

%----------------------------------------------------------------------------------------------------------------------------------------------

%----------------------------------------------------------------------------------------------------------------------------------------------

%----------------------------------------------------------------------------------------------------------------------------------------------

%----------------------------------------------------------------------------------------------------------------------------------------------

\subsection{Further computations from the literature}

In this section we will give a brief overview of some of the families of examples where the spectrum has been computed. We don't try to be exhaustive, but rather to give some important and/or illustrative examples. A good overview, at the time of its writing, is given in \cite{Balmerguide}.

We note that many of the examples were computed before the language of tt-geometry. We will not take any care to make this distinction when discussing the examples, i.e.\ we will retcon the older work to exist in a universe where tt-geometry has always been.

%----------------------------------------------------------------------------------------------------------------------------------------------

\subsubsection{Algebraic geometry}

Let us begin in the affine setting. If $R$ is a commutative ring then we have the rigid tt-category $\mathsf{D}^\mathrm{perf}(R)$ of perfect complexes over $R$. This is generated by the unit $R$ and so the lattices of radical tensor-ideals, tensor-ideals, and thick subcategories all coincide. The map $\rho$ of Remark~\ref{Grem:rho} still works at this level of generality and gives a continuous map 
\[
\rho\colon \Spc \mathsf{D}^\mathrm{perf}(R) \to \Spec R, \quad \rho(\mathsf{P}) = \{r\in R\mid R\stackrel{r}{\to}R \notin \mathsf{P}\}.
\]
It was proved by Hopkins and Neeman \cite{Neeman:1992a} in the noetherian case and by Thomason \cite{Thomason:1997a} in general that $\rho$ is a homeomorphism. Thus:
\begin{itemize}
\item $\Spc \mathsf{D}^\mathrm{perf}(R) \cong \Spec R$ recovers the Zariski spectrum of $R$
\item thick subcategories of $\mathsf{D}^\mathrm{perf}(R)$ are classified by Thomason subsets of $\Spec R$
\item the universal support theory is given by the $R$-module theoretic support of the cohomology.
\end{itemize}

The approach in Neeman's work passes through the unbounded derived category and utilizes the residue fields $k(\mathfrak{p}) = (R/\mathfrak{p})_{(0)}$ at the prime ideals of $R$. Thomason's approach is based on a tensor-nilpotence argument for morphisms. In Section~\ref{Gsec:thomasonproof} we will give a tensor triangular proof of Thomason's result using residue fields; we will not really discuss tensor-nilpotence in these notes although it is an important and useful perspective.

In fact, Thomason proved the extension not just to not necessarily noetherian commutative rings but to a very general class of schemes. Suppose that $X$ is a quasi-compact and quasi-separated scheme. These conditions guarantee that the underlying space of $X$ is coherent and so is eligible to be the spectrum of a tt-category. Then, as we have mentioned, $\mathsf{D}^\mathrm{perf}(X)$ is a rigid tt-category. Thomason showed that 
\[
\Spc \mathsf{D}^\mathrm{perf}(X) \cong X
\]
as spaces and so the tensor-ideals of $X$ are classified by the Thomason subsets of $X$. In general one really needs the tensor product here as the lattice of thick subcategories of $\Spc \mathsf{D}^\mathrm{perf}(X)$ need not be distributive! In fact, this already happens for the projective line over a field. Classifying general thick subcategories of $\Spc \mathsf{D}^\mathrm{perf}(X)$ is currently completely out of reach: the only projective examples in which this is fully understood are the projective line and elliptic curves where one has a complete understanding of the objects in the bounded derived category of coherent sheaves.

\begin{remark}\label{Grem:lrs}
In fact, one can do better than just recovering $X$ as a space via the spectrum of the perfect complexes and its the derived tensor product. For a tt-category $\mathsf{K}$ one can equip $\Spc \mathsf{K}$ with a sheaf of rings making it into a locally ringed space (see \cite{Balmer:2005a} and \cite{Balmer:2010b}). This is done by defining a presheaf valued on the quasi-compact open subsets of $\Spc \mathsf{K}$ by
\[
U \mapsto \Hom_{\mathsf{K}/\mathsf{K}_{U^c}}(\unit, \unit)
\]
where $\mathsf{K}_{U^c}$ is the radical tensor-ideal corresponding to the complement of $U$. Since the quasi-compact open subsets are a base for the topology this determines a sheaf, namely the structure sheaf $\mathcal{O}_\mathsf{K}$.

If one applies this construction in the case of $\mathsf{D}^\mathrm{perf}(X)$ we recover $X$ as a locally ringed space, i.e.\ we get a scheme. A categorification of this procedure is discussed in \cite{highergeometry}.
\end{remark}

\begin{remark}
The use of the tensor product to recover $X$ from $\mathsf{D}^\mathrm{perf}(X)$ is really necessary in general. There are many examples where $X$ and $Y$ are non-isomorphic schemes such that $\mathsf{D}^\mathrm{perf}(X) \cong \mathsf{D}^\mathrm{perf}(Y)$. What the work discussed above shows is that such an equivalence cannot be monoidal.

However, there are cases where the property of being a tensor-ideal is intrinsic and so one can reconstruct $X$ from $\mathsf{D}^\mathrm{perf}(X)$ without recourse to the monoidal structure. For instance, this happens for smooth schemes where the canonical bundle is either ample or anti-ample: up to a shift the category of perfect complexes over a smooth scheme knows the canonical bundle and the (anti-)ampleness can be used to detect tensor-ideals.
\end{remark}

%----------------------------------------------------------------------------------------------------------------------------------------------

%----------------------------------------------------------------------------------------------------------------------------------------------

\subsubsection{Homotopy theory}

The example of finite spectra was computed in \cite{Devinatz/Hopkins/Smith:1988a}. It was the first of these computations and an inspiration for much of what followed. It is fair to say that the whole subject has its roots in (chromatic) homotopy theory.

The triangulated category in question is $\mathrm{ho}(\mathsf{Sp}^\omega)$ the compact objects in the stable homotopy category of spectra or, equivalently, the Spanier-Whitehead category of finite CW complexes. Roughly speaking, this category is built by inverting suspension on the homotopy category of finite CW complexes and captures the stable homotopy groups of finite CW complexes (motivated by the Freudenthal suspension theorem). We don't recall the precise construction here, but just recall that the objects are (de)suspensions of finite CW complexes, either from the point of view of suspension spectra or of pairs $(X,n)$ of a CW complex and an integer, the symmetric monoidal structure is given by smash product $\wedge$ and the tensor unit is the sphere spectrum or $(S^0,0)$ depending on the perspective.

The category $\mathrm{ho}(\mathsf{Sp}^\omega)$ is rather complicated and displays very non-noetherian behaviour. It is generated by the tensor unit, but has infinite Rouquier dimension (i.e.\ arbitrarily many cones are needed to get all objects starting with the sphere spectrum), and understanding morphisms in this category is an extremely difficult open problem.

However, as mentioned above the space $\Spc \mathrm{ho}(\mathsf{Sp}^\omega)$ has been computed and has a rather rich structure. It can be illustrated as follows where an upward line indicates specialization i.e.\ higher points are in the closure of the lower points:

\[
\begin{tikzcd}[column sep=0.8em, row sep=0.4em]
&& \mathsf{P}_{2,\infty} \arrow[d, no head] & \mathsf{P}_{3,\infty} \arrow[d, no head] && \hspace{-2em}\cdots & \mathsf{P}_{p,\infty} \arrow[d, no head] & \cdots \\
\mathrm{Spc}(\mathrm{ho}(\mathsf{Sp}^\omega)) = && \vdots \arrow[d, no head] & \vdots \arrow[d, no head] &&& \vdots \arrow[d, no head] \\
&& \mathsf{P}_{2,n+1} \arrow[d, no head] & \mathsf{P}_{3,n+1} \arrow[d, no head] && \hspace{-2em}\cdots & \mathsf{P}_{p,n+1} \arrow[d, no head] & \cdots \\
&& \mathsf{P}_{2,n} \arrow[d, no head] & \mathsf{P}_{3,n} \arrow[d, no head] && \hspace{-2em}\cdots & \mathsf{P}_{p,n} \arrow[d, no head] & \cdots \\
&& \vdots \arrow[d, no head] & \vdots \arrow[d, no head] &&& \vdots \arrow[d, no head] \\
&& \mathsf{P}_{2,2} \arrow[rrd, no head] & \mathsf{P}_{3,2} \arrow[rd, no head] && \hspace{-2em}\cdots & \mathsf{P}_{p,2} \arrow[lld, no head] & \cdots \\
&&&& \mathsf{P}_{0,1} &&&
\end{tikzcd}
%\xymatrix@C=.8em @R=.4em{
%&&\mathsf{P}_{2,\infty} \ar@{-}[d]
%&\mathsf{P}_{3,\infty} \ar@{-}[d]
%&& \kern-2em{\cdots}
%&\mathsf{P}_{p,\infty} \ar@{-}[d]
%& {\cdots}
%\\
%\Spc(\mathrm{ho}(\mathsf{Sp}^\omega))=
%&&{\vdots} \ar@{-}[d]
%& {\vdots} \ar@{-}[d]
%&&& {\vdots} \ar@{-}[d]
%\\
%&&\mathsf{P}_{2,n+1} \ar@{-}[d]
%& \mathsf{P}_{3,n+1} \ar@{-}[d]
%&& \kern-2em{\cdots}
%& \mathsf{P}_{p,n+1} \ar@{-}[d]
%& {\cdots}
%\\
%&&\mathsf{P}_{2,n} \ar@{-}[d]
%& \mathsf{P}_{3,n} \ar@{-}[d]
%&& \kern-2em{\cdots}
%& \mathsf{P}_{p,n} \ar@{-}[d]
%& {\cdots}
%\\
%&&{\vdots} \ar@{-}[d]
%& {\vdots} \ar@{-}[d]
%&&& {\vdots} \ar@{-}[d]
%\\
%&&\mathsf{P}_{2,2} \ar@{-}[rrd]
%& \mathsf{P}_{3,2} \ar@{-}[rd]
%&& \kern-2em{\cdots}
%& \mathsf{P}_{p,2} \ar@{-}[lld]
%& {\cdots}
%\\
%&&&& \mathsf{P}_{0,1}
%}
\]

Let us explain, very briefly, the notation without attempting to go into the relevant constructions from stable homotopy theory.

\begin{enumerate}
\item The prime tensor-ideal $\mathsf{P}_{0,1}$ is the subcategory of spectra with torsion stable homotopy groups. This can be viewed as the kernel of the rational homology functor. It is the generic point of~$\Spc(\mathrm{ho}(\mathsf{Sp}^\omega))$.
\item For each prime integer~$p$, the prime ideal $\mathsf{P}_{p,\infty}$ is the kernel of the localization at $p$ functor $\mathrm{ho}(\mathsf{Sp}^\omega)\to \mathrm{ho}(\mathsf{Sp}^\omega)_{(p)}$. In other words, $\mathsf{P}_{p,\infty}$ consists of those objects whose stable homotopy groups are annihilated by some integer $n$ not divisible by $p$. These $\mathsf{P}_{p,\infty}$ are exactly the closed points of~$\Spc(\mathrm{ho}(\mathsf{Sp}^\omega))$.
\item For each prime integer~$p$ and each integer~$2\le n<\infty$, the prime ideal~$\mathsf{P}_{p,n}$ is the kernel of localization at $p$ followed by smashing with the $(n-1)^\textrm{st}$ Morava $K$-theory~$K_{p,n-1}$.
\item The support of an object~$A$ is empty when $A=0$, it is everything when $A$ is not torsion, and for a non-zero torsion spectrum it is a finite union of `columns'
    \[
    \overline{\mathsf{P}_{p,m_p}}= \{\mathsf{P}_{p,n} \mid m_p\leq n\leq\infty \}
    \]
    for integers $2\leq m_p<\infty$ depending on the prime~$p$.
\item The Thomason subsets of~$\Spc(\mathrm{ho}(\mathsf{Sp}^\omega))$ are the empty set, the whole space, and arbitrary unions of columns $\overline{\mathsf{P}_{p,m_p}}$ with $m_p\ge2$ as above.
\end{enumerate}

We see that the spectrum has infinite Krull dimension and is irreducible.

There has since been a lot of work on understanding other homotopical examples. Notably, a lot of work has been invested in investigating equivariant stable homotopy categories for various classes of groups and the perfect complexes over various ring spectra.

%----------------------------------------------------------------------------------------------------------------------------------------------

%----------------------------------------------------------------------------------------------------------------------------------------------

\subsubsection{Modular representation theory}

We start with the work of \cite{Benson/Carlson/Rickard:1997a} which concerns the stable module category. Let $G$ be a finite group and let $k$ be a field (the characteristic of which divides the order of $G$ if we want anything interesting to happen). We form the group algebra $kG$ and consider $\modu kG$ the category of finite dimensional $kG$-modules, or equivalently, the category of finite dimensional representations of $G$. The algebra $kG$ is a cocommutative Hopf algebra, with comultiplication determined by $g\mapsto g\otimes g$ for every group element $g$ and so this fits into Example~\ref{Gex:ttcats} (\ref{Gitem:hopf}). Thus $\modu kG$ is a symmetric monoidal Frobenius category and $\underline{\modu}\: kG$ is a rigid tt-category with unit the trivial representation $k$.

It is not always the case that $k$ generates $\underline{\modu}\: kG$ and this only happens if $G$ is $p$-nilpotent where $p$ is the characteristic of $k$. This is the case if $G$ is a $p$-group. Thus in general we need to consider tensor-ideals and a classification of thick subcategories seems, in general, out of reach outside of the $p$-nilpotent case.

We can also consider the slightly larger subcategory $\mathsf{D}^\mathrm{b}(\modu kG)$ which is also a rigid tt-category with the tensor product over $k$ (no need to derive as it is exact) and unit $k$. We have a localization sequence
\[
\mathsf{D}^\mathrm{perf}(kG) \to \mathsf{D}^\mathrm{b}(\modu kG) \to \underline{\modu}\: kG
\]
where the perfect complexes are a tensor-ideal (by Frobenius reciprocity also known as the projection formula in general) and so the localization functor is monoidal.

We can consider the cohomology ring of $G$
\[
H^\ast(G,k) = \Ext^\ast_{kG}(k,k)
\]
which is the non-negative part of the graded endomorphism ring of the tensor unit of $\underline{\modu}\: kG$ and the graded endomorphism ring of $k$ in the bounded derived category. This is a graded commutative ring, and so it makes sense to consider $\Spec^\mathrm{h} H^\ast(G,k)$ the set of homogeneous prime ideals of this graded ring (i.e.\ prime ideals generated by homogeneous elements) with the Zariski topology; this is just the Stone dual of the coherent frame of radical homogeneous ideals of the cohomology ring. Inside of $\Spec^\mathrm{h} H^\ast(G,k)$ we can consider the open subspace
\[
\Proj H^\ast(G,k) = \Spec^\mathrm{h} H^\ast(G,k) \setminus \{H^{\geq 1}(G,k)\}.
\]

The theorem of Benson, Carlson, and Rickard is that 
\[
\Spc \underline{\modu}\: kG \cong \Proj H^\ast(G,k) \quad \text{ and } \quad \Spc \mathsf{D}^\mathrm{b}(\modu kG) \cong \Spec^\mathrm{h} H^\ast(G,k).
\]
This is closely related to a lot of earlier work, for instance by Dade and Quillen, and sheds a lot of light on the representation theory of $G$. For instance, one can read off the rate of growth of the minimal projective resolution of a $kG$-module from the corresponding closed subset of $\Proj H^\ast(G,k)$ and Carlson proved \cite{Carlson:1984a} that an indecomposable module has connected support.

There are many other rigid tt-categories that one can produce from a finite group. For instance, Balmer and Gallauer have studied the bounded homotopy category of permutation modules in \cite{balmer2023geometry}, which turns out to be an interesting and complicated example.

%----------------------------------------------------------------------------------------------------------------------------------------------

%----------------------------------------------------------------------------------------------------------------------------------------------

\subsubsection{The rogues' gallery}

All of the examples discussed above are rigid. Thus one gets a classification of all tensor-ideals: being radical is for free. Let us start by mentioning a few non-rigid examples where one observes some behaviour which cannot occur in the rigid case.

Some work has been done on non-rigid analogues of the stable module categories discussed above. Instead of working over a field $k$ one considers representations over a more general ring, for instance $\mathbb{Z}/p^n\mathbb{Z}$. One still obtains a Hopf algebra but the tensor product over $\mathbb{Z}/p^n\mathbb{Z}$ is not exact and so one does not obtain a tt-category by taking the stable category. In order to rectify this one can adjust the exact structure by only considering a sequence to be exact if it is split when restricted to $\mathbb{Z}/p^n\mathbb{Z}$, see \cite{Benson/Iyengar/Krause:2013a} for details. One thus gets a non-rigid tt-category $\underline{\modu_\mathrm{rel}}\:\mathbb{Z}/p^n\mathbb{Z}G$ with unit object $\mathbb{Z}/p^n\mathbb{Z}$ with the trivial action. The spectrum of this category was computed in \cite{BalandChirvasituStevenson}
\[
\Spc \underline{\modu_\mathrm{rel}}\:\mathbb{Z}/p^n\mathbb{Z}G = \coprod_{i=1}^n \Spc \underline{\modu}\:\mathbb{Z}/p\mathbb{Z}G.
\]
Thus, even though the unit object $\mathbb{Z}/p^n\mathbb{Z}$ is indecomposable the spectrum is not connected. This cannot happen in the rigid case. In fact, the decomposition above corresponds to a semi-orthogonal decomposition into $n$ tensor-ideals which are all monoidally equivalent to $\underline{\modu}\:\mathbb{Z}/p\mathbb{Z}G$.

This behaviour seems somewhat common in the non-rigid setting. Aoki has computed the spectrum in the very general setting of compact objects in the category of sheaves on a coherent space with values in a well-behaved symmetric monoidal stable $\infty$-category \cite{Aoki:2023}. In this setting one frequently gets semi-orthogonal decompositions and the spectrum will typically be highly disconnected (see for instance \cite{Gallauer-constructible} for a prototypical example).

There are many natural examples of tt-categories which are not rigid, as discussed briefly in Example~\ref{Gex:ttcats} and it could be interesting to get a better understanding of how these behave and what one can expect. One aspect which has not really been so well explored is the behaviour of tt-categories with nilpotent objects. In the above examples tensor-nilpotent objects do not occur! The only examples I could remember while writing this which have nilpotent objects and where one understands the spectrum are very artificial (e.g.\ \cite[Section~5]{Stevenson:bool}).

\begin{xca}
Look at some article on tt-geometry, e.g.\ one of the ones mentioned above, and see what you recognise.
\end{xca}

%----------------------------------------------------------------------------------------------------------------------------------------------

%----------------------------------------------------------------------------------------------------------------------------------------------

%----------------------------------------------------------------------------------------------------------------------------------------------

%----------------------------------------------------------------------------------------------------------------------------------------------

%----------------------------------------------------------------------------------------------------------------------------------------------

\subsection{A tt-proof of Thomason's theorem}\label{Gsec:thomasonproof}

In this section we give a complete proof of Thomason's theorem using some techniques from tt-geometry. This serves to give an insight into how one proves such results, the techniques of tt-geometry, and some results and strategies we didn't discuss yet. This section does not aim to be self-contained and we invoke some  results from tt-geometry, and the theory of triangulated categories more generally, which were not discussed above.

We already mentioned in Remark~\ref{Grem:rho} and when discussing classifications in algebraic geometry the comparison map $\rho$. In fact, this map is defined in complete generality.

\begin{theorem}[\cite{Balmer:2010b}]\label{Gthm:rho}
Let $\mathsf{K}$ be a tt-category and set $R_\mathsf{K} = \mathsf{K}(\unit, \unit)$. There is a continuous comparison map
\[
\rho\colon \Spc \mathsf{K} \to \Spec R_\mathsf{K} \quad \text{ given by } \rho(\mathsf{P}) = \{f\in R_\mathsf{K} \mid \mathrm{cone}(f)\notin \mathsf{P}\}.
\]
It is natural in $\mathsf{K}$ and is surjective if $R_\mathsf{K}$ is a coherent ring or if $\mathsf{K}$ is \emph{connective}, which means that
\[
\Hom_\mathsf{K}(\unit, \Sigma^i \unit) = 0 \text{ for all } i\geq 1.
\]
\end{theorem}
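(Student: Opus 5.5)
The plan is to check, in order, that $\rho(\mathsf{P})$ is a prime ideal of $R_\mathsf{K}$, that $\rho$ is continuous, that it is natural, and finally that it is surjective under either hypothesis. The idea behind the first step is to reinterpret $\rho(\mathsf{P})$ through the Verdier quotient $q\colon \mathsf{K}\to \mathsf{K}/\mathsf{P}$. The cone of $f\in R_\mathsf{K}$ lies in $\mathsf{P}$ exactly when $q(f)$ is invertible in $\mathsf{K}/\mathsf{P}$. So $\rho(\mathsf{P})$ is the preimage, under $R_\mathsf{K}\to \mathrm{End}_{\mathsf{K}/\mathsf{P}}(\unit)$, of the set of non-units.

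The first three steps then go as follows.
\begin{enumerate}
\item \emph{Non-units form an ideal.} Since $\mathrm{End}(\unit)$ is commutative, the non-units are closed under multiplication by arbitrary elements, and a product $fg$ is a unit iff both $f$ and $g$ are. So it suffices to show that the non-units are closed under addition, i.e.\ that the non-units of $\mathrm{End}_{\mathsf{K}/\mathsf{P}}(\unit)$ form an ideal.
\item \emph{Nilpotence on cones.} I would use the standard lemma that $f\otimes \mathrm{id}_{\mathrm{cone}(f)}$ is nilpotent; in fact its square vanishes, because it factors through the maps of the defining triangle.
\item \emph{Closure under addition.} Suppose $f,g\in\rho(\mathsf{P})$ but $f+g\notin\rho(\mathsf{P})$. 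On $c=\mathrm{cone}(f)\otimes\mathrm{cone}(g)$ both $f$ and $g$ act nilpotently, and they commute, so $f+g$ acts nilpotently on $c$. Since $f+g$ is invertible modulo $\mathsf{P}$, this forces $q(c)=0$, i.e.\ $c\in\mathsf{P}$. Primality of $\mathsf{P}$ then puts $\mathrm{cone}(f)$ or $\mathrm{cone}(g)$ in $\mathsf{P}$, a contradiction.
\item \emph{Properness.} We have $0\in\rho(\mathsf{P})$ because $\mathrm{cone}(0)=\unit\oplus\Sigma\unit\notin\mathsf{P}$. We have $1\notin\rho(\mathsf{P})$ because $\mathrm{cone}(1)=0$.
\item \emph{Continuity.} By definition $\rho^{-1}(D(f))=\{\mathsf{P}\mid \mathrm{cone}(f)\in\mathsf{P}\}=\Spc\mathsf{K}\setminus\supp(\mathrm{cone}(f))$, which is open since supports are closed.
\item \emph{Naturality.} An exact monoidal functor $F$ sends $\mathrm{cone}(f)$ to $\mathrm{cone}(F(f))$. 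Hence $\rho(F^{-1}\mathsf{Q})$ is the preimage of $\rho(\mathsf{Q})$ under $R_\mathsf{K}\to R_\mathsf{L}$.
\end{enumerate}

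For surjectivity, fix $\mathfrak{p}\in\Spec R_\mathsf{K}$. We want a prime $\mathsf{P}$ with $\mathrm{cone}(s)\in\mathsf{P}$ for all $s\notin\mathfrak{p}$ and $\mathrm{cone}(f)\notin\mathsf{P}$ for all $f\in\mathfrak{p}$. Let $\mathsf{I}=\thick^\otimes(\mathrm{cone}(s)\mid s\notin\mathfrak{p})$. Let $S$ be the tensor-multiplicative set generated by $\unit$ and the $\mathrm{cone}(f)$ with $f\in\mathfrak{p}$. If $S\cap\sqrt{\mathsf{I}}=\varnothing$, the Zorn argument of Proposition~\ref{Gprop:ttfacts}(a), applied in $\mathsf{K}/\mathsf{I}$ or directly to radical ideals containing $\mathsf{I}$, produces a prime $\mathsf{P}\supseteq\mathsf{I}$ avoiding $S$, and then $\rho(\mathsf{P})=\mathfrak{p}$. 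So everything reduces to showing that no object $\mathrm{cone}(f_1)\otimes\cdots\otimes\mathrm{cone}(f_n)$ with $f_i\in\mathfrak{p}$ has a tensor power in $\mathsf{I}$.

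This last step is the main obstacle, and it is where the hypotheses enter. In the connective case I would use the functor $\Hom(\unit,-)$ in degree $0$, together with a Koszul-type computation. Connectivity should give $\Hom(\unit,\mathrm{cone}(f_1)\otimes\cdots\otimes\mathrm{cone}(f_n))\cong R_\mathsf{K}/(f_1,\dots,f_n)$ in the top degree, so the same holds for tensor powers. Objects of $\mathsf{I}$ should have this invariant killed by some $s\notin\mathfrak{p}$, since each generator $\mathrm{cone}(s)$ does and the property is preserved by the ideal operations. But $R_\mathsf{K}/(f_1,\dots,f_n)$ localized at $\mathfrak{p}$ is nonzero, which rules out membership in $\mathsf{I}$. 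In the coherent case I would run an analogous argument with the long exact sequences in $\Hom(\unit,\Sigma^*-)$, using coherence of $R_\mathsf{K}$ to control finitely generated annihilators. Of everything in the plan, this is the step I am least sure goes through as stated.
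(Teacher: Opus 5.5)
Everything up to and including the connective case is sound. The square-zero action of $f$ on $\mathrm{cone}(f)$ shows that $\rho(\mathsf{P})$ is a prime ideal, and continuity and naturality are as you say. The reduction of surjectivity to ``no Koszul object on elements of $\mathfrak{p}$ lies in $\mathsf{I}=\thick^\otimes(\mathrm{cone}(s)\mid s\notin\mathfrak{p})$'' is correct. Under connectivity, the computation $\Hom(\unit,\mathrm{cone}(f_1)\otimes\cdots\otimes\mathrm{cone}(f_n))\cong R_\mathsf{K}/(f_1,\dots,f_n)$ then finishes that case.

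One precision in the connective case: the property to push through the ideal operations should be ``$s\cdot 1_x=0$ for some $s\notin\mathfrak{p}$''. This is stable under tensoring, suspension, summands and extensions: if $s$ kills $x$ and $t$ kills $z$ in a triangle $x\to y\to z\to\Sigma x$, then $st$ kills $y$. The degree-zero invariant $\Hom(\unit,-)$ on its own is not obviously stable under cones or under tensoring.

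The genuine gap is the coherent case, and it is more than unfinished. With only the ungraded ring $R_\mathsf{K}$ coherent, the clause is false as literally stated, so no argument of the kind you sketch can work. Take $\mathsf{K}=\mathsf{D}^{\mathrm{perf}}(U)$ with $U=\mathbb{A}^2_k\setminus\{0\}$. Then $R_\mathsf{K}=\Gamma(U,\mathcal{O}_U)=k[x,y]$ is noetherian. However, $\mathrm{cone}(x)\otimes\mathrm{cone}(y)$ restricts to zero on both $D(x)$ and $D(y)$, so it is $0$ and lies in every prime. Hence no $\mathsf{P}$ has $\rho(\mathsf{P})=(x,y)$; in fact $\rho$ is the open immersion $U\hookrightarrow\mathbb{A}^2$.

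The long exact sequences you want to use bring in $\Hom(\unit,\Sigma^i\unit)$ for $i\neq 0$. Here that is $H^1(U,\mathcal{O}_U)$, which is not finitely generated, and coherence of $R_\mathsf{K}$ says nothing about such groups. The hypothesis that works, and the form in which coherence enters Balmer's surjectivity result, is coherence of the graded ring $R^\bullet_\mathsf{K}=\bigoplus_i\Hom(\unit,\Sigma^i\unit)$.

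With graded coherence, the missing argument runs as follows.
\begin{enumerate}
\item Use the triangle $y\xrightarrow{f}y\to\mathrm{cone}(f)\otimes y\to\Sigma y$, together with the fact that kernels, cokernels and extensions of finitely presented graded modules over a coherent graded ring are finitely presented. Induction then shows that $\Hom(\unit,\Sigma^\bullet K)$ is finitely presented over $R^\bullet_\mathsf{K}$ for every Koszul object $K$.
\item Suppose $K(f_1,\dots,f_n)\in\mathsf{I}$ with all $f_i\in\mathfrak{p}$. Then $\Hom(\unit,\Sigma^\bullet K(f_1,\dots,f_n))_\mathfrak{p}=0$.
\item So $f_1$ acts surjectively on the finitely generated module $\Hom(\unit,\Sigma^\bullet K(f_2,\dots,f_n))_\mathfrak{p}$. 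This module therefore vanishes by graded Nakayama, since degree-zero elements are central and $R_{\mathsf{K},\mathfrak{p}}$ is local.
\item Descending through $f_2,\dots,f_n$ in the same way, you reach $(R^\bullet_\mathsf{K})_\mathfrak{p}=0$. This contradicts $R_{\mathsf{K},\mathfrak{p}}\neq 0$.
\end{enumerate}
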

\begin{proof}
This is a combination of 5.6, 7.3, and 7.13 of Balmer's paper.
\end{proof}

One can also say something precise about when $\rho$ is injective, although the condition is much harder to check on account of also being necessary. (This result appeared in \cite[Propositon~3.11]{Dellambrogio2014}.)

\begin{lemma}\label{lem_inj_cri}
Let $\mathsf{K}$ be a tensor triangulated category, and let 
\begin{displaymath}
\rho = \rho_{\mathsf{K}} \colon \Spc{\mathsf{K}} \to \Spec{R_{\mathsf{K}}}
\end{displaymath}
denote Balmer's comparison map. Then $\rho$ is injective if
\begin{displaymath}
\mathcal{B} = \{\supp{\mathrm{cone}(\unit \stackrel{r}{\longrightarrow} \unit)} \; \vert \; r\in R_{\mathsf{K}} \}
\end{displaymath}
is a basis of closed sets for the topology on $\Spc \mathsf{K}$. 
\end{lemma}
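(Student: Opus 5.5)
Since $\Spc \mathsf{K}$ is $T_0$ (it is sober, being coherent), two primes with the same closure are equal. The plan is to show that $\rho(\mathsf{P}) = \rho(\mathsf{Q})$ forces $\overline{\{\mathsf{P}\}} = \overline{\{\mathsf{Q}\}}$, using only the closed sets in $\mathcal{B}$ to test closures.

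The first step is to restate membership in the basic closed sets in terms of $\rho$. By the Proposition describing $\supp$ via primes, for $r\in R_\mathsf{K}$ we have
\[
\mathsf{P} \in \supp \mathrm{cone}(r) \iff \mathrm{cone}(r) \notin \mathsf{P} \iff r \in \rho(\mathsf{P}),
\]
where the last equivalence is exactly the definition of $\rho$ in Theorem~\ref{Gthm:rho}. So if $\rho(\mathsf{P}) = \rho(\mathsf{Q})$, then $\mathsf{P}$ and $\mathsf{Q}$ lie in exactly the same members of $\mathcal{B}$.

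The second step uses the hypothesis that $\mathcal{B}$ is a basis of closed sets. This means every closed subset of $\Spc \mathsf{K}$ is an intersection of members of $\mathcal{B}$. Hence the closure of a point is
\[
\overline{\{\mathsf{P}\}} = \bigcap \{ B\in \mathcal{B} \mid \mathsf{P}\in B\}.
\]
If $\mathsf{P}$ and $\mathsf{Q}$ lie in the same members of $\mathcal{B}$, the right-hand sides agree, so $\overline{\{\mathsf{P}\}} = \overline{\{\mathsf{Q}\}}$.

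The last step concludes injectivity. By Lemma~\ref{Glem:closure}, $\overline{\{\mathsf{P}\}}$ is the set of primes contained in $\mathsf{P}$. Equality of closures therefore gives $\mathsf{Q} \subseteq \mathsf{P}$ and $\mathsf{P}\subseteq \mathsf{Q}$, so $\mathsf{P}=\mathsf{Q}$.

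The only point needing care is the meaning of ``basis of closed sets''. If it only means that finite unions of members of $\mathcal{B}$ form a basis, the argument is unchanged: a point lies in a finite union of members of $\mathcal{B}$ exactly when it lies in one of them, so the two points still lie in the same basic closed sets. Otherwise the argument is essentially immediate.
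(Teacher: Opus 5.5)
Your proof is correct and is essentially the paper's argument. You translate $\rho(\mathsf{P})=\rho(\mathsf{Q})$ into $\mathsf{P}$ and $\mathsf{Q}$ lying in the same members of $\mathcal{B}$, write each point-closure as an intersection of basic closed sets, and conclude from equal closures. The paper finishes via $T_0$; you finish via Lemma~\ref{Glem:closure}, which amounts to the same thing.
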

\begin{proof}
Suppose that $\mathcal{B}$ is a basis. We set $K(r) = \mathrm{cone}(\unit \stackrel{r}{\longrightarrow} \unit)$. Let $\mathsf{P}, \mathsf{Q} \in \Spc{\mathsf{K}}$ be such that $\rho(\mathsf{P}) = \rho(\mathsf{Q})$ i.e., 
$K(r)\notin \mathsf{P}$ if and only if $K(r) \notin \mathsf{Q}$ for every $r\in R_{\mathsf{K}}$. Using our basis $\mathcal{B}$ we see that
\begin{displaymath}
\overline{\{\mathsf{P}\}} = \bigcap_{\substack{r\in R_{\mathsf{K}} \\ K(r)\notin \mathsf{P}}} \supp{K(r)} = \bigcap_{\substack{r\in R_{\mathsf{K}} \\ K(r)\notin \mathsf{Q}}} \supp{K(r)} = \overline{\{\mathsf{Q}\}}
\end{displaymath}
where the middle equality follows from $\rho(\mathsf{P}) = \rho(\mathsf{Q})$ and the closure can be computed this way by \cite[Proposition~2.8]{Balmer:2005a} (or a small exercise). But then $\mathsf{P} = \mathsf{Q}$ since $\Spc{\mathsf{K}}$ is coherent and hence $T_0$.
\end{proof}

\begin{remark}
Motivated by commutative algebra the object 
\[
K(r) = \mathrm{cone}(\unit \stackrel{r}{\longrightarrow} \unit)
\]
for $r\in R_\mathsf{K}$ is sometimes called the Koszul complex on $r$.
\end{remark}

We want to prove that for any quasi-compact and quasi-separated scheme $X$ there is a canonical isomorphism of schemes
\begin{displaymath}
(\Spc \mathsf{D}^{\mathrm{perf}}(X), \mathcal{O}_{\mathsf{D}^{\mathrm{perf}}(X)}) \stackrel{\sim}{\to} X.
\end{displaymath}
We briefly discussed the locally ringed structure in Remark~\ref{Grem:lrs}, and we do not say much more about it here. In fact, provided $\rho$ is a homeomorphism it is automatically an isomorphism of locally ringed spaces and so one can safely choose to ignore this extra structure (safe in the knowledge it has come along for the ride if or when one chooses to engage with it).

We will reference some results from Thomason's paper, but we do not use any results whose proofs are difficult; the point is that we are able to prove the theorem without use of any tensor nilpotence theorems as they are hidden in the machine. By the construction of the spectrum we will then deduce Thomason's original result:

\begin{theorem}[Thomason]\label{Thomason:1997a}
Let $X$ be a quasi-compact and quasi-separated scheme. There is an order-preserving bijection (in fact an isomorphism of coherent frames) between $\Thick^{\sqrt{\otimes}}(\mathsf{D}^\mathrm{perf}(X))$ and $\Thom(X)$.
\end{theorem}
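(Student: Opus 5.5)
The strategy is to prove that Balmer's comparison map, suitably globalized, is a homeomorphism $\Spc \mathsf{D}^\mathrm{perf}(X) \cong X$. The statement then follows from the construction of the spectrum: by definition $\Thick^{\sqrt{\otimes}}(\mathsf{D}^\mathrm{perf}(X)) \cong \Thom(\Spc \mathsf{D}^\mathrm{perf}(X))$, and a homeomorphism of coherent spaces induces an isomorphism of the frames of Thomason subsets. Concretely, I will build a support datum $\sigma(E) = \supp_X H^*(E) = \{x \in X \mid E_x \neq 0\}$ on $\mathsf{D}^\mathrm{perf}(X)$ with values in $X$. It is closed with quasi-compact open complement because perfect complexes have finitely presented cohomology locally. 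It also satisfies (SD1)--(SD5); tensor multiplicativity is checked on stalks via the Künneth argument over the local rings, using the residue field $k(x)$ as in Remark~\ref{Grem:field}. By the universal property this gives a continuous map $f\colon X \to \Spc \mathsf{D}^\mathrm{perf}(X)$, $x \mapsto \mathsf{P}(x) = \{E \mid E_x = 0\}$, and $f$ is quasi-compact. So it suffices to show that the induced lattice map $\Thick^{\sqrt{\otimes}}(\mathsf{D}^\mathrm{perf}(X))^\omega \to \Thom(X)^\omega$, $\sqrt{\thick^\otimes(E)} \mapsto \sigma(E)$, is a bijection. Equivalently, $\sigma$ must be injective on principal ideals and hit every closed subset with quasi-compact complement.

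The first step is the affine case $X = \Spec R$, generalizing the $k[x]$ computation. Surjectivity onto $\Thom(X)^\omega$ is easy: a closed set $V(r_1,\dots,r_n)$ is the support of the Koszul complex $K(r_1)\otimes\cdots\otimes K(r_n)$. For injectivity (``classification''), I must show: if $\sigma(a)\subseteq\sigma(b)$ then $a\in\thick^\otimes(b)$, with radicality free since the category is rigid. Here I use the residue fields in place of tensor nilpotence. Let $\mathsf{J} = \thick^\otimes(b)$ and pass to the unbounded derived category $\mathsf{D}(R)$ with its Bousfield localization $\Gamma_\mathsf{J}\unit \to \unit \to L_\mathsf{J}\unit$ at $\Loc(\mathsf{J})$. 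It is enough to show $a\otimes L_\mathsf{J}\unit = 0$, because then $a\in\Loc(\mathsf{J})\cap\mathsf{D}^\mathrm{perf} = \mathsf{J}$ by Neeman's compactness lemma. An object of $\mathsf{D}(R)$ vanishes if all $k(\mathfrak{p})\otimes -$ vanish. For $\mathfrak{p}\notin\sigma(a)$ we have $a\otimes k(\mathfrak{p})=0$. For $\mathfrak{p}\in\sigma(a)\subseteq\sigma(b)$ we have $b\otimes k(\mathfrak{p})\neq 0$, and since it is a sum of shifts of $k(\mathfrak{p})$, this puts $k(\mathfrak{p})\in\Loc(\mathsf{J})$. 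Hence $k(\mathfrak{p})\otimes L_\mathsf{J}\unit=0$. Together these give the vanishing.

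I expect this residue-field detection step to be the main obstacle: one must know that the $k(\mathfrak{p})$ jointly detect vanishing in $\mathsf{D}(R)$ for non-noetherian $R$. The route I would try first avoids that as a black box. Replace $L_\mathsf{J}\unit$ by the stalks $(L_\mathsf{J}\unit)_\mathfrak{p}$, reducing to a local ring. Then, using that $a$ is perfect, apply a Nakayama-type argument to $\hom(a, L_\mathsf{J}\unit)$, which has a bounded-above structure: a perfect complex over a local ring with $a\otimes k(\mathfrak{m})=0$ is zero.

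Finally, I globalize to qcqs $X$ by induction on the number of affines in a cover, via Mayer--Vietoris. For quasi-compact opens $U$ with complement $Z$, Thomason's localization theorem identifies $\mathsf{D}^\mathrm{perf}(U)$ with the idempotent completion of $\mathsf{D}^\mathrm{perf}(X)/\mathsf{D}^\mathrm{perf}_Z(X)$. The spectrum of such a quotient is the corresponding open subspace $\{\mathsf{P} \supseteq \mathsf{D}^\mathrm{perf}_Z(X)\}$, and it is unchanged by idempotent completion. So $\Spc \mathsf{D}^\mathrm{perf}(X)$ is covered by the opens $\Spc\mathsf{D}^\mathrm{perf}(U_i)\cong U_i$, and $f$ restricts to the affine homeomorphisms on them. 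It remains to check that these opens really cover; this follows because $\bigcap_i \mathsf{D}^\mathrm{perf}_{X\setminus U_i}(X) = 0$ and every prime contains $0$. Once $f$ is a local homeomorphism compatible with the cover and bijective, it is a homeomorphism. Taking Thomason subsets then yields the frame isomorphism asserted in the theorem.
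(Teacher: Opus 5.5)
\textbf{There is a genuine gap in the affine case.} It sits exactly at the step you flagged, and your proposed workaround does not close it.

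\emph{Residue fields do not detect vanishing.} The principle that $N\in\mathsf{D}(R)$ vanishes once $N\otimes^{\mathrm{L}}k(\mathfrak{p})=0$ for all $\mathfrak{p}$ is false for non-noetherian $R$. Take a valuation domain $V$ with value group $\mathbb{Q}$, fraction field $K$ and maximal ideal $\mathfrak{m}$, so that $\mathfrak{m}$ is flat and $\mathfrak{m}^2=\mathfrak{m}$. The module $N=K/\mathfrak{m}\neq 0$ is torsion, so $N\otimes K=0$. Also $N\otimes^{\mathrm{L}}k(\mathfrak{m})=0$, because both $\mathfrak{m}\otimes^{\mathrm{L}}k(\mathfrak{m})=\mathfrak{m}/\mathfrak{m}^2$ and $K\otimes k(\mathfrak{m})=K/\mathfrak{m}K$ vanish. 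For your particular $N=a\otimes L_{\mathsf{J}}\unit$ the implication does hold, but only as a consequence of the theorem you are proving.

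\emph{The Nakayama fallback does not apply.} The object $\hom(a,L_{\mathsf{J}}\unit)_{\mathfrak{p}}\cong a_{\mathfrak{p}}^{\vee}\otimes(L_{\mathsf{J}}\unit)_{\mathfrak{p}}$ is not perfect, and its cohomology is not finitely generated. Already for $R=k[x]$ and $b=k[x]/(x)$ one has $L_{\mathsf{J}}\unit=k[x,x^{-1}]$. Its stalk at $(x)$ is the rational function field $k(x)\neq 0$, yet $k(x)\otimes^{\mathrm{L}}k[x]/(x)=0$. So the closed point of the local ring sees nothing. Testing all residue fields of the local ring is the same detection problem, and that already fails for $V$.

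\emph{A smaller slip.} Cohomology of perfect complexes over non-coherent rings need not be finitely presented; for example $H^{-1}K(r)=\mathrm{ann}(r)$. So the fact that $\supp_X E$ is closed with quasi-compact complement needs another argument, for instance descent to a noetherian ring.

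\textbf{How the paper avoids this.} The paper never needs residue fields to detect vanishing of a non-perfect object. In the noetherian case it uses three inputs to show that every nonzero prime contains a nonzero Koszul complex:
\begin{itemize}
\item the residue field at a single closed point $\mathfrak{m}\in\supp_R E$, where Nakayama is legitimate because $E$ is perfect;
\item the fact that the finite-length Koszul complex on generators of $\mathfrak{m}$ lies in $\loc(k(\mathfrak{m}))$;
\item the equality $\loc(E)\cap\mathsf{D}^\mathrm{perf}(R)=\thick(E)$.
\end{itemize}
Together with surjectivity of Balmer's $\rho$ for connective categories, the criterion that supports of Koszul complexes form a basis, and central localization, this gives $\rho^{-1}\supp_R E=\supp E$. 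The non-noetherian case then follows by descending a single perfect complex to a finite-type $\mathbb{Z}$-subalgebra $R_\beta$ and using naturality of $\rho$. This works precisely because the statement involves one object at a time.

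\textbf{How to repair your route.} Restrict the detection step to noetherian $R$. There it is Neeman's theorem that the $k(\mathfrak{p})$ generate $\mathsf{D}(R)$, a substantial but legitimate input. Then descend to the general case.

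Your two-object statement does not descend naively, however. Since $\Spec R\to\Spec R_\beta$ need not be surjective, $\sigma(a)\subseteq\sigma(b)$ need not hold for the descended complexes. You would have to enlarge $\beta$ until the constructible sets $\supp_{R_\gamma}a_\gamma\setminus\supp_{R_\gamma}b_\gamma$ become empty, using compactness of the constructible topology, and only then apply the noetherian case.

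Your globalization step is essentially the paper's and is fine.
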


We prove the claimed isomorphism of locally ringed spaces exists in three steps. Contrary to common algebro-geometric practice we begin with affine noetherian schemes and build up to arbitrary schemes rather than starting with the general case and performing reductions. Before getting underway with the affine case we recall the notion of sheaf-theoretic support and use it to construct a morphism $X \to \Spc \mathsf{D}^{\mathrm{perf}}(X)$ via the universal property.

\begin{definition}\label{Gdef:supph}
Let $X$ be a quasi-compact and quasi-separated scheme and let $E\in \mathsf{D}^{\mathrm{perf}}(X)$. The \emph{sheaf-theoretic support} of $E$ is
\[
\supp_X E = \{x\in X \mid E_x\neq 0\} = \cup_i \supp_X H^i(E).
\]
Here $\supp_X H^i(E)$ is the usual support of a quasi-coherent sheaf on $X$. We use the subscript $X$ to distinguish it from the universal support. 
\end{definition}

The sheaf-theoretic support is easily checked to be a support datum in the sense of Definition~\ref{Gdef:supportdatum}. Thus, by the universal property of $\Spc$, there is a continuous map $\sigma \colon X \to \Spc \mathsf{D}^{\mathrm{perf}}(X)$ such that $\sigma^{-1}\supp E = \supp_X E$. It can be given explicitly by
\[
\sigma(x) = \{E\in \mathsf{D}^{\mathrm{perf}}(X) \mid E_x\cong 0\}.
\]
One should compare to the discussion of Section~\ref{Gsec:realexample} where we explicitly constructed precisely these primes in the case of the affine line.

We will make use of the unbounded derived category in this section. We denote by $\mathsf{D}(R)$ the unbounded derived category of $R$-modules and by $\mathsf{D}(X)$ the unbounded derived category of $\mathcal{O}_X$-modules with quasi-coherent cohomology on a quasi-compact and quasi-separated scheme $X$ (when $X$ is affine these are the same thing). We recall that $\mathsf{D}^{\mathrm{perf}}(X)$ is the full subcategory of compact objects in $\mathsf{D}(X)$. (We have not discussed compactly generated triangulated categories yet, but some reminders can be found at the start of Chapter~\ref{Gchap:cg}.) The unbounded derived category is a triangulated category with arbitrary coproducts. Given a collection of objects $\mathcal{S}$ we denote by $\loc(\mathcal{S})$ the smallest localizing, i.e.\ triangulated and coproduct closed, subcategory of $\mathsf{D}(X)$. 

\begin{lemma}
Let $R$ be a commutative ring. Then the map $\sigma$ corresponding to the sheaf-theoretic support satisfies $\rho\sigma = 1_{\Spec R}$ on the level of topological spaces. In particular, if $\rho$ is a bijection then it is a homeomorphism.
\end{lemma}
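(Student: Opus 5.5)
We prove the identity $\rho\sigma = 1_{\Spec R}$ by computing both maps on an arbitrary prime $\mathfrak{p}\in\Spec R$. By the explicit description above,
\[
\sigma(\mathfrak{p}) = \{E\in \mathsf{D}^\mathrm{perf}(R) \mid E_\mathfrak{p}\cong 0\}.
\]
Here $R_{\mathsf{K}} = \mathsf{D}^\mathrm{perf}(R)(R,R) = R$, and the Koszul complex on $r\in R$ is $K(r) = \mathrm{cone}(R\xrightarrow{r} R)$. Theorem~\ref{Gthm:rho} then gives
\[
\rho\sigma(\mathfrak{p}) = \{r\in R \mid K(r)_\mathfrak{p}\neq 0\}.
\]
So the whole statement reduces to showing that, for every $r\in R$,
\[
K(r)_\mathfrak{p}\neq 0 \iff r\in\mathfrak{p}.
\]

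Localization is exact, so $K(r)_\mathfrak{p}\cong\mathrm{cone}(R_\mathfrak{p}\xrightarrow{r}R_\mathfrak{p})$.
\begin{itemize}
\item If $r\notin\mathfrak{p}$, then $r$ is a unit in $R_\mathfrak{p}$. Multiplication by $r$ is an isomorphism, so its cone vanishes.
\item If $r\in\mathfrak{p}$, then $r$ lies in the maximal ideal of the local ring $R_\mathfrak{p}$. The cohomology $H^0$ of the cone (up to indexing convention) is $R_\mathfrak{p}/rR_\mathfrak{p}$. This is nonzero by Nakayama, or simply because $rR_\mathfrak{p}\subseteq\mathfrak{p}R_\mathfrak{p}\neq R_\mathfrak{p}$. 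Hence $K(r)_\mathfrak{p}\neq 0$.
\end{itemize}
This gives $\rho\sigma(\mathfrak{p}) = \mathfrak{p}$.

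For the final claim, suppose $\rho$ is a bijection. Since $\rho\sigma = 1$, the map $\sigma$ is the set-theoretic inverse of $\rho$. Both maps are continuous: $\rho$ by Theorem~\ref{Gthm:rho}, and $\sigma$ by the universal property of $\Spc$. A continuous bijection with continuous inverse is a homeomorphism, so $\rho$ is one.

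There is no serious obstacle. The only points to check are that exactness of localization is used correctly when computing the cone, and that the sign and degree conventions for $\mathrm{cone}$ do not affect whether it vanishes.
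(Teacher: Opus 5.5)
Your proposal is correct and follows essentially the same route as the paper: write $\sigma(\mathfrak{p})$ explicitly, reduce $\rho\sigma(\mathfrak{p})$ to deciding when $K(r)_\mathfrak{p}$ vanishes, and note this happens exactly when $r\notin\mathfrak{p}$. You also spell out two points the paper leaves implicit: why $R_\mathfrak{p}/rR_\mathfrak{p}\neq 0$ when $r\in\mathfrak{p}$, and how the homeomorphism claim follows from $\sigma$ being a continuous set-theoretic inverse of the continuous map $\rho$.
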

\begin{proof}
This is an explicit computation. Given $\mathfrak{p}\in \Spec R$ we have
\begin{displaymath}
\sigma(\mathfrak{p}) = \{E \in \mathsf{D}^\mathrm{perf}(R) \mid E_\mathfrak{p} \cong 0\}
\end{displaymath}
and to compute $\rho\sigma(\mathfrak{p})$ we need to identify precisely which Koszul complexes $K(r) = \mathrm{cone}(R \stackrel{r}{\to} R)$ are contained in the prime tensor-ideal $\sigma(\mathfrak{p})$. Now $K(r)_\mathfrak{p} = 0$ precisely if $r$ becomes an isomorphism when localizing at $\mathfrak{p}$. This is so exactly when $r\notin \mathfrak{p}$ and so
\[
\rho\sigma(\mathfrak{p}) = \{r\in R \mid K(r) \notin \sigma(\mathfrak{p})\} = \{r\in R\mid K(r)_\mathfrak{p}\neq 0\} = \mathfrak{p}
\]
as claimed.
\end{proof}

\begin{proposition}
Let $R$ be a noetherian ring. Then the comparison map 
\[
\rho \colon \Spc \mathsf{D}^\mathrm{perf}(R) \to \Spec R
\]
is an isomorphism of locally ringed spaces.
\end{proposition}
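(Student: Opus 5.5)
By the preceding lemma, $\rho\sigma = 1_{\Spec R}$, so $\rho$ is surjective. It therefore suffices to show that $\rho$ is injective; the lemma then makes $\rho$ a homeomorphism. The isomorphism of locally ringed spaces comes for free, as remarked before Theorem~\ref{Thomason:1997a}. So the plan reduces to showing that $\sigma$ is surjective, i.e.\ that every prime $\mathsf{P}$ is of the form $\sigma(\mathfrak{p})$. Equivalently, we want to prove that the sheaf-theoretic support classifies thick subcategories: the assignment $\mathsf{J}\mapsto \bigcup_{E\in\mathsf{J}}\supp_R E$ is injective on thick subcategories. Then $\sigma$ induces an isomorphism of frames $\Thick^{\sqrt{\otimes}}(\mathsf{D}^\mathrm{perf}(R))\cong\Thom(\Spec R)$, and by Stone duality $\sigma$ is a homeomorphism with inverse $\rho$.

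To get the classification I would follow the residue field strategy from Section~\ref{Gsec:realexample}, this time in $\mathsf{D}(R)$. First, a \emph{detection} step: for $E\in\mathsf{D}(R)$, if $E\otimes k(\mathfrak{p})=0$ for all $\mathfrak{p}$, then $E=0$. The noetherian hypothesis is used here. I would prove that $R\in\loc(k(\mathfrak{p})\mid\mathfrak{p}\in\Spec R)$ by noetherian induction on $\Spec R$. At a point one uses the localization triangles $\Gamma_{\mathfrak{p}}\to\cdots$ built from the Koszul complexes $K(r)$; the supports of $K(r)$ are $V(r)$, and finitely many generators of each prime suffice. Over the local ring $R_\mathfrak{p}$, the object $\Gamma_{\mathfrak{p}}R_\mathfrak{p}$ is built from $k(\mathfrak{p})$, since the torsion modules over an artinian local quotient are finite extensions of $k(\mathfrak{p})$. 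This gives the local-to-global statement $\loc(R)=\loc(k(\mathfrak{p}))$, and detection follows because $E=E\otimes R$.

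Second, given a thick $\mathsf{J}$ and a perfect $E$ with $\supp_R E\subseteq\bigcup_{F\in\mathsf{J}}\supp_R F$, we want $E\in\mathsf{J}$. Pass to the finite (Bousfield) localization of $\mathsf{D}(R)$ away from $\loc(\mathsf{J})$, with acyclization triangle $\Gamma_\mathsf{J}R\to R\to L_\mathsf{J}R$. Neeman's theorem on compactly generated localizing subcategories gives $\loc(\mathsf{J})\cap\mathsf{D}^\mathrm{perf}(R)=\mathsf{J}$ (using idempotent completeness). So it suffices to show that $E\otimes L_\mathsf{J}R=0$, and by detection it is enough to check $E\otimes L_\mathsf{J}R\otimes k(\mathfrak{p})=0$ for each $\mathfrak{p}$. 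If $\mathfrak{p}\notin\supp_R E$, then $E\otimes k(\mathfrak{p})=0$, as in Remark~\ref{Grem:field}, since $E_\mathfrak{p}=0$. If $\mathfrak{p}\in\supp_R F$ for some $F\in\mathsf{J}$, then $F\otimes k(\mathfrak{p})$ is a nonzero sum of shifts of $k(\mathfrak{p})$ lying in $\loc(\mathsf{J})$, which is a tensor ideal because $R$ generates. Hence $k(\mathfrak{p})\in\loc(\mathsf{J})$ and $k(\mathfrak{p})\otimes L_\mathsf{J}R=0$.

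The main obstacle is the detection step, specifically showing cleanly that $\Gamma_\mathfrak{p}R$ lies in $\loc(k(\mathfrak{p}))$, and the bookkeeping for the noetherian induction. Neeman's localization theorem I would simply quote.
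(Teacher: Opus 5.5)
Your argument is correct, but it takes a different route from the paper.

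\textbf{Your route.} You first prove the full Hopkins--Neeman classification, in Neeman's original style:
\begin{enumerate}
\item Residue fields detect vanishing in $\mathsf{D}(R)$. You get this from the local-to-global principle, proved by noetherian induction.
\item A finite-localization argument then shows that $\supp_R E\subseteq\bigcup_{F\in\mathsf{J}}\supp_R F$ forces $E\in\mathsf{J}$.
\item Stone duality converts this classification into the statement that $\sigma$ is a homeomorphism with inverse $\rho$.
\end{enumerate}

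\textbf{The paper's route.} The paper never proves detection for arbitrary objects of $\mathsf{D}(R)$. It gets surjectivity of $\rho$ from Balmer's connectivity theorem. It gets injectivity from Lemma~\ref{lem_inj_cri}, which reduces the problem to showing that supports of Koszul complexes form a basis, i.e.\ that $\rho^{-1}\supp_R E=\supp E$. Noetherianity enters only through one lemma: every nonzero prime contains a nonzero Koszul complex. That lemma uses only the residue field at a single closed point, Nakayama, and $\loc(E)\cap\mathsf{D}^\mathrm{perf}(R)=\thick(E)$. The rest of the paper's proof is done with Balmer's central localization.

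\textbf{Trade-off.} Your approach is more classical and self-contained:
\begin{itemize}
\item it avoids central localization;
\item getting surjectivity of $\rho$ from $\rho\sigma=1$ is a genuine simplification over invoking connectivity;
\item it yields the thick-subcategory classification, and the local-to-global principle for $\mathsf{D}(R)$, along the way.
\end{itemize}
The price is the much stronger input $\loc(k(\mathfrak{p})\mid\mathfrak{p}\in\Spec R)=\mathsf{D}(R)$, with its noetherian-induction bookkeeping. The paper's proof works intrinsically with primes of the small category and confines the noetherian input to a statement about closed points. That confinement is what makes its subsequent reduction of the non-noetherian case so clean.

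\textbf{One step to make explicit.} Injectivity of $\mathsf{J}\mapsto\bigcup_{E\in\mathsf{J}}\supp_R E$ alone does not give an isomorphism of frames, since an injective frame map $\sigma^{-1}$ need not come from a surjective $\sigma$. You also need every Thomason subset of $\Spec R$ to be hit. This is easy: use $V(r_1,\dots,r_n)=\supp_R K(r_1,\dots,r_n)$, or equivalently $\sigma^{-1}\rho^{-1}=\mathrm{id}$ together with $\rho^{-1}V((r))=\supp K(r)$.
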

\begin{proof}
Throughout we will identify the endomorphism ring of the tensor unit with $R$. Since $\mathsf{D}^{\mathrm{perf}}(R)$ is connective, surjectivity of $\rho$ follows from Theorem~\ref{Gthm:rho}. Thus to show that $\rho$ is an isomorphism it is sufficient to show injectivity as noted above. We do this by checking the criterion given in Lemma~\ref{lem_inj_cri}. Thus we are reduced to showing that the supports (in the universal sense) of Koszul objects form a basis. Since we already know that the sets $\supp E$ for $E$ an object of $\mathsf{D}^{\mathrm{perf}}(R)$ form a basis it is sufficient to check that for any such $E$ and any $\mathsf{P} \in \Spc \mathsf{D}^{\mathrm{perf}}(R)$ with $\mathsf{P}\notin \supp E$ there is an $r\in R$ such that $\supp E \subseteq \supp K(r)$ and $\mathsf{P}\notin \supp K(r)$.

We know that the $\supp_R K(r) = V((r))$ form a basis for $\Spec R$ and that 
\begin{align*}
\rho^{-1}\supp_R K(r) &= \{\mathsf{P} \in \Spc \mathsf{D}^{\mathrm{perf}}(R) \mid \rho(\mathsf{P}) \in \supp_R K(r) \} \\
&= \{\mathsf{P} \in \Spc \mathsf{D}^{\mathrm{perf}}(R) \mid r\in \rho(\mathsf{P})\} \\
&= \{\mathsf{P} \in \Spc \mathsf{D}^{\mathrm{perf}}(R) \mid K(r)\notin \mathsf{P} \} \\
&=  \supp K(r).
\end{align*}
So provided we can show that $\rho^{-1}\supp_R E = \supp E$ it will follow that the supports of Koszul complexes form a basis and we will be done. This will follow relatively easily from the following lemma which relies on passing to the unbounded derived category.

\begin{lemma}
Every non-zero prime tensor-ideal in $\mathsf{D}^{\mathrm{perf}}(R)$ contains a non-zero Koszul complex, i.e.\ an object of the form
\[
K(r_1,\ldots,r_n) = K(r_1)\otimes \cdots K(r_n).
\]
\end{lemma}
\begin{proof}
Recall that $\mathsf{D}(R)$ denotes the unbounded derived category of all $R$-modules. Let $E \in \mathsf{D}(R)$ be a non-zero perfect complex and consider $\loc(E)$ the localizing subcategory generated by $E$. Because $E$ is non-zero the subset $\supp_R E$ of $\Spec R$ is non-empty and because $E$ is perfect it is closed. Pick a closed point $\mathfrak{m}\in \supp_R E$ with corresponding residue field $k = R/\mathfrak{m}$. Then $E \otimes^\mathrm{L} k$ is in $\loc(E)$ (by the analogue of Lemma~\ref{Glem:small_unit_gen}) and is a direct sum of suspensions of $k$ (cf.\ Remark~\ref{Grem:field}). By Nakayama's lemma $E \otimes^\mathrm{L} k$ is not zero and so $\loc(k) \subseteq \loc(E)$.

Using the standard $t$-structure on $\mathsf{D}(R)$ we see that $\loc(k)$ contains all bounded complexes with finite length homology supported at $\mathfrak{m}$. Since $R$ is noetherian we can take a generating set for $\mathfrak{m}$ say $(m_1,\ldots,m_n)$. The corresponding Koszul complex $K(\mathfrak{m}) = K(m_1,\ldots,m_n)$ has finite length homology and so is contained in $\loc(k)$. Moreover, $K(\mathfrak{m})$ is a perfect complex by construction. Because $E$ is compact we can apply \cite[Theorem~4.4.9]{Neeman:1992a} which asserts that 
\[
\loc(E) \cap \mathsf{D}^{\mathrm{perf}}(R) = \thick(E).
\]
Altogether, we have shown that $\thick(E)$ contains $K(\mathfrak{m})$ for every closed point $\mathfrak{m}$ in its sheaf-theoretic support. In particular, if $\mathsf{P}$ is a non-zero prime ideal it contains such an $E$ and hence a Koszul complex.
\end{proof}

Now we complete the proof of the proposition by showing $\rho^{-1}\supp_R E = \supp E$ for every perfect complex $E$. We will denote by $\mathsf{Q}$ a prime tensor-ideal and set $\mathfrak{q} = \rho\mathsf{Q}$. Associated to $\mathsf{Q}$ we have the tensor-ideal
\[
\mathsf{J} = \thick( K(s) \mid K(s)\in \mathsf{Q}) \subseteq \mathsf{Q}.
\]
By the theory of central localization (see \cite[Theorem~3.6]{Balmer:2010b}) we have
\[
\mathsf{D}^{\mathrm{perf}}(R)/\mathsf{J} \cong \mathsf{D}^{\mathrm{perf}}(R)_\mathfrak{q}
\]
where the right-hand side is obtained by tensoring all homomorphism modules in $\mathsf{D}^{\mathrm{perf}}(R)$ with $R_\mathfrak{q}$. This is again a tt-category, since it is the quotient by a tensor-ideal. (In fact, one can check that, up to idempotent completion, this gives $\mathsf{D}^{\mathrm{perf}}(R_\mathfrak{q})$ but we will not need this fact). This quotient and the description via localization at $\mathfrak{q}$ is the main tool we need in addition to the lemma we just proved.

We first show that for $E \in \mathsf{D}^{\mathrm{perf}}(R)$ we have the containment $\supp E \subseteq \rho^{-1}\supp_R E$. Indeed for any $\mathsf{Q} \in \supp E$ we have $\Hom_{\mathsf{D}^{\mathrm{perf}}(R)/\mathsf{Q}}(E,E) \neq 0$. With notation as above we have $\mathsf{D}^{\mathrm{perf}}(R)/\mathsf{Q} \cong (\mathsf{D}^{\mathrm{perf}}(R)/\mathsf{J})/(\mathsf{Q}/\mathsf{J})$ and hence
\begin{displaymath}
0 \neq \Hom_{\mathsf{D}^{\mathrm{perf}}(R)/\mathsf{J}}(E,E) \cong \Hom_{\mathsf{D}^{\mathrm{perf}}(R)}(E,E)_\mathfrak{q} \cong \Hom_{\mathsf{D}^{\mathrm{perf}}(R_\mathfrak{q})}(E_\mathfrak{q}, E_\mathfrak{q})
\end{displaymath}
since $E$ survives under the further localization killing $\mathsf{Q}$. Thus $E_\mathfrak{q}\neq 0$ or equivalently $\mathfrak{q}\in \supp_R E$ and so $\mathsf{Q} \in \rho^{-1}\supp_R E$ as claimed.

Let us now check the other containment. Suppose then that $E$ is a perfect complex and that $\mathsf{Q}\notin \supp E$ but $\mathsf{Q} \in \rho^{-1}\supp_R E$. 
Consider the functor
\begin{displaymath}
\begin{tikzcd}
\mathsf{D}^{\mathrm{perf}}(R) \arrow[r, "\phi"] & \mathsf{D}^{\mathrm{perf}}(R_\mathfrak{q})
\end{tikzcd}
%\xymatrix{
%\mathsf{D}^{\mathrm{perf}}(R) \ar[r]^{\phi} & \mathsf{D}^{\mathrm{perf}}(R_\mathfrak{q})
%}
\end{displaymath}
induced by localization, and the functor
\begin{displaymath}
\begin{tikzcd}
\mathsf{D}^{\mathrm{perf}}(R) \arrow[r, "\pi", pos=0.3] & \mathsf{D}^{\mathrm{perf}}(R)/\mathsf{J} \cong \mathsf{D}^{\mathrm{perf}}(R)_\mathfrak{q}
\end{tikzcd}
%\xymatrix{
%\mathsf{D}^{\mathrm{perf}}(R) \ar[r]^(0.3){\pi} & \mathsf{D}^{\mathrm{perf}}(R)/\mathsf{J} \cong \mathsf{D}^{\mathrm{perf}}(R)_\mathfrak{q}
%}
\end{displaymath}
described above. We note that both of the target categories have $R_\mathfrak{q}$ as the endomorphism ring of their respective tensor units. Now $\mathsf{Q} \in \rho^{-1}\supp_R E$ says that $\phi E \neq 0$ in $\mathsf{D}^{\mathrm{perf}}(R_\mathfrak{q})$. We can see that $\mathsf{J} = \ker\pi \subseteq \ker\phi$ as the objects $K(r)$ generating $\mathsf{J}$ become zero in $\mathsf{D}^{\mathrm{perf}}(R_\mathfrak{q})$. Indeed, if $K(r) \in \mathsf{Q}$ then $r\notin \mathfrak{q}$ and so $r$ is inverted in $R_\mathfrak{q}$ and its cone $\phi(K(r))$ becomes zero. The universal property of localization thus gives a commutative triangle
\begin{displaymath}
\begin{tikzcd}
\mathsf{D}^{\mathrm{perf}}(R) \arrow[r, "\pi"] \arrow[dr, "\phi"'] & \mathsf{D}^{\mathrm{perf}}(R)_\mathfrak{q} \arrow[d, "\phi'"] \\
& \mathsf{D}^{\mathrm{perf}}(R_\mathfrak{q})
\end{tikzcd}
%\xymatrix{
%\mathsf{D}^{\mathrm{perf}}(R) \ar[r]^\pi \ar[dr]_\phi & \mathsf{D}^{\mathrm{perf}}(R)_\mathfrak{q} \ar[d]^{\phi'} \\
%& \mathsf{D}^{\mathrm{perf}}(R_\mathfrak{q})
%} 
\end{displaymath}
and hence $\pi(E) \neq 0$. Now $\pi(\mathsf{Q}) = \mathsf{Q}/\mathsf{J} = \mathsf{Q}_\mathfrak{q} \in \Spc \mathsf{D}^{\mathrm{perf}}(R)_\mathfrak{q}$ is a prime ideal and is non-zero as it contains the image of $E$ which we just saw is non-zero. By hypothesis it contains no Koszul complexes. We claim that $\ker\phi \subseteq \mathsf{Q}$. Indeed,
\begin{displaymath}
\ker\phi = \{F\in \mathsf{D}^{\mathrm{perf}}(R) \mid F \otimes R_\mathfrak{q} = 0 \} = \{F \in \mathsf{D}^{\mathrm{perf}}(R) \mid \mathfrak{q} \notin \supp_R F\}
\end{displaymath}
so that if $F \in \ker\phi$ we can use the containment for supports we have already proved and the second description above to see that there is some $s\notin \mathfrak{q}$ such that $\supp F \subseteq \supp K(s)$ so that $F \in \thick(K(s)) \subseteq \mathsf{Q}$ (remember we know that the $V((r))$ are a basis for $\Spec R$ and their preimages under $\rho$ are the subsets $\supp K(r)$). It follows that $\phi\mathsf{Q} = \phi'\mathsf{Q}_\mathfrak{q}$ is a prime ideal in $\mathsf{D}^{\mathrm{perf}}(R_\mathfrak{q})$ and is non-zero since it contains $\phi E$. But by hypothesis it contains no non-zero Koszul complexes contradicting the lemma above.
\end{proof}

\begin{remark}
In fact, as follows from the classification, $\mathsf{Q} = \mathsf{J}$ and the quotient is $\mathsf{D}^{\mathrm{perf}}(R_\mathfrak{q})$ up to idempotent completion.
\end{remark}

It is now easy to extend the result to non-noetherian rings as well.

\begin{proposition}
Suppose that $R$ is a ring (not necessarily noetherian) then $\rho$ is an isomorphism.
\end{proposition}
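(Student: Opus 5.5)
We reduce to the noetherian case by writing $R$ as a filtered colimit of its finitely generated $\mathbb{Z}$-subalgebras, $R = \colim_\lambda R_\lambda$. Each $R_\lambda$ is noetherian, so the previous proposition says $\rho_\lambda\colon \Spc \mathsf{D}^\mathrm{perf}(R_\lambda) \to \Spec R_\lambda$ is a homeomorphism. The key inputs are the following.
\begin{enumerate}
\item $\mathsf{D}^\mathrm{perf}(R)$ is the filtered colimit (2-colimit) of the $\mathsf{D}^\mathrm{perf}(R_\lambda)$ along the base change functors. Every perfect complex descends to some $R_\lambda$, and so do morphisms and the relations among them; this is the standard approximation result for perfect complexes over a filtered colimit of rings.
\item $\Spc$ converts such a filtered colimit of tt-categories into a cofiltered limit of spaces.
\end{enumerate}
Item (2) is cleanest on the lattice side. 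A radical tensor-ideal of $\mathsf{D}^\mathrm{perf}(R)$ is determined by which objects it contains. Every object comes from some stage, and every tensor-ideal is radical by rigidity. Hence the lattice of principal ideals $\Thick^{\sqrt{\otimes}}(\mathsf{D}^\mathrm{perf}(R))^\omega$ is the filtered colimit of the bounded distributive lattices $\Thick^{\sqrt{\otimes}}(\mathsf{D}^\mathrm{perf}(R_\lambda))^\omega$. By Remark~\ref{Grem:coherentequivalence}, Stone duality then turns this colimit into a cofiltered limit of coherent spaces, so $\Spc \mathsf{D}^\mathrm{perf}(R) \cong \lim_\lambda \Spc\mathsf{D}^\mathrm{perf}(R_\lambda)$. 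Similarly, the lattice of radicals of finitely generated ideals of $R$ is the colimit of the corresponding lattices for the $R_\lambda$, which gives $\Spec R \cong \lim_\lambda \Spec R_\lambda$.

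By naturality of $\rho$ (Theorem~\ref{Gthm:rho}), the maps $\rho_\lambda$ are compatible with these limit presentations, so $\rho$ is the limit of the homeomorphisms $\rho_\lambda$ and is therefore a homeomorphism. Finally, as remarked before the noetherian case, once $\rho$ is a homeomorphism it is automatically an isomorphism of locally ringed spaces.

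The main obstacle is item (1), and specifically the consequence we need from it. For the lattice colimit we must know that if $\sqrt{\thick^\otimes(a)} \subseteq \sqrt{\thick^\otimes(b)}$ holds over $R$ for objects defined at stage $\lambda$, then the same containment already holds at some later stage $\mu$. The approach is to note that membership $a \in \thick^\otimes(b)$ is witnessed by finitely many triangles and a retraction. All of this data consists of finitely many perfect complexes and morphisms between them, so it descends to some $R_\mu$, provided Hom groups between descended perfect complexes are the colimits of the Hom groups at the finite stages. That Hom statement is the standard fact I would invoke.

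An alternative route avoids the limit machinery. One can show $\rho^{-1}\supp_R E = \supp E$ directly by pulling both sides back from a noetherian stage over which $E$ is defined, using the noetherian result there together with compatibility of the universal and sheaf-theoretic supports with base change.
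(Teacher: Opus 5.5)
Your proof is correct, but your main argument takes a different route from the paper. Your closing ``alternative route'' is essentially the paper's proof.

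The paper observes that the noetherian hypothesis was used only to show that non-zero primes contain Koszul complexes. So, by surjectivity from connectivity (Theorem~\ref{Gthm:rho}), the injectivity criterion of Lemma~\ref{lem_inj_cri}, and $\rho\sigma = 1$, it suffices to prove $\rho^{-1}\supp_R E = \supp E$ for each perfect complex $E$. It then descends $E$ to a single noetherian stage $E_\beta$. It transports the identity using naturality of $\rho$, compatibility of $\supp_R$ with base change, and $(\Spc F)^{-1}\supp E_\beta = \supp FE_\beta$.

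The paper's route needs only descent of \emph{objects} (Thomason's 3.6.4). The price is that it leans on the internal structure of the noetherian proof. Your route treats the noetherian case as a black box (each $\rho_\lambda$ is a homeomorphism). It proves a general continuity statement, $\Spc \mathsf{D}^\mathrm{perf}(R) \cong \lim_\lambda \Spc \mathsf{D}^\mathrm{perf}(R_\lambda)$, via Stone duality for filtered colimits of bounded distributive lattices. This is more conceptual, fits the lattice-theoretic spirit of the notes, and is reusable. Its cost is a stronger input: Hom groups between perfect complexes over $R$ must be filtered colimits of the Hom groups at finite stages, so that morphisms, cones and retractions descend. You correctly identify this as the crux, and your descent of a witness for $a\in\thick(b)$ is sound, since $\thick^\otimes = \thick$ here.

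One detail to make explicit: Stone duality produces these limits in the category of coherent spaces with spectral maps. You should either note that $\rho$ is spectral, or invoke that cofiltered limits of spectral spaces along spectral maps are computed in topological spaces. Spectrality holds because $\rho^{-1}(D(r))$ is the complement of $\supp K(r)$, a quasi-compact open. Then $\rho$ is identified with $\lim_\lambda \rho_\lambda$ by uniqueness of maps into a limit.
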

\begin{proof}
We did not need to use the noetherian hypothesis above until we proved the key lemma that prime ideals contain Koszul complexes. Thus as above it is sufficient to show that $\rho^{-1}\supp_R = \supp$. We can write $R$ as
\begin{displaymath}
R = \colim R_\alpha
\end{displaymath}
where the colimit is taken over the subrings $R_\alpha$ of $R$ which are of finite type over $\mathbb{Z}$. In particular, each $R_\alpha$ is noetherian.

Now let $E$ be a perfect complex over $R$, which we may assume is in fact a bounded complex of finitely generated projective $R$-modules on the nose. As in \cite[3.6.4]{Thomason:1997a} we can find some $\beta$ and some $E_\beta \in \mathsf{D}^{\mathrm{perf}}(R_\beta)$, again a complex of finitely generated projectives on the nose, so that
\begin{displaymath}
E \cong R \otimes_{R_\beta} E_\beta.
\end{displaymath}
In other words the derived functor 
\begin{displaymath}
\begin{tikzcd}
\mathsf{D}^{\mathrm{perf}}(R_\beta) \arrow[r, "F"] & \mathsf{D}^{\mathrm{perf}}(R)
\end{tikzcd}
%\xymatrix{
%\mathsf{D}^{\mathrm{perf}}(R_\beta) \ar[r]^F & \mathsf{D}^{\mathrm{perf}}(R)
%}
\end{displaymath}
of base change along $f\colon R_\beta \to R$ satisfies $FE_\beta = E$. By naturality of $\rho$ this functor fits into a commutative square
\begin{displaymath}
\begin{tikzcd}
\mathrm{Spc} \, \mathsf{D}^{\mathrm{perf}}(R) \arrow[r, "\mathrm{Spc} \, F"] \arrow[d, "\rho"'] & \mathrm{Spc} \, \mathsf{D}^{\mathrm{perf}}(R_\beta) \arrow[d, "\rho_\beta"] \\
\mathrm{Spec} \, R \arrow[r, "\mathrm{Spec} \, f"'] & \mathrm{Spec} \, R_\beta
\end{tikzcd}
%\xymatrix{
%\Spc \mathsf{D}^{\mathrm{perf}}(R) \ar[r]^{\Spc F} \ar[d]_{\rho} & \Spc \mathsf{D}^{\mathrm{perf}}(R_\beta) \ar[d]^{\rho_\beta} \\
%\Spec R \ar[r]_{\Spec f} & \Spec R_\beta
%}
\end{displaymath}
Thus
\begin{align*}
\rho^{-1}\supp_R E &= \rho^{-1}(\Spec f)^{-1}\supp_{R_\beta} E_\beta \\
&= (\Spc F)^{-1} \rho_\beta^{-1} \supp_{R_\beta} E_\beta \\
&= \supp FE_\beta \\
&= \supp E
\end{align*}
where $\rho_\beta^{-1}\supp_{R_\beta} E_\beta = \supp E_\beta$ by the noetherian case and the formulas relating the preimage of the support and the supports of $FE_\beta$ and $\supp_R E$ and $\supp_{R_\beta} E_\beta$ are just Stone duality (or alternatively the former is \cite[Proposition~3.6]{Balmer:2005a} and the latter a small exercise in commutative algebra).
\end{proof}

\begin{remark}
The point above is that a perfect complex $E$ over $R$ is essentially given by specifying finitely many matrices over $R$, i.e.\ giving a finite set of elements of $R$. These elements always lie in some finitely generated subalgebra of $R$, e.g.\ the one obtained by adjoining the required elements to the image of $\mathbb{Z}$ in $R$, and so we can realize $E$ over this subalgebra. Understanding the spectrum only ever involves dealing with finitely many perfect complexes at once and so we should be able to reduce to the noetherian case.
\end{remark}

Finally we are ready to treat the case of $X$ a quasi-compact and quasi-separated scheme. 

\begin{theorem}
Let $X$ be a quasi-compact and quasi-separated scheme. There is a natural homeomorphism
\[
\Spc \mathsf{D}^{\mathrm{perf}}(X) \cong X
\]
which gives an isomorphism of locally ringed spaces. 
\end{theorem}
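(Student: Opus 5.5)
We will globalize the affine result by gluing along a finite affine cover. As set up after Definition~\ref{Gdef:supph}, the sheaf-theoretic support gives a continuous map $\sigma\colon X\to \Spc \mathsf{D}^{\mathrm{perf}}(X)$ with $\sigma(x) = \{E \mid E_x\cong 0\}$ and $\sigma^{-1}\supp E = \supp_X E$. The aim is to show that $\sigma$ is a homeomorphism. Equivalently, by Stone duality and the construction of $\Spc$ as the Hochster dual of $\pt(\Thick^{\sqrt{\otimes}})$, it suffices to show that $E\mapsto \supp_X E$ induces an isomorphism of bounded distributive lattices $\Thick^{\sqrt{\otimes}}(\mathsf{D}^\mathrm{perf}(X))^\omega \cong \Thom(X)^\omega$, which also yields Theorem~\ref{Thomason:1997a}. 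The locally ringed structure then comes along as remarked before the statement.

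The first step is localization to a quasi-compact open $U\subseteq X$ with closed complement $Z$. Restriction $\mathsf{D}^{\mathrm{perf}}(X)\to \mathsf{D}^{\mathrm{perf}}(U)$ is, up to idempotent completion, the Verdier quotient by $\mathsf{D}^{\mathrm{perf}}_Z(X)$; this is Thomason's localization theorem, which we cite as in \cite{Thomason:1997a} (a Neeman-style compact generation argument, not tensor-nilpotence). The spectrum of a quotient $\mathsf{K}/\mathsf{J}$ is the subspace of primes containing $\mathsf{J}$, and idempotent completion does not change $\Spc$. Hence $\Spc \mathsf{D}^{\mathrm{perf}}(U)$ is identified with the open subset $\{\mathsf{P}\mid \mathsf{D}^{\mathrm{perf}}_Z(X)\subseteq \mathsf{P}\}$ of $\Spc\mathsf{D}^{\mathrm{perf}}(X)$. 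To see this is open, we need $\mathsf{D}^{\mathrm{perf}}_Z(X)$ to be generated by a single object $E_Z$ with $\supp_X E_Z = Z$; for quasi-compact $U$ such an object exists by Thomason's construction of perfect complexes with prescribed support. Granting this, the subset equals $\Spc\mathsf{D}^{\mathrm{perf}}(X)\setminus \supp E_Z$. Naturality of $\sigma$ with respect to restriction gives a commutative square in which $\sigma_U$ is the corestriction of $\sigma$ to this open piece.

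The second step is to cover and glue. Choose a finite affine cover $X = U_1\cup\cdots\cup U_n$. By the affine proposition just proved, each $\sigma_{U_i}$ is a homeomorphism; it is the inverse of $\rho$, since $\rho\sigma = 1$. The opens $V_i = \Spc\mathsf{D}^{\mathrm{perf}}(X)\setminus \supp E_{Z_i}$ cover the spectrum. Indeed, their complements intersect in
\[
\supp(E_{Z_1}\otimes\cdots\otimes E_{Z_n}),
\]
and this object has empty sheaf-theoretic support, so it is zero. Since $\sigma$ restricts to homeomorphisms $U_i\to V_i$ with $\sigma^{-1}(V_i)=U_i$, it is a bijective local homeomorphism and therefore a homeomorphism. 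Injectivity needs one extra check. If $\sigma(x)=\sigma(y)$ with $x\in U_i$, then $\sigma(y)\in V_i$ forces $y\in U_i$, and injectivity on $U_i$ gives $x=y$. Quasi-separatedness is not used directly here, but it ensures the overlaps and complements $Z_i$ are manageable when constructing the generators $E_{Z_i}$.

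The main obstacle is the first step. We need the identification of the kernel of restriction with $\mathsf{D}^{\mathrm{perf}}_Z(X)$, the fact that this kernel is principal, and essential surjectivity of the quotient functor up to summands. These are the genuinely geometric inputs from Thomason's paper. I would import them, because they do not involve tensor-nilpotence and their proofs are not difficult. The rest is formal tt-geometry together with the affine case.
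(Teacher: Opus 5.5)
There is a genuine gap in your first step. What Thomason's construction of perfect complexes with prescribed support (\cite[Lemma~3.4]{Thomason:1997a}) gives is an object $E_Z$ with $\supp_X E_Z = Z$. That only yields $\thick^\otimes(E_Z)\subseteq \mathsf{D}^{\mathrm{perf}}_Z(X)$, and hence $\{\mathsf{P}\mid \mathsf{D}^{\mathrm{perf}}_Z(X)\subseteq \mathsf{P}\}\subseteq \Spc\mathsf{D}^{\mathrm{perf}}(X)\setminus \supp E_Z$. The reverse containment says that every perfect complex supported in $Z=\supp_X E_Z$ lies in $\sqrt{\thick^\otimes(E_Z)}$. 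That is an instance of the classification you are proving; in \cite{Thomason:1997a} it is part of the classification theorem and is obtained from the tensor-nilpotence argument. So it is not a cheap geometric input. Getting it independently requires a separate and harder result, for instance that objects of $\mathsf{D}(X)$ supported on $Z$ are compactly generated by a single perfect complex, combined with Theorem~\ref{Gthm:localizationthm}(3). Your argument depends on it in two places. First, the primeness trick with $E_{Z_1}\otimes\cdots\otimes E_{Z_n}=0$ only shows that the sets $V_i=\Spc\mathsf{D}^{\mathrm{perf}}(X)\setminus\supp E_{Z_i}$ cover. Without $V_i=\im(\Spc\pi_i)$, this says nothing about every prime lying in the image of some $\Spc\mathsf{D}^{\mathrm{perf}}(U_i)$, so surjectivity is not established. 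Second, the same identification is what you use to say that $\sigma$ restricts to homeomorphisms $U_i\to V_i$. Your injectivity argument is fine: it only needs $\sigma^{-1}(V_i)=U_i$, which follows from $\supp_X E_{Z_i}=Z_i$ alone.

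The fix, which is the paper's route, is to run the primeness argument on the ideals themselves rather than on generators. Each $\ker\pi_i=\mathsf{D}^{\mathrm{perf}}_{Z_i}(X)$ is a radical tensor-ideal, and $\bigcap_i\ker\pi_i=0\subseteq\mathsf{P}$. A prime is meet-prime among radical ideals: if $a\in\mathsf{I}\setminus\mathsf{P}$ and $b\in\mathsf{J}\setminus\mathsf{P}$, then $a\otimes b\in\mathsf{I}\cap\mathsf{J}$. Hence $\ker\pi_i\subseteq\mathsf{P}$ for some $i$, and $\mathsf{P}$ lies in the image of $\Spc\pi_i$; no generator is needed. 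Once $\sigma$ is bijective, $\sigma^{-1}\supp E=\supp_X E$ gives $\supp E=\sigma(\supp_X E)$. For a quasi-compact open $U$, apply Thomason's lemma in the form it actually has, namely some $E$ with $\supp_X E=X\setminus U$. Then $\sigma(U)=\Spc\mathsf{D}^{\mathrm{perf}}(X)\setminus\supp E$ is open, so $\sigma$ is open and hence a homeomorphism. In particular $V_i=\sigma(U_i)=\im(\Spc\pi_i)$ holds a posteriori. Principality of $\mathsf{D}^{\mathrm{perf}}_Z(X)$ thus comes out as a consequence of the theorem rather than being needed as an input.
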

\begin{proof}
We begin with some setup. The sheaf-theoretic support on $X$, as in Definition~\ref{Gdef:supph}, is a support datum valued in $X$. Hence as discussed following said definition, there exists a unique continuous map
\begin{displaymath}
\sigma \colon X\to \Spc \mathsf{D}^{\mathrm{perf}}(X)
\end{displaymath}
such that $\sigma^{-1}\supp E = \supp_X E$ for every object $E$ of $\mathsf{D}^{\mathrm{perf}}(X)$. We claim that $\sigma$ is in fact a homeomorphism which can be enhanced to an isomorphism of locally ringed spaces. Let $X = \cup_{i=1}^n U_i$, where $U_i \cong \Spec R_i$, be an open affine cover of $X$ which we can take to be finite by quasi-compactness. Denote by $Z_i$ the complement of $U_i$ in $X$. We let $\mathsf{D}_{Z_i}^{\mathrm{perf}}(X)$ be the full subcategory of perfect complexes whose sheaf-theoretic support is contained in $Z_i$. Then we have a composite
\begin{displaymath}
\begin{tikzcd}
\mathsf{D}^{\mathrm{perf}}(X) \arrow[r, two heads] & \mathsf{D}^{\mathrm{perf}}(X)/\mathsf{D}_{Z_i}^{\mathrm{perf}}(X) \arrow[r, hook] & (\mathsf{D}^{\mathrm{perf}}(X)/\mathsf{D}_{Z_i}^{\mathrm{perf}}(X))^\natural \cong \mathsf{D}^{\mathrm{perf}}(U_i)
\end{tikzcd}
%\xymatrix{
%\mathsf{D}^{\mathrm{perf}}(X) \ar@{->>}[r] & \mathsf{D}^{\mathrm{perf}}(X)/\mathsf{D}_{Z_i}^{\mathrm{perf}}(X)\; \ar@{^{(}->}[r] & (\mathsf{D}^{\mathrm{perf}}(X)/\mathsf{D}_{Z_i}^{\mathrm{perf}}(X))^\natural \cong \mathsf{D}^{\mathrm{perf}}(U_i)
%}
\end{displaymath}
which we denote by $\pi_i$, where the $\natural$ indicates taking the idempotent completion and the final equivalence of categories is proved in \cite{ThomasonTrobaugh:1990}. Consider the square
\begin{equation}\label{comm_square_1}
\begin{tikzcd}
\mathrm{Spc} \, \mathsf{D}^{\mathrm{perf}}(U_i) \arrow[rr, hook, "\mathrm{Spc} \, \pi_i"] && \mathrm{Spc} \, \mathsf{D}^{\mathrm{perf}}(X) \\
U_i \arrow[u, "\rho_i^{-1}", "\wr"'] \arrow[rr, hook, "j_i"'] && X \arrow[u, "\sigma"']
\end{tikzcd}
%\xymatrix{
%\Spc \mathsf{D}^{\mathrm{perf}}(U_i) \ar@{^{(}->}[rr]^{\Spc \pi_i} && \Spc \mathsf{D}^{\mathrm{perf}}(X) \\
%U_i \ar[u]^{{\rho_i^{-1}}}_{\wr} \ar@{^{(}->}[rr]^{j_i} && X \ar[u]_{\sigma}
%}
\end{equation}
We assert that this square is commutative. This is a consequence of Stone duality. The support datum $\supp_{X}(-) \cap U_i$ defines a unique continuous map $\sigma_i\colon U_i \to \Spc \mathsf{D}^{\mathrm{perf}}(X)$ such that $(\sigma j_i)^{-1} = \sigma_i^{-1} =  (\Spc \pi_i {\rho_i}^{-1})^{-1}$ (the latter equality since $\pi_i$ identifies with the pullback). In particular, it agrees with the support datum $\supp_{U_i} \pi_i(-)$ and so the two composites in the square are equal (cf.\ \cite[Lemma~3.3]{Balmer:2005a}). This allows us to work locally and we are now ready to show $\sigma$ is an isomorphism. 

First we prove that $\sigma$ is surjective. Using the commutative square above it is clearly sufficient to show that 
\begin{displaymath}
\Spc \mathsf{D}^{\mathrm{perf}}(X) = \bigcup_{i=1}^n \Spc \mathsf{D}^{\mathrm{perf}}(U_i)
\end{displaymath}
(where we have identified the spaces on the right with their homeomorphic images). Equivalently, we need to show each prime $\mathsf{P}$ is in the preimage of some $\pi_i$. This is the same as showing $\pi_i\mathsf{P}$ is prime in $\mathsf{D}^{\mathrm{perf}}(U_i)$ for some $i$ (where by a mild abuse of the notation we are writing $\pi_i\mathsf{P}$ for the idempotent completion of the prime ideal in the quotient) i.e.\ that $\ker\pi_i$ is contained in $\mathsf{P}$ for some $i$. So let $\mathsf{P}$ be a prime tensor-ideal. For each $i$ we can consider
\[
\ker \pi_i = \{E\in \mathsf{D}^{\mathrm{perf}}(X) \mid \supp_X E \subseteq Z_i\}
\]
which is a radical tensor-ideal. The intersection of these kernels is
\[
\bigcap_{i=1}^n \ker \pi_i = \{E\in \mathsf{D}^{\mathrm{perf}}(X) \mid \supp_X E \subseteq \cap_i Z_i\} = \{E\in \mathsf{D}^{\mathrm{perf}}(X) \mid \supp_X E = \varnothing\}  
\]
since the $U_i$ cover $X$ and hence no point is contained in all the $Z_i$. But $\supp_X E = \varnothing$ if and only if $E\cong 0$ and so this intersection is $0$. Because $\mathsf{P}$ is meet-prime in the lattice of radical tensor ideals and contains the intersection of the $\ker \pi_i$ it must contain some $\ker \pi_i$. Hence $\mathsf{P}$ is in the image of $\Spc \mathsf{D}^{\mathrm{perf}}(U_i)$.

We next prove that $\sigma$ is injective. Suppose that $x,y\in X$ are such that $\sigma(x) = \sigma(y)$. Then since each $\sigma j_i$ is injective by the commutativity of (\ref{comm_square_1}) we must have that $x$ and $y$ lie in distinct opens occurring in the cover. Suppose then that $\sigma(x) \in \Spc \mathsf{D}^{\mathrm{perf}}(U_i)$ and $\sigma(y) \in \Spc \mathsf{D}^{\mathrm{perf}}(U_j)$ where $i\neq j$ and we are identifying the spectra of these idempotent completed quotients with their images via the canonical homeomorphisms as above. In other words we have $\pi_i\sigma(x)$ and $\pi_j\sigma(y)$ prime in $\mathsf{D}^{\mathrm{perf}}(U_i)$ and $\mathsf{D}^{\mathrm{perf}}(U_j)$ respectively. This is equivalent to
\begin{displaymath}
\mathsf{D}_{Z_i}^{\mathrm{perf}}(X) \subseteq \sigma(x) \quad \text{ and } \quad \mathsf{D}_{Z_j}^{\mathrm{perf}}(X) \subseteq \sigma(y).
\end{displaymath}
Since $\sigma(x) = \sigma(y)$ we thus have $\mathsf{D}_{Z_j}^{\mathrm{perf}}(X) \subseteq \sigma(x)$ so that $\pi_j\sigma(x)$ is a prime tensor-ideal in $\mathsf{D}^{\mathrm{perf}}(U_j)$. But we know that ${\rho_j}$ is an isomorphism from which we deduce that $x\in U_j$ yielding a contradiction.

Thus we have proved that $\sigma$ is bijective; it is immediate from this that $\supp = \supp_X$ on $\mathsf{D}^{\mathrm{perf}}(X)$. It follows that $\sigma$ sends quasi-compact opens to quasi-compact opens (using for example \cite[Lemma~3.4]{Thomason:1997a} or by pondering more commutative diagrams like (\ref{comm_square_1})). Thus $\sigma$ is open and hence a homeomorphism.
\end{proof}

%----------------------------------------------------------------------------------------------------------------------------------------------

%----------------------------------------------------------------------------------------------------------------------------------------------

%-----------------------------------------------------------------------
% End of chapter
%-----------------------------------------------------------------------

\section{Categorified lattices and the local-to-global principle}\label{Gchap:cg}

As we have seen, using Stone duality one can give a framework that completely solves (in principle!) the classification problem for radical tensor-ideals of an essentially small tt-category. In this chapter we consider the analogous problem for `big tt-categories' i.e.\ symmetric monoidal triangulated categories with arbitrary coproducts that are controlled by a set of small objects. This turns out to be much harder, we do not have a framework for attacking the corresponding classification problem in general, and there are many open questions and challenges.

%----------------------------------------------------------------------------------------------------------------------------------------------

%----------------------------------------------------------------------------------------------------------------------------------------------

\subsection{Big tt-categories}

A compactly generated triangulated category $\mathsf{T}$ is to an essentially small one $\mathsf{K}$ roughly as the category of all modules over a ring is to the category of finitely presented ones (the ring should be coherent for a particularly good analogy). This is meant in the sense that a compactly generated triangulated category has all coproducts, and so cannot be essentially small if it is not trivial, but it is determined in some sense by an essentially small subcategory of objects that behave well with respect to filtered colimits.

\begin{definition}
Let $\mathsf{T}$ be a triangulated category admitting all set-indexed coproducts. An object $t\in \mathsf{T}$ is \emph{compact} if for any set $\{M_\lambda \; \vert \; \lambda \in \Lambda\}$ of objects in $\mathsf{T}$ any morphism $t\to \coprod_\lambda M_\lambda$ factors via a summand $\coprod_{i=1}^n M_{\lambda_i}$ involving only finitely many of the $M_\lambda$. In more categorical language this says that the corepresentable functor $\mathsf{T}(t,-)$ preserves coproducts, i.e.\ for every such family  the natural morphism
\begin{displaymath}
\bigoplus_{\lambda \in \Lambda} \mathsf{T}(t, M_\lambda) \to \mathsf{T}(t, \coprod_{\lambda \in \Lambda} M_\lambda)
\end{displaymath}
is an isomorphism.

We say $\mathsf{T}$ is \emph{compactly generated} if there is a set $\mathcal{G}$ of compact objects of $\mathsf{T}$ such that an object $M\in \mathsf{T}$ is zero if and only if
\begin{displaymath}
\mathsf{T}(g, \Sigma^i M) = 0 \text{ for every } g\in \mathcal{G} \text{ and }  i\in \mathbb{Z}.
\end{displaymath}
We denote by $\mathsf{T}^\mathrm{c}$ the full subcategory of compact objects of $\mathsf{T}$ and note that it is an essentially small thick subcategory of $\mathsf{T}$.
\end{definition}

\begin{example}\label{Gex:bigttcats}
Each example of Example~\ref{Gex:bigttcats} can be completed to a compactly generated triangulated category.
\begin{enumerate}
\item\label{Gitem:bigperf} Let $R$ be a commutative ring. Then $\mathsf{D}^\mathrm{perf}(R)$ is the full subcategory of compact objects in the unbounded derived category $\mathsf{D}(R) = \mathsf{D}(\Modu R)$ of all $R$-modules.
\item More generally for $X$ a quasi-compact and quasi-separated scheme the category $\mathsf{D}^\mathrm{perf}(X)$ is the compact part of the unbounded derived category $\mathsf{D}(X)$ of $\mathcal{O}_X$-modules with quasi-coherent cohomology.
\item\label{Gitem:bighopf} If $A$ is a finite dimensional Hopf algebra over a field then $\underline{\modu}\: A$ is the full subcategory of compact objects of $\underline{\Modu}\: A$ the stable category of all $A$-modules.
\item The homotopy category of finite spectra $\mathrm{ho}(\mathsf{Sp}^\omega)$ is the subcategory of compact objects of the stable homotopy category $\mathrm{ho}(\mathsf{Sp})$ of all spectra.
\item One can also complete $\mathsf{K}^\mathrm{b}(\mathsf{M})$, the bounded homotopy category of an additive category $\mathsf{M}$, to a compactly generated triangulated category.
\end{enumerate}
\end{example}

In fact, each of these corresponds to a compactly generated triangulated category with a monoidal structure extending that of the compact objects.

\begin{definition}
A \emph{compactly generated tensor triangulated category} is a triple $(\mathsf{T}, \otimes, \unit)$ where $\mathsf{T}$ is a compactly generated triangulated category, and the pair $(\otimes, \unit)$ is a symmetric monoidal structure on $\mathsf{T}$ such that the tensor product $\otimes$ is a coproduct preserving exact functor in each variable and the compact objects $\mathsf{T}^c$ form a tensor subcategory. In particular, we require that the unit $\unit$ is compact.

It is automatic that $\mathsf{T}$ is actually closed monoidal, and we will usually assume that the internal hom, denoted $\hom(-,-)$ is exact in both variables. (This is true in practice, but also doesn't necessarily matter in practice.)
\end{definition}

\begin{remark}
We will usually just say $\mathsf{T}$ is a compactly generated tensor triangulated category and leave the tensor product and unit implicit.
\end{remark}

\begin{remark}
There are interesting examples where $\mathsf{T}^c$ does not form a tensor subcategory and one requires a different approach. We will, however, not discuss such examples in these notes.
\end{remark}

Unlike in the essentially small case one runs into delicate issues quite quickly without additional hypotheses. Our minimal working setup is as follows.

\begin{definition}
A \emph{rigidly-compactly generated tensor triangulated category} $\mathsf{T}$, called a \emph{big tt-category} for short, is a compactly generated tensor triangulated category such that the full subcategory $\mathsf{T}^c$ of compact objects is rigid. Thus $\mathsf{T}^\mathrm{c}$ is a tensor subcategory of $\mathsf{T}$, which is closed under the internal hom and satisfies Definition~\ref{Gdefn:rigid}.
\end{definition}

\begin{example}
The compactly generated triangulated categories $\mathsf{D}(X)$ for a quasi-compact and quasi-separated scheme $X$, $\underline{\Modu}\: A$ for a cocommutative Hopf algebra $A$, and $\mathrm{ho}(\mathsf{Sp})$ are all big tt-categories.
\end{example}

%----------------------------------------------------------------------------------------------------------------------------------------------

%----------------------------------------------------------------------------------------------------------------------------------------------

\subsection{Localizing and smashing ideals}\label{Gsec:loc}

We now introduce the analogue of the lattice of thick tensor-ideals for a big tt-category and, more or less immediately, throw our hands in the air and look for milder options. Throughout this section $\mathsf{T}$ is a big tt-category.

\begin{definition}
A \emph{localizing subcategory} $\mathsf{L}$ of $\mathsf{T}$ is a full triangulated subcategory which is closed under coproducts. A \emph{localizing tensor-ideal} is a localizing subcategory $\mathsf{L}$ which is closed under tensoring with arbitrary objects of $\mathsf{T}$.

We denote by $\Loc(\mathsf{T})$ and $\Loc^\otimes(\mathsf{T})$ the collections of localizing subcategories and of localizing tensor-ideals of $\mathsf{T}$. These are partially ordered by inclusion and clearly closed under intersections of arbitrary collections. This gives the meet on these posets, and hence also a join operation as we now make explicit.

Given a class of objects $\mathcal{M}$ we denote by $\loc(\mathcal{M})$ the smallest localizing subcategory containing $\mathcal{M}$, i.e.\ the intersection of all localizing subcategories containing $\mathcal{M}$, and ditto for $\loc^\otimes(\mathcal{M})$. We call these the localizing subcategory (resp.\ tensor-ideal) generated by $\mathcal{M}$. Thus one can take the join of a family of localizing subcategories (resp.\ ideals) by taking the localizing subcategory (resp.\ ideal) they generate.
\end{definition}

\begin{remark}
A localizing subcategory of a compactly generated triangulated category is automatically thick (i.e.\ closed under summands). This only requires countable coproducts (or products) and follows by taking a Milnor colimit along repeated action of the idempotent (see \cite[Proposition~1.6.8]{Neeman:2001a}).
\end{remark}

\begin{remark}
There is no guarantee that either $\Loc(\mathsf{T})$ or $\Loc^\otimes(\mathsf{T})$ forms a set. At least we do not know if this is necessarily the case! There are several examples where it is known, but the only method currently available to show this is to describe the collection of localizing subcategories or ideals completely. One does not need to go far to find examples where we have no idea about the size of these collections. This motivates us to introduce various special classes of localizing ideals which we know have mild behaviour.

Despite this, we will not introduce special terminology to distinguish the fact that these might be `large' and will simply call them lattices.
\end{remark}

\begin{remark}
There is a dual notion of colocalizing subcategory and a corresponding definition of colocalizing $\hom$-ideal. 
\end{remark}

The notions of compact generation and generation of localizing subcategories are related in the way that one would hope. A set $\mathcal{G}$ of compact objects of $\mathsf{T}$ generates $\mathsf{T}$ if and only if $\mathsf{T} = \loc(\mathcal{G})$. A proof of this can be found, for instance, in \cite[Lemma~3.2]{Neeman:1996a} (see also Theorem~\ref{Gthm:localizationthm} below). In particular, if $t\in \mathsf{T}^\mathrm{c}$ satisfies $\supp t = \Spc \mathsf{T}^\mathrm{c}$ then $\loc^\otimes(t) = \mathsf{T}$. 

We also need some notation for orthogonal subcategories.

\begin{definition}
Let $\mathcal{M}$ be a class of objects. The \emph{right orthogonal} (or \emph{right `perp'}) of $\mathcal{M}$ is
\[
\mathcal{M}^\perp = \{N\in \mathsf{T} \mid \mathsf{T}(M, \Sigma^i N) = 0 \text{ for all } i\in \mathbb{Z}\}.
\]
The category $\mathcal{M}^\perp$ is always closed under summands, cones, suspensions, and products i.e.\ it is colocalizing. Dually we may consider the \emph{left orthogonal} (or \emph{left `perp'})
\[
{}^\perp\mathcal{M} = \{N\in \mathsf{T} \mid \mathsf{T}(\Sigma^i N, M) = 0 \text{ for all } i\in \mathbb{Z}\}
\]
which is a localizing subcategory.
\end{definition}

\begin{remark}
It is not totally standard to define the right and left orthogonals to be closed under suspensions (and it is quite undesirable if one works with $t$-structures or related objects). We will only really care about suspension closed structures and so it's convenient for us.
\end{remark}

\begin{xca}
Show that for $\mathcal{M}$ a collection of objects one has equalities
\[
\mathcal{M}^\perp = \loc(\mathcal{M})^\perp \text{ and } {}^\perp\mathcal{M} = {}^\perp\coloc(\mathcal{M})
\]
where $\coloc(\mathcal{M})$ denotes the smallest colocalizing subcategory containing $\mathcal{M}$.
\end{xca}

We now start to explore localizing tensor-ideals and the connection to thick tensor-ideals.

\begin{lemma}\label{Glem:tensorclosed}
If $\mathcal{M}$ is a class of objects of $\mathsf{T}$ which is closed under tensoring with compact objects then $\loc(\mathcal{M})$ is a localizing tensor-ideal (ditto for the thick closure if we work inside of $\mathsf{T}^\mathrm{c}$). In particular, if $\mathsf{J}$ is a thick-tensor ideal of $\mathsf{T}^\mathrm{c}$ then $\loc(\mathsf{J})$ is a localizing tensor-ideal and $\Loc^\otimes(\mathsf{T})$ is a sublattice of $\Loc(\mathsf{T})$.
\end{lemma}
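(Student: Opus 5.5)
The plan is the standard ``two-out-of-three'' closure argument, carried out in two stages: first tensoring with compact objects, then with arbitrary objects. Throughout, fix the class $\mathcal{M}$ and set $\mathsf{L} = \loc(\mathcal{M})$.

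For the first stage, fix a compact object $c\in \mathsf{T}^\mathrm{c}$ and consider
\[
\mathsf{S}_c = \{X\in \mathsf{T} \mid c\otimes X \in \mathsf{L}\}.
\]
Since $c\otimes -$ is exact and preserves coproducts, $\mathsf{S}_c$ is a localizing subcategory of $\mathsf{T}$. The hypothesis on $\mathcal{M}$ says $\mathcal{M}\subseteq \mathsf{S}_c$. Minimality of $\loc(\mathcal{M})$ then gives $\mathsf{L}\subseteq \mathsf{S}_c$, so $c\otimes \mathsf{L}\subseteq \mathsf{L}$ for every compact $c$.

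For the second stage, fix $X\in \mathsf{L}$ and consider
\[
\mathsf{S}'_X = \{Y\in \mathsf{T} \mid Y\otimes X\in \mathsf{L}\}.
\]
By the same reasoning, using that $-\otimes X$ is exact and coproduct preserving, this is a localizing subcategory. By the first stage and symmetry of $\otimes$ it contains $\mathsf{T}^\mathrm{c}$. Since $\mathsf{T}$ is compactly generated, $\mathsf{T} = \loc(\mathcal{G})$ for a set of compact generators (as recalled just before this lemma, via \cite[Lemma~3.2]{Neeman:1996a}), so $\loc(\mathsf{T}^\mathrm{c}) = \mathsf{T}$. Hence $\mathsf{S}'_X = \mathsf{T}$, and $\mathsf{L}$ is a localizing tensor-ideal.

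The thick version inside $\mathsf{T}^\mathrm{c}$ is the first stage alone: for compact $c$ the subcategory $\{x\in \mathsf{T}^\mathrm{c}\mid c\otimes x\in \thick(\mathcal{M})\}$ is thick and contains $\mathcal{M}$.

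For the ``in particular'' statements:
\begin{itemize}
\item A thick tensor-ideal $\mathsf{J}$ of $\mathsf{T}^\mathrm{c}$ is by definition closed under tensoring with compacts, so $\loc(\mathsf{J})$ is a localizing tensor-ideal.
\item For the sublattice claim, meets in both $\Loc(\mathsf{T})$ and $\Loc^\otimes(\mathsf{T})$ are intersections, and an intersection of tensor-ideals is a tensor-ideal. For joins, a union of localizing tensor-ideals is closed under tensoring with all (in particular compact) objects. Applying the main statement to this union shows that its localizing closure, the join computed in $\Loc(\mathsf{T})$, is already a tensor-ideal, so the two joins agree.
\end{itemize}

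The only step needing any care is the second stage, namely the input $\loc(\mathsf{T}^\mathrm{c})=\mathsf{T}$. This is where compact generation is used, and we simply cite the recalled result for it. Everything else is the routine check that preimages of localizing subcategories under exact coproduct-preserving functors are localizing.
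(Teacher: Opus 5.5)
Your proof is correct. The paper leaves this lemma as an exercise, pointing to the analogous exercise for $\Thick^\otimes(\mathsf{K})$, and your two-stage closure argument is exactly the standard intended proof: first show $c\otimes\loc(\mathcal{M})\subseteq\loc(\mathcal{M})$ for compact $c$, then use $\loc(\mathsf{T}^\mathrm{c})=\mathsf{T}$ to pass to arbitrary objects. Your treatment of the thick version and of the sublattice claim (meets are intersections, and joins are $\loc$ of unions, which the main statement shows are already tensor-ideals) is complete, and it uses nothing beyond compact generation (in particular no rigidity).
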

\begin{proof}
This is a good exercise (cf.\ Exercise~\ref{Gxca:sublattice}).
\end{proof}

\begin{xca}\label{lem_big_unit_gen}
Let $\mathsf{T}$ be a rigidly-compactly generated tensor triangulated category. If $\mathbf{1}$ is a compact generator for $\mathsf{T}$, i.e.\ if $\mathsf{T} = \loc(\mathbf{1})$, then every localising subcategory of $\mathsf{T}$ is a localising tensor-ideal.
\end{xca}

This is a good spot in which to state some extremely useful facts we will need in the sequel.

\begin{theorem}\label{Gthm:localizationthm}
Let $\mathsf{T}$ be a compactly generated triangulated category, $\mathcal{R}$ a set of compact objects, and set $\mathsf{R} = \loc(\mathcal{R})$. Then:
\begin{enumerate}
\item $\mathsf{R}$ is a compactly generated triangulated category with $\mathcal{R}$ as a generating set;
\item If $\mathcal{R}$ generates $\mathsf{T}$ then $\mathsf{R} = \mathsf{T}$;
\item $\mathsf{R} \cap \mathsf{T}^\mathrm{c} = \thick(\mathcal{R})$;
\item For any triangulated category $\mathsf{S}$ an exact functor $F\colon \mathsf{T} \to \mathsf{S}$ has a right adjoint $G$ if and only of $F$ preserves coproducts, and in this case $G$ preserves coproducts if and only if $F$ sends compacts to compacts;
\item The quotient $\mathsf{T}/\mathsf{R}$ is compactly generated with compact objects the idempotent completion of $\mathsf{T}^\mathrm{c}/\thick(\mathcal{R})$.
\end{enumerate}
\end{theorem}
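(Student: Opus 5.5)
The plan is to reduce everything to Brown representability together with the compact-object calculus of Neeman. The order of proof would be (4), then (1) and (2), then (3), then (5).

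For (4), necessity is formal, since left adjoints preserve coproducts. For sufficiency, fix $s\in\mathsf{S}$ and consider the functor $\mathsf{S}(F(-),s)$ on $\mathsf{T}^\op$. It is cohomological, because $F$ is exact, and it sends coproducts to products, because $F$ preserves coproducts. Brown representability for the compactly generated category $\mathsf{T}$ then makes it representable, say by an object $Gs$. Yoneda assembles the objects $Gs$ into a right adjoint $G$, and $G$ is automatically exact. For the second clause, suppose first that $F$ preserves compacts, and let $t\in\mathsf{T}^\mathrm{c}$. Then
\[
\mathsf{T}(t,G\textstyle\coprod s_\lambda)\cong \mathsf{S}(Ft,\coprod s_\lambda)\cong\bigoplus\mathsf{S}(Ft,s_\lambda)\cong \mathsf{T}(t,\coprod Gs_\lambda).
\]
Since compacts detect isomorphisms, the canonical map $\coprod Gs_\lambda\to G\coprod s_\lambda$ is an isomorphism. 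The converse follows by running the same adjunction chain backwards, using that $G$ preserves coproducts.

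For (1), note that each $r\in\mathcal{R}$ is compact in $\mathsf{R}$, since coproducts in $\mathsf{R}$ are computed in $\mathsf{T}$. It remains to show that $\mathcal{R}$ generates $\mathsf{R}$, i.e.\ that $\mathcal{R}^\perp\cap\mathsf{R}=0$. I would construct the Bousfield localization triangle directly, via Neeman's cellular tower. Starting from $M\in\mathsf{T}$, repeatedly kill all maps from suspensions of objects of $\mathcal{R}$ by coning off coproducts of such objects. The homotopy colimit $\Gamma M$ of this tower lies in $\mathsf{R}$. Compactness of the objects of $\mathcal{R}$ gives a triangle
\[
\Gamma M\to M\to LM
\]
with $LM\in\mathcal{R}^\perp$.

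If $M\in\mathsf{R}\cap\mathcal{R}^\perp$, then $\Gamma M=0$, because every map $\Gamma M\to M$ factors through the stages of the tower and these stages see only $\mathcal{R}$-maps. So $M\cong LM$, which lies in $\mathsf{R}\cap\mathsf{R}^\perp=0$. This gives (1). Statement (2) is then immediate: the triangle shows that $\mathcal{R}^\perp=0$ forces $M\cong\Gamma M\in\mathsf{R}$.

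For (3), the inclusion $\supseteq$ is clear. For the converse, apply Neeman's theorem: in a compactly generated category with generating set $\mathcal{R}$, as provided by (1), the compact objects are exactly $\thick(\mathcal{R})$. This is proved by showing that a compact $t\in\mathsf{R}$ is a summand of a finite stage of the tower computing $\Gamma t=t$. Since the identity of $t$ factors through some finite stage, $t$ is a summand of a finite extension of coproducts of objects of $\mathcal{R}$, and compactness again cuts these coproducts down to finite ones. I expect this finite-stage factorization to be the main technical obstacle, as it needs care about which cells are used at each stage.

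For (5), the triangle gives a localization functor $L$ whose essential image $\mathcal{R}^\perp$ is equivalent to $\mathsf{T}/\mathsf{R}$. The inclusion $\mathcal{R}^\perp\hookrightarrow\mathsf{T}$ preserves coproducts, because the objects of $\mathcal{R}$ are compact. Hence, by (4), the quotient functor $\mathsf{T}\to\mathsf{T}/\mathsf{R}$ sends compacts to compacts. The images of a generating set of $\mathsf{T}^\mathrm{c}$ generate the quotient, by adjunction. The functor $\mathsf{T}^\mathrm{c}/\thick(\mathcal{R})\to(\mathsf{T}/\mathsf{R})^\mathrm{c}$ is fully faithful, by a calculus-of-fractions argument using (3). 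Finally, it is dense up to summands, by (3) applied to the quotient, via Thomason's density argument. This identifies the compact objects of $\mathsf{T}/\mathsf{R}$ with the idempotent completion of $\mathsf{T}^\mathrm{c}/\thick(\mathcal{R})$.
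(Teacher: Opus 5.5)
Your outline follows the standard Neeman argument, which the paper simply cites (\cite[Theorems~2.1, 4.1 and 5.1]{Neeman:1996a}). Parts (2)--(4) and the density half of (5) are fine as sketched, modulo the finite-stage induction you already flagged. The step that does not work as written is full faithfulness of $\mathsf{T}^\mathrm{c}/\thick(\mathcal{R})\to\mathsf{T}/\mathsf{R}$ ``by a calculus-of-fractions argument using (3)''.

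\begin{itemize}
\item For faithfulness: if $f\colon x\to y$ in $\mathsf{T}^\mathrm{c}$ becomes zero in $\mathsf{T}/\mathsf{R}$, all you know is that $f$ factors through some $z\in\mathsf{R}$. Since $z$ need not be compact, (3) tells you nothing about it.
\item For fullness: replacing a roof $x\to y'\leftarrow y$ with $\mathrm{cone}(y\to y')=z\in\mathsf{R}$ by one whose cone lies in $\thick(\mathcal{R})$ is done by factoring the composite $x\to y'\to z$ through an object of $\thick(\mathcal{R})$.
\end{itemize}

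So the input you actually need is Neeman's factorization lemma: every morphism from an object of $\mathsf{T}^\mathrm{c}$ to an object of $\mathsf{R}$ factors through an object of $\thick(\mathcal{R})$. Statement (3) is only the special case of identity maps of objects of $\mathsf{R}\cap\mathsf{T}^\mathrm{c}$, so citing it is not enough. Your own finite-stage argument proves the general version without change. Run it on an arbitrary map $x\to z$ with $x$ compact in $\mathsf{T}$ and $z\in\mathsf{R}$. The map factors through a finite stage of the tower for $z\cong\Gamma z$, and then, by the induction you flagged, through a finite extension of finite coproducts of suspensions of objects of $\mathcal{R}$. State and prove the lemma in this form, then deduce both (3) and full faithfulness from it.

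A smaller point concerns (1): the tower plays no real role there. Your argument ends by passing from $\mathcal{R}^\perp$ to $\mathsf{R}^\perp$, i.e.\ by using that ${}^\perp N$ is localizing for every $N$. That fact alone is the whole proof. If $M\in\mathsf{R}\cap\mathcal{R}^\perp$, then ${}^\perp M$ is a localizing subcategory containing $\mathcal{R}$, hence $\mathsf{R}$, hence $M$, so $\mathrm{id}_M=0$.

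In building the triangle, be careful about which homotopy colimit lies in $\mathsf{R}$. It is the homotopy colimit of the fibres $\mathrm{fib}(M\to M_n)$, each an iterated extension of coproducts of suspensions of objects of $\mathcal{R}$. It is not that of the tower $M\to M_1\to M_2\to\cdots$ itself, whose homotopy colimit is $LM$.

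Alternatively, once (1) is proved this way, you can skip the tower entirely. Apply (4) to the coproduct-preserving inclusion $\mathsf{R}\hookrightarrow\mathsf{T}$ to get a right adjoint $\Gamma$, and hence the triangle $\Gamma M\to M\to LM$ with $LM\in\mathsf{R}^\perp$.
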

\begin{proof}
The first three statements are part of the Neeman-Thomason localization theorem, see \cite[Theorem~2.1]{Neeman:1996a}, which also implies the idempotent completion statement in the last item. Part (4) is Theorem~4.1 and Theorem~5.1 of \emph{loc.\ cit.} 
\end{proof}

The procedure of taking localizing subcategories lets us inflate thick tensor-ideals to localizing ideals as in Lemma~\ref{Glem:tensorclosed} and this behaves well lattice-theoretically.

\begin{proposition}\label{Gprop:joinsfinite}
The assignment $\mathsf{J} \mapsto \loc(\mathsf{J})$ defines an injective morphism of posets $\Thick(\mathsf{T}) \to \Loc(\mathsf{T})$ which preserves arbitrary joins. This restricts to an injective map
\[
\Thick^\otimes(\mathsf{T}) \to \Loc^\otimes(\mathsf{T})
\]
which preserves arbitrary joins. Both maps have a section, which is given by sending a localizing subcategory (resp.\ ideal) $\mathsf{L}$ to $\mathsf{L}\cap \mathsf{T}^\mathrm{c}$, which preserves arbitrary meets.
\end{proposition}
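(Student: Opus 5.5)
The plan is to read $\Thick(\mathsf{T})$ as $\Thick(\mathsf{T}^\mathrm{c})$, and correspondingly $\Thick^\otimes(\mathsf{T})$ as $\Thick^\otimes(\mathsf{T}^\mathrm{c})$, and to derive everything from Theorem~\ref{Gthm:localizationthm}(3). The key identity is
\[
\loc(\mathsf{J})\cap \mathsf{T}^\mathrm{c} = \thick(\mathsf{J}) = \mathsf{J}
\]
for any thick subcategory $\mathsf{J}$ of $\mathsf{T}^\mathrm{c}$. Since $\mathsf{T}^\mathrm{c}$ is essentially small, we may replace $\mathsf{J}$ by a set of representatives of its isomorphism classes without changing $\loc(\mathsf{J})$. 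This puts us in the setting of Theorem~\ref{Gthm:localizationthm}(3). The displayed identity says exactly that $\mathsf{L}\mapsto \mathsf{L}\cap\mathsf{T}^\mathrm{c}$ is a left inverse of $\mathsf{J}\mapsto \loc(\mathsf{J})$. It follows at once that $\loc(-)$ is injective.

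Next I check the order properties. Both maps are plainly order preserving. The section preserves arbitrary meets, because meets in both $\Loc(\mathsf{T})$ and $\Thick(\mathsf{T}^\mathrm{c})$ are intersections, and
\[
\left(\bigcap_\lambda \mathsf{L}_\lambda\right)\cap \mathsf{T}^\mathrm{c} = \bigcap_\lambda (\mathsf{L}_\lambda\cap\mathsf{T}^\mathrm{c}).
\]
The empty meet $\mathsf{T}$ goes to $\mathsf{T}^\mathrm{c}$. For joins, take a family $\{\mathsf{J}_\lambda\}$ of thick subcategories. Their join in $\Thick(\mathsf{T}^\mathrm{c})$ is $\thick(\bigcup_\lambda \mathsf{J}_\lambda)$, and we must show
\[
\loc\left(\thick\left(\bigcup_\lambda \mathsf{J}_\lambda\right)\right) = \loc\left(\bigcup_\lambda \loc(\mathsf{J}_\lambda)\right).
\]
Both sides equal $\loc(\bigcup_\lambda \mathsf{J}_\lambda)$. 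One inclusion holds because any localizing subcategory is thick, so a localizing subcategory containing $\bigcup_\lambda\mathsf{J}_\lambda$ contains $\thick(\bigcup_\lambda\mathsf{J}_\lambda)$. The other holds by generation. Alternatively, $\loc(-)$ is left adjoint to the intersection map, since $\loc(\mathsf{J})\subseteq \mathsf{L}$ iff $\mathsf{J}\subseteq \mathsf{L}\cap\mathsf{T}^\mathrm{c}$, and left adjoints preserve joins.

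Then I restrict to ideals. If $\mathsf{J}$ is a thick tensor-ideal of $\mathsf{T}^\mathrm{c}$, then Lemma~\ref{Glem:tensorclosed} shows $\loc(\mathsf{J})$ is a localizing tensor-ideal. Conversely, if $\mathsf{L}$ is a localizing tensor-ideal, then $\mathsf{L}\cap\mathsf{T}^\mathrm{c}$ is a thick tensor-ideal of $\mathsf{T}^\mathrm{c}$, since $\mathsf{T}^\mathrm{c}$ is closed under $\otimes$. Joins in $\Loc^\otimes(\mathsf{T})$ agree with those in $\Loc(\mathsf{T})$ by Lemma~\ref{Glem:tensorclosed}, which says $\Loc^\otimes(\mathsf{T})$ is a sublattice. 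For thick tensor-ideals I would note similarly, as in Exercise~\ref{Gxca:sublattice}, that joins in $\Thick^\otimes(\mathsf{T}^\mathrm{c})$ agree with those in $\Thick(\mathsf{T}^\mathrm{c})$. Meets are intersections in all cases. So the restricted maps inherit all the properties already proved.

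The only substantive input is Theorem~\ref{Gthm:localizationthm}(3). The remaining point needing care is the agreement of joins in the ideal lattices with joins in the ambient lattices. For localizing tensor-ideals this is exactly Lemma~\ref{Glem:tensorclosed}, applied to the union of the ideals, which is closed under tensoring with compacts. The rest is routine.
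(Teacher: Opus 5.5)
Your proposal is correct and follows essentially the same route as the paper. You get the section property, and hence injectivity, from Theorem~\ref{Gthm:localizationthm}(3); meets by commuting intersections; joins via $\loc(\thick(\bigcup_\lambda \mathsf{J}_\lambda)) = \loc(\bigcup_\lambda \mathsf{J}_\lambda) = \loc(\bigcup_\lambda \loc(\mathsf{J}_\lambda))$; and the tensor case by reduction to the non-tensor case using Lemma~\ref{Glem:tensorclosed}. Your alternative observation, that $\loc(-)$ is left adjoint to $(-)\cap\mathsf{T}^\mathrm{c}$, matches the remark the paper makes immediately after the proposition.
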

\begin{proof}
Restricting to the compacts is a section by part (3) of Theorem~\ref{Gthm:localizationthm} and this gives injectivity. The assignment sends ideals in the compacts to ideals in $\mathsf{T}$ by Lemma~\ref{Glem:tensorclosed}. So we need to check the claimed joins and meets are preserved by these constructions. Because the tensor-ideals are closed under joins and meets in the lattices of localizing and thick subcategories it is enough to work without the tensor.

Let us start with $(-)^\mathrm{c}\colon \Loc(\mathsf{T}) \to \Thick(\mathsf{T}^\mathrm{c})$. If $\mathsf{L}_\lambda$ is a family of localizing subcategories then 
\[
\left(\bigcap_\lambda \mathsf{L}_\lambda \right) \cap \mathsf{T}^\mathrm{c} = \left( \bigcap_\lambda (\mathsf{L}_\lambda \cap \mathsf{T}^\mathrm{c}) \right).
\]
Now suppose that $\mathsf{J}_\lambda$ is a family of thick subcategories of $\mathsf{T}^\mathrm{c}$. Preservation of the join is also straightforward:
\[
\loc\left( \bigvee_\lambda \mathsf{J}_\lambda \right) = \loc\left( \thick\left( \bigcup_\lambda \mathsf{J}_\lambda \right) \right) = \loc\left( \bigcup_\lambda \mathsf{J}_\lambda \right) = \loc\left( \bigcup_\lambda \loc(\mathsf{J}_\lambda) \right) = \bigvee_\lambda \loc(\mathsf{J}_\lambda).
\]
\end{proof}

\begin{remark}
The two assignments, namely completion to a localizing subcategory and restriction to compacts, are left and right adjoint to one another cf.\ Section~\ref{Gsec:bigsupport}.
\end{remark}

\begin{definition}
We call the map $\Thick^\otimes(\mathsf{T}) \to \Loc^\otimes(\mathsf{T})$ inflation and the localizing ideals in its image \emph{finite}. 
\end{definition}

Inflation gives us a connection between $\Spc \mathsf{T}^\mathrm{c}$ and the collection of all localizing ideals. We would like to better understand the image of inflation and to exploit this connection to give a notion of support for arbitrary objects of $\mathsf{T}$. The key observation is Miller's theorem which leads to the notion of idempotent algebras and coalgebras. All of this works more generally, and so we present it in an abstract fashion which makes more transparent the connection to some relatively recent developments.

We combine several facts into the next theorem, which also is partially a definition.

\begin{theorem}\label{Gthm:finiteloc}
Let $\mathsf{T}$ be a rigidly-compactly generated tensor triangulated category and $\mathsf{J}\in \Thick^\otimes(\mathsf{T}^\mathrm{c})$. Then
%\begin{displaymath}
%\xymatrix{
%\sfS \ar[r]<0.5ex>^-{i_*} \ar@{<-}[r]<-0.5ex>_-{i^!} & \mathsf{T} \ar[r]<0.5ex>^-{j^*} \ar@{<-}[r]<-0.5ex>_-{j_*} & \sfS^\perp
%}
%\end{displaymath}
\begin{enumerate}
\item both $\loc(\mathsf{J})$ and $\mathsf{J}^\perp = \loc(\mathsf{J})^\perp$ are localizing tensor-ideals;
\item the inclusion $i_*\colon \loc(\mathsf{J}) \to \mathsf{T}$ admits a right adjoint $i^!$ and $C = i_*i^!\unit$ is an \emph{idempotent coalgebra} i.e.\ it is equipped with a morphism $\varepsilon\colon C \to \unit$ (the counit of adjunction) and $C\otimes \varepsilon \colon C\otimes C\to C$ is an isomorphism (the inverse of this map gives a unique coalgebra structure with counit $\varepsilon$);
\item the inclusion $j_*\colon \loc(\mathsf{J})^\perp \to \mathsf{T}$ admits a left adjoint $j^*$ and $A = j_*j^*\unit$ is an \emph{idempotent algebra} i.e.\ it is equipped with a morphism $\eta\colon \unit \to A$ (the unit of adjunction) and $A\otimes \eta \colon A\to A\otimes A$ is an isomorphism (the inverse of this map gives a unique algebra structure with unit $\eta$);
\item the objects $A$ and $C$ satisfy $A\otimes C\cong 0$;
\item the objects $A$ and $C$ realize these adjunctions in the sense that there are natural isomorphisms
\[
i_*i^! \cong C\otimes (-) \text{ and } j_*j^* \cong A\otimes (-)
\]
and identifications
\[
\loc(\mathsf{J}) = \Comodu_\mathsf{T}(C) = \ker(A\otimes(-)) 
\]
and
\[
\loc(\mathsf{J})^\perp = \Modu_\mathsf{T}(A) = \ker(C\otimes(-))
\]
where these are the categories of comodules over $C$ in $\mathsf{T}$ and modules over $A$ in $\mathsf{T}$ respectively, or equivalently objects $M$ equipped with an isomorphism $M \cong C\otimes M$ or $M\cong A\otimes M$;
\item for every $N \in \mathsf{T}$ there is a triangle
\[
C\otimes N \to N \to A\otimes N \to
\]
which is functorial in $N$ and with $C\otimes N\in \loc(\mathsf{J})$ and $A\otimes N\in \loc(\mathsf{J})^\perp$ which is obtained by tensoring $N$ with the triangle
\[
C\to \unit \to A \to
\]
where the maps are the counit and unit of $C$ and $A$ respectively.
\end{enumerate}
\end{theorem}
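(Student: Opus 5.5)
The plan is to start from the Bousfield localization furnished by Theorem~\ref{Gthm:localizationthm} and then use rigidity of the compacts to show everything is given by tensoring. Write $\mathsf{L} = \loc(\mathsf{J})$.

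\textbf{Part (1).} By Lemma~\ref{Glem:tensorclosed}, $\mathsf{L}$ is a localizing tensor-ideal. For $\mathsf{J}^\perp$, take $N \in \mathsf{J}^\perp$ and $t \in \mathsf{T}^\mathrm{c}$. For $a \in \mathsf{J}$ rigidity gives
\[
\mathsf{T}(a, \Sigma^i t\otimes N) \cong \mathsf{T}(t^\vee \otimes a, \Sigma^i N) = 0,
\]
since $t^\vee\otimes a \in \mathsf{J}$. So $\mathsf{J}^\perp$ is closed under tensoring with compacts. It is also localizing, because objects of $\mathsf{J}$ are compact and so $\mathsf{T}(a,-)$ commutes with coproducts. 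The class of $X$ with $X\otimes \mathsf{J}^\perp \subseteq \mathsf{J}^\perp$ is localizing and contains $\mathsf{T}^\mathrm{c}$, hence is all of $\mathsf{T} = \loc(\mathsf{T}^\mathrm{c})$. The identity $\mathsf{J}^\perp = \loc(\mathsf{J})^\perp$ is the earlier exercise.

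\textbf{Parts (2), (3) and (6).} By Theorem~\ref{Gthm:localizationthm}(1), $\mathsf{L}$ is compactly generated. The inclusion $i_*$ preserves coproducts, so by part (4) of that theorem it has a right adjoint $i^!$. Standard Bousfield localization then gives, for every $N$, a functorial triangle
\[
i_*i^!N \to N \to j_*j^*N \to
\]
with $j_*j^*N \in \mathsf{L}^\perp$, where $j^*$ is the left adjoint of $j_*\colon \mathsf{L}^\perp\to\mathsf{T}$. Set $C = i_*i^!\unit$ and $A = j_*j^*\unit$, and tensor the triangle $C\to\unit\to A\to$ with $N$. By (1), $C\otimes N\in\mathsf{L}$ and $A\otimes N\in\mathsf{L}^\perp$. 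Such a decomposition triangle is unique up to unique isomorphism, since $\Hom(\mathsf{L},\mathsf{L}^\perp)=0$. This yields $i_*i^!\cong C\otimes-$ and $j_*j^*\cong A\otimes-$ naturally, which is (6) and the first half of (5).

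\textbf{Parts (4) and (5), and idempotence.} Since $C \in \mathsf{L}$ and $A \in \mathsf{L}^\perp$, and both are ideals, $A\otimes C$ lies in $\mathsf{L}\cap\mathsf{L}^\perp=0$; this is (4). Tensoring the triangle with $C$ gives $C\otimes C\to C\to A\otimes C=0$, so $C\otimes\varepsilon$ is an isomorphism. Dually, tensoring with $A$ shows $A\otimes\eta$ is an isomorphism. Uniqueness of the (co)algebra structure then follows formally from idempotence.

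For the kernel identifications in (5): $N\in\mathsf{L}$ if and only if $N\to A\otimes N$ vanishes, if and only if $A\otimes N=0$, if and only if $C\otimes N\to N$ is an isomorphism. Symmetrically, $N \in \mathsf{L}^\perp$ if and only if $C\otimes N = 0$. Comodules over an idempotent coalgebra are exactly the objects on which the counit becomes invertible, and dually for modules over $A$.

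The main obstacle is the tensor-compatibility in (1) and the resulting identification of the Bousfield triangle with a tensored triangle. It is exactly there that rigidity of $\mathsf{T}^\mathrm{c}$ is used, together with the two-out-of-three argument extending closure under tensoring from compacts to all objects. The rest is formal.
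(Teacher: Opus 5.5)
Your proof is correct and follows the same route as the paper's sketch. Rigidity, via the adjunction between $t\otimes-$ and $t^\vee\otimes-$ on all of $\mathsf{T}$, makes $\mathsf{J}^\perp$ closed under tensoring with compacts and hence a localizing ideal; Theorem~\ref{Gthm:localizationthm}(4) supplies the adjoints and the triangle $C\to\unit\to A\to$; and everything else comes from tensoring that triangle around, using that its outer terms lie in the two ideals. You fill in details the paper leaves implicit, such as the uniqueness of the decomposition triangle giving naturality and the kernel identifications.
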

\begin{proof}
We give a very rough outline of the proof and provide some references. Let us begin with the sketch. Compactness of the objects of $\mathsf{J}$ guarantees that $\mathsf{J}^\perp$ is closed under coproducts and hence both localizing and colocalizing. The fact that $\mathsf{J}^\perp$ is a tensor-ideal, which is really the key to everything, is where rigidity enters. One can show that, for $t\in \mathsf{T}^\mathrm{c}$, the functor $\hom(t,\unit)\otimes(-)$ is both a right and left adjoint of $t\otimes(-)$ on all of $\mathsf{T}$. From this one shows, using the adjunction, that closure of $\mathsf{J}^\perp$ under tensoring with compacts is equivalent to closure of $\loc(\mathsf{J})$ under tensoring with compacts which follows from Lemma~\ref{Glem:tensorclosed}. The same lemma then implies that both of these localizing categories are localizing ideals.

The existence of the adjoints is a consequence of Brown representability in the form of Theorem~\ref{Gthm:localizationthm}(4) and these guarantee the existence of a triangle
\[
i_*i^!\unit \to \unit \to j_*j^*\unit \to
\]
One then uses that the outer terms lie in ideals to tensor this triangle around and everything else follows directly.

A convenient source for the details is \cite{Balmer/Favi:2011a} and there is also a discussion and further references in Section~2 of \cite{Stevensontour}.
\end{proof}

\begin{remark}
The theorem as stated is extremely redundant: everything is determined by any one of $\mathsf{J}$, $\mathsf{J}^\perp$, $A$, or $C$ and idempotence of $A$ is equivalent to idempotence of $C$ and to $A\otimes C\cong 0$.
\end{remark}

\begin{example}
Let us give a first general example (see Example~\ref{example:algebras2} for something more concrete). If $R$ is a commutative ring and $S$ is a multiplicative subset of $R$ then $S^{-1}R$ is an idempotent algebra in $\mathsf{D}(R)$. This is because $R \to S^{-1}R$ is a flat epimorphism. It corresponds to the localization $\mathsf{D}(R) \to \mathsf{D}(S^{-1}R)$ given by base change with right adjoint restriction of scalars. Restriction of scalars identifies $\mathsf{D}(S^{-1}R)$ with $S^{-1}R$-modules in $\mathsf{D}(R)$.
\end{example}

\begin{xca}
Give a description of the kernel of $\mathsf{D}(R) \to \mathsf{D}(S^{-1}R)$ and show that it is generated by perfect complexes, i.e.\ it is the inflation of a thick subcategory of $\mathsf{D}^\mathrm{perf}(R)$.
\end{xca}

With this we have the technology to prove that inflation $\Thick^\otimes(\mathsf{T}^\mathrm{c}) \to \Loc^\otimes(\mathsf{T})$ preserves finite meets.

\begin{proposition}\label{Gprop:meetsfinite}
The inflation map $\Thick^\otimes(\mathsf{T}^\mathrm{c}) \to \Loc^\otimes(\mathsf{T})$ preserves finite meets.
\end{proposition}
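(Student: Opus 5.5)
We must show that for thick tensor-ideals $\mathsf{I},\mathsf{J}$ of $\mathsf{T}^\mathrm{c}$ one has $\loc(\mathsf{I}\cap\mathsf{J}) = \loc(\mathsf{I})\cap\loc(\mathsf{J})$, together with the empty meet $\loc(\mathsf{T}^\mathrm{c}) = \mathsf{T}$. The empty meet is immediate from Theorem~\ref{Gthm:localizationthm}(2), since $\mathsf{T}^\mathrm{c}$ generates $\mathsf{T}$. The containment $\loc(\mathsf{I}\cap\mathsf{J})\subseteq \loc(\mathsf{I})\cap\loc(\mathsf{J})$ follows from monotonicity of inflation. So the content is the reverse containment.

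The plan is to use the idempotent coalgebras of Theorem~\ref{Gthm:finiteloc}. Let $C_\mathsf{I}$ and $C_\mathsf{J}$ be the idempotent coalgebras attached to $\mathsf{I}$ and $\mathsf{J}$, so that $\loc(\mathsf{I}) = \{M \mid M\cong C_\mathsf{I}\otimes M\}$, and similarly for $\mathsf{J}$. Take $M\in \loc(\mathsf{I})\cap\loc(\mathsf{J})$. Then
\[
M \cong C_\mathsf{I}\otimes M \cong C_\mathsf{I}\otimes C_\mathsf{J}\otimes M .
\]
It therefore suffices to show that $C_\mathsf{I}\otimes C_\mathsf{J}\otimes M$ lies in $\loc(\mathsf{I}\cap\mathsf{J})$. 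Since $\loc(\mathsf{I}\cap\mathsf{J})$ is a localizing tensor-ideal by Lemma~\ref{Glem:tensorclosed}, it is enough to show $C_\mathsf{I}\otimes C_\mathsf{J}\in \loc(\mathsf{I}\cap\mathsf{J})$.

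For this key step we use $C_\mathsf{I}\in\loc(\mathsf{I})$ and $C_\mathsf{J}\in\loc(\mathsf{J})$, so we need
\[
\loc(\mathsf{I})\otimes\loc(\mathsf{J})\subseteq \loc(\mathsf{I}\otimes\mathsf{J}).
\]
This is a standard two-variable argument. Fix $x\in\mathsf{I}$. The class of $N$ with $x\otimes N\in\loc(\mathsf{I}\otimes \mathsf{J})$ is localizing, because $\otimes$ is exact and coproduct-preserving, and it contains $\mathsf{J}$; hence it contains $\loc(\mathsf{J})$. Now fix $N\in\loc(\mathsf{J})$ and vary the first variable in the same way, which gives the claim. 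Finally $\mathsf{I}\otimes\mathsf{J}\subseteq \mathsf{I}\cap\mathsf{J}$, since both are ideals. Therefore $C_\mathsf{I}\otimes C_\mathsf{J}\in \loc(\mathsf{I}\cap\mathsf{J})$, and the proof is complete. (By rigidity $\mathsf{I}\otimes\mathsf{J}$ in fact equals $\mathsf{I}\cap\mathsf{J}$, as in Lemma~\ref{Glem:meettensor}, though only the containment is needed.)

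The main point requiring care is the identification $M\cong C_\mathsf{I}\otimes M$ for $M\in\loc(\mathsf{I})$, which is Theorem~\ref{Gthm:finiteloc}(5). Given the statement of that theorem, this is quoted rather than reproved. If one wanted to avoid the full comodule description, one can get it directly: the triangle $C_\mathsf{I}\otimes M\to M\to A_\mathsf{I}\otimes M$ has third term in $\loc(\mathsf{I})^\perp$. Since $M$ and $C_\mathsf{I}\otimes M$ both lie in $\loc(\mathsf{I})$, so does $A_\mathsf{I}\otimes M$, and an object in $\loc(\mathsf{I})\cap\loc(\mathsf{I})^\perp$ has zero identity map and hence vanishes.
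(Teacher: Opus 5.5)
Your proof is correct and takes essentially the same route as the paper. Both arguments reduce to showing $C_\mathsf{I}\otimes C_\mathsf{J}\in\loc(\mathsf{I}\otimes\mathsf{J})$ using the coalgebra description of $\loc(\mathsf{I})\cap\loc(\mathsf{J})$; the paper cites \cite[Lemma~3.11]{Stevenson:2013a} for the inclusion $\loc(\mathsf{I})\otimes\loc(\mathsf{J})\subseteq\loc(\mathsf{I}\otimes\mathsf{J})$, whereas you prove it directly by the standard two-variable argument, and you also treat the empty meet, which the paper leaves implicit.
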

\begin{proof}
Let $\mathsf{J}_1$ and $\mathsf{J}_2$ be tensor-ideals of $\mathsf{T}^\mathrm{c}$. Because $\mathsf{T}^\mathrm{c}$ is rigid we know that
\[
\mathsf{J}_1 \cap \mathsf{J}_2 = \mathsf{J}_1 \otimes \mathsf{J}_2
\]
by Lemma~\ref{Glem:meettensor}. Let us denote $\mathsf{J}_1 \otimes \mathsf{J}_2$ by $\mathsf{J}_{12}$. We need to show that
\[
\loc(\mathsf{J}_1) \cap \loc(\mathsf{J}_2) = \loc(\mathsf{J}_{12}).
\]
We note that the containment $\loc(\mathsf{J}_{12}) \subseteq \loc(\mathsf{J}_1) \cap \loc(\mathsf{J}_2)$ is clear.

Using the theorem we know that $\loc(\mathsf{J}_1)$ and $\loc(\mathsf{J}_2)$ are determined by idempotent coalgebras $C_1$ and $C_2$. The tensor product $C_{12} = C_1\otimes C_2$ is again an idempotent coalgebra and we claim its category of comodules in $\mathsf{T}$ is exactly $\loc(\mathsf{J}_1) \cap \loc(\mathsf{J}_2)$. Indeed, for $M\in \loc(\mathsf{J}_1) \cap \loc(\mathsf{J}_2)$ we have $C_1\otimes M\cong M$ and $C_2\otimes M\cong M$ and hence $C_1\otimes C_2\otimes M\cong M$ and if $C_1\otimes C_2 \otimes M \cong M$ then 
\[
C_1 \otimes M \cong C_1 \otimes C_1 \otimes C_2 \otimes M \cong C_1 \otimes C_2 \otimes M \cong M
\]
and similarly for $C_2\otimes M$. The proposition then follows once we show that $C_{12} \in \loc(\mathsf{J}_{12})$. Heuristically the argument is that $C_i$ is generated by $\mathsf{J}_i$ and so $C_1 \otimes C_2$ is generated by $\mathsf{J}_1\otimes \mathsf{J}_2$. The precise statement is that
\[
\loc(\mathsf{J}_{12}) = \loc(\mathsf{J}_1 \otimes \mathsf{J}_2) = \loc(M\otimes N \mid M\in \loc(\mathsf{J}_1), N\in \loc(\mathsf{J}_2))
\]
where the non-trivial equality on the right is a special case of \cite[Lemma~3.11]{Stevenson:2013a} (which just makes the heuristic argument rigorous).
\end{proof}

\begin{remark}\label{remark:inflationmeets}
It is not true that inflation preserves arbitrary meets! A counterexample is given in \cite[Remark~5.12]{BKSframe}.
\end{remark}

\begin{example}\label{example:algebras2}
Let us return to the example of Section~\ref{Gsec:realexample} to explore all of this in a concrete setting. We know that the tensor-ideals of $\mathsf{D}^\mathrm{perf}(k[x])$ are given by specialization closed subsets of $\Spec k[x] = \mathbb{A}^1$. The inflation of $\mathsf{D}^\mathrm{perf}(k[x])$ is just $\mathsf{D}(k[x])$ and the corresponding idempotent coalgebra is $k[x]$ with complementary idempotent algebra $0$. Similarly, the inflation of $0$ is $0$ and now we have idempotent coalgebra $0$ and algebra $k[x]$. It is \emph{not} typical for an object to be both an idempotent algebra and a coalgebra: this implies it is a summand of the tensor-unit.

Now let us consider some proper thick tensor-ideals. We could consider the maximal tensor-ideal $\mathsf{M}$ consisting of perfect complexes with torsion cohomology
\[
\mathsf{M} = \thick(k[x]/\mathfrak{p} \mid \mathfrak{p} \in \mathbb{A}^1\setminus \{(0)\}).
\]
The inflation $\loc(\mathsf{M})$ is just
\begin{align*}
\loc(\mathsf{M}) &= \loc(k[x]/\mathfrak{p} \mid \mathfrak{p}\in \mathbb{A}^1\setminus \{(0)\}) \\
&= \{M\in \mathsf{D}(k[x]) \mid H^i(M) \text{ is torsion for each } i\in \mathbb{Z}\}.
\end{align*}
The localization, i.e.\ the right perp, is given by the full subcategory of complexes of $k(x)$-vector spaces 
\[
\mathsf{M}^\perp \cong \mathsf{D}(k(x)),
\]
where $k(x)$ is the field of rational functions, and the corresponding idempotent algebra is $k(x)$. This is not too surprising: $k(x)$ is a flat $k[x]$-algebra which is idempotent, since $k[x] \to k(x)$ is an epimorphism of rings, and so is also an idempotent algebra in the derived category. We can compute the corresponding idempotent coalgebra $C$ using the triangle
\[
C \to k[x] \to k(x) \to \Sigma C
\]
and see (using that $k[x] \to k(x)$ is injective and so this just comes from a short exact sequence) that 
\[
C = \Sigma^{-1} k(x)/k[x] = \Sigma^{-1} \bigoplus_{\mathfrak{p}\in \mathbb{A}^1\setminus \{(0)\}} E(k[x]/\mathfrak{p})
\]
where $E(k[x]/\mathfrak{p})$ is the injective envelope of the residue field $k[x]/\mathfrak{p}$. Concretely, if $\mathfrak{p} = (f)$ then 
\[
E(k[x]/\mathfrak{p}) = k[x]_f/k[x] = \colim_n k[x]/(f^n). 
\]

This is the typical behaviour: for any set $X\subseteq \mathbb{A}^1$ of closed points 
\[
C_X = \Sigma^{-1} \bigoplus_{\mathfrak{p}\in X} E(k[x]/\mathfrak{p})
\]
is an idempotent coalgebra and corresponds to the inflation of the tensor-ideal of perfect complexes supported on $X$. 

An instructive special case is $X = \mathbb{A}^1\setminus \{\mathfrak{p},(0)\}$ where one gets the idempotent algebra $k[x]_\mathfrak{p}$ and the idempotent coalgebra via the formula above.
\end{example}

All of this leads to a thought: can we skip the inflation and just look at idempotent (co)algebras? The answer is yes and the corresponding notion is that of a smashing ideal.

\begin{definition}
An ideal $\mathsf{S}$ of $\mathsf{T}$ is \emph{smashing} if $\mathsf{S}^\perp$ is closed under coproducts (and hence itself a localizing tensor-ideal). 
\end{definition}

The same arguments as Theorem~\ref{Gthm:finiteloc} give that for a smashing ideal $\mathsf{S}$:
\begin{enumerate}
\item The inclusion of $\mathsf{S}$ and its right perp $\mathsf{S}^\perp$ admit a right and a left adjoint respectively and these are realized by an idempotent coalgebra $C$ and an idempotent algebra $A$ (in particular all of these functors preserve coproducts);
\item The idempotent coalgebra $C$ determines $\mathsf{S}$ via
\[
\mathsf{S} = \Comodu_\mathsf{T}(C) = \{M\in \mathsf{T} \mid C\otimes M \cong M\}
\]
\item The idempotent algebra determines $\mathsf{S}^\perp$ via
\[
\mathsf{S}^\perp = \Modu_\mathsf{T}(A) = \{M\in \mathsf{T} \mid A \cong A\otimes M\}.
\]
\end{enumerate}

\begin{remark}
Not every smashing ideal is finite, i.e.\ inflated from a thick tensor-ideal of compact objects. There exist counterexamples for non-noetherian commutative rings (see \cite{Kellersmashing}). 
%For instance, we could take the local ring $\mathbb{F}_p[[t^{1/p^\infty}]]$
\end{remark}

\begin{definition}
Let us denote by $\mathcal{S}(\mathsf{T})$ the collection of smashing tensor-ideals of $\mathsf{T}$ ordered by inclusion. This forms a set and so it is an honest poset! (This follows from the analogous fact for all smashing subcategories as in \cite{Krause:2000a} and there are, by now, several other proofs e.g.\ it is proved in \cite{BKSframe} directly for smashing ideals.)
\end{definition}

\begin{theorem}\label{Gthm:smashingframe}
The poset $\mathcal{S}(\mathsf{T})$ is a frame with joins given by
\[
\bigvee_{\lambda\in \Lambda} \mathsf{S}_\lambda = \loc\left( \bigcup_{\lambda \in \Lambda} \mathsf{S}_\lambda \right)
\]
and finite meets given by
\[
\mathsf{S}_1 \wedge \mathsf{S}_2 = \mathsf{S}_1 \cap \mathsf{S}_2.
\]
In particular, the inclusion $\mathcal{S}(\mathsf{T}) \to \Loc(\mathsf{T})$ preserves joins and finite meets.
\end{theorem}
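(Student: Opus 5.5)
We will work with idempotent coalgebras throughout, since by the discussion following the definition of smashing ideals a smashing ideal $\mathsf{S}$ is the same thing as an idempotent coalgebra $C_\mathsf{S}$ with $\mathsf{S} = \{M \mid C_\mathsf{S}\otimes M\cong M\} = \ker(A_\mathsf{S}\otimes(-))$. The plan has three steps: identify finite meets, identify arbitrary joins, and then check the infinite distributive law.

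\emph{Meets.} For smashing ideals $\mathsf{S}_1,\mathsf{S}_2$ with coalgebras $C_1,C_2$, the argument in the proof of Proposition~\ref{Gprop:meetsfinite} applies verbatim and shows that $C_1\otimes C_2$ is an idempotent coalgebra whose comodules are exactly $\mathsf{S}_1\cap\mathsf{S}_2$. It remains to see that $\mathsf{S}_1\cap \mathsf{S}_2$ is smashing. We will show that its right perp is $\ker(C_1\otimes C_2\otimes -)$, which is coproduct closed because $\otimes$ preserves coproducts. This follows from the triangle $C_1\otimes C_2\otimes N\to N\to \mathrm{cone}\to$: the cone is killed by $C_1\otimes C_2$, and hence lies in the perp by the usual adjunction argument. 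So $\mathsf{S}_1\wedge\mathsf{S}_2=\mathsf{S}_1\cap\mathsf{S}_2$, and the top element $\mathsf{T}$ (with coalgebra $\unit$) is smashing.

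\emph{Joins.} Set $\mathsf{S}=\loc(\bigcup_\lambda\mathsf{S}_\lambda)$, which is a tensor-ideal by Lemma~\ref{Glem:tensorclosed}. Its right perp is $\bigcap_\lambda \mathsf{S}_\lambda^\perp$, and an intersection of coproduct-closed subcategories is coproduct closed. The one thing not immediate is that $\mathsf{S}$ is again a localization kernel, i.e.\ that the inclusion of $\mathsf{S}$ has a right adjoint, so that the triangle $C\to\unit\to A$ exists. This is the main obstacle, since $\bigcup_\lambda\mathsf{S}_\lambda$ need not be generated by a set of compacts and Theorem~\ref{Gthm:localizationthm} does not apply directly.

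For the join I would first try to construct $A$ explicitly. For finite subsets $F\subseteq\Lambda$, define $A_F$ as the idempotent algebra of the finite join, built inductively: the join of two smashing ideals has algebra $A_1\otimes A_2$, which one checks is idempotent with modules $\mathsf{S}_1^\perp\cap\mathsf{S}_2^\perp$. Then take $A=\mathrm{hocolim}_F A_F$ over the directed system of finite subsets, formed as a Milnor-type colimit, or in a stable $\infty$-enhancement if needed. Idempotence of $A$ should follow from the facts that $\otimes$ commutes with filtered colimits and that the transition maps become isomorphisms after tensoring. The fibre $C\to\unit\to A$ then lies in $\mathsf{S}$, and $A\otimes\mathsf{S}_\lambda=0$ gives the correct perp. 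If working in a bare triangulated category makes the filtered colimit problematic, the fallback is to quote the set-theoretic result that $\mathcal{S}(\mathsf{T})$ is a set (Krause) together with the existence of arbitrary joins of smashing subcategories from \cite{Krause:2000a} or \cite{BKSframe}.

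\emph{Distributivity.} We must show $\mathsf{S}\cap\loc(\bigcup_\lambda\mathsf{S}_\lambda)=\loc(\bigcup_\lambda(\mathsf{S}\cap\mathsf{S}_\lambda))$. The inclusion $\supseteq$ is clear. For $\subseteq$, note that $C_\mathsf{S}\otimes(-)$ is exact and coproduct preserving, so the class of $M$ with $C_\mathsf{S}\otimes M\in\loc(\bigcup_\lambda(\mathsf{S}\cap\mathsf{S}_\lambda))$ is localizing and contains each $\mathsf{S}_\lambda$, because $C_\mathsf{S}\otimes\mathsf{S}_\lambda\subseteq\mathsf{S}\cap\mathsf{S}_\lambda$. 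Applying this to an $M$ in the left-hand side, where $C_\mathsf{S}\otimes M\cong M$, gives the inclusion. The final sentence of the theorem, that the inclusion $\mathcal{S}(\mathsf{T})\to\Loc(\mathsf{T})$ preserves joins and finite meets, then holds by construction.
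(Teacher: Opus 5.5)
Your proposal is correct, and your join step is the paper's argument. The paper writes out only one thing: the computation $\loc(\bigcup_\lambda\mathsf{S}_\lambda)^\perp=\bigcap_\lambda\mathsf{S}_\lambda^\perp$, which is coproduct-closed. Everything else is deferred to \cite{BKSframe}: finite meets, the frame law, and the existence of the idempotent triangle for a join. That reference works in the presheaf category $\Fun((\mathsf{T}^\mathrm{c})^\op,\Modu\mathbb{Z})$, where the infinite tensor product of idempotents can be formed as a filtered colimit.

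Where you differ is that you supply direct tensor-triangular arguments for the parts the paper outsources. For meets you use the coalgebra $C_1\otimes C_2$ for $\mathsf{S}_1\cap\mathsf{S}_2$, whose right perp is $\ker(C_1\otimes C_2\otimes-)$. For distributivity you pull $\loc(\bigcup_\lambda(\mathsf{S}\cap\mathsf{S}_\lambda))$ back along $C_\mathsf{S}\otimes-$. The distributivity argument is a real gain. It shows more generally that a smashing ideal distributes over arbitrary joins of localizing ideals, and it uses only the coalgebra of $\mathsf{S}$ itself.

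Two comments on the join.

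\emph{What your route needs.} With the paper's literal definition (smashing means $\mathsf{S}^\perp$ is coproduct-closed), your perp computation already finishes the join. The right adjoint to the inclusion, which you rightly flag, is an extra property. The paper assumes it tacitly whenever it attaches $C$ and $A$ to a smashing ideal. Your route genuinely needs it: the distributive law involves $\mathsf{S}\wedge\bigvee_\lambda\mathsf{S}_\lambda$, and your meet argument requires a coalgebra for $\bigvee_\lambda\mathsf{S}_\lambda$ as well.

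\emph{The colimit construction.} A Milnor colimit handles only sequential systems. So in a bare triangulated category it does not build $\mathrm{hocolim}_F A_F$ over the directed set of finite subsets of an uncountable $\Lambda$. Your $\infty$-categorical version does work. There filtered colimits are exact, so $C=\colim_F C_F$ lies in $\loc(\bigcup_\lambda\mathsf{S}_\lambda)$, and $A\otimes\mathsf{S}_\lambda=0$ gives the correct perp. Without an enhancement, your fallback citation of \cite{Krause:2000a} or \cite{BKSframe} is exactly the paper's route.
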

\begin{proof}
This can be deduced from \cite{BKSframe}. The argument is based on passing to the category $\Fun((\mathsf{T}^\mathrm{c})^\op, \Modu \mathbb{Z})$ of additive presheaves on the compacts. In the presence of an enhancement to a stably symmetric monoidal $\infty$-category one can give a more direct proof (see the discussion below).

Since the argument is left implicit in \cite{BKSframe} let us give the argument that the join is computed as claimed. We have a series of equalities
\begin{align*}
\loc\left( \bigcup_{\lambda \in \Lambda} \mathsf{S}_\lambda \right)^\perp &= \left( \bigcup_{\lambda \in \Lambda} \mathsf{S}_\lambda \right)^\perp \\
&= \left\{M \in \mathsf{T} \; \middle| \; \mathsf{T}(S, M) = 0 \; \forall \; S \in \bigcup_\lambda \mathsf{S}_\lambda \right\} \\
&= \bigcap_\lambda \{M \in \mathsf{T} \mid \mathsf{T}(S, M) = 0 \; \forall \; S \in \mathsf{S}_\lambda \} \\
&= \bigcap_\lambda (\mathsf{S}_\lambda^\perp)
\end{align*}
where the first is a small exercise (using that the left perp is always localizing) and the rest are clear. We see from the final description that the right perp is an intersection of localizing ideals and hence again a localizing ideal and so the join of the $\mathsf{S}_\lambda$ as localizing ideals is smashing as claimed.
\end{proof}

\begin{remark}
Taking infinite meets is not so straightforward: the intersection of a family of smashing subcategories can fail to be smashing (this is related to Remark~\ref{remark:inflationmeets} and the same counterexample \cite[Remark~5.12]{BKSframe} applies).
\end{remark}

\begin{definition}\label{Gdef:idempotentalgebra}
We denote by $\Idem(\mathsf{T})$ the set of idempotent algebras and we let $\Idem^\mathrm{fin}(\mathsf{T})$ be the subset of idempotent algebras arising from smashing ideals inflated from tensor ideals of $\mathsf{T}^\mathrm{c}$. The set $\Idem(\mathsf{T})$ is partially ordered as follows: for algebras $\eta_i\colon \unit \to A_i$ ($i=1,2$) we have $A_1 \leq A_2$ if there is a commutative triangle
\[
\begin{tikzcd}
                          & \unit \arrow[ld, "\eta_1"'] \arrow[rd, "\eta_2"] &     \\
A_1 \arrow[rr, "\varphi"] &                                                  & A_2
\end{tikzcd}
\]
This makes $\Idem(\mathsf{T})$ into a complete lattice (see \cite{BKSframe}) and it is moreover a frame. The join of $A_1$ and $A_2$ is given by $A_1\otimes A_2$ and the meet is given by the Mayer-Vietoris triangle
\[
A_1 \wedge A_2 = \mathrm{fibre}(A_1\oplus A_2 \to A_1\otimes A_2).
\] 
\end{definition}

\begin{remark}
For $A_1,A_2 \in \Idem(\mathsf{T})$ we have $A_1 \leq A_2$ if and only if any of the following equivalent conditions hold:
\begin{itemize}
\item $\Modu_\mathsf{T}(A_1) \supseteq \Modu_\mathsf{T}(A_2)$;
\item $\ker(A_1 \otimes -) \subseteq \ker(A_2\otimes -)$;
\item there exists an isomorphism $A_2 \cong A_1\otimes A_2$;
\item $\eta_1\otimes A_2 \colon A_2 \to A_1\otimes A_2$ is an isomorphism.
\end{itemize}
\end{remark}

\begin{xca}
Prove the equivalence of the conditions above.
\end{xca}

\begin{remark}
For an infinite family of idempotent algebras the join is the colimit over all finite joins, i.e.\ the `infinite tensor product' which can be constructed using the unit maps. If $\mathsf{T}$ were an $\infty$-category this would just make sense, but if $\mathsf{T}$ is only triangulated and not assumed to have an enhancement then one can make sense of this in the functor category.
\end{remark}

\begin{proposition}
There is an isomorphisms of frames
\[
\mathcal{S}(\mathsf{T}) \cong \Idem(\mathsf{T})
\]
sending the finite localizations to $\Idem^\mathrm{fin}(\mathsf{T})$.
\end{proposition}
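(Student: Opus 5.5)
The plan is to write down the obvious assignment and check that it is an order isomorphism; a bijection of posets automatically preserves all joins and meets, so it is then an isomorphism of frames. Given a smashing ideal $\mathsf{S}$, the discussion following the definition of smashing ideals gives a triangle $C\to\unit\to A\to$ with $A$ an idempotent algebra and $\mathsf{S}^\perp=\Modu_\mathsf{T}(A)$. The map is $\Phi(\mathsf{S})=A$, with unit $\eta\colon\unit\to A$. I would first check that this is well defined up to the relation defining the order on $\Idem(\mathsf{T})$: $A=j_*j^*\unit$ is determined by $\mathsf{S}$ up to a unique isomorphism under $\unit$, by uniqueness of adjoints.

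Next I would construct an inverse $\Psi(A)=\ker(A\otimes -)$ for an idempotent algebra $\eta\colon \unit\to A$, and show it is a smashing ideal. It is clearly a localizing tensor-ideal, since $A\otimes-$ is exact, preserves coproducts, and commutes with tensoring. Let $C=\mathrm{fibre}(\eta)$. Idempotence gives $A\otimes C\cong 0$, so $C\otimes N\in\ker(A\otimes-)$ for every $N$. I would then show $\ker(A\otimes -)^\perp=\Modu_\mathsf{T}(A)$:
\begin{itemize}
\item If $M\cong A\otimes M$ and $K\in\ker(A\otimes-)$, then $\mathsf{T}(K,A\otimes M)=0$. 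To see this, factor a map $K\to A\otimes M$ through $A\otimes K=0$, using the multiplication $A\otimes A\to A$, which is inverse to $A\otimes\eta$.
\item Conversely, if $N\in\ker(A\otimes-)^\perp$, then the map $C\otimes N\to N$ vanishes, because $C\otimes N\in\ker(A\otimes-)$. Hence $N$ is a summand of $A\otimes N$, and so $N\cong A\otimes N$ by idempotence.
\end{itemize}
Since $\Modu_\mathsf{T}(A)=\ker(C\otimes-)$ is closed under coproducts, $\Psi(A)$ is smashing. The triangle $C\otimes N\to N\to A\otimes N$ then exhibits $A$ as $j_*j^*\unit$ for $\Psi(A)$, giving $\Phi\Psi\cong\mathrm{id}$. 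The identity $\Psi\Phi(\mathsf{S})=\ker(A\otimes-)=\mathsf{S}$ is item (2) of the smashing discussion.

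For order preservation in both directions I would use the remark listing equivalent conditions for $A_1\le A_2$, namely $\ker(A_1\otimes-)\subseteq\ker(A_2\otimes-)$. This says precisely that $\Psi$ is order preserving and reflecting, so $\mathcal{S}(\mathsf{T})\cong\Idem(\mathsf{T})$ as posets. Since Theorem~\ref{Gthm:smashingframe} shows $\mathcal{S}(\mathsf{T})$ is a frame, this is a frame isomorphism. As a consistency check, meets $\mathsf{S}_1\cap\mathsf{S}_2=\ker(A_1\otimes-)\cap\ker(A_2\otimes-)$ correspond to the Mayer–Vietoris fibre, and joins correspond to $A_1\otimes A_2$ via $\ker(A_1\otimes A_2\otimes -)$. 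Finally, the finite localizations correspond to $\Idem^\mathrm{fin}(\mathsf{T})$ by definition of the latter together with Theorem~\ref{Gthm:finiteloc}.

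The step I expect to be the real obstacle is the implicit claim that $\Idem(\mathsf{T})$ is a poset rather than a preorder, together with its frame structure as asserted in Definition~\ref{Gdef:idempotentalgebra}. Antisymmetry needs that a map $\varphi\colon A_1\to A_2$ under $\unit$ is unique and that mutual maps are inverse isomorphisms. I would prove uniqueness by showing $\mathsf{T}(A_1,M)\cong\mathsf{T}(\unit,M)$ via $\eta_1$ for any $A_1$-module $M$. This holds because $\mathsf{T}(C_1,M)=0$ when $M\in\Modu_\mathsf{T}(A_1)=\ker(C_1\otimes-)$, again using $A_1\otimes C_1=0$ and tensoring. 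Once that is in place, transporting the frame structure along the bijection sidesteps any need to construct infinite joins in $\Idem(\mathsf{T})$ directly.
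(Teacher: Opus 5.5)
Your proposal is correct and uses the same pair of mutually inverse assignments as the paper, $\mathsf{S}\mapsto A$ (the localization of $\unit$) and $A\mapsto\ker(A\otimes-)$. The paper states these in a single line. Your additional checks fill in exactly what it leaves implicit: that $\ker(A\otimes-)^\perp=\Modu_\mathsf{T}(A)=\ker(C\otimes-)$, so $\ker(A\otimes-)$ is smashing; that the orders match via $\ker(A_1\otimes-)\subseteq\ker(A_2\otimes-)$; and that the order on $\Idem(\mathsf{T})$ is antisymmetric, since $\mathsf{T}(A_1,M)\cong\mathsf{T}(\unit,M)$ for $A_1$-modules $M$.
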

\begin{proof}
This is simply given by sending a smashing ideal to the corresponding idempotent algebra (the image of $\unit$ under the corresponding localization) and an idempotent algebra to the ideal of objects it annihilates.
\end{proof}

\begin{remark}
In a similar way one can define the frame of idempotent coalgebras and there is the expected lattice isomorphism.
\end{remark}

As a final remark let us note that one cannot expect $\Idem(\mathsf{T})$ to be as nicely behaved as the coherent frame $\Idem^\mathrm{fin}(\mathsf{T}) \cong \Thick^\otimes(\mathsf{T}^\mathrm{c})$.

\begin{remark}
It is an open question as to whether $\Idem(\mathsf{T})$ is a spatial frame. It is known to be spatial in some special cases, but we have no idea in general. Even in the case that it is spatial it seems it need not be a coherent frame.
\end{remark}

\begin{remark}\label{Grem:idempotentnotation}
Let us make a comment on notation. One often sees some version of $e$ and $f$ used to denote a pair of an idempotent coalgebra and algebra respectively. When $V$ is a Thomason subset of $\Spc \mathsf{T}^\mathrm{c}$ it is customary to write $\Gamma_V\mathbf{1}$ and $L_V\mathbf{1}$ for the corresponding idempotent coalgebra and algebra.
\end{remark}

%----------------------------------------------------------------------------------------------------------------------------------------------

%----------------------------------------------------------------------------------------------------------------------------------------------

\subsection{Supports for big tt-categories}\label{Gsec:bigsupport}

We now describe a way to define some notions of support for objects of $\mathsf{T}$. The material in this section is inspired by \cite{Balmer/Favi:2011a} and \cite{BalchinStevenson} and the point of view presented is based on joint work with Scott Balchin. Our viewpoint is nonstandard, and allows us to capture naive and refined notions of support using the same framework. In certain cases, which we will highlight, our notion is not quite identical to what exists in the literature. However, as far as I am aware in these cases the old notion has not proved particularly useful and so I don't feel particular attachment to it. Moreover, we will see that we don't actually lose information in these cases.

We will temporarily leave the world of frames, at least in principle. Up to set-theoretic issues $\Loc^\otimes(\mathsf{T})$ is a complete lattice, and so it makes sense to probe it by complete lattices that we understand which are not necessarily frames. A morphism of complete lattices is a map preserving arbitrary joins and finite meets.

Let $F$ be a complete lattice (we use $F$ because we have in mind that it might be a frame and I lack imagination). %and let $\tau\colon F\to \Loc^\otimes(\mathsf{T})$ be a map of complete lattices. 

\begin{definition}\label{Gdef:catlattice}
We call a map of complete lattices $\tau\colon F \to \Loc^\otimes(\mathsf{T})$ a \emph{categorified complete lattice} (following Clausen and Scholze). 
\end{definition}

\begin{remark}
This is not so snappy, so it is probably better to just say categorified lattice.
\end{remark}

\begin{remark}\label{Grem:Ko}
If we assume that $\mathsf{T}$ is a presentably symmetric monoidal stable $\infty$-category, $F$ is a frame, and $\tau$ factors through $\Idem(\mathsf{T})$ then by \cite{aoki2023sheaves} Definition~\ref{Gdef:catlattice} is equivalent to giving a symmetric monoidal and colimit preserving functor $\mathrm{Shv}(F; \mathsf{Sp}) \to \mathsf{T}$ from sheaves of spectra on $F$ to $\mathsf{T}$. This determines an action of the category of spectral sheaves on $\mathsf{T}$ which gives rise to the structure we have observed and will observe.
\end{remark}

\begin{example}\label{Gex:catlattice}
We present some of the abstract examples we will have in mind throughout what follows:
\begin{enumerate}
\item We could take $\tau = \mathrm{id}\colon \Loc^\otimes(\mathsf{T}) \to \Loc^\otimes(\mathsf{T})$. This is the universal example. 
\item We could take $\tau$ to be the inclusion of $\Thick^\otimes(\mathsf{T}^c)$ which is valid by Propositions~\ref{Gprop:joinsfinite} and \ref{Gprop:meetsfinite}.
\item We could take $\tau$ to be the inclusion of $\mathcal{S}(\mathsf{T}) \cong \Idem(\mathsf{T})$ which is valid by Theorem~\ref{Gthm:smashingframe}. (In this section we will often regard this isomorphism as an identification, i.e.\ we will be sloppy with distinguishing smashing ideals and the corresponding idempotent algebra.)
\end{enumerate}
As we will see later, under the right conditions we can refine some of these examples to obtain better approximations of $\Loc^\otimes(\mathsf{T})$.
\end{example}

\begin{remark}
If we insisted that $\mathsf{T}$ were a presentably symmetric monoidal stable $\infty$-category, $F$ were a frame, and that $\tau$ factored via the frame $\Idem(\mathsf{T})$ this would be the notion of \emph{categorified locale} as introduced by Clausen and Scholze in their study of complex geometry through the lens of condensed mathematics \cite[Definition~7.1]{Complex}. This is a reasonable restriction to make (cf.\ also Remark~\ref{Grem:Ko}). It does not seem necessary in order to set up a reasonable definition though and as we don't know enough about $\Loc^\otimes(\mathsf{T})$ in general it may be worthwhile resisting the temptation to impose hypotheses.
\end{remark}

Because $\tau$ preserves joins, i.e.\ preserves colimits, it will always have a right adjoint $(-)^\mathrm{o}\colon \Loc^\otimes(\mathsf{T}) \to F$ which sends a localizing ideal $\mathsf{L}$ to
\[
\mathsf{L}^\mathrm{o} = \bigvee\{f\in F \mid \tau(f) \leq \mathsf{L} \}.
\]

\begin{example}
If we take (2) and (3) of Example~\ref{Gex:catlattice} then the above sends $\mathsf{L}$ to $\mathsf{L}\cap \mathsf{T}^\mathrm{c}$ and to the largest smashing subcategory of $\mathsf{L}$ respectively. This motivates the notation: $\mathsf{L}^\mathrm{o}$ is the interior of $\mathsf{L}$ relative to $\tau$.
\end{example}

This construction is useful, but not very helpful in the context of trying to understand general localizing ideals. In practice, many localizing ideals contain no compact objects and so we can't really build a useful invariant from this right adjoint. 

What about a left adjoint? As we have remarked, in examples $\tau$ need not preserve arbitrary meets and so the left adjoint won't necessarily exist. However, the formula which would define a left adjoint makes sense whether this left adjoint exists or not. This is the basis for Definition~\ref{Gdef:bigsupport}. In order to give it we  recall the notion of a filter on a poset.

Given a complete lattice $F$ a filter $\mathcal{V}$ on $F$ is a non-empty subset of $F$ which is upward closed and downward directed. Concretely, this means that if $f\leq f'$ and $f\in \mathcal{V}$ then $f'\in \mathcal{V}$ and whenever $f,g\in \mathcal{V}$ then so is $f\wedge g$. We let $\mathrm{Filt}(F)$ denote the set of filters on $F$ ordered by inclusion and note that this is a complete lattice and a frame if $F$ is distributive.

\begin{xca}
Construct the meet and join on $\mathrm{Filt}(F)$ and verify the above claims.
%join is upward closure of finite meets and meet is join. But can also just intersect filters and see meet that way and give join via filtered union (double check all this).
\end{xca}

\begin{definition}\label{Gdef:bigsupport}
Let $\tau \colon F \to \Loc^\otimes(\mathsf{T})$ be a map of complete lattices. We define maps of posets
\[
\widetilde{\sigma}\colon \Loc^\otimes(\mathsf{T})^\op \to \mathrm{Filt}(F) \quad \text{by} \quad \widetilde{\sigma}\mathsf{L} = \{f\in F \mid \mathsf{L}\leq \tau f \}
\]
and 
\[
 \sigma\colon \Loc^\otimes(\mathsf{T}) \to F \quad \text{by} \quad \sigma \mathsf{L} = \bigwedge \widetilde{\sigma}\mathsf{L}.
\]
%by setting
%\[
%\widetilde{\sigma}\mathsf{L} = \{f\in F \mid \mathsf{L}\leq \tau f \}
%\]
%and 
%\[
%\sigma \mathsf{L} = \wedge \widetilde{\sigma}\mathsf{L}.
%\]
We call these assignments the \emph{filter of supports} and \emph{support} of $\mathsf{L}$ respectively (implicitly with respect to $\tau$ and explicitly when necessary).

Given an object $M\in \mathsf{T}$ we set
\[
\sigma(M) = \sigma(\loc^\otimes(M)) \quad \text{and} \quad \widetilde{\sigma}(M) = \widetilde{\sigma}(\loc^\otimes(M))
\]
\end{definition}

\begin{remark}
If $\tau$ preserves meets then $\sigma$ is its left adjoint. In this case $\mathsf{L} \leq \tau(f)$ if and only if $\sigma(\mathsf{L}) \leq f$ and so $\widetilde{\sigma}(\mathsf{L})$ is just the upward closure of $\sigma(\mathsf{L})$, i.e.\ the support and filter of supports are equivalent data.
\end{remark}

We now go on a long digression concerning `examples' and the comparison with the traditional notion with values in the spectrum of the compacts. Let us consider a motivating example and corresponding computation. Take $\tau$ to be the inclusion $\Thick^\otimes(\mathsf{T}^\mathrm{c}) \to  \Loc^\otimes(\mathsf{T})$ given by inflation. We will identify $\Thick^\otimes(\mathsf{T}^c)$ with the lattice of Thomason subsets when convenient. For a localizing subcategory $\mathsf{L}$ we have
\[
\widetilde{\sigma}\mathsf{L} = \{ V \in \Thom(\Spc \mathsf{T}^\mathrm{c}) \mid \mathsf{L} \subseteq \loc(\mathsf{T}_V^\mathrm{c})\}
\]
the collection of Thomason subsets $V$ such that $\mathsf{L}$ is contained in $\loc(\mathsf{T}_V^\mathrm{c})$ where $\mathsf{T}_V^\mathrm{c}$ is the ideal corresponding to $V$. By definition $\sigma\mathsf{L}$ is the meet of these Thomason subsets, i.e.\ the largest Thomason subset contained in their intersection.

Let us compare this to the traditional notion in a pair of cases. In order to do this we need to recall the usual naive support. It is useful to have a bit of notation and a small fact which we leave as an exercise. For a point $x$ of a space $X$ we let
\[
\mathcal{Z}(x) = \{y\in T \mid x \notin \overline{\{y\}} \}
\]
i.e.\ we take the collection of points that don't specialize to $x$. For a Thomason subset $V$, with corresponding ideal of compacts $\mathsf{T}_V^\mathrm{c}$, we let $A_V$ be the corresponding idempotent algebra (mentally replace this by $L_V\mathbf{1}$ if you prefer).

\begin{xca}
Let $\mathsf{P}$ be a prime tensor ideal of the compacts. Show that $\supp \mathsf{P} = \mathcal{Z}(\mathsf{P})$ (so in particular $\mathcal{Z}(\mathsf{P})$ is Thomason).
\end{xca}

\begin{definition}
For a localizing ideal $\mathsf{L}$ the naive support is
\[
\supp \mathsf{L} = \{ \mathsf{P} \in \Spc \mathsf{T}^\mathrm{c} \mid A_{\mathcal{Z}(\mathsf{P})}\otimes \mathsf{L} \neq 0\}.
\]
Equivalently, this is the set of those primes $\mathsf{P}$ such that $\mathsf{L}$ survives in the localization by $\mathsf{P}$ i.e.\ $\mathsf{L}$ is not contained in $\loc(\mathsf{P}) = \loc(\mathsf{T}_{\mathcal{Z}(\mathsf{P})}^\mathrm{c})$.
\end{definition}

Let us consider 
\[
U(\mathsf{L}) = \Spc \mathsf{T}^\mathrm{c} \setminus \supp \mathsf{L} = \{\mathsf{P} \in \Spc \mathsf{T}^\mathrm{c} \mid \mathsf{L} \subseteq \loc(\mathsf{T}_{\mathcal{Z}(\mathsf{P})}^\mathrm{c}) \}.
\]

Continuing to take for $\tau$ the inclusion of $\Thick^\otimes(\mathsf{T}^\mathrm{c})$, and continuing to abuse the identification of thick tensor-ideals with Thomason subsets, we see that
\[
U(\mathsf{L}) = \widetilde{\sigma}(\mathsf{L}) \cap \{\mathcal{Z}(\mathsf{P}) \mid \mathsf{P}\in \Spc \mathsf{T}^\mathrm{c} \}
\]
i.e.\ the complement of the naive support is just $\widetilde{\sigma}$ restricted to the meet-prime Thomason subsets. If $\supp \mathsf{L}$ is a Thomason subset then
\[
\supp \mathsf{L} = \bigwedge_{\mathsf{P}\in U(\mathsf{L})} \mathcal{Z}(\mathsf{P}) = \sigma(\mathsf{L})
\]
and if we replace the meet by an intersection the first equality always holds. Thus $\widetilde{\sigma}$ is an honest generalization of the naive support and $\sigma$ is trying its best.

\begin{remark}
In the case that $\Idem \mathsf{T}$ is spatial there is a completely analogous notion of naive smashing support. This is recovered in the same way through our definition when $\tau$ is the inclusion of $\Idem \mathsf{T}$. The advantage of our construction is that it works without the assumption that the smashing ideals form a spatial frame.
\end{remark}

Now let us do an example to see that the notion we have defined can actually do much better than recovering the naive support. This will motivate the discussion of Section~\ref{Gsec:refined}.

We use $\mathcal{P}$ to denote the power set operation on sets. The power set of a set, ordered by inclusion, is a spatial frame which is dual to the discrete topology on the given set.

\begin{example}\label{Gex:Foxbysupport}
Let $R$ be a commutative noetherian ring and take $\mathsf{T} = \mathsf{D}(R)$. Consider the function $\tau\colon  \mathcal{P}(\Spec R) \to \Loc^\otimes(\mathsf{T})$ from the power set of $\Spec R$ defined by
\[
\tau(W) = \loc(k(\mathfrak{p}) \mid \mathfrak{p} \in W),
\]
where $k(\mathfrak{p})$ is the residue field at $\mathfrak{p}$. This is clearly join preserving and it preserves binary meets even without the noetherian hypothesis. Being noetherian guarantees (the non-trivial fact) that $\tau(\Spec R) = \mathsf{D}(R)$ which shows that $\tau$ is a map of complete lattices. In this case $\sigma$ is Foxby's small support:
\[
\sigma(\mathsf{L}) = \{\mathfrak{p}\in \Spec R \mid \mathsf{L} \otimes k(\mathfrak{p}) \neq 0 \}
\]
and Neeman's theorem asserts that $\tau$ is actually an isomorphism of frames. (See Example~\ref{Gex:Foxbysupport2} for further discussion.)
\end{example}

\begin{xca}
Verify that $\tau$ as in the example preserves binary meets (without the noetherian assumption if you feel brave).
\end{xca}

To conclude let us give the basic properties of the support and filter of supports. We fix some arbitrary $F$ and $\tau$, use variations on $M$ to refer to objects of $\mathsf{T}$, and let $\mathsf{L}$ and $\mathsf{M}$ be localizing ideals.

\begin{lemma}
The filter of supports $\widetilde{\sigma}$ satisfies the following properties:
\begin{enumerate}
\item $\widetilde{\sigma}(\Sigma^i M) = \widetilde{\sigma}(M)$ for all $i \in \mathbb{Z}$;
\item $\widetilde{\sigma}(0) = F$ and $\widetilde{\sigma}(\mathsf{T}) = \{f\in F \mid \tau(f) = \mathsf{T} \}$. In particular, $\widetilde{\sigma}(\mathsf{T}) = \{1_F\}$ if $\tau$ is injective.
\item If $\mathsf{L} \subseteq \mathsf{M}$ then $\widetilde{\sigma}(\mathsf{L}) \geq \widetilde{\sigma}(\mathsf{M})$.
\item $\widetilde{\sigma}(\coprod_\lambda M_\lambda) = \bigcap_\lambda \widetilde{\sigma}(M_\lambda)$;
\item Given a triangle $M' \to M \to M''$ we have $\widetilde{\sigma}(M) \geq (\widetilde{\sigma}(M') \cap \widetilde{\sigma}(M''))$.
\item $\widetilde{\sigma}(\vee_\lambda \mathsf{L}_\lambda) = \cap_\lambda \widetilde{\sigma}(\mathsf{L})$;
\item $(\widetilde{\sigma}(\mathsf{L}) \vee \widetilde{\sigma}(\mathsf{M})) \subseteq \widetilde{\sigma}(\mathsf{L} \cap \mathsf{M})$.
\end{enumerate}
\end{lemma}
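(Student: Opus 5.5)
The plan is to reduce everything to two observations. First, $\widetilde{\sigma}$ is defined by $\widetilde{\sigma}\mathsf{L} = \{f \mid \mathsf{L}\subseteq \tau f\}$, and each $\tau f$ is a localizing tensor-ideal. Second, $\mathsf{L}\subseteq \tau f$ holds if and only if a generating set of $\mathsf{L}$ lies in $\tau f$. Throughout, the ordering on $\mathrm{Filt}(F)$ is inclusion, so statements with $\geq$ are inclusions of subsets of $F$.

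I would dispose of the formal properties first.
\begin{itemize}
\item[(1)] Since $\loc^\otimes(\Sigma^i M)=\loc^\otimes(M)$, the two filters of supports agree.
\item[(3)] If $\mathsf{L}\subseteq\mathsf{M}\subseteq\tau f$ then $\mathsf{L}\subseteq \tau f$, so $\widetilde{\sigma}(\mathsf{M})\subseteq\widetilde{\sigma}(\mathsf{L})$. This is just the antitone property.
\item[(2)] The zero ideal is contained in every $\tau f$, so $\widetilde{\sigma}(0)=F$. On the other hand $\mathsf{T}\subseteq\tau f$ forces $\tau f=\mathsf{T}$. Since $\tau$ preserves the empty meet, $\tau(1_F)=\mathsf{T}$. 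If $\tau$ is injective, $1_F$ is then the only element sent to $\mathsf{T}$.
\end{itemize}

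Next comes the join formula (6). A localizing ideal $\tau f$ contains $\bigvee_\lambda \mathsf{L}_\lambda = \loc^\otimes(\bigcup_\lambda \mathsf{L}_\lambda)$ if and only if it contains each $\mathsf{L}_\lambda$. This gives $\widetilde{\sigma}(\bigvee_\lambda\mathsf{L}_\lambda)=\bigcap_\lambda\widetilde{\sigma}(\mathsf{L}_\lambda)$, where the right-hand index in the statement should read $\mathsf{L}_\lambda$. Property (4) then follows from (6): $\loc^\otimes(\coprod_\lambda M_\lambda)=\bigvee_\lambda\loc^\otimes(M_\lambda)$. The inclusion $\supseteq$ holds because localizing ideals are coproduct closed. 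The inclusion $\subseteq$ holds because each $M_\lambda$ is a summand of the coproduct and localizing subcategories are thick.

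For (5), take $f\in\widetilde{\sigma}(M')\cap\widetilde{\sigma}(M'')$. Then $M',M''\in\tau f$, and since $\tau f$ is triangulated, $M\in\tau f$, so $f\in\widetilde{\sigma}(M)$.

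Finally, (7) is where the lattice structure of $\tau$ enters, and I expect it to be the only step needing care. The join of two filters $\mathcal{V},\mathcal{W}$ in $\mathrm{Filt}(F)$ is the upward closure of $\{v\wedge w \mid v\in\mathcal{V}, w\in\mathcal{W}\}$. So it suffices to check that $v\wedge w$ lies in $\widetilde{\sigma}(\mathsf{L}\cap\mathsf{M})$ whenever $\mathsf{L}\subseteq\tau v$ and $\mathsf{M}\subseteq\tau w$; upward closure is then automatic because $\widetilde{\sigma}$ takes values in filters. This is where we use that $\tau$ preserves binary meets: $\tau(v\wedge w)=\tau v\cap\tau w\supseteq\mathsf{L}\cap\mathsf{M}$. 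The meet in $\Loc^\otimes(\mathsf{T})$ is intersection, as noted when these lattices were introduced.

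As a preliminary, I would also note that each $\widetilde{\sigma}\mathsf{L}$ really is a filter, by the same argument. It is non-empty since it contains $1_F$. It is upward closed because $\tau$ is monotone. It is closed under binary meets because $\tau$ preserves them.
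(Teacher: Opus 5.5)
Your proof is correct and matches the paper's item-by-item verification. The differences are minor: you derive (4) from (6), which the paper checks directly and then notes as a special case of (6) in the following remark. In (7), your upward closure of $\{v\wedge w\}$ is also the slightly more careful description of the join in $\mathrm{Filt}(F)$, since the bare set of meets need not be upward closed when $F$ is not distributive; this is harmless for the inclusion.

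One small slip: in (4) your two justifications are attached to the wrong inclusions. Coproduct closure of $\bigvee_\lambda\loc^\otimes(M_\lambda)$ gives $\loc^\otimes(\coprod_\lambda M_\lambda)\subseteq\bigvee_\lambda\loc^\otimes(M_\lambda)$, while the summand/thickness argument gives $\supseteq$.
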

\begin{proof}
Each of these is quickly disposed of as follows:
\begin{enumerate}
\item This is immediate from the definition as the suspensions of an object all generate the same localizing ideal.
\item These statements are true by definition.
\item If $\mathsf{M} \subseteq \tau(g)$ then we also have $\mathsf{L} \subseteq \tau(g)$ and so $\widetilde{\sigma}(\mathsf{L}) \geq \widetilde{\sigma}(\mathsf{M})$.
\item We have
\begin{align*}
\widetilde{\sigma}\left(\coprod_\lambda M_\lambda\right) &= \{f\in F \mid \coprod_\lambda M_\lambda \in \tau f \} \\
&= \cap_\lambda \{f\in F \mid M_\lambda \in \tau f \} \\
&= \cap_\lambda \widetilde{\sigma}(M_\lambda)
%\subseteq \{f\in F \mid M_\lambda \in \tau f \} \text{ for each } \lambda.
\end{align*}
where the non-trivial equality is the fact that localizing subcategories are closed under retracts and arbitrary coproducts.
\item If $M' \in \tau f$ and $M'' \in \tau f$, i.e.\ $f\in (\widetilde{\sigma}(M') \cap \widetilde{\sigma}(M''))$, then the given triangle implies that $M \in \tau f$, i.e.\ $f\in \widetilde{\sigma}(M)$, as localizing ideals are closed under extensions. 
\item We have 
\begin{align*}
\widetilde{\sigma}(\vee_\lambda \mathsf{L}_\lambda) &= \{f\in F \mid \cup_\lambda \mathsf{L}_\lambda \subseteq \tau f \} \\
&= \cap_\lambda \{f\in F \mid \mathsf{L}_\lambda \subseteq \tau f \} \\
&= \cap_\lambda \widetilde{\sigma}(\mathsf{L})
\end{align*}
%\item We have 
%\begin{align*}
%\widetilde{\sigma}(\mathsf{L}\vee \mathsf{M}) &= \{f\in F \mid \mathsf{L} \cup \mathsf{M} \subseteq \tau f \} \\
%&= \{f\in F \mid \mathsf{L} \subseteq \tau f \} \cap \{g\in F \mid \mathsf{M} \subseteq \tau g \} \\
%&= \widetilde{\sigma}(\mathsf{L}) \cap \widetilde{\sigma}(\mathsf{M})
%%\subseteq \{f\in F \mid X_\lambda \in \tau f \} \text{ for each } \lambda.
%\end{align*}
\item We have
\[
\widetilde{\sigma}(\mathsf{L} \cap \mathsf{M}) = \{f\in F \mid \mathsf{L} \cap \mathsf{M} \subseteq \tau f \} \\
\]
and
\begin{align*}
\widetilde{\sigma}(\mathsf{L}) \vee \widetilde{\sigma}(\mathsf{M}) &= \{f\in F \mid \mathsf{L}\subseteq \tau f\} \vee \{g\in F \mid \mathsf{M}\subseteq \tau g\} \\
&= \{f\wedge g \mid \mathsf{L}\subseteq \tau f \text{ and } \mathsf{M}\subseteq \tau g \} \\
\end{align*}
Since $\tau$ preserves meets we know that if $f$ and $g$ satisfy the above conditions then 
\[
\mathsf{L} \cap \mathsf{M} \subseteq \tau(f) \cap \tau(g) = \tau(f\wedge g).
\]
This shows $f\wedge g \in \widetilde{\sigma}(\mathsf{L} \cap \mathsf{M})$ and so 
\[
(\widetilde{\sigma}(\mathsf{L}) \vee \widetilde{\sigma}(\mathsf{M})) \subseteq \widetilde{\sigma}(\mathsf{L} \cap \mathsf{M}).
\]
\end{enumerate}
\end{proof}

\begin{remark}
The statement (4) is a special case of (6) using the fact that
\[
\bigvee_\lambda \loc\left(M_\lambda\right) = \loc\left(\coprod_\lambda M_\lambda\right).
\]
\end{remark}

%\textcolor{red}{Find some natural condition so the last guy can be an equality and get a complete lattice map. Maybe it just works and I'm missing it.}

\begin{lemma}
The support $\sigma$ satisfies the following properties:
\begin{enumerate}
\item $\sigma(\Sigma^i M) = \sigma(M)$ for all $i \in \mathbb{Z}$;
\item $\sigma(0) = 0_F$ and $\sigma(\mathsf{T}) = \wedge\{f\in F \mid \tau(f) = \mathsf{T} \}$. In particular, $\sigma(\mathsf{T}) = 1_F$ if $\tau$ is injective.
\item $\sigma(\coprod_\lambda M_\lambda) \geq \vee_\lambda \sigma(M_\lambda)$ with equality if $M_\lambda \in \tau\sigma(M_\lambda)$ for all $\lambda$.
\item Given a triangle $M' \to M \to M''$ we have $\sigma(M) \leq (\sigma(M') \vee \sigma(M''))$.
\item $\sigma(\mathsf{L}) \wedge \sigma(\mathsf{M}) \geq \sigma(\mathsf{L} \cap \mathsf{M})$;
\item $(\sigma(\mathsf{L}) \vee \sigma(\mathsf{M})) \leq \sigma(\mathsf{L} \vee \mathsf{M})$.
\end{enumerate}
\end{lemma}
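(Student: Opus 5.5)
The plan is to deduce each property of $\sigma$ from the corresponding property of the filter of supports in the previous lemma. The one formal fact needed is that $\sigma = \bigwedge\circ\widetilde{\sigma}$, and that taking meets reverses inclusions of subsets of $F$: if $\mathcal{V}\subseteq\mathcal{W}$ then $\bigwedge\mathcal{W}\leq\bigwedge\mathcal{V}$. For filters we also use $\bigwedge(\mathcal{V}\cap\mathcal{W})\geq \bigwedge\mathcal{V}\vee\bigwedge\mathcal{W}$ and $\bigwedge(\mathcal{V}\vee\mathcal{W})=\bigwedge\mathcal{V}\wedge\bigwedge\mathcal{W}$. The second of these holds because the filter join consists of elements above some $f\wedge g$ with $f\in\mathcal{V}$ and $g\in\mathcal{W}$.

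The properties then go as follows.
\begin{enumerate}
\item This is immediate, since $\Sigma^iM$ and $M$ generate the same localizing ideal.
\item For $\sigma(0)$, note $\widetilde{\sigma}(0)=F$, whose meet is $0_F$. The formula for $\sigma(\mathsf{T})$ is the definition. If $\tau$ is injective, then since $\tau(1_F)=\mathsf{T}$ (as $\tau$ preserves empty meets) the only such $f$ is $1_F$.
\item We have $\widetilde{\sigma}(\coprod M_\lambda)=\bigcap_\lambda\widetilde{\sigma}(M_\lambda)$. This intersection is contained in each $\widetilde{\sigma}(M_\lambda)$, so its meet lies above every $\sigma(M_\lambda)$, which gives the inequality. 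For equality, assume $M_\lambda\in\tau\sigma(M_\lambda)$. Then $\coprod_\lambda M_\lambda\in\bigvee_\lambda\tau\sigma(M_\lambda)=\tau(\bigvee_\lambda\sigma(M_\lambda))$, using that $\tau$ preserves joins. Hence $\bigvee_\lambda\sigma(M_\lambda)$ lies in the filter of supports of the coproduct, so the meet is at most it.
\item The same join trick works only under an analogous hypothesis, so in general I instead use part (5) of the previous lemma: $\widetilde{\sigma}(M)\supseteq\widetilde{\sigma}(M')\cap\widetilde{\sigma}(M'')$. This gives $\sigma(M)\leq\bigwedge(\widetilde{\sigma}(M')\cap\widetilde{\sigma}(M''))$. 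The difficulty is that this last meet dominates $\sigma(M')\vee\sigma(M'')$, which is the wrong direction for bounding $\sigma(M)$ from above.
\end{enumerate}

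This is where I expect the main obstacle. The statement is likely only safe when the supports are attained, meaning $M'\in\tau\sigma(M')$ and similarly for $M''$. In that case $\sigma(M')\vee\sigma(M'')$ lies in both filters, and extension closure of $\tau(\sigma(M')\vee\sigma(M''))$ gives $\sigma(M)\leq\sigma(M')\vee\sigma(M'')$. So I would first attempt the general statement, and if no argument works, record attainment as the needed hypothesis, mirroring item (3).

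\begin{enumerate}
\setcounter{enumi}{4}
\item By part (7) of the previous lemma, $\widetilde{\sigma}(\mathsf{L})\vee\widetilde{\sigma}(\mathsf{M})\subseteq\widetilde{\sigma}(\mathsf{L}\cap\mathsf{M})$. Taking meets and using the identity $\bigwedge(\mathcal{V}\vee\mathcal{W})=\bigwedge\mathcal{V}\wedge\bigwedge\mathcal{W}$ gives $\sigma(\mathsf{L}\cap\mathsf{M})\leq\sigma(\mathsf{L})\wedge\sigma(\mathsf{M})$. For that identity I will check both inequalities directly. First, $\bigwedge\mathcal{V}\wedge\bigwedge\mathcal{W}\leq f\wedge g$ for all $f\in\mathcal{V}$ and $g\in\mathcal{W}$. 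Conversely, $\mathcal{V}$ and $\mathcal{W}$ are both contained in the join $\mathcal{V}\vee\mathcal{W}$, so the meet of the join is at most $\bigwedge\mathcal{V}$ and at most $\bigwedge\mathcal{W}$.
\item This follows from the monotonicity in part (3) of the previous lemma applied to $\mathsf{L},\mathsf{M}\subseteq\mathsf{L}\vee\mathsf{M}$. That gives $\sigma(\mathsf{L}),\sigma(\mathsf{M})\leq\sigma(\mathsf{L}\vee\mathsf{M})$, hence the join inequality.
\end{enumerate}
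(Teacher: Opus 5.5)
The paper leaves this lemma as an exercise, so there is no proof to compare routes against. Your arguments for (1), (2), (3), (5) and (6) are correct. For (5) you do not need the filter-join identity: $\mathsf{L}\cap\mathsf{M}$ is contained in both $\mathsf{L}$ and $\mathsf{M}$, so monotonicity of $\sigma$ gives it exactly as in your (6).

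The gap is (4), which you leave open, and it cannot be closed: (4) is false in this generality. Your diagnosis is exactly the reason. $\bigwedge(\widetilde\sigma(M')\cap\widetilde\sigma(M''))$ can be strictly larger than $\sigma(M')\vee\sigma(M'')$ when $F$ fails the law $x\vee\bigwedge S=\bigwedge_{s\in S}(x\vee s)$.

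Here is a counterexample using inflation, the paper's own example (2).
\begin{enumerate}
\item Let $A$ be the ring of eventually constant sequences in a field $k$. It is absolutely flat, and $\Spec A\cong\mathbb{N}\cup\{\infty\}$ is the one-point compactification of $\mathbb{N}$, with $\infty$ the prime of eventually zero sequences.
\item Since $\Spec A$ is Hausdorff and zero-dimensional, its Thomason subsets are exactly its open subsets. So inflation is an injective map of complete lattices $\tau\colon\Omega(\Spec A)\to\Loc^\otimes(\mathsf{D}(A))$, and meets in $F=\Omega(\Spec A)$ are interiors of intersections.
\item One has $k(\mathfrak{p})\in\tau(U)$ if and only if $\mathfrak{p}\in U$. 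If $\mathfrak{p}\in U$, choose a clopen $W$ with $\mathfrak{p}\in W\subseteq U$ and corresponding idempotent $e$; then $k(\mathfrak{p})\cong eA\otimes k(\mathfrak{p})\in\loc(eA)\subseteq\tau(U)$. If $\mathfrak{p}\notin U$, then $k(\mathfrak{p})\otimes(-)$ kills $\mathsf{D}(A)^{\mathrm{c}}_U$ (its objects vanish at $\mathfrak{p}$), hence kills $\tau(U)$, whereas $k(\mathfrak{p})\otimes k(\mathfrak{p})\neq 0$.
\item Take $M'=k(\infty)$ and $M''=\bigoplus_{n\in\mathbb{N}}k(n)$. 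Then $\widetilde\sigma(M')$ is the set of open neighbourhoods of $\infty$, so $\sigma(M')=\interior\{\infty\}=\varnothing$. Similarly $\sigma(M'')=\mathbb{N}$.
\item The only $U$ with $M'\oplus M''\in\tau(U)$ is $\Spec A$, so $\sigma(M'\oplus M'')=\Spec A\not\subseteq\mathbb{N}$.
\end{enumerate}
The split triangle $M'\to M'\oplus M''\to M''\to\Sigma M'$ therefore violates (4). The same example shows that equality in (3) can fail without the attainment hypothesis, even for two summands.

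So the right move is the one you propose: record (4) under a hypothesis. Your extension-closure argument is correct when $M'\in\tau\sigma(M')$ and $M''\in\tau\sigma(M'')$.

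Alternatively, (4) holds whenever $F$ satisfies the dual distributive law above. Examples are $F=\mathcal{P}(X)$, or the Thomason subsets of a noetherian spectral space, where meets are intersections. In that case $\bigwedge(\widetilde\sigma(M')\cap\widetilde\sigma(M''))\leq\bigwedge_{f,g}(f\vee g)=\sigma(M')\vee\sigma(M'')$, with $f$ ranging over $\widetilde\sigma(M')$ and $g$ over $\widetilde\sigma(M'')$. This is why the failure never shows up in the paper's noetherian examples.
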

\begin{proof}
Exercise.
\end{proof}

%When introduce ltg use it to prove that finite localizations closed under intersections - this works right?

%Do ltg as extending to powerset, cheat and use spatial. See later if can do using breadth and if there's a notion of discretization of a locale

%----------------------------------------------------------------------------------------------------------------------------------------------

%----------------------------------------------------------------------------------------------------------------------------------------------

\subsection{Refined supports for big tt-categories}\label{Gsec:refined}

We now discuss how one can start with a support theory via $\tau \colon F \to \Loc^\otimes(\mathsf{T})$ and attempt to refine it. This leads to the local-to-global principle and questions about generators for $\mathsf{T}$. 

\begin{hypothesis}
Throughout this section we will assume that $F$ is a spatial frame and $\tau$ is injective and factors through $\Idem \mathsf{T}$.
\end{hypothesis}

Thus $\tau$ is a categorified spatial frame (or categorified spatial locale). 

\begin{remark}
The above factorization takes place via identifying $\Idem \mathsf{T}$ with the frame of smashing ideals. We will need to consider smashing ideals, idempotent coalgebras, and idempotent algebras but as we have seen these are all isomorphic lattices and so we identify them as convenient. We will generally think of $\tau f$ as the smashing ideal and write $A_{\tau f}$ and $C_{\tau f}$ for the associated algebra and coalgebra.
\end{remark}

\begin{remark}
It should be possible to produce a framework that, at the very least, does not require $F$ to be spatial but there are some challenges involved. I have some ideas about this. %keep reading about breadth and think about how to define some discretization of $F$

This is also restrictive in other ways. In order for it to capture everything it requires that the smashing localizations are sufficient to determine all localizations. This already places us beyond what we can compute so in practice it is currently rather mild. However, we know that smashing ideals will not be enough in general.
\end{remark}

\begin{example}
If $A$ is an absolutely flat commutative ring (i.e. every module is flat) then the telescope conjecture holds for $\mathsf{D}(A)$ by \cite{BS-smashing}, that is
\[
\Idem^\mathrm{fin}(\mathsf{D}(A)) = \Idem(\mathsf{D}(A)).
\] 
However, we know from \cite{Stevensonvnr} that there are examples where one cannot obtain all localizing subcategories from $\Idem(\mathsf{D}(A))$ using support-theoretic methods. 
\end{example}

\begin{definition}
A \emph{discretization} of $\tau$ is an injective map of complete lattices $\upsilon$ making the following triangle commute
\[
\begin{tikzcd}
F \arrow[d, "\tau"'] \arrow[r] & \mathcal{P}(\pt F) \arrow[ld, "\upsilon", dashed] \\
\Loc^\otimes(\mathsf{T})                     &                                                    
\end{tikzcd}
\]
where the map $F \to \mathcal{P}(\pt F)$ sends $f$ to the corresponding open subset $U_f$.
\end{definition}

\begin{remark}
In particular, a discretization gives a localizing ideal $\upsilon(x)$ for each $x\in \pt(F)$ such that $\upsilon(x) \cap \upsilon(y) = 0$ for $x\neq y$ and $\tau(f) = \vee_{x\in U_f} \upsilon(x)$. Specializing to $1_F$ we see that the $\upsilon(x)$ have to generate $\mathsf{T}$. These requests amount to a lot.
\end{remark}

\begin{remark}
I don't see an \emph{a priori} reason why there shouldn't be more than one discretization in general. If one relaxes the condition that $\tau$ factors through $\Idem \mathsf{T}$ then there can be more than one choice of discretization. In the examples we understand they are unique, but to check this one needs a sufficiently complete understanding of $\Loc^\otimes(\mathsf{T})$ as to defeat the purpose.
\end{remark}

\begin{definition}\label{Grem:weak}
One could instead consider a \emph{weak discretization} where we only require a natural transformation $\upsilon(U_{(-)}) \to \tau(-)$ i.e.\ for each $f$ a containment $\upsilon(U_{f}) = \vee_{x\in U_f} \upsilon(x) \subseteq \tau(f)$ rather than strict commutativity. It's also natural to ask less of $\upsilon$. For instance, that it only preserves joins (and hence not necessarily the top element). It will automatically be oplax monoidal for the cartesian monoidal structures. This is the sort of structure one usually starts with and is sometimes the best we know how to do. 
\end{definition}

%\begin{remark}
%Once one is only asking for a comparison map it would make sense to add a universality condition and reformulate this as follows: the weak discretization of $\tau$ is the right Kan extension of $\tau$ along $F\to \mathcal{P}(\pt F)$. Using this point of view it is clear how to generalize from $F\to \mathcal{P}(\pt F)$ to any frame map with source $F$.
%\end{remark}

Let us give an example and then begin the process of trying to produce a discretization of our categorified frame.

\begin{example}\label{Gex:Foxbysupport2}
Let us take $\mathsf{T} = \mathsf{D}(R)$ for $R$ a commutative noetherian ring (cf.\ Example~\ref{Gex:Foxbysupport}). We take for $\tau$ the inflation
\[
\Thom(\Spec R) \cong \Thick(\mathsf{D}^\mathrm{perf}(R)) \to \Loc(\mathsf{D}(R)).
\]
This is always injective and we know that $\Thick(\mathsf{D}^\mathrm{perf}(R))$ is a spatial frame with corresponding space of points $(\Spec R)^\vee$ the Hochster dual of $\Spec R$ (we will only talk about points and subsets in this example and so this dual topology won't matter). For each prime ideal $\mathfrak{p} \in \Spec R$ we can consider the residue field $k(\mathfrak{p})$ and Neeman proved in \cite{Neeman:1992a} that
\[
\mathsf{D}(R) = \loc(k(\mathfrak{p}) \mid \mathfrak{p}\in \Spec R).
\]
We define $\upsilon \colon \mathcal{P}(\Spec R) \to \Loc^\otimes(\mathsf{D}(R))$ by
\[
\upsilon(W) = \loc(k(\mathfrak{p}) \mid \mathfrak{p}\in W).
\]
This is compatible with joins by construction, and it is injective since $\upsilon(W)$ contains precisely the residue fields corresponding to points of $W$. It was an exercise that this preserves finite meets and it extends $\tau$ since we can take the equality of localizing ideals $\mathsf{D}(R) = \loc(k(\mathfrak{p}) \mid \mathfrak{p}\in \Spec R)$ and tensor with the idempotent coalgebra $C_V$ for a Thomason subset $V$ to see
\begin{align*}
\tau(V) &= \loc(C_V) \\
&= C_V\otimes \mathsf{D}(R) \\
&= C_V \otimes \loc(k(\mathfrak{p}) \mid \mathfrak{p}\in \Spec R) \\
&= \loc(C_V \otimes k(\mathfrak{p}) \mid \mathfrak{p}\in \Spec R) \\
&= \loc(k(\mathfrak{p}) \mid C_V \otimes k(\mathfrak{p})\neq 0) \\
&= \loc(k(\mathfrak{p}) \mid \mathfrak{p}\in V) \\
&= \upsilon(V).
\end{align*}
Hence $\upsilon$ is a discretization of $\tau$ and in fact Neeman shows that $\upsilon$ is a lattice isomorphism. In particular $\upsilon^{-1}$ gives a classifying support and this support agrees with Foxby's small support and the Balmer-Favi support in this setting.
\end{example}

We now discuss a general procedure for refining a categorified spatial frame $\tau\colon F \to \Idem(\mathsf{T})$ (as in our standing hypotheses). The first ingredient is a topological restriction on the space of points.

\begin{definition}
Let $X$ be a topological space. We say that $X$ is $T_D$ if every point of $X$ is locally closed. If $F$ is a spatial frame we will say that $F$ is $T_D$ if $\pt(F)$ is. %(In this section we assume $F$ is spatial and so we just call it $T_D$.)
\end{definition}

This turns out to be a key condition on $\pt(F)$. Through $F$ we can only access the open and closed subsets of $\pt(F)$ and the $T_D$ condition allows us to cut out any point using a pair of elements of $F$.

\begin{definition}
Suppose that $F$ is $T_D$. For a point $p\in \pt(F)$ choose a pair of elements $f,g\in F$ such that $\{p\} = U_f \cap V_g$ where $V_g = \pt(F)\setminus U_g$. We define the idempotent at $p$ to be
\[
\Gamma_p = C_{\tau f} \otimes A_{\tau g}
\]
where $C_{\tau f}$ is the idempotent coalgebra corresponding to $\tau f$ and $A_{\tau g}$ is the idempotent algebra for $\tau g$.
\end{definition}

\begin{remark}
The object $\Gamma_p$ is sometimes used to refer to the corresponding functor and the object we have defined above is written as $\Gamma_p\unit$. We choose to prioritize the object and write $\Gamma_p \otimes(-)$ for the corresponding functor. The notation $\kappa(p)$ is used for the analogous construction in \cite{Balmer/Favi:2011a}.

Perhaps it would be better to use some different notation to suggest that $\Gamma_p$ is a combination of an idempotent coalgebra and idempotent algebra (but in general is neither).
\end{remark}

\begin{remark}
Given a space $X$ a subset $W$ is locally closed if and only if there is an extension by zero functor from sheaves of abelian groups on $W$ to sheaves of abelian groups on $X$ (see e.g.\ \cite[Chapter~3, Theorem~8.6]{Tennison}). Knowing this it is hard not to make some remark based purely on vibes.

Actually, this connects back to Remark~\ref{Grem:Ko}. Let us assume that $\mathsf{T}$ is a presentably symmetric monoidal stable $\infty$-category and so we have, by \cite{aoki2023sheaves}, an action of $\mathrm{Shv}(F; \mathsf{Sp}) = \mathrm{Shv}(\pt(F); \mathsf{Sp})$ on $\mathsf{T}$ corresponding to $\tau$. Then the extension by zero from each point of $F$ exists precisely when $\pt(F)$ is $T_D$ and extending the unit of $\mathsf{Sp}$ by zero along the different points gives rise to the $\Gamma_x$'s.
\end{remark}

\begin{lemma}\label{Gxca:unique}
The object $\Gamma_p$ is independent of the choice of $f$ and $g$.
\end{lemma}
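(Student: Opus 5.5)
The plan is to compare two choices $(f,g)$ and $(f',g')$ with $\{p\} = U_f\cap V_g = U_{f'}\cap V_{g'}$ by passing through a common refinement. Namely, I will show that $\Gamma_p$ is unchanged when $(f,g)$ is replaced by $(f\wedge f', g\vee g')$. Since $U_{f\wedge f'}\cap V_{g\vee g'} = U_f\cap U_{f'}\cap V_g\cap V_{g'} = \{p\}$, this pair is again a valid choice. By symmetry this suffices. It reduces the lemma to two monotonicity statements. First, if $f_1\leq f_2$ and $U_{f_1}\cap V_g = U_{f_2}\cap V_g$, then $C_{\tau f_1}\otimes A_{\tau g}\cong C_{\tau f_2}\otimes A_{\tau g}$. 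Second, the analogous statement holds for enlarging $g$ while keeping $f$ fixed.

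The tools are the lattice identities for idempotents, all transported along $\tau$. Since $\tau$ preserves finite meets and joins and the identification of Theorem~\ref{Gthm:smashingframe} is with $\Idem(\mathsf{T})$, we have:
\begin{itemize}
\item $C_{\tau(f\wedge f')} \cong C_{\tau f}\otimes C_{\tau f'}$, since intersection of smashing ideals corresponds to tensoring coalgebras, as in the proof of Proposition~\ref{Gprop:meetsfinite};
\item $A_{\tau(g\vee g')}\cong A_{\tau g}\otimes A_{\tau g'}$;
\item $C_{\tau f}\otimes A_{\tau g}=0$ whenever $\tau f\subseteq\tau g$, since $C_{\tau f}\in\tau f\subseteq \tau g = \ker(A_{\tau g}\otimes -)$.
\end{itemize}

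For the first monotonicity statement, tensor the triangle $C_{\tau f_1}\to C_{\tau f_2}\to C_{\tau f_2}\otimes A_{\tau f_1}$ with $A_{\tau g}$. This triangle comes from tensoring $C_{\tau f_1}\to\unit\to A_{\tau f_1}$ with $C_{\tau f_2}$, using $C_{\tau f_2}\otimes C_{\tau f_1}\cong C_{\tau f_1}$. The third term becomes $C_{\tau f_2}\otimes A_{\tau f_1}\otimes A_{\tau g} \cong C_{\tau f_2}\otimes A_{\tau(f_1\vee g)}$, and I claim this vanishes. The hypothesis $U_{f_1}\cap V_g = U_{f_2}\cap V_g$ says $U_{f_2}\subseteq U_{f_1}\cup U_g = U_{f_1\vee g}$. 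Spatiality of $F$ then gives $f_2\leq f_1\vee g$, hence $\tau f_2\subseteq \tau(f_1\vee g)$, and the third bullet kills the term. The second statement is dual. Tensor $C_{\tau f}$ with the triangle $C_{\tau g_2}\otimes A_{\tau g_1}\to A_{\tau g_1}\to A_{\tau g_2}$, obtained from $C_{\tau g_2}\to\unit\to A_{\tau g_2}$ tensored with $A_{\tau g_1}$ and using $A_{\tau g_1}\otimes A_{\tau g_2}\cong A_{\tau g_2}$ for $g_1\leq g_2$. The first term becomes $C_{\tau(f\wedge g_2)}\otimes A_{\tau g_1}$. The hypothesis $U_f\cap V_{g_1}=U_f\cap V_{g_2}$ gives $U_{f\wedge g_2}\subseteq U_{g_1}$, so spatiality yields $f\wedge g_2\leq g_1$, and this term vanishes as well.

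Finally, apply the two statements with $f_1=f\wedge f'\leq f_2=f$ at fixed $g$, and then with $g_1=g\leq g_2=g\vee g'$ at fixed $f\wedge f'$. Both intersection hypotheses hold because each intermediate pair still cuts out exactly $\{p\}$.

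The main obstacle is bookkeeping rather than depth: one must check that the isomorphisms of idempotents (for example $C_{\tau f_2}\otimes C_{\tau f_1}\cong C_{\tau f_1}$ when $f_1\le f_2$) hold in the triangulated setting without an enhancement. These follow from the characterizations $\Comodu_\mathsf{T}(C)$ and $\Modu_\mathsf{T}(A)$ listed after the definition of smashing ideals, together with the equivalent conditions for the order on $\Idem(\mathsf{T})$. A second point requiring care is that spatiality is genuinely needed to convert inclusions of opens into inequalities in $F$; this is exactly where the standing hypothesis enters.
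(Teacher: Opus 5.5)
Your proof is correct, but it compares the two presentations differently from the paper. The paper proves only the case where the closed part is fixed, remarking that one may always take it to be $V_p$, and leaves variation of the closed part as an exercise. For the open part it normalizes upward: it replaces $f$ by $f\vee g$, and tensoring the Mayer--Vietoris triangle $C_{\tau f}\otimes C_{\tau g}\to C_{\tau f}\oplus C_{\tau g}\to C_{\tau(f\vee g)}$ with $A_{\tau g}$ gives $C_{\tau f}\otimes A_{\tau g}\cong C_{\tau(f\vee g)}\otimes A_{\tau g}$. Once both opens contain $U_g$, the hypothesis forces $U_{f\vee g}=U_{m\vee g}$, hence $f\vee g=m\vee g$ by spatiality, so the two representatives literally coincide. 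You instead pass to the common refinement $(f\wedge f',g\vee g')$ and prove two monotonicity statements from localization triangles. The error terms vanish because of the inequalities $f_2\le f_1\vee g$ and $f\wedge g_2\le g_1$. Spatiality is used in the same way in both arguments, namely to turn an inclusion of opens of $\pt(F)$ into an inequality in $F$. Your route gains completeness and symmetry. Your second monotonicity statement, taking $g_2$ to be the meet-prime element attached to $p$ so that $V_{g_2}=V_p$, is exactly the omitted exercise, and it is what justifies the paper's reduction to $V_p$. Moreover, nothing in your argument uses that the locally closed set is a singleton, so you establish the paper's stronger claim that $C_{\tau f}\otimes A_{\tau g}$ depends only on $U_f\cap V_g$. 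The paper's route is shorter for the open part because it picks a canonical representative $f\vee g$ rather than comparing two representatives through a triangle.
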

\begin{proof}
Let us actually prove the stronger statement: 
\[
\text{if } U_f \cap V_g = U_m \cap V_n \text{ then } C_{\tau f} \otimes A_{\tau g} \cong C_{\tau m} \otimes A_{\tau n}.
\]
We can split this into two parts by varying only the open subset and by varying only the closed subset. In fact, for the lemma we only need to vary the open as we can always take the closed subset to be $V_p$ the closed set corresponding to $p$ (we think of $p$ as both a point and as a meet-prime element). We prove this case and leave the remainder as an exercise.

So suppose that $U_f\cap V_g = U_m \cap V_g$. We claim we can replace $C_{\tau f}$ by $C_{\tau(f\vee g)} = C_{\tau f}\vee C_{\tau g}$. Indeed, the join of the idempotent coalgebras is defined by the triangle
\[
C_{\tau f}\otimes C_{\tau g} \to C_{\tau f}\oplus C_{\tau g} \to C_{\tau f}\vee C_{\tau g}
\]
and tensoring with $A_{\tau g}$ kills $C_{\tau g}$ rendering $C_{\tau f}\otimes A_{\tau g}$ isomorphic to $(C_{\tau f}\vee C_{\tau g})\otimes A_{\tau g}$ and similarly for $m$. So we have reduced to the case that $U_g \subseteq U_f$ and $U_g \subseteq U_m$. By assumption
\[
U_f \cap V_g = U_m \cap V_g
\]
i.e.\ $U_f$ and $U_m$ differ by elements of $U_g$. But both $U_f$ and $U_m$ contain $U_g$ and so they must be equal. Thus $\tau f = \tau m$ and so tensoring the corresponding coalgebras with $A_{\tau g}$ gives isomorphic objects.
\end{proof}

\begin{remark}
This statement is a generalization of \cite[Lemma~7.4]{Balmer/Favi:2011a} and the proof is essentially the same.
\end{remark}

\begin{lemma}\label{Glem:nonzero}
If $p$ is a locally closed point of $F$ then $\Gamma_p \neq 0$.
\end{lemma}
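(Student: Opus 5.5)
We argue by contradiction: if $\Gamma_p = 0$ we will exhibit two distinct elements of $F$ with the same image under $\tau$, which is impossible since $\tau$ is injective by our standing hypothesis. Write $\{p\} = U_f \cap V_g$. By Lemma~\ref{Gxca:unique} the choice does not matter, and the argument in its proof shows we may replace $f$ by $f\vee g$. So we assume from the outset that $g \leq f$, and hence $U_g \subseteq U_f$. Then $U_f = U_g \cup \{p\}$ with $p \notin U_g$. In particular $U_f \neq U_g$, so $f \neq g$ because $F$ is spatial. Injectivity of $\tau$ then gives $\tau g \subsetneq \tau f$ as smashing ideals.

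The main step is to show that $C_{\tau f}\otimes A_{\tau g} = 0$ forces $\tau f \subseteq \tau g$.
\begin{itemize}
\item Tensor the triangle $C_{\tau g} \to \unit \to A_{\tau g} \to$ of Theorem~\ref{Gthm:finiteloc}(6), which holds verbatim for smashing ideals, with $C_{\tau f}$. This gives a triangle
\[
C_{\tau f}\otimes C_{\tau g} \to C_{\tau f} \to C_{\tau f}\otimes A_{\tau g} \to .
\]
\item If the third term vanishes, then $C_{\tau f} \cong C_{\tau f}\otimes C_{\tau g}$.
\item The object $C_{\tau f}\otimes C_{\tau g}$ lies in the ideal $\Comodu_\mathsf{T}(C_{\tau g}) = \tau g$, since $C_{\tau g}\otimes C_{\tau f}\otimes C_{\tau g} \cong C_{\tau f}\otimes C_{\tau g}$ by idempotence.
\item Hence $C_{\tau f} \in \tau g$. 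But $\tau f = \Comodu_\mathsf{T}(C_{\tau f})$, and every object of this ideal has the form $M \cong C_{\tau f}\otimes M$, which lies in the tensor ideal $\tau g$.
\item So $\tau f \subseteq \tau g$, and together with $\tau g \subseteq \tau f$ this gives $\tau f = \tau g$.
\end{itemize}
Injectivity of $\tau$ now yields $f = g$, contradicting the first paragraph.

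The only delicate point is the bookkeeping between smashing ideals and their idempotent coalgebras: $\tau$ is order preserving into $\Idem(\mathsf{T})$, and comodules over $C_{\tau f}$ are exactly $\tau f$. Both facts are recorded after Theorem~\ref{Gthm:smashingframe}. The substantive inputs are therefore spatiality, which separates $f$ from $g$, and injectivity of $\tau$.
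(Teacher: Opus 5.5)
Your proof is correct and follows the paper's argument: both show that $\Gamma_p = 0$ forces $C_{\tau f}\in \loc^\otimes(C_{\tau g})$, i.e.\ $\tau f \leq \tau g$, and then use injectivity of $\tau$ to contradict $p(f)=1$, $p(g)=0$. The only difference is that you first normalize to $g \leq f$ via Lemma~\ref{Gxca:unique}, so that plain injectivity suffices, whereas the paper deduces $f \leq g$ directly from $\tau f \leq \tau g$ (this order-reflection follows from injectivity plus meet preservation); also, the implication $U_f \neq U_g \Rightarrow f \neq g$ is trivial and does not need spatiality.
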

\begin{proof}
Let us write $\Gamma_p$ as $C_{\tau f} \otimes A_{\tau g}$ for $f,g\in F$. If $\Gamma_p$ were trivial we would have to have 
\[
C_{\tau f} \in \ker(A_{\tau g}\otimes -) = \loc^\otimes(C_{\tau g}). 
\]
This says that $\tau f \leq \tau g$ and hence, by injectivity of $\tau$, we must have $f\leq g$. By construction $p(g) = 0$ and so it would follow that $p(f)=0$ but we have chosen $f$ such that $p(f)=1$, i.e.\ we have reached an absurd conclusion. Thus $\Gamma_p$ could not have been trivial.
\end{proof}

\begin{xca}\label{Gxca:tensoridempotent}
Prove that if $p$ is a point of $F$ then $\Gamma_p \otimes \Gamma_p \cong \Gamma_p$. Is it necessarily an idempotent algebra or coalgebra?
\end{xca}

\begin{xca}\label{Gxca:tensororthogonal}
Prove that if $p\neq q$ are points of $F$ then $\Gamma_p \otimes \Gamma_q = 0$. 
\end{xca}

\begin{proposition}\label{Gprop:weak}
Suppose that $F$ is $T_D$. Then $\upsilon \colon \mathcal{P}(\pt(F)) \to \Loc^\otimes(\mathsf{T})$ defined by
\[
\upsilon(W) = \loc^\otimes(\Gamma_p \mid p\in W)
\]
is a weak discretization, i.e.\ it preserves joins, is injective, and for each $f$ we have 
\[
\upsilon(U_{f}) = \bigvee_{x\in U_f} \upsilon(x) \subseteq \tau(f).
\] 
\end{proposition}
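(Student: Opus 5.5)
The three assertions in Proposition~\ref{Gprop:weak} are of different difficulty, so I would take them in order: joins, the containment $\upsilon(U_f)\subseteq\tau(f)$, and finally injectivity.

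\emph{Joins.} These are formal. For a family $W_\lambda\subseteq\pt(F)$ we have
\[
\upsilon\left(\bigcup_\lambda W_\lambda\right)=\loc^\otimes\left(\Gamma_p \mid p\in \textstyle\bigcup_\lambda W_\lambda\right)=\bigvee_\lambda \loc^\otimes(\Gamma_p\mid p\in W_\lambda),
\]
since the join in $\Loc^\otimes(\mathsf{T})$ is the localizing ideal generated by the union. In particular $\upsilon(U_f)=\bigvee_{x\in U_f}\upsilon(x)$.

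\emph{Containment.} Fix $f$ and $x\in U_f$. By Lemma~\ref{Gxca:unique} we may compute $\Gamma_x$ using any presentation $\{x\}=U_{f'}\cap V_{g}$. I would choose one with $U_{f'}\subseteq U_f$: take any presentation and replace $f'$ by $f'\wedge f$. This still cuts out $\{x\}$ because $x\in U_f$, and $U_{f'\wedge f}=U_{f'}\cap U_f$ since points are frame maps. Then $\Gamma_x=C_{\tau(f'\wedge f)}\otimes A_{\tau g}$. The coalgebra $C_{\tau(f'\wedge f)}$ lies in the smashing ideal $\tau(f'\wedge f)\subseteq\tau(f)$, and smashing ideals are tensor ideals, so $\Gamma_x\in\tau(f)$. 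Since $\tau(f)$ is a localizing tensor ideal, $\loc^\otimes(\Gamma_x)\subseteq \tau(f)$, and taking the join over $x\in U_f$ gives $\upsilon(U_f)\subseteq\tau(f)$.

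\emph{Injectivity.} This is the main step. Suppose $W\neq W'$, say $q\in W\setminus W'$. I claim $\Gamma_q\in\upsilon(W)$ but $\Gamma_q\notin\upsilon(W')$.
\begin{enumerate}
\item Consider $\mathsf{K}_q=\ker(\Gamma_q\otimes-)$. It is a localizing tensor ideal because $\otimes$ is exact and preserves coproducts.
\item For every $p\neq q$, Exercise~\ref{Gxca:tensororthogonal} gives $\Gamma_p\otimes\Gamma_q=0$, so $\Gamma_p\in\mathsf{K}_q$. Hence $\upsilon(W')\subseteq \mathsf{K}_q$.
\item On the other hand, Exercise~\ref{Gxca:tensoridempotent} gives $\Gamma_q\otimes\Gamma_q\cong\Gamma_q$, and Lemma~\ref{Glem:nonzero} gives $\Gamma_q\neq0$. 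So $\Gamma_q\notin\mathsf{K}_q$, and therefore $\Gamma_q\notin\upsilon(W')$.
\end{enumerate}
Hence $\upsilon(W)\neq\upsilon(W')$.

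The two exercises carry the real content, so I would verify them briefly here using the $T_D$ presentations.

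For idempotence, $C\otimes C\cong C$ and $A\otimes A\cong A$ for idempotent coalgebras and algebras, and the factors commute by symmetry.

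For orthogonality, with $\{p\}=U_f\cap V_g$ and $\{q\}=U_m\cap V_n$:
\begin{itemize}
\item Tensor products of idempotent coalgebras compute meets, so $C_{\tau f}\otimes C_{\tau m}=C_{\tau(f\wedge m)}$.
\item Tensor products of idempotent algebras compute joins, so $A_{\tau g}\otimes A_{\tau n}=A_{\tau(g\vee n)}$.
\item Thus $\Gamma_p\otimes\Gamma_q=C_{\tau(f\wedge m)}\otimes A_{\tau(g\vee n)}$. The corresponding locally closed set is $U_{f\wedge m}\cap V_{g\vee n}=\{p\}\cap\{q\}=\varnothing$.
\item Emptiness means $U_{f\wedge m}\subseteq U_{g\vee n}$. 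Spatiality of $F$ then gives $f\wedge m\le g\vee n$, so $C_{\tau(f\wedge m)}$ lies in $\tau(g\vee n)=\ker(A_{\tau(g\vee n)}\otimes-)$, and the tensor product vanishes.
\end{itemize}
This is essentially the argument of Lemma~\ref{Glem:nonzero} run in reverse.
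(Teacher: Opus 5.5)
Your proposal is correct and follows the paper's proof. Joins are formal. Injectivity comes from $\upsilon(W')\subseteq\ker(\Gamma_q\otimes-)$ together with $\Gamma_q\otimes\Gamma_q\cong\Gamma_q\neq 0$. The containment uses Lemma~\ref{Gxca:unique} to replace the open part of the presentation by its meet with $f$: the paper phrases this as $\Gamma_p\otimes C_{\tau f}\cong\Gamma_p$, while you phrase it as the coalgebra factor lying in $\tau(f)$, which is the same computation. Your check of the two exercises, using spatiality and the fact that tensor products of idempotent (co)algebras compute meets and joins, is also sound and supplies what the paper leaves to the reader.
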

\begin{proof}
The assignment $\upsilon$ preserves joins by construction. Injectivity follows from the previous exercise: if $W\neq W'$ then without loss of generality there is a $p\in W \setminus W'$ and we have $\upsilon(W') \subseteq \ker(\Gamma_p \otimes -)$ but $\Gamma_p \otimes \upsilon(W) \neq 0$. 

Let $C_{\tau f}$ denote the idempotent coalgebra corresponding to $\tau(f)$. If we can show that $\Gamma_p \otimes C_{\tau f} \cong \Gamma_p$ for every $p\in U_f$ it follows that $\upsilon(U_{f})\subseteq \tau(f)$. Let us pick elements $g,h\in F$ presenting $\Gamma_p$ as $C_{\tau h} \otimes A_{\tau g}$. We compute that
\[
\Gamma_p \otimes C_{\tau f} \cong C_{\tau h} \otimes A_{\tau g} \otimes C_{\tau f} = C_{\tau f \wedge \tau h} \otimes A_{\tau g} = C_{\tau (f\wedge h)} \otimes A_{\tau g}.
\]
By assumption $p\in U_f$ and $p\in U_h$ so $p$ also lies in $U_f \cap U_h = U_{f\wedge h}$. Thus we know $p\in U_{f\wedge h} \cap V_{g} \subseteq U_{h} \cap V_g = \{p\}$. It follows from Lemma~\ref{Gxca:unique} that $\Gamma_p \otimes C_{\tau f} \cong \Gamma_p$ as required.
\end{proof}

We next discuss what it takes it takes to upgrade $\upsilon$ to a discretization.

\begin{definition}
Suppose that $F$ is $T_D$. We say that the \emph{local-to-global principle} holds for $\tau$ if $\upsilon(\pt(F)) = \mathsf{T}$ i.e.\ if
\[
\mathsf{T} = \loc^\otimes(\Gamma_p \mid p\in \pt(F)).
\]
\end{definition}

\begin{remark}
One can rephrase this as saying $\upsilon$ preserves the empty meet.
\end{remark}

\begin{remark}
In some sense this definition goes back to Neeman's paper \cite{Neeman:1992a} where he established that the residue fields generate the derived category of a noetherian ring. This was abstracted, and named, in work of Benson, Iyengar, and Krause \cite{Benson/Iyengar/Krause:2011a}. The definition presented above is a direct descendent of the formulation, in the setting of big tt-geometry, given in \cite{Stevenson:2013a}.
\end{remark}

\begin{corollary}\label{Gcor:ltg}
If $\tau$ satisfies the local-to-global principle then $\upsilon$ is a discretization of $\tau$ which preserves arbitrary meets.
\end{corollary}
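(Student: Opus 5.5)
The plan is to first get a concrete description of $\upsilon(W)$ in terms of the idempotents $\Gamma_p$, and then read off every required property from it. The claim to establish is that, when the local-to-global principle holds, for every $W\subseteq \pt(F)$
\[
\upsilon(W) = \{M\in \mathsf{T} \mid \Gamma_q\otimes M = 0 \text{ for all } q\notin W\}.
\]

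\textbf{Step 1: the description of $\upsilon(W)$.} For the inclusion $\subseteq$, I would use that the right-hand side is a localizing tensor-ideal, being a kernel of coproduct-preserving exact functors. It contains each $\Gamma_p$ with $p\in W$ by Exercise~\ref{Gxca:tensororthogonal}. For the inclusion $\supseteq$, take $M$ in the right-hand side. The collection $\{N\in\mathsf{T} \mid N\otimes M\in \loc^\otimes(\Gamma_p\otimes M \mid p\in\pt(F))\}$ is a localizing tensor-ideal containing every $\Gamma_p$. By the local-to-global principle it is therefore all of $\mathsf{T}$. Taking $N=\unit$ gives
\[
M\in\loc^\otimes(\Gamma_p\otimes M\mid p\in \pt(F)).
\]
For $q\notin W$ the generator $\Gamma_q\otimes M$ vanishes by hypothesis. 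For $p\in W$ the generator $\Gamma_p\otimes M$ lies in $\loc^\otimes(\Gamma_p)\subseteq\upsilon(W)$. Hence $M\in\upsilon(W)$.

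\textbf{Step 2: $\upsilon$ preserves arbitrary meets.} This follows at once from Step 1. An object is killed by $\Gamma_q$ for all $q\notin\bigcap_i W_i$ exactly when, for each $i$, it is killed by $\Gamma_q$ for all $q\notin W_i$. So $\upsilon(\bigcap_i W_i)=\bigcap_i\upsilon(W_i)$. The empty meet is the local-to-global principle itself. Together with Proposition~\ref{Gprop:weak}, which gives injectivity and preservation of joins, $\upsilon$ is an injective map of complete lattices.

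\textbf{Step 3: $\upsilon(U_f)=\tau(f)$.} Proposition~\ref{Gprop:weak} already gives $\upsilon(U_f)\subseteq\tau(f)$, so it remains to show $\tau(f)\subseteq\upsilon(U_f)$. Let $M\in\tau(f)$, so that $M\cong C_{\tau f}\otimes M$. By Step 1 it suffices to show $\Gamma_q\otimes C_{\tau f}=0$ for every $q\notin U_f$. Write $\Gamma_q=C_{\tau h}\otimes A_{\tau g}$ with $\{q\}=U_h\cap V_g$. As in the proof of Proposition~\ref{Gprop:weak},
\[
\Gamma_q\otimes C_{\tau f}\cong C_{\tau(f\wedge h)}\otimes A_{\tau g}.
\]
Since $q\notin U_f$, we have $U_{f\wedge h}\cap V_g\subseteq \{q\}\cap U_f=\varnothing$, that is $U_{f\wedge h}\subseteq U_g$. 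Spatiality of $F$ (Remark~\ref{Grem:spatial}) then gives $f\wedge h\leq g$, so $\tau(f\wedge h)\leq\tau(g)$. Consequently $C_{\tau(f\wedge h)}$ lies in $\ker(A_{\tau g}\otimes-)$, and the product vanishes. Hence the triangle commutes and $\upsilon$ is a discretization.

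The only step that is not formal is the ``tensoring a generation statement with $M$'' argument in Step 1. It rests on the fact that $\{N \mid N\otimes M\in\mathsf{L}\}$ is a localizing tensor-ideal whenever $\mathsf{L}$ is. This is routine, using that $\otimes$ is exact and preserves coproducts in each variable, but it is where the local-to-global principle actually enters the argument, so it is the main obstacle.
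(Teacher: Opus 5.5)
Your proof is correct. It uses the same ingredients as the paper's proof but organizes them around one key lemma.

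\textbf{How the paper argues.} The paper proves the two properties separately.
\begin{itemize}
\item For $\tau(f)\subseteq\upsilon(U_f)$, it tensors the generation statement $\mathsf{T}=\loc^\otimes(\Gamma_p\mid p\in\pt(F))$ with $C_{\tau f}$, citing \cite[Lemma~3.12]{Stevenson:2013a}. It then needs both halves of the computation: $C_{\tau f}\otimes\Gamma_p\cong\Gamma_p$ for $p\in U_f$, and $C_{\tau f}\otimes\Gamma_p=0$ for $p\notin U_f$.
\item For meets, it shows directly that any $M\in\bigcap_i\upsilon(W_i)$ is generated by the nonzero $M\otimes\Gamma_p$, and that these occur only for $p\in\bigcap_i W_i$.
\end{itemize}

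\textbf{How your route differs.} Your Step~1 isolates the description $\upsilon(W)=\bigcap_{q\notin W}\ker(\Gamma_q\otimes-)$. Its proof is exactly the paper's meet argument, run for an arbitrary $W$. The inclusion $\subseteq$ is orthogonality (Exercise~\ref{Gxca:tensororthogonal}). The inclusion $\supseteq$ is the $\{N\mid N\otimes M\in\mathsf{L}\}$ trick, which proves the needed direction of the cited lemma inline.

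\textbf{What your route buys.}
\begin{itemize}
\item Meet-preservation, including the empty meet, becomes a one-line set-theoretic identity.
\item For the triangle you need only the vanishing half of the computation. You prove it correctly via spatiality (Remark~\ref{Grem:spatial}), which supplies the paper's ``small exercise''. The reverse inclusion is already Proposition~\ref{Gprop:weak}.
\item Your description says precisely that $\mathsf{L}\subseteq\upsilon(W)$ if and only if $\{p\mid\Gamma_p\otimes\mathsf{L}\neq 0\}\subseteq W$. That is the content of Lemma~\ref{Glem:support}, that $\Supp$ is left adjoint to $\upsilon$, which the paper proves separately afterwards.
\end{itemize}

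\textbf{What the paper's route buys.} It exhibits the generators of $\tau(f)$ explicitly as the objects $C_{\tau f}\otimes\Gamma_p$, at the cost of needing the full case analysis of these tensor products.
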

\begin{proof}
We need to check that $\upsilon$ preserves meets and that $\tau(f) \subseteq \upsilon(U_{f})$. Let us start with the latter. We assume that $\upsilon(\pt(F)) = \mathsf{T}$. Given $f\in F$ we have 
\begin{align*}
\tau f &= C_{\tau f}\otimes \mathsf{T} \\
&= C_{\tau f} \otimes \loc^\otimes(\Gamma_p \mid p\in \pt(F)) \\
&= \loc^\otimes(C_{\tau f} \otimes \Gamma_p \mid p\in \pt(F)) \\
&= \loc^\otimes(\Gamma_p \mid p\in U_f) \\
&= \upsilon(U_f)
\end{align*}
where the first equality is by construction of the associated idempotent coalgebra, the second is the local-to-global principle, the third is \cite[Lemma~3.12]{Stevenson:2013a}, the fourth is the computation from the proof of Proposition~\ref{Gprop:weak} combined with a small exercise, and the last is the definition.

We next show that $\upsilon$ preserves meets. So suppose $W_i$ for $i\in I$ are subsets of $\pt(F)$ with intersection $W$. By the universal property of the meet we know that
\[
\upsilon(W) \subseteq \cap_i \upsilon(W_i).
\]
So suppose that $M$ lies in the intersection of the $\upsilon(W_i)$. We know that
\[
\loc^\otimes(M) = M\otimes \mathsf{T} = M\otimes \loc^\otimes(\Gamma_p \mid p\in \pt(F)) = \loc^\otimes(M\otimes \Gamma_p \mid p\in \pt(F))
\]
and so $M$ is generated by the $\Gamma_p$ such that $M\otimes \Gamma_p \neq 0$. If $p \in W_i$ but not in $W_j$ then by Exercise~\ref{Gxca:tensororthogonal} tensoring with $\Gamma_p$ kills $\upsilon(W_j)$. In particular, $M\otimes \Gamma_p = 0$. Thus $\Gamma_p \otimes M\neq 0$ implies that $p\in W_i$ for all $i$ and hence $M\in \upsilon(W)$.
\end{proof}

\begin{lemma}\label{Glem:support}
Suppose that $F$ is $T_D$ and the local-to-global principle holds. Then the left adjoint of $\upsilon$ is given on a localizing ideal $\mathsf{L}$ by
\[
\Supp \mathsf{L} = \{p \in \pt(F) \mid \Gamma_p \otimes \mathsf{L} \neq 0\}.
\]
\end{lemma}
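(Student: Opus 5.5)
We have to show that $\Supp$ is left adjoint to $\upsilon$. Concretely, for every localizing ideal $\mathsf{L}$ and every subset $W\subseteq \pt(F)$ we need
\[
\Supp \mathsf{L} \subseteq W \iff \mathsf{L} \subseteq \upsilon(W).
\]
Adjoints are unique, and by Corollary~\ref{Gcor:ltg} the map $\upsilon$ preserves arbitrary meets, so a left adjoint exists. It therefore suffices to verify this equivalence; we do not need to construct the left adjoint separately. The argument rests on two facts already established: Exercise~\ref{Gxca:tensororthogonal} ($\Gamma_p\otimes\Gamma_q=0$ for $p\neq q$) and the local-to-global principle $\mathsf{T}=\loc^\otimes(\Gamma_p\mid p\in\pt(F))$.

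For ($\Leftarrow$), suppose $\mathsf{L}\subseteq\upsilon(W)$ and $p\notin W$. The functor $\Gamma_p\otimes(-)$ is exact and coproduct preserving, so its kernel is a localizing ideal. By Exercise~\ref{Gxca:tensororthogonal} this kernel contains every generator $\Gamma_q$ with $q\in W$. Hence it contains $\upsilon(W)\supseteq\mathsf{L}$, which gives $\Gamma_p\otimes\mathsf{L}=0$ and so $p\notin\Supp\mathsf{L}$. Thus $\Supp\mathsf{L}\subseteq W$.

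For ($\Rightarrow$), suppose $\Supp\mathsf{L}\subseteq W$ and take $M\in\mathsf{L}$. We reuse the computation from the proof of Corollary~\ref{Gcor:ltg}. Using the local-to-global principle and \cite[Lemma~3.12]{Stevenson:2013a},
\[
M\in\loc^\otimes(M)=M\otimes\mathsf{T}=\loc^\otimes(M\otimes\Gamma_p\mid p\in\pt(F)).
\]
When $M\otimes\Gamma_p\neq 0$ we have $p\in\Supp\mathsf{L}\subseteq W$, and then $M\otimes\Gamma_p\in\loc^\otimes(\Gamma_p)\subseteq\upsilon(W)$ because $\loc^\otimes(\Gamma_p)$ is a tensor ideal. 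The remaining terms are zero. Therefore $M\in\upsilon(W)$, and so $\mathsf{L}\subseteq\upsilon(W)$.

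The only delicate step is the identity $M\otimes\loc^\otimes(\mathcal{G})=\loc^\otimes(M\otimes\mathcal{G})$, including the fact that $M$ itself lies in $M\otimes\mathsf{T}$ (take the unit). This is exactly the input already invoked in Corollary~\ref{Gcor:ltg}, so we cite it there rather than reprove it. Finally, with the notation $\Supp M=\Supp\loc^\otimes(M)$, we have $\Gamma_p\otimes\loc^\otimes(M)\neq0$ if and only if $\Gamma_p\otimes M\neq 0$, by the same kernel argument. So the formula can equally be read objectwise.
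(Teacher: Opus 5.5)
Your proof is correct and follows the paper's argument. The paper computes the candidate left adjoint $Y=\bigcap\{W \mid \mathsf{L}\subseteq\upsilon(W)\}$ and then shows two things: $\Supp\mathsf{L}\subseteq Y$ by the same orthogonality/kernel trick, and $\mathsf{L}\subseteq\upsilon(\Supp\mathsf{L})$ by the same local-to-global computation, done for $\mathsf{L}$ rather than objectwise. These are exactly your two directions. Your appeal to meet-preservation from Corollary~\ref{Gcor:ltg} is harmless but unnecessary, since checking the equivalence directly already exhibits $\Supp$ as the left adjoint.
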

\begin{proof}
We know that the formula for the left adjoint at $\mathsf{L}$ is
\[
Y = \bigcap \{W \subseteq \pt(F) \mid \mathsf{L} \subseteq \upsilon(W)\}
\]
and we need to show that $Y = \Supp \mathsf{L}$. If $p\notin Y$ then $p\notin W$ for some subset $W$ with $\mathsf{L} \subseteq \upsilon(W)$ and, by the now standard trick, $\Gamma_p \otimes \upsilon(W) = 0$. Thus $p \notin \Supp \mathsf{L}$ showing that $\Supp \mathsf{L} \subseteq Y$. On the other hand by the local-to-global principle we deduce that
\[
\mathsf{L} = \mathsf{L} \otimes \mathsf{T} = \mathsf{L} \otimes \loc^\otimes(\Gamma_p \mid p\in \pt(F)) = \loc^\otimes(\mathsf{L}\otimes \Gamma_p \mid p\in \Supp \mathsf{L}).
\]
This exhibits $\mathsf{L}$ as a subcategory of $\upsilon(\Supp \mathsf{L})$ and so $Y\subseteq \Supp \mathsf{L}$.

%suppose that $\Gamma_p \otimes \mathsf{L} \neq 0$. 
\end{proof}

%We only really have any understanding of the local-to-global principle in the case that $F = \Thick^\otimes(\mathsf{T}^\mathrm{c})$ and $\tau$ is inflation. However, 

%\textcolor{red}{I should check the exercises, proof read everything. Perhaps think about proving ltg for arbitrary guys but maybe save that if can do all using breadth without spatial and just do examples. I guess ponder the breadth stuff...}

\begin{remark}
It could be an interesting project to use Verasdanis' theory of support-cosupport pairs \cite{verasdanis2022costratification} to extend all of this to a theory capturing information about both localizing and colocalizing subcategories.
\end{remark}

\begin{corollary}
Suppose that $F$ is $T_D$ and the local-to-global principle holds. Then the support of Lemma~\ref{Glem:support} detects vanishing:
\[
M\in \mathsf{T} \text{ is } 0 \text{ if and only if } \Supp M = \varnothing
\]
and gives a retraction onto $\mathcal{P}(\pt F)$. 
\end{corollary}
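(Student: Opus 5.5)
The plan is to deduce both claims from Lemma~\ref{Glem:support} together with Corollary~\ref{Gcor:ltg}. The argument has three steps.

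\textbf{Step 1: detection of vanishing.} If $M\cong 0$ then $\loc^\otimes(M)=0$ and $\Gamma_p\otimes 0=0$ for every $p$, so $\Supp M=\varnothing$. Conversely, suppose $\Supp M=\varnothing$, so $\Gamma_p\otimes M\cong 0$ for all $p\in\pt(F)$. Then $\Gamma_p\otimes \loc^\otimes(M)=0$ for all $p$, since the kernel of the coproduct-preserving exact functor $\Gamma_p\otimes(-)$ is a localizing ideal containing $M$. Now run the local-to-global computation already used in Corollary~\ref{Gcor:ltg} and Lemma~\ref{Glem:support}:
\[
\loc^\otimes(M) = M\otimes \mathsf{T} = M\otimes \loc^\otimes(\Gamma_p \mid p\in \pt(F)) = \loc^\otimes(M\otimes\Gamma_p \mid p\in\pt(F)) = \loc^\otimes(0)=0.
\]
Hence $M\cong 0$. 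The only non-formal input here is the third equality, \cite[Lemma~3.12]{Stevenson:2013a}, which the paper already invokes. One could alternatively phrase this step as: the left adjoint of Lemma~\ref{Glem:support} sends $\loc^\otimes(M)$ to $\varnothing$, so $\loc^\otimes(M)\subseteq\upsilon(\varnothing)=0$.

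\textbf{Step 2: the retraction, via the adjunction.} By Corollary~\ref{Gcor:ltg}, $\upsilon$ preserves arbitrary meets as well as joins. By Lemma~\ref{Glem:support}, $\Supp$ is its left adjoint, so $\Supp\dashv\upsilon$. For an adjunction between posets with $\upsilon$ injective, i.e.\ fully faithful, the counit $\Supp\,\upsilon(W)\le W$ is an equality. So it suffices to show $\Supp\upsilon(W)=W$ directly, and I would verify this by hand. If $p\in W$, then $\Gamma_p\otimes\Gamma_p\cong\Gamma_p\neq0$ by Exercise~\ref{Gxca:tensoridempotent} and Lemma~\ref{Glem:nonzero}, and $\Gamma_p\in\upsilon(W)$, so $p\in\Supp\upsilon(W)$. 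If $p\notin W$, then $\Gamma_p\otimes\Gamma_q=0$ for all $q\in W$ by Exercise~\ref{Gxca:tensororthogonal}. Hence $\Gamma_p\otimes(-)$ kills the generators of $\upsilon(W)$, and therefore kills all of $\upsilon(W)$, since its kernel is a localizing ideal. So $p\notin\Supp\upsilon(W)$. Thus $\Supp\circ\upsilon=\mathrm{id}_{\mathcal{P}(\pt F)}$, which exhibits $\mathcal{P}(\pt F)$ as a retract of $\Loc^\otimes(\mathsf{T})$.

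\textbf{Step 3: the retraction preserves joins.} Being a left adjoint, $\Supp$ preserves joins. This can also be seen directly: $\Gamma_p\otimes\bigvee_\lambda\mathsf{L}_\lambda=\loc^\otimes(\Gamma_p\otimes\mathsf{L}_\lambda)$ is nonzero iff some $\Gamma_p\otimes\mathsf{L}_\lambda\neq0$. So the retraction is compatible with the lattice structure on the nose.

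I expect no real obstacle. The only point needing care is the repeated use of the fact that tensoring with an object commutes with taking generated localizing ideals, which is exactly \cite[Lemma~3.12]{Stevenson:2013a} and has already been used in the preceding proofs.
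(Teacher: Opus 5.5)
Your proposal is correct and follows the paper's proof essentially verbatim. Vanishing is detected by rewriting $\loc^\otimes(M)$ as $\loc^\otimes(M\otimes\Gamma_p \mid p\in\pt(F))$ via the local-to-global principle, and the retraction is the direct computation $\Supp\upsilon(W)=W$ using $\Gamma_p\otimes\Gamma_p\cong\Gamma_p\neq 0$ and $\Gamma_p\otimes\Gamma_q=0$ for $p\neq q$. Your adjunction remarks and Step~3 (join preservation of $\Supp$) are valid extras that the statement does not need; in the adjunction remark, injectivity of $\upsilon$ suffices because $\upsilon\,\Supp\,\upsilon=\upsilon$.
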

\begin{proof}
If $M=0$ then $\loc^\otimes(M) = 0$ and so obviously $\Supp M = \varnothing$. On the other hand, if $\Supp M = \varnothing$ then by definition $M\otimes \Gamma_p = 0$ for every $p\in \pt F$. But the local-to-global principle tells us that
\[
\loc^\otimes(M) = \loc^\otimes(M\otimes \Gamma_p \mid p\in \pt(F))
\]
(as in the proof of Corollary~\ref{Gcor:ltg}) and the localizing ideal on the right is obviously zero. Hence $M \cong 0$.

Now we come to the retraction. Let $W$ be a subset of points of $F$. We have
\begin{align*}
\Supp \upsilon(W) &= \Supp(\loc^\otimes(\Gamma_q \mid q\in W)) \\
&= \{p \in \pt(F) \mid \Gamma_p \otimes \loc^\otimes(\Gamma_q \mid q\in W) \neq 0\} \\
&= \{p \in \pt(F) \mid \Gamma_p \otimes \Gamma_q \neq 0 \text{ for some } q\in W\} \\
\end{align*}
By Exercises~\ref{Gxca:tensoridempotent} and \ref{Gxca:tensororthogonal} and Lemma~\ref{Glem:nonzero} we know that $\Gamma_p \otimes \Gamma_p \neq 0$ and $\Gamma_p\otimes \Gamma_q = 0$ if $p\neq q$. Thus $\Supp\upsilon(W)$ is precisely $W$.
\end{proof}

To summarise, in the presence of the local-to-global principle we have an injective map
\[
\upsilon \colon \mathcal{P}(\pt(F)) \to \Loc^\otimes(\mathsf{T})
\]
preserving arbitrary joins and meets whose left adjoint, the refined support, gives a retraction. One is then naturally tempted to look for conditions guaranteeing that $\upsilon$ is also surjective and hence an isomorphism of lattices. This happens precisely when each $\upsilon(\{p\}) = \loc^\otimes(\Gamma_p)$ is an atom i.e.\ a minimal non-zero localizing ideal.

%----------------------------------------------------------------------------------------------------------------------------------------------

%----------------------------------------------------------------------------------------------------------------------------------------------

\subsection{Examples}

Let us now discuss some examples where one can get a handle on all of the objects described above and check the local-to-global principle holds for some choice of $F$ and $\tau$.

\subsection{Noetherian commutative rings}

Let $R$ be a commutative noetherian ring, an example which by now should seem somewhat familiar. As we saw in Example~\ref{Gex:Foxbysupport2} we know all the localizing subcategories of $\mathsf{D}(R)$ and that $\Idem\mathsf{D}(R) \cong \Thick^\otimes(\mathsf{D}(R)^\mathrm{c})$. However, our argument used the residue fields $k(\mathfrak{p})$ and these are not the $\Gamma_\mathfrak{p}$ which arise from $\tau\colon \Thick^\otimes(\mathsf{D}(R)^\mathrm{c}) \to \Loc^\otimes(\mathsf{D}(R))$. (Although they turn out to give equivalent information.)

The first observation, which we leave as an exercise, is that if $R$ is noetherian then
\[
\pt(\Thick^\otimes(\mathsf{D}(R)^\mathrm{c})) \cong (\Spec R)^\vee
\]
is $T_D$. Thus we are in the situation of Proposition~\ref{Gprop:weak}. The objects $\Gamma_\mathfrak{p}$ in this setting can be described as the localization at $\mathfrak{p}$ of the stable Koszul complex at $\mathfrak{p}$ (see \cite[Section~3.4]{Stevensontour} for a summary, including definitions, and further references). If we restrict to the case that $R$ is Gorenstein then we can describe this simply as
\[
\Gamma_\mathfrak{p} \cong \Sigma^{-\mathrm{ht}(\mathfrak{p})}E(k(\mathfrak{p}))
\]
as a shift, by the height $\mathrm{ht}(\mathfrak{p})$ of $\mathfrak{p}$, of the injective envelope $E(k(\mathfrak{p})$ of the residue field at $\mathfrak{p}$. In particular, this is already an object of $\mathsf{D}(R_\mathfrak{p})$ and one can check that $k(\mathfrak{p})$ and $E(k(\mathfrak{p}))$ generate the same localizing subcategory (this is true without any Gorenstein hypothesis).

In particular, Example~\ref{Gex:Foxbysupport2} then gives that $\tau$ satisfies the local-to-global principle. %This was the inspiration for a much more general fact: if $\Spc \mathsf{T}^\mathrm{c}$ is a noetherian space then inflation satisfies the local-to-global principle. 

\subsection{Absolutely flat rings}

In this section we study an example where the local-to-global principle relative to thick tensor ideals of the compacts can fail. It concerns a special class of commutative rings.

\begin{definition}
A commutative ring $A$ is \emph{absolutely flat} if every $A$-module is flat. This is equivalent to $A$ being reduced and zero dimensional.
\end{definition}

We fix an absolutely flat ring $A$. Its spectrum $\Spec A$ is a zero dimensional coherent space and hence a profinite space (so in particular it is Hausdorff and Hochster self-dual). So it is certainly $T_D$ and we can pick out a point $\mathfrak{p}$ by taking the complement of the open subset $\mathcal{Z}(\mathfrak{p}) = \Spec A \setminus \{\mathfrak{p}\}$ consisting of points that do not specialize to $\mathfrak{p}$. The corresponding idempotent algebra is the localization $A_\mathfrak{p}$. As $A$ is absolutely flat this is just the residue field $k(\mathfrak{p})$. In other words, $\Gamma_\mathfrak{p} = k(\mathfrak{p})$.

The local-to-global principle then asks if the $k(\mathfrak{p})$ generate $\mathsf{D}(A)$. Sometimes this is true and sometimes it is not (see \cite{Stevensonvnr} and \cite{Stevensonltg} as well as the proof of the local-to-global principle in the next section).

\begin{definition}
We say that $A$ is \emph{semiartinian} if every non-zero quotient of $A$ in $\Modu A$ contains a simple submodule (i.e.\ it has some $k(\mathfrak{p})$ as a submodule).
\end{definition}

\begin{theorem}
Suppose that $A$ is absolutely flat. The local-to-global principle holds for $A$ if and only if $A$ is semiartinian.
\end{theorem}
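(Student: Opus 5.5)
The plan is to treat the two implications separately, using throughout that in $\mathsf{D}(A)$ we have $\Gamma_\mathfrak{p} = k(\mathfrak{p}) = A/\mathfrak{p}$. Since $\mathsf{D}(A) = \loc(A)$, Exercise~\ref{lem_big_unit_gen} shows that every localizing subcategory is a tensor-ideal. The local-to-global principle therefore amounts to
\[
\mathsf{D}(A) = \loc(k(\mathfrak{p}) \mid \mathfrak{p}\in \Spec A),
\]
and it holds as soon as $A$ itself lies in this localizing subcategory, which we call $\mathsf{L}$.

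\emph{Local-to-global implies semiartinian.} Suppose $A$ is not semiartinian. Then there is a nonzero quotient $A/I$ with zero socle, so $\Hom_A(k(\mathfrak{p}), A/I) = 0$ for every $\mathfrak{p}$. Let $E = E(A/I)$ be the injective envelope. It is an essential extension of $A/I$, so it also has zero socle, and hence $\Hom_A(k(\mathfrak{p}),E)=0$ for every $\mathfrak{p}$. Since $E$ is injective, $\mathsf{D}(A)(k(\mathfrak{p}),\Sigma^i E) = \Ext^i_A(k(\mathfrak{p}),E)$ vanishes for all $i\neq 0$ as well. Thus $E \in \{k(\mathfrak{p})\}^\perp = \mathsf{L}^\perp$. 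On the other hand $\mathsf{D}(A)(A,E) = E \neq 0$, so $A\notin \mathsf{L}$ and the local-to-global principle fails.

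\emph{Semiartinian implies local-to-global.} Use the Loewy (socle) series of $A$. Set $I_0 = 0$ and $I_{\alpha+1}/I_\alpha = \mathrm{soc}(A/I_\alpha)$, and take unions at limit ordinals. The $I_\alpha$ are ideals. Semiartinianness says that whenever $I_\alpha \neq A$ the quotient $A/I_\alpha$ has nonzero socle, so the chain is strictly increasing until it reaches $A$; hence $I_\lambda = A$ for some ordinal $\lambda$. Each layer $I_{\alpha+1}/I_\alpha$ is semisimple. Its simple summands are simple $A$-modules, hence of the form $A/\mathfrak{m}$ for maximal $\mathfrak{m}$, and these are exactly the $k(\mathfrak{p})$ because $A$ is zero dimensional. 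So each layer is a direct sum of residue fields and lies in $\mathsf{L}$. We then show by transfinite induction that $I_\alpha \in \mathsf{L}$ for all $\alpha$.
\begin{itemize}
\item At successor steps, the short exact sequence $0\to I_\alpha \to I_{\alpha+1}\to I_{\alpha+1}/I_\alpha\to 0$ gives a triangle, and $\mathsf{L}$ is closed under extensions.
\item At a limit ordinal $\beta$, we have $I_\beta = \colim_{\alpha<\beta} I_\alpha$, a filtered colimit of monomorphisms.
\end{itemize}
At $\alpha=\lambda$ this gives $A\in\mathsf{L}$, as required.

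The main obstacle is the limit step: showing that $\mathsf{L}$ is closed under such transfinite unions. When $\beta$ has countable cofinality this is the Milnor telescope $I_\beta \cong \mathrm{hocolim}$ along a cofinal sequence, which lies in $\mathsf{L}$ by closure under coproducts and cones. For general $\beta$ I would use that filtered colimits of modules are exact and hence compute homotopy colimits in $\mathsf{D}(A)$. I would then invoke the standard fact that localizing subcategories of $\mathsf{D}(A)$ are closed under homotopy colimits of well-ordered chains of modules, for instance via the stable $\infty$-enhancement, or by presenting the colimit through a (possibly transfinite) telescope-type cofibre sequence $\bigoplus_\alpha I_\alpha \to \bigoplus_\alpha I_\alpha \to I_\beta$. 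If this general closure proves awkward, I would fall back on an Eklof-type argument, as in \cite{Stevensonvnr}, showing that $I_\beta$ is a transfinite extension of objects of $\mathsf{L}$ in a form the triangulated structure can see.
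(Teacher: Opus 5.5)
Your proposal is correct, provided you commit to the $\infty$-categorical argument at limit ordinals.

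\textbf{The `only if' direction.} This is the paper's idea: a nonzero injective module in $\loc(k(\mathfrak{p})\mid\mathfrak{p}\in\Spec A)^\perp$. You make it self-contained. Instead of quoting Trlifaj's injective with no indecomposable summands, you take $E(A/I)$ for a socle-free cyclic quotient, and zero socle gives $\Hom_A(k(\mathfrak{p}),E)=0$ directly.

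\textbf{The `if' direction: a different route.} The paper deduces it from its general theorem for injective frame maps $\Omega(X)\to\Idem(\mathsf{T})$ with $X$ of Cantor--Bendixson rank. It inducts on the localizing ideals $\tau(X_{\le\alpha})$, which has two consequences:
\begin{itemize}
\item limit ordinals are free, since $\tau$ preserves joins and joins of localizing ideals are generated by unions;
\item successor steps require passing to the smashing quotient $\mathsf{T}/\tau(X_{\le\alpha})$ and identifying $\Gamma'_x$ with $\Gamma_x$.
\end{itemize}
You instead induct on objects along the Loewy series. This is in fact the same filtration. The Loewy layers of $A$ correspond to the Cantor--Bendixson strata of $\Spec A$, and the ideal $I_U=\{a\in A\mid D(a)\subseteq U\}$ attached to an open $U$ is exactly the coalgebra $C_{\tau(U)}$.

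Your route is more elementary. It uses semiartinianness directly, with no translation to CB rank, and successor steps are trivial because the layers are sums of residue fields. The cost moves to limit ordinals, where you need $\mathsf{L}$ to be closed under continuous well-ordered unions of modules.

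\textbf{The limit step.} The argument via the stable $\infty$-enhancement is valid. A full stable subcategory of $\mathcal{D}(A)$ closed under coproducts is closed under all small colimits, and directed unions of modules compute colimits in $\mathcal{D}(A)$. Your other two suggestions do not work as stated.
\begin{itemize}
\item The Milnor telescope alone does not suffice, because limit stages of uncountable cofinality occur. An example is the ring of locally constant $\mathbb{F}_2$-valued functions on the ordinal space $[0,\omega_1]$, whose Loewy series has the stage $I_{\omega_1}$.
\item The `transfinite telescope' $\bigoplus_\alpha I_\alpha\to\bigoplus_\alpha I_\alpha\to I_\beta$ does not exist. The map $x_\alpha\mapsto x_\alpha-\iota(x_\alpha)$ only glues $\alpha$ to $\alpha+1$. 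Its cokernel is therefore $\bigoplus_\lambda I_{\lambda+\omega}$, with $\lambda$ running over $0$ and the limit ordinals below $\beta$, rather than $I_\beta$. Adding the missing relations destroys injectivity.
\end{itemize}

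\textbf{A purely triangulated alternative.} The paper's machinery gives one.
\begin{itemize}
\item Every finitely generated ideal of $A$ is generated by an idempotent, so $I_\beta$ is the directed union of the summands $eA$ for idempotents $e\in I_\beta$.
\item Each such $e$ lies in some $I_\alpha$ with $\alpha<\beta$, so $eA$ is a summand of $I_\alpha$ and lies in $\mathsf{L}$.
\item Set $\mathsf{J}=\thick(eA\mid e\in I_\beta\text{ idempotent})$ with idempotent algebra $A_{\mathsf{J}}$. Since all modules are flat, $H^*(I_\beta\otimes A_{\mathsf{J}})=\colim_e H^*(eA\otimes A_{\mathsf{J}})=0$.
\item Hence $I_\beta\in\ker(A_{\mathsf{J}}\otimes -)=\loc(\mathsf{J})\subseteq\mathsf{L}$ by Theorem~\ref{Gthm:finiteloc}.
\end{itemize}
This is exactly the paper's `$\tau$ preserves joins', seen at the level of objects.
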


We will not give a proof, but merely provide some comments. As mentioned above the statement is equivalent to the residue fields generating if and only if $A$ is semiartinian. The fact that the residue fields generate if $A$ is semiartinian is a consequence of the theorem in the next section, together with the fact that $A$ is semiartinian exactly when $\Spec A$ has Cantor-Bendixson rank (see the next section for the definition). On the other hand, if $A$ is not semiartinian then using work of Trlifaj \cite{Trlifaj} there exists an injective module with no indecomposable summand which, because the $k(\mathfrak{p})$ are simple injectives, must lie in the right orthogonal of $\loc(k(\mathfrak{p}) \mid \mathfrak{p}\in \Spec A)$.

%----------------------------------------------------------------------------------------------------------------------------------------------

%----------------------------------------------------------------------------------------------------------------------------------------------

\subsection{Some cases of the local-to-global principle}

In this section we give the most general result that seems to be known concerning the local-to-global principle (most general is pretty cheap since I just invented the setting of this chapter). This has a number of predecessors. However, the results which were previously known were restricted to the case of $\tau\colon \Thick^\otimes(\mathsf{T}^\mathrm{c}) \to \Idem(\mathsf{T})$. This was originally proved by Benson, Iyengar, and Krause in the context of finite localizations coming from actions by noetherian rings of finite Krull dimension, see \cite[Corollary~3.5]{Benson/Iyengar/Krause:2011a}. This was extended to the tt-setting in \cite{Stevenson:2013a} and then generalized further in \cite{Stevensonltg} motivated by the case of absolutely flat rings. In \cite{Stevensonltg} the noetherian and profinite cases are treated separately. This is actually not necessary and in \cite[Theorem~7.18]{sanders2017support} it is shown that the same methods can be used to give the local-to-global principle with respect to inflation of the compacts whenever $(\Spc \mathsf{T}^\mathrm{c})^\vee$ has Cantor-Bendixson rank, as defined below, which turns out to be the fundamental invariant.

\begin{definition}
Let $X$ be a space. We let $X_{\leq 0}$ denote the set of isolated (i.e.\ open) points of $X$ and set $X_{>0} = X\setminus X_{\leq 0}$. We then define further subspaces by transfinite induction. Suppose that $X_{\leq \alpha}$ has been defined and denote by $X_{> \alpha}$ its complement. We set
\[
X_{\leq \alpha+1} = X_{\leq \alpha} \cup (X_{>\alpha})_{\leq 0}
\] 
i.e.\ we add the isolated points of $X_{> \alpha}$ to $X_{\leq \alpha}$. If $\lambda$ is a limit ordinal we let
\[
X_{\leq \lambda} = \bigcup_{\kappa<\lambda} X_{\leq \kappa}.
\]
This is the \emph{Cantor-Bendixson filtration} of $X$. If there is an $\alpha$ such that $X = X_{\leq \alpha}$ then we say $X$ \emph{has Cantor-Bendixson (CB) rank} and we take the rank to be the least ordinal $\beta$ such that $X = X_{\leq \beta}$. If there is no such ordinal, i.e.\ the filtration is not exhaustive, we say that the rank is undefined or that $X$ does not have rank.
\end{definition}

\begin{example}
If $X$ is a noetherian coherent space then $X^\vee$ has CB rank.
\end{example}

\begin{example}
If $A$ is a semiartinian absolutely flat ring then $\Spec A$ has CB rank.
\end{example}

We will now give a criterion for the local-to-global principle to hold. We fix a sober $T_D$ space $X$ with CB rank and a big tt-category $\mathsf{T}$. We assume we are given an injective map of frames $\tau\colon \Omega(X) \to \Idem(\mathsf{T})$ (and as usual we freely move between idempotent algebras, coalgebras, and smashing ideals). Before stating our criterion we give some preparatory lemmas. 

\begin{lemma}\label{Glem:ltg1}
The subset $X_{\leq \alpha}$ is open for every ordinal $\alpha$.
\end{lemma}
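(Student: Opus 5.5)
We argue by transfinite induction on $\alpha$ that $X_{\leq \alpha}$ is open in $X$; equivalently, that $X_{>\alpha}$ is closed. The idea is that each step of the Cantor--Bendixson filtration removes from a closed subspace an open subset of it, and an open subset of a closed subspace, together with what was already removed, is still open in $X$.

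For the base case, $X_{\leq 0}$ is the set of isolated points of $X$. Each such point $x$ has $\{x\}$ open, so $X_{\leq 0} = \bigcup_{x} \{x\}$ is a union of open sets and hence open.

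For the successor step, suppose $X_{\leq \alpha}$ is open. Then $X_{>\alpha}$ is closed, and we give it the subspace topology. The set $(X_{>\alpha})_{\leq 0}$ of isolated points of $X_{>\alpha}$ is open in $X_{>\alpha}$ by the base case applied to this subspace. Hence there is an open $U\subseteq X$ with $U\cap X_{>\alpha} = (X_{>\alpha})_{\leq 0}$. We claim
\[
X_{\leq \alpha+1} = X_{\leq\alpha}\cup (X_{>\alpha})_{\leq 0} = X_{\leq \alpha}\cup U.
\]
The containment $\subseteq$ is clear. For $\supseteq$, take a point of $U$: either it lies in $X_{\leq\alpha}$, or it lies in $U\cap X_{>\alpha} = (X_{>\alpha})_{\leq 0}$. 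Since $X_{\leq\alpha}\cup U$ is a union of two opens, $X_{\leq\alpha+1}$ is open.

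For a limit ordinal $\lambda$, the set $X_{\leq\lambda}$ is by definition a union of the sets $X_{\leq\kappa}$ for $\kappa<\lambda$, each of which is open by induction. So $X_{\leq\lambda}$ is open.

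The only point needing care is the successor step: an open subset of the subspace $X_{>\alpha}$ need not be open in $X$. What makes the argument work is absorbing it into $X_{\leq\alpha}$, which holds precisely because $X_{>\alpha}$ is the complement of $X_{\leq\alpha}$. None of the hypotheses on $X$ (sober, $T_D$, having CB rank) are needed for this lemma.
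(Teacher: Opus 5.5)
Your proof is correct and is essentially the paper's transfinite induction. The only difference is cosmetic: in the successor step the paper picks, for each newly added point $x$, an open $U$ with $U\cap X_{>\alpha}=\{x\}$ and notes $U\subseteq X_{\leq\alpha+1}$, whereas you take a single open $U$ with $U\cap X_{>\alpha}=(X_{>\alpha})_{\leq 0}$ and write $X_{\leq\alpha+1}=X_{\leq\alpha}\cup U$. You are also right that the hypotheses on $X$ (sober, $T_D$, having CB rank) play no role here.
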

\begin{proof}
We proceed by transfinite induction. The base case is clear: $X_{\leq 0}$ is a union of open points and hence open. Suppose $\alpha$ is an ordinal and $X_{\leq \alpha}$ is open. By definition
\begin{displaymath}
X_{\leq \alpha+1} = X_{\leq \alpha} \cup \{x\in X_{>\alpha} \mid x \text{ is open in } X_{> \alpha}\}.
\end{displaymath}
If $x\in X_{\leq \alpha+1}\setminus X_{\leq \alpha}$, that is to say $x$ is open in $X_{>\alpha}$, then there is an open neighbourhood $U$ of $x$ such that $U\cap X_{>\alpha} = \{x\}$ i.e., $U\setminus \{x\} \subseteq X_{\leq \alpha}$. Thus $U$ is an open neighbourhood of $x$ contained in $X_{\leq \alpha+1}$ showing $X_{\leq \alpha+1}$ is open. If $\lambda$ is a limit ordinal then
\begin{displaymath}
X_{\leq \lambda} = \bigcup_{\kappa < \lambda} X_{\leq \kappa}.
\end{displaymath}
By the induction hypothesis each $X_{\leq \kappa}$ is open and thus $X_{\leq \lambda}$ is also open.
\end{proof}

The next two results allow us to pass the categorified locale structure to quotients of $\mathsf{T}$ by smashing ideals.

\begin{lemma}\label{Glem:ltg2}
Let $U$ be an open subset of $X$ with closed complement $Z$. Then the interval
\[
[U, X] = \{W \in \Omega(X) \mid U\subseteq W\}
\]
is a frame and naturally isomorphic to $\Omega(Z)$ by sending $W$ to $W\setminus U = W \cap Z$. 
\end{lemma}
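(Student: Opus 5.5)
The plan is to show that the map $\phi\colon [U,X] \to \Omega(Z)$, $\phi(W) = W\cap Z$, is an order isomorphism. Every order isomorphism between complete lattices preserves all joins and meets, so it would then follow that $[U,X]$ is a frame and that $\phi$ is an isomorphism of frames. Here $\Omega(Z)$ means the frame of opens of $Z$ with the subspace topology.

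The first step is well-definedness and monotonicity. For $W\in\Omega(X)$ the set $W\cap Z$ is open in $Z$ by definition of the subspace topology, and $\phi$ is clearly order preserving.

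Next comes surjectivity. Any open subset of $Z$ has the form $O\cap Z$ for some $O\in\Omega(X)$. Setting $W = O\cup U$ gives an element of $[U,X]$, and since $U\cap Z=\varnothing$ we get $\phi(W) = O\cap Z$.

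Then I would show that $\phi$ reflects the order. Suppose $W,W'\in[U,X]$ with $W\cap Z\subseteq W'\cap Z$. Because $X = U\sqcup Z$ we can write
\[
W = U\cup (W\cap Z) \subseteq U\cup(W'\cap Z) = W',
\]
using $U\subseteq W$ and $U\subseteq W'$. Hence $\phi$ is an order embedding, and combined with surjectivity it is an order isomorphism; its inverse sends $V\mapsto U\cup V$.

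To avoid appealing to the general principle that order isomorphisms preserve lattice structure, one can also check the frame structure directly. The interval $[U,X]$ is closed under arbitrary unions of nonempty families and under finite intersections in $\Omega(X)$. Its empty join is $U$ and its top element is $X$, and these correspond to $\varnothing$ and $Z$. The infinite distributive law for $[U,X]$ is then inherited from that of $\Omega(X)$, which holds because $\Omega(X)$ is a frame.

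There is no real obstacle here. The only point needing care is that the bottom element of $[U,X]$ is $U$ rather than $\varnothing$, so the empty join must be handled separately, as done above. Naturality follows because both $\phi$ and its inverse $V\mapsto U\cup V$ are given by canonical formulas.
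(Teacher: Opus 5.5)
Your proof is correct and follows essentially the same route as the paper: you show that $W\mapsto W\cap Z$ is order preserving, surjective via $O\cup U$, and order reflecting because $X = U\sqcup Z$, so it is an order isomorphism and hence a frame isomorphism. Your treatment of the empty join (the bottom element $U$) and of the explicit inverse $V\mapsto U\cup V$ just spells out details that the paper leaves as obvious.
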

\begin{proof}
The interval $[U,X]$ has bottom element $U$, top element $X$, and is closed under non-empty joins and meets in $\Omega(X)$. It is thus clearly a frame. Let us write $f\colon [U,X] \to \Omega(Z)$ for the map sending $W$ to its intersection with $Z$. We note that this is just the composite
\[
\begin{tikzcd}
{[U,X]} \arrow[r, hook] & \Omega(X) \arrow[r, "\Omega(i)"] & \Omega(Z)
\end{tikzcd}
\]
where $i\colon Z\to X$ is the inclusion. This is order preserving, injective since $X = U\coprod Z$ as sets, and surjective as if $V \in \Omega(Z)$ then there is an open subset $W$ of $X$ such that $W \cap Z = V$ and the open subset $W\cup U \in [U,X]$ satisfies $f(W\cup U) = V$. Finally, we need to check that $f$ reflects the ordering, but this is obvious. This shows $f$ is an isomorphism of posets and it is then automatically a frame isomorphism since being a frame is a property of a poset.

%We have $f(U) = \varnothing$ and $f(X) = Z$ so the top and bottom elements are preserved. Intersection commutes with unions and so $f$ preserves joins.
\end{proof}

\begin{lemma}\label{Glem:ltg3}
Let $\mathsf{S}$ be a smashing ideal of $\mathsf{T}$ with corresponding idempotent algebra $A$. Then $\mathsf{T}/\mathsf{S}$ is again a big tt-category and
\[
(-)\otimes A\colon [A, 0] \to \Idem(\mathsf{T}/\mathsf{S})
\]
is an isomorphism of frames where $[A,0] = \{B\in \Idem(\mathsf{T}) \mid A\leq B \leq 0\}$.
\end{lemma}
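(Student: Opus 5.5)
The plan has two parts: first show that $\mathsf{T}/\mathsf{S}$ is a big tt-category, then identify its idempotent algebras with the interval $[A,0]$ via the description of $\mathsf{T}/\mathsf{S}$ as $A$-modules.

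For the first part, identify the Verdier quotient $\mathsf{T}/\mathsf{S}$ with $\mathsf{S}^\perp = \Modu_\mathsf{T}(A)$ using the left adjoint $A\otimes(-)$ (cf.\ Theorem~\ref{Gthm:finiteloc} and the discussion of smashing ideals). Since $\mathsf{S}$ is a smashing tensor-ideal, $\mathsf{T}/\mathsf{S}$ inherits a tensor product $M\otimes_A N = M\otimes N$ with unit $A$: idempotence $A\otimes A\cong A$ makes $A$ a unit on $A$-modules, and $\Modu_\mathsf{T}(A)$ is a localizing ideal. The localization $A\otimes(-)$ has a coproduct-preserving right adjoint, the inclusion $j_*$, because $\mathsf{S}^\perp$ is closed under coproducts. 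By Theorem~\ref{Gthm:localizationthm}(4) it therefore sends compacts to compacts. The objects $A\otimes t$ with $t\in\mathsf{T}^\mathrm{c}$ then form a set of compact generators: if $\Hom(A\otimes t,\Sigma^i M)=0$ for all such $t$ and some $A$-module $M$, then $\Hom(t,\Sigma^i M)=0$ by adjunction, so $M=0$. They are rigid, since $A\otimes(-)$ is monoidal and preserves duals. Finally, every compact object of $\mathsf{T}/\mathsf{S}$ is a summand of one built from such $A\otimes t$, and rigid objects are closed under summands and cones. This gives the big tt-structure.

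For the second part, recall that $A\le B$ means $B\cong A\otimes B$, which says exactly that $B$ is an $A$-module. Given $B\in[A,0]$, the object $B$ is an $A$-module, and its unit $\eta_B\colon\unit\to B$ factors as $\unit\to A\to B$ by the defining triangle of $A\le B$. Hence $A\to B$ serves as a unit in $\mathsf{T}/\mathsf{S}$, and $B\otimes_A B=B\otimes B\cong B$, so $B\cong A\otimes B$ is an idempotent algebra in $\mathsf{T}/\mathsf{S}$. Conversely, given an idempotent algebra $A\to B'$ in $\mathsf{T}/\mathsf{S}$, compose with $\unit\to A$ to view $B'$ in $\mathsf{T}$. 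The composite is idempotent, since $B'\otimes B'=B'\otimes_A B'\cong B'$ via the unit map, and $A\le B'$. These two assignments are mutually inverse. They are order-preserving in both directions, because the order is determined by the vanishing kernels $\ker(B\otimes -)$, equivalently by the module categories $\Modu(B)$, and these are literally the same subcategories whether computed in $\mathsf{T}$ or in $\Modu_\mathsf{T}(A)$. A poset isomorphism between frames is automatically a frame isomorphism, as in Lemma~\ref{Glem:ltg2}.

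The main obstacle is the bookkeeping of uniqueness of units. An idempotent algebra in the quotient is a pair $(B,\eta)$ considered up to isomorphism under the unit, so one must check that the unit $A\to B$ is forced, namely that it is unique up to the identification $B\cong A\otimes B$. It is cleanest to avoid this entirely by working with the kernels $\ker(B\otimes-)\supseteq\mathsf{S}$, using the isomorphism of Idem with smashing ideals. Under that isomorphism, $[A,0]$ corresponds to the smashing ideals of $\mathsf{T}$ containing $\mathsf{S}$. These correspond bijectively to the smashing ideals of $\mathsf{T}/\mathsf{S}$ by taking images and preimages under the quotient, with smashing-ness preserved because right orthogonals coincide inside $\mathsf{S}^\perp$. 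I would present the proof in this form.
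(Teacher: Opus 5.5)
Your proposal is correct. The first two paragraphs are essentially the paper's own proof.

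\textbf{Where you match the paper.} The paper also gets the big tt-structure from two facts: the localization preserves compacts (Theorem~\ref{Gthm:localizationthm}(4)), and monoidal functors preserve rigidity. It builds the bijection exactly as you do: factor $\eta_B$ through $A$ in one direction, and compose $\unit\to A\to B'$ in the other. You are slightly more careful in two places. You note that every compact, not just each generator, is rigid, since compacts are the thick closure of the generators. You also check that the map reflects the order, which the paper does not say explicitly.

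\textbf{Where you diverge.} Your final plan is to recast the bijection through smashing ideals containing $\mathsf{S}$. That route does work: right orthogonals of an ideal $\mathsf{S}'\supseteq\mathsf{S}$ computed in $\mathsf{T}$ and in $\mathsf{S}^\perp$ agree, so smashing-ness transfers. But it costs extra steps. You need the correspondence of localizing ideals under a Verdier quotient, and a check that the induced map on idempotent algebras is $(-)\otimes A$.

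\textbf{Why the detour is unnecessary.} The unit bookkeeping you want to avoid is automatic. For $B\in\mathsf{S}^\perp$, apply $\Hom_\mathsf{T}(-,B)$ to the triangle $C\to\unit\to A\to\Sigma C$. Since $\Hom_\mathsf{T}(C,B)=0=\Hom_\mathsf{T}(\Sigma C,B)$, restriction along $\eta_A$ is a bijection $\Hom_\mathsf{T}(A,B)\cong\Hom_\mathsf{T}(\unit,B)$. So the map $A\to B$ under $\unit$ is unique, and your direct argument (the paper's) is already complete.

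\textbf{One small slip.} The kernels $\ker(B\otimes-)$ are not literally the same in $\mathsf{T}$ and in $\mathsf{S}^\perp$. The one in $\mathsf{T}$ contains $\mathsf{S}$; the one in $\mathsf{S}^\perp$ is its intersection with $\mathsf{S}^\perp$. The module categories $\Modu_\mathsf{T}(B)$, however, are literally the same, and those are what you should use to reflect the order.
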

\begin{proof}
Because $\mathsf{S}$ is smashing the localization functor $\mathsf{T} \to \mathsf{T}/\mathsf{S}$ sends compacts to compacts (this is equivalent to the right adjoint preserving coproducts, see Theorem~\ref{Gthm:localizationthm}) and the image of $\mathsf{T}^\mathrm{c}$ thus gives a compact generating set for $\mathsf{T}/\mathsf{S}$. The localization inherits a monoidal structure such that the localization is monoidal because $\mathsf{S}$ is an ideal. Any monoidal functor sends rigid objects to rigid objects and so the compact generators for $\mathsf{T}/\mathsf{S}$ are also rigid and hence it is a big tt-category. The unit object of $\mathsf{T}/\mathsf{S}$ is $A$.

Let us show the morphism $(-)\otimes A$ is an order preserving bijection (see Definition~\ref{Gdef:idempotentalgebra} and the remark that follows it for the ordering in terms of idempotent algebras). Suppose that $A_1 \leq A_2$ lie in $[A,0]$, i.e.\ they come with algebra maps $A\to A_i$. Since $A_i \geq A$ tensoring the unit map for $A$ with $A_i$ gives isomorphisms $A_i \stackrel{\sim}{\to} A\otimes A_i$. Thus we see that the map $A\to A_i$ makes $A_i$ into an idempotent algebra in $\mathsf{T}/\mathsf{S}$ and the map $A_1 \to A_2$ remains. It's immediate from this construction that the map of the lemma is well-defined, order preserving, and injective. 

Let us check surjectivity. Suppose that $A\to B$ is an idempotent algebra in $\mathsf{T}/\mathsf{S}$. The composite $\unit \to A \to B$ makes $B$ into an idempotent algebra in $\mathsf{T}$ as one can see by tensoring with $B\cong (A\otimes B)$. It lies in $[A,0]$ by construction and tensoring with $A$ sends it back to $B$ viewed as an idempotent algebra in $\mathsf{T}/\mathsf{S}$.
\end{proof}

\begin{proposition}\label{Gprop:ltg}
Suppose that $\tau\colon \Omega(X) \to \Idem(\mathsf{T})$ is a categorified frame. Let $U\in \Omega(X)$ with closed complement $Z$. Then $\tau$ induces a categorified frame on the corresponding localization $\Omega(Z) \to \Idem(\mathsf{T}/\tau(U))$.
\end{proposition}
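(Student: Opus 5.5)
The plan is to build the induced categorified frame as a composite of three maps, two of which come from Lemmas~\ref{Glem:ltg2} and \ref{Glem:ltg3}:
\[
\Omega(Z) \xrightarrow{\ \sim\ } [U,X] \xrightarrow{\ \tau\ } [A_{\tau U}, 0] \xrightarrow{\ (-)\otimes A_{\tau U}\ } \Idem(\mathsf{T}/\tau(U)).
\]
The first map is the inverse of the isomorphism $W\mapsto W\cap Z$ of Lemma~\ref{Glem:ltg2}; explicitly it sends an open $V$ of $Z$ to $U\cup W$ for any open $W$ of $X$ with $W\cap Z=V$. The last map is the frame isomorphism of Lemma~\ref{Glem:ltg3}, applied to the smashing ideal $\mathsf{S}=\tau(U)$ with idempotent algebra $A=A_{\tau U}$. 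Here $\mathsf{T}/\tau(U)$ is a big tt-category by that lemma. The composite will be our candidate $\tau_Z\colon \Omega(Z)\to \Idem(\mathsf{T}/\tau(U))$, and the remaining work is to check that the middle map has the required properties.

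First, $\tau$ restricts to a map $[U,X]\to[A_{\tau U},0]$. This holds because $\tau$ is order preserving and sends the bounds $U$ and $X$ to $A_{\tau U}$ and to the top element. We must watch the order convention here: in $\Idem(\mathsf{T})$ the top element is the algebra $0$, so $\tau(X)=\mathsf{T}$ corresponds to the algebra $0$.

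Second, the restriction preserves the frame operations of the intervals. Arbitrary non-empty joins and binary meets in $[U,X]$ are computed as in $\Omega(X)$, and likewise for $[A_{\tau U},0]\subseteq \Idem(\mathsf{T})$, so $\tau$ preserves them because it is a frame map. The bottom element $U$ goes to $A_{\tau U}$ and the top $X$ goes to $0$, so the empty join and empty meet are also preserved.

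Third, the restriction is injective, since $\tau$ is.

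Composing with the two frame isomorphisms then gives an injective frame map $\Omega(Z)\to\Idem(\mathsf{T}/\tau(U))$, which is a categorified frame in the sense of our standing hypotheses. It also satisfies the remaining hypotheses: $Z$ is a closed subspace of a sober $T_D$ space, so it is again sober and $T_D$, and if we want CB rank as well, closed subspaces inherit CB rank.

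The main point needing care is compatibility of $(-)\otimes A$ with the identification of smashing ideals of $\mathsf{T}/\tau(U)$. We need to see that an algebra $B\geq A_{\tau U}$, regarded in $\mathsf{T}/\tau(U)\simeq \Modu_\mathsf{T}(A_{\tau U})$, corresponds to the smashing ideal $\tau(W)/\tau(U)$ for the open $W$ with $\tau(W)$ corresponding to $B$. I would verify this by noting two facts. The kernel of $B\otimes-$ on $A$-modules is $\ker(B\otimes -)\cap \Modu_\mathsf{T}(A)$. The quotient functor $\mathsf{T}\to\mathsf{T}/\tau(U)$ is identified with $A\otimes -$. This is only needed if one wants to phrase $\tau_Z$ in terms of ideals rather than algebras; the algebra-level statement follows directly from the lemmas.
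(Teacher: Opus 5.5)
Your proposal is correct and matches the paper's proof: you restrict $\tau$ to $[U,X]\to[\tau(U),0]$ and compose with the isomorphisms of Lemmas~\ref{Glem:ltg2} and~\ref{Glem:ltg3}, and you also write out the routine checks the paper leaves implicit (the intervals are closed under the frame operations, the bounds are preserved, and injectivity is inherited). Your extra remarks — that sobriety, $T_D$ and Cantor--Bendixson rank pass to $Z$, and the ideal-level description of the induced map — are correct, but the statement itself does not need them.
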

\begin{proof}
This just boils down to restricting $\tau$ to $[U,X] \to [\tau(U), 0]$ and applying the last two lemmas.
\end{proof}

\begin{remark}
The hypotheses above are overkill: the same argument, with obvious modifications, works for any frame map $F\to \Idem(\mathsf{T})$ and element $f\in F$.
\end{remark}

We are now ready to prove that Cantor-Bendixson rank is enough to guarantee the local-to-global principle.

\begin{theorem}
Suppose that $X$ is sober, $T_D$, and has Cantor-Bendixson rank. Then any injective frame map $\tau\colon \Omega(X) \to \Idem(\mathsf{T})$ satisfies the local-to-global principle. In particular, the corresponding map $\upsilon$ of Proposition~\ref{Gprop:weak} is a discretization of $\tau$ which preserves arbitrary meets.
\end{theorem}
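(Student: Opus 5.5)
We argue by transfinite induction along the Cantor--Bendixson filtration, showing that for every ordinal $\alpha$
\[
\tau(X_{\leq \alpha}) \subseteq \upsilon(X_{\leq\alpha}) = \loc^\otimes(\Gamma_p \mid p\in X_{\leq\alpha}).
\]
This makes sense because each $X_{\leq\alpha}$ is open by Lemma~\ref{Glem:ltg1}. Since $X$ has CB rank, some $X_{\leq\beta}=X$, and injectivity of $\tau$ gives $\tau(X)=\mathsf{T}$, so the statement at $\beta$ is exactly the local-to-global principle. The final sentence of the theorem then follows from Corollary~\ref{Gcor:ltg}. The reverse containment $\upsilon(U)\subseteq\tau(U)$ for open $U$ is already Proposition~\ref{Gprop:weak}, so only one inclusion needs proof.

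\textbf{Isolated points.} First we treat the case of a space $Y$ (later a closed subspace) and show $\tau(Y_{\leq 0})\subseteq \upsilon(Y_{\leq 0})$. For an open point $p$ we may take $f$ with $U_f=\{p\}$ and $g=0$, so $\Gamma_p=C_{\tau f}$ and $\loc^\otimes(\Gamma_p)=\tau(\{p\})$. Since $\tau$ preserves joins, $\tau(Y_{\leq 0})=\bigvee_{p}\tau(\{p\})=\loc^\otimes(\Gamma_p\mid p\in Y_{\leq 0})$.

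\textbf{Successor step.} Assume $\tau(X_{\leq\alpha})\subseteq\upsilon(X_{\leq\alpha})$. Set $U=X_{\leq\alpha}$ with complement $Z=X_{>\alpha}$, and pass to $\mathsf{T}/\tau(U)$. Proposition~\ref{Gprop:ltg} gives an injective categorified frame $\tau_Z\colon\Omega(Z)\to\Idem(\mathsf{T}/\tau(U))$. Here $Z$ is again $T_D$, because locally closed points stay locally closed in a subspace. The isolated points of $Z$ are precisely $X_{\leq\alpha+1}\setminus X_{\leq\alpha}$.

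Applying the isolated-point case in the quotient, every object of $\tau(X_{\leq\alpha+1})$ has image in $\mathsf{T}/\tau(U)$ lying in $\loc^\otimes(\Gamma^Z_p\mid p\in Z_{\leq 0})$. The crucial compatibility is that for such $p$ the object $\Gamma^Z_p$ is the image of $\Gamma_p$. To see this, write $\{p\}=U_f\cap Z$ with $U\subseteq U_f$ (replace $f$ by $f\vee u$). Then $\Gamma_p=C_{\tau f}\otimes A_{\tau U}$, and under Lemma~\ref{Glem:ltg3} this is exactly the coalgebra cutting out $\{p\}$ in the quotient.

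We now lift back to $\mathsf{T}$ using the localization triangle $C_{\tau U}\otimes M\to M\to A_{\tau U}\otimes M$ for $M\in\tau(X_{\leq\alpha+1})$.
\begin{itemize}
\item The first term lies in $\tau(U)$, which is contained in $\upsilon(U)$ by the inductive hypothesis.
\item The third term lies in $\mathsf{T}/\tau(U)=\Modu_\mathsf{T}(A_{\tau U})$, embedded as a localizing ideal of $\mathsf{T}$. It therefore lies in the localizing ideal of $\mathsf{T}$ generated by the $\Gamma_p$ with $p\in Z_{\leq 0}$. This uses that the embedding preserves coproducts and triangles, and that tensor-ideal generation in the quotient is tensoring with objects $A\otimes N$.
\end{itemize}
Hence $M\in\upsilon(X_{\leq\alpha+1})$.

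\textbf{Limit step.} For a limit ordinal $\lambda$, $\tau(X_{\leq\lambda})=\bigvee_{\kappa<\lambda}\tau(X_{\leq\kappa})$ because $\tau$ preserves joins. This is contained in $\bigvee_{\kappa<\lambda}\upsilon(X_{\leq\kappa})\subseteq\upsilon(X_{\leq\lambda})$.

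The main obstacle is the successor step, specifically two points:
\begin{itemize}
\item the identification of $\Gamma_p^Z$ with $\Gamma_p$ under Lemma~\ref{Glem:ltg3}, which requires matching idempotent coalgebras, not just algebras, through the quotient;
\item the transfer of tensor-ideal generation from $\mathsf{T}/\tau(U)$ back to $\mathsf{T}$ via the fully faithful, coproduct-preserving right adjoint.
\end{itemize}
I would handle both by working throughout with the description $\mathsf{T}/\tau(U)\simeq A_{\tau U}\otimes\mathsf{T}$.
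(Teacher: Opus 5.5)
Your proposal is correct and follows the paper's proof essentially step for step. Both use transfinite induction along the Cantor--Bendixson filtration. The isolated-point case uses $\Gamma_p = C_{\tau(\{p\})}$. The successor step passes to $\mathsf{T}/\tau(X_{\leq\alpha})$ via Proposition~\ref{Gprop:ltg} and identifies $\Gamma^Z_p$ with $C_{\tau(X_{\leq\alpha}\cup\{p\})}\otimes A_{\tau(X_{\leq\alpha})} = \Gamma_p$. The limit step is handled by joins.

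The differences are cosmetic. You prove only $\tau\subseteq\upsilon$ and cite Proposition~\ref{Gprop:weak} for the reverse inclusion. You also lift back to $\mathsf{T}$ via the triangle $C\otimes M\to M\to A\otimes M$ rather than writing $\tau(X_{\leq\alpha+1}) = \tau(X_{\leq\alpha})\vee\mathsf{Q}$.

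One small slip: $\tau(X)=\mathsf{T}$ holds because frame maps preserve the top element, not because $\tau$ is injective. Injectivity is instead what makes $\upsilon$ injective, via Lemma~\ref{Glem:nonzero}.
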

\begin{proof}
We will prove by transfinite induction that 
\[
\tau(X_{\leq \alpha}) = \loc^\otimes(\Gamma_x \mid x\in X_{\leq \alpha}) = \upsilon(X_{\leq \alpha})
\]
for all ordinals $\alpha$. In particular, if $\beta$ is the Cantor-Bendixson rank of $X$ then 
\[
\mathsf{T} = \tau(X) = \tau(X_{\leq \beta}) = \upsilon(X_{\leq \beta}) = \upsilon(X).
\]
The base case is $\tau(X_{\leq 0}) = \upsilon (X_{\leq 0})$. Every point in $X_{\leq 0}$ is open by definition. Thus, using that $\tau$ is a map of frames, we see
\[
\tau(X_{\leq 0}) = \tau(\cup_{x\in X_{\leq 0}}\{x\}) = \vee_{x\in X_{\leq 0}} \tau(\{x\})
\]
and so it is enough to identify $\tau(\{x\})$ with $\upsilon(\{x\})$ (we know $\upsilon$ preserves joins by Proposition~\ref{Gprop:weak}). Because $\{x\}$ is open we can isolate $x$ as $\{x\} = \{x\} \cap X$ (thinking of $X$ as the closed complement of the open subset $\varnothing$) which gives
\[
\Gamma_x = C_{\tau(\{x\})} \otimes A_{\varnothing} = C_{\tau(\{x\})} \otimes \unit = C_{\tau(\{x\})}
\]
and hence $\upsilon(\{x\}) = \loc^\otimes(\Gamma_x) = \loc^\otimes(C_{\tau(\{x\})}) = \tau(\{x\})$.

We now treat the case of limit ordinals which, using our setup, is essentially trivial. Suppose that $\lambda$ is a limit ordinal and that $\tau(X_{\leq \kappa}) =\upsilon(X_{\leq \kappa})$ for every $\kappa < \lambda$. We see that
\begin{align*}
\tau(X_{\leq \lambda}) &= \tau(\cup_{\kappa<\lambda} X_{\leq \kappa}) \\
&= \vee_{\kappa<\lambda}\, \tau(X_{\leq \kappa}) \\
&= \vee_{\kappa<\lambda}\, \upsilon(X_{\leq \kappa}) \\
&= \upsilon(\cup_{\kappa<\lambda} X_{\leq \kappa}) \\
&= \upsilon(X_{\leq \lambda})
\end{align*}
where we have used that both $\tau$ and $\upsilon$ commute with joins and the induction hypothesis.

Finally, we treat the case of successor ordinals. So suppose that $\tau(X_{\leq \beta}) = \upsilon(X_{\leq \beta})$ for all $\beta \leq \alpha$. We consider the commutative diagram
\[
\begin{tikzcd}
\tau(X_{\leq \alpha}) \arrow[r] & \tau(X_{\leq \alpha+1}) \arrow[r] \arrow[d] & \mathsf{Q} \arrow[d]             \\
                                & \mathsf{T} \arrow[r]                        & \mathsf{T}/\tau(X_{\leq \alpha})
\end{tikzcd}
\]
where $\mathsf{Q} = \tau(X_{\leq \alpha+1})/\tau(X_{\leq \alpha})$, i.e.\ the top row is a localization sequence, the vertical arrows are fully faithful, and every functor has a right adjoint. By Proposition~\ref{Gprop:ltg} we get a categorified locale $\tau'\colon \Omega(X_{>\alpha}) \to \mathsf{T}/\tau(X_{\leq \alpha})$. By definition of the Cantor-Bendixson rank we have
\[
X_{\leq \alpha+1} \cap X_{>\alpha} = (X_{>\alpha})_{\leq 0}
\]
and so using Lemmas~\ref{Glem:ltg2} and \ref{Glem:ltg3} we identify $\mathsf{Q}$ with $\tau'((X_{>\alpha})_{\leq 0})$. By the argument for the base case of the induction we know that $\upsilon'((X_{>\alpha})_{\leq 0}) = \tau'((X_{>\alpha})_{\leq 0})$ and using the right adjoint of the localization to regard $\mathsf{Q}$ as a localizing ideal contained in $\tau(X_{\leq \alpha+1})$ we see
\[
\tau(X_{\leq \alpha+1}) = \tau(X_{\leq \alpha}) \vee \mathsf{Q} = \upsilon(X_{\leq \alpha}) \vee \upsilon'((X_{>\alpha})_{\leq 0}).
\]
Thus it is enough to identify, for $x\in X$ with Cantor-Bendixson rank $\alpha+1$, the objects $\Gamma_x$ and $\Gamma'_x$ where the latter is constructed using $\tau'$ (in other words to verify the equality $\upsilon'((X_{>\alpha})_{\leq 0}) = \upsilon(X_{\leq \alpha+1} \cap X_{>\alpha})$). 

Let us denote by $A$ the idempotent algebra corresponding to $\tau(X_{\leq \alpha})$. We know that for $x$ viewed as a point of $(X_{>\alpha})_{\leq 0}$ the object $\Gamma'_x$ is just the idempotent coalgebra for $\tau'(\{x\})$ (cf.\ the proof of the base case for the induction). By another combo of Lemma~\ref{Glem:ltg2} and Lemma~\ref{Glem:ltg3} this is the fibre of $\unit\otimes A \to A_{\tau{X_{\leq \alpha} \cup \{x\}}} \otimes A$, which is just $C_{\tau{X_{\leq \alpha} \cup \{x\}}} \otimes A$. But $A$ is the idempotent algebra for $\tau(X_{\leq \alpha})$ and $X_{> \alpha} \cap (X_{\leq \alpha} \cup \{x\}) = \{x\}$, so this tensor product of an idempotent algebra and coalgebra presents $\Gamma_x$. This completes the proof.
\end{proof}

%--------------------------------------------------------------------------------------------------------------------------------------------------

%--------------------------------------------------------------------------------------------------------------------------------------------------

\bibliographystyle{amsplain}
\bibliography{./repcoh}

\end{document}